\documentclass[a4paper,11pt]{article}

\usepackage[style=alphabetic, maxnames = 6, maxalphanames = 6]{biblatex}

\RequirePackage{amsmath,amsthm,amssymb}
\usepackage{footnote}
\usepackage{cancel}
\usepackage{xcolor}
\usepackage{tikz}
\usepackage{pdflscape}
\usepackage{hyperref}

\newtheorem{thm}{Theorem}
\newtheorem{lem}[thm]{Lemma}
\newtheorem{prop}[thm]{Proposition}
\newtheorem{corol}[thm]{Corollary}

\theoremstyle{definition}
\newtheorem{conj}[thm]{Conjecture}

\theoremstyle{remark}
\newtheorem{rem}[thm]{Remark}

\newcommand{\I}{\mathcal{I}}
\newcommand{\W}{\mathcal{W}}
\newcommand{\Ustirling}[2]{\genfrac{[}{]}{0pt}{}{#1}{#2}}
\renewcommand{\min}{\mathbf{min}}
\renewcommand{\max}{\mathbf{max}}
\newcommand{\forb}{\mathbf{Forb}}
\newcommand{\vals}{\mathbf{Vals}}
\newcommand{\dist}{\mathbf{dist}}
\newcommand{\firstmax}{\mathbf{firstmax}}
\newcommand{\lastmax}{\mathbf{lastmax}}
\newcommand{\occ}{\mathbf{occ}}
\newcommand{\sites}{\mathbf{sites}}
\newcommand{\Sites}{\mathbf{Sites}}
\newcommand{\rep}{\mathbf{rep}}
\renewcommand{\top}{\mathbf{top}}
\newcommand{\inc}{\mathbf{inc}}
\newcommand{\des}{\mathbf{Des}}
\newcommand{\bounce}{\mathbf{bounce}}
\renewcommand{\sec}{\mathbf{sec}}
\newcommand{\dec}{\mathbf{dec}}

\renewcommand{\arraystretch}{1.3}

\title{Completing the enumeration of inversion sequences avoiding one or two patterns of length 3}

\author{Benjamin Testart}

\usepackage{a4wide}
\addbibresource{biblio.bib}

\begin{document}
\renewcommand{\thefootnote}{\arabic{footnote}}
\begin{center}
{\LARGE Completing the enumeration of inversion sequences \\ avoiding one or two patterns of length 3} \bigbreak
{\Large Benjamin Testart} \bigbreak
{Universit\'e de Lorraine, CNRS, Inria, LORIA, F-54000 Nancy, France}
\end{center}

\abstract{
We present four constructions of inversion sequences, and use them to compute the enumeration sequences of 24 classes of pattern-avoiding inversion sequences. This completes the enumeration of inversion sequences avoiding one or two patterns of length 3. Some of our constructions are based on generating trees. Others involve pattern-avoiding words, which we also count using generating trees. To solve some of these cases, we introduce a generalization of inversion sequences, which we call \emph{shifted} inversion sequences. Lastly, we briefly discuss the asymptotics of pattern-avoiding inversion sequences, focusing on their exponential or super-exponential behavior.
} \medbreak

{\noindent \textbf{Keywords: }pattern avoidance, inversion sequences, enumeration, words, generating trees, Cayley permutations}

\section{Introduction}

\subsection{Basic definitions}
Let $\mathbb N$ be the set of natural numbers, including 0. Given a natural number $n \in \mathbb N$, we call \emph{integer sequences} of \emph{size} $n$ the elements of $\mathbb N^n$. We write the terms of an integer sequence $\sigma = (\sigma_1, \dots, \sigma_n) \in \mathbb N^n$. We denote by $\I_n$ the set of \emph{inversion sequences} \cite{Mansour_Shattuck_2015, Corteel_Martinez_Savage_Weselcouch_2016} of size $n$, that is the set of sequences $\sigma \in \mathbb N^n$ such that $\sigma_i < i$ for all $i \in \{1, \dots, n\}$.

There is a simple bijection between $\I_n$ and the set of permutations of $n$ elements, called the \emph{Lehmer code}, which explains the name ``inversion sequence". If $\pi$ is a permutation of the set $\{1, \dots, n\}$, the inversion sequence $\sigma \in \I_n$ associated with $\pi$ by the Lehmer code is defined by $\sigma_i = |\{j \; : \; \pi(j) > \pi(i) \text{ and } j < i \}|$ for all $i \in \{1, \dots, n\}$, i.e. $\sigma_i$ counts the number of \emph{inversions} of $\pi$ whose second entry is at position $i$.

Given two integer sequences $\sigma = (\sigma_1, \dots, \sigma_n) \in \mathbb N^n$ and $\rho = (\rho_1, \dots, \rho_k) \in \mathbb N^k$, we say that $\sigma$ \emph{contains} the \emph{pattern} $\rho$ if there exists a subsequence of $\sigma$ which is order-isomorphic to $\rho$. Such a subsequence is called an \emph{occurrence} of $\rho$. In this work, we exclusively study patterns of length 3, which we denote $\rho_1\rho_2\rho_3$ instead of $(\rho_1,\rho_2,\rho_3)$ for simplicity. For instance the sequence $(4,3,2,5,4)$ contains the pattern 021, since the subsequences $(3,5,4)$ and $(2,5,4)$ are both occurrences of 021. A sequence \emph{avoids} a pattern $\rho$ if it does not contain $\rho$, e.g. the inversion sequence $(0,0,2,3,2,0,1,5)$ avoids the pattern 101. If $P$ is a set of patterns, we denote by $\I_n(P)$ the set of inversion sequences of size $n$ avoiding all patterns in $P$.

\subsection{Context and summary of results}
The study of pattern-avoiding inversion sequences (and many more types of sequences avoiding patterns) branched from pattern-avoiding permutations, a well-established field of research in enumerative combinatorics, see e.g. \cite{Kitaev_2011} or \cite{Vatter_2015}.
Pattern-avoiding inversion sequences were first introduced in \cite{Mansour_Shattuck_2015} and \cite{Corteel_Martinez_Savage_Weselcouch_2016}, independently. 
Their study was continued in many articles, such as \cite{Martinez_Savage_2018, Beaton_Bouvel_Guerrini_Rinaldi_2019, Auli_Elizalde_2019_1, Auli_Elizalde_2019_2, Yan_Lin_2020, Lin_Yan_2020, Auli_Elizalde_2021, Mansour_Yıldırım_021, Pantone201+210, Kotsireas_Mansour_Yildirim_2024}, among others.

\begin{table}
\begin{center}
    \begin{tabular}{|c|c|c|c|}
    \hline
    Pattern $\rho$ & $|\I_n(\rho)|$ for $n = 1, \dots, 7$ & Solved in & \cite{OEIS} \\
    \hline
    000 & 1, 2, 5, 16, 61, 272, 1385 & \cite{Corteel_Martinez_Savage_Weselcouch_2016} & A000111\\
    001 & 1, 2, 4, 8, 16, 32, 64 & \cite{Corteel_Martinez_Savage_Weselcouch_2016} & A000079\\
    \textbf{010} & \textbf{1, 2, 5, 15, 53, 215, 979} & \textbf{Theorem \ref{thm010}} & \textbf{A263779}\\
    011 & 1, 2, 5, 15, 52, 203, 877 & \cite{Corteel_Martinez_Savage_Weselcouch_2016} &  A000110\\
    012 & 1, 2, 5, 13, 34, 89, 233 & \cite{Corteel_Martinez_Savage_Weselcouch_2016} and \cite{Mansour_Shattuck_2015} & A001519\\
    021 & 1, 2, 6, 22, 90, 394, 1806 & \cite{Corteel_Martinez_Savage_Weselcouch_2016} and \cite{Mansour_Shattuck_2015} & A006318\\
    100 & 1, 2, 6, 23, 106, 565, 3399 & \cite{Kotsireas_Mansour_Yildirim_2024} & A263780\\
    101 or 110 & 1, 2, 6, 23, 105, 549, 3207 & \cite{Corteel_Martinez_Savage_Weselcouch_2016} & A113227\\
    102 & 1, 2, 6, 22, 89, 381, 1694 & \cite{Mansour_Shattuck_2015} & A200753\\
    120 & 1, 2, 6, 23, 103, 515, 2803 & \cite{Mansour_Shattuck_2015} & A263778\\
    201 or 210 & 1, 2, 6, 24, 118, 674, 4306 & \cite{Mansour_Shattuck_2015} & A263777\\
    \hline
    \end{tabular}
    \caption{Enumeration sequences of inversion sequences avoiding a single pattern of length 3.}
    \label{table1}
\end{center}  \vspace{-10pt}
\end{table}

The enumeration of inversion sequences avoiding a single pattern of length 3 was already solved for all patterns except 010, see Table \ref{table1}.
A systematic study of inversion sequences avoiding pairs of patterns of length 3 was conducted by Yan and Lin in \cite{Yan_Lin_2020}, which left open the enumeration of inversion sequences avoiding 32 of the 78 total pairs. Since then, eight cases were solved in \cite{Kotsireas_Mansour_Yildirim_2024}, and one additional case was solved in \cite{Chen_Lin} and \cite{Pantone201+210} independently. Most of these cases were solved using bijections with other known combinatorial objects, or through generating trees.

In our work, we solve\footnote{We consider the enumeration of $P$-avoiding inversion sequences ``solved" once an algorithm is known to compute each number $|\I_n(P)|$ in polynomial time in $n$. Such an algorithm is sometimes called a ``Wilfian formula" after Herbert Wilf's paper \cite{Wilf_1982}. In our work, Wilfian formulas always take the form of a recurrence relation or an explicit expression.} the enumeration of inversion sequences avoiding 010, and the remaining 23 cases for pairs of patterns through four different constructions of pattern-avoiding inversion sequences, see Table \ref{table2}.

\begin{table}[ht]
\small
\begin{center}
    \setlength{\tabcolsep}{5pt}
    \begin{tabular}{|c|c|c|c|c|}
    \hline
    Pattern pair $P$ & $|\I_n(P)|$ for $n = 1, \dots, 7$ & Solved in & Performance\footnotemark & \cite{OEIS} \\
    \hline
    \{000, 100\} & 1, 2, 5, 16, 60, 260, 1267 & Theorem \ref{thm000+100} & 990 & A279564 \\
    \{102, 201\} & 1, 2, 6, 22, 87, 354, 1465 & Theorems \ref{thm102+201},\ref{thmGF102+201} & 6100 & A279566 \\
    \hline
    \{000, 102\} & 1, 2, 5, 14, 40, 121, 373 & Theorem \ref{thm000+102} & 5800 & A374541 \\
    \{102, 210\} & 1, 2, 6, 22, 87, 351, 1416 & Theorems \ref{thm102+210},\ref{thmGF102+210}& 5600 & A374542 \\
    \{000, 201\}\,or\,\{000, 210\} & 1, 2, 5, 16, 60, 257, 1218 & Theorem \ref{thm000+201} & 735 & A374543 \\
    \{100, 110\} & 1, 2, 6, 22, 93, 437, 2233 & Theorem \ref{thm100+110} & 820 & A374544 \\
    \{100, 101\} & 1, 2, 6, 22, 93, 439, 2267 & Theorem \ref{thm100+101} & 815 & A374545 \\
    \{110, 201\} & 1, 2, 6, 23, 103, 512, 2739 & Theorem \ref{thm110+201} & 825 & A374546 \\
    \{101, 210\} & 1, 2, 6, 23, 103, 513, 2763 & Theorem \ref{thm101+210} & 810 & A374547\\
    \hline
    \{011, 120\} & 1, 2, 5, 14, 42, 132, 431 & Theorem \ref{thm011+120} & 430 & A374548 \\
    \{100, 120\} & 1, 2, 6, 22, 92, 421, 2062 & Theorem \ref{thm100+120} & 350 & A374549 \\
    \{120, 201\} & 1, 2, 6, 23, 102, 498, 2607 & Theorem \ref{thm120+201} & 340 & A374550 \\
    \{110, 120\} & 1, 2, 6, 22, 92, 423, 2091 & Theorem \ref{thm110+120} & 330 &  A279570 \\
    \{010, 120\} & 1, 2, 5, 15, 52, 201, 845 & Theorem \ref{thm010+120} & 330 & A279559 \\
    \{101, 120\} & 1, 2, 6, 22, 90, 397, 1859 & Theorem \ref{thm101+120} & 240 & A374551 \\
    \{000, 120\} & 1, 2, 5, 15, 50, 185, 737 & Theorem \ref{thm000+120} & 355 & A374552 \\
    \{000, 010\} & 1, 2, 4, 10, 29, 95, 345 & Theorem \ref{thm000+010} & 235 & A279552 \\
    \{010, 201\}\,or\,\{010, 210\} & 1, 2, 5, 15, 53, 214, 958 & Theorem \ref{thm010+210} & 185 & A360052 \\
    \{010, 110\} & 1, 2, 5, 15, 52, 201, 847 & Theorem \ref{thm010+110} & 145 & A359191 \\
    \hline
    \{010, 102\} & 1, 2, 5, 15, 51, 186, 707 & Theorem \ref{thm010+102} & 265 & A374553 \\
    \{100, 102\} & 1, 2, 6, 21, 80, 318, 1305 & Theorem \ref{thm100+102} & 380 & A374554 \\
    \hline
    \end{tabular} 
\caption{Enumeration sequences of inversion sequences avoiding pairs of patterns studied in this article. Each section of the table corresponds to a different construction.}
\label{table2}
\end{center}
\end{table}

Each construction is presented in a different section. The construction of Section \ref{sectionGT} simply consists in inserting each entry of a sequence from left to right. In Section \ref{sectionMax}, we construct sequences by inserting their entries in increasing order of value (the order of insertion of entries having the same value varies according to the patterns considered). Section \ref{sectionSplitMax} is an improved and more complete version of a previous (unpublished) work \cite{Testart2022}. It relies on a decomposition of inversion sequences around their first maximum; this means sequences are obtained by concatenating two smaller sequences. Section \ref{sectionShift} introduces shifted inversion sequences and uses a decomposition around their first minimum, similar to that of Section \ref{sectionSplitMax}. It is sometimes easier to construct pattern-avoiding inversion sequences by seeing them as a particular case of shifted inversion sequences.

In Section \ref{sectionGF}, we present conjectures about the algebraicity of the generating functions of several classes of inversion sequences avoiding pairs of patterns, and prove two of those conjectures. We conclude with a brief discussion on the asymptotic behavior of the number of pattern-avoiding inversion sequences in Section \ref{sectionAsymptotics}. In particular, we give sufficient conditions on a set of patterns $P$ to show that the growth of the enumeration sequence of $P$-avoiding inversion sequences is bounded above by an exponential function, or to show that it is super-exponential.

\subsection{Notation and preliminaries}

\subsubsection*{Notation}
For any integers $a,b \in \mathbb Z$, we denote the integer interval $[a,b] = \{k \in \mathbb Z \; : \; a \leqslant k \leqslant b\}$.
We denote by $\delta_{a,b} = \begin{cases}
1 & \text{if} \quad a = b \\
0 & \text{if} \quad a \neq b
\end{cases} \;$ the Kronecker delta function.
For all $n \in \mathbb N$, we denote by $C_n = \frac{1}{n+1}\binom{2n}{n}$ the Catalan numbers.
We denote by $\varepsilon$ the \emph{empty sequence}, that is the only sequence of size $0$.

Let $n,m \in \mathbb N$, and let $\sigma \in \mathbb N^n, \tau \in \mathbb N^m$ be two integer sequences.
We define the \emph{concatenation} of $\sigma$ and $\tau$ as $\sigma \cdot \tau = (\sigma_1, \dots, \sigma_n, \tau_1, \dots, \tau_m) \in \mathbb N^{n+m}$. For all $k \in \mathbb N$, we denote by $\sigma^k$ the concatenation of $k$ copies of $\sigma$ (in particular, $\sigma^0 = \varepsilon$).

For any $k \in \mathbb N$, we denote by $\sigma + k = (\sigma_i + k)_{i \in [1,n]}$ the sequence obtained by adding $k$ to each term of $\sigma$.

\footnotetext{Approximate number of terms of each enumeration sequence we were able to compute in 1 minute, with C++ programs running on a personal computer. Naive methods can compute around 20 terms at most.}

\subsubsection*{Terminology}
A \emph{combinatorial class} is a set of objects $\mathcal C$ together with a size function $|\cdot| : \mathcal C \to \mathbb N$ such that there is a finite number of objects of size $n$ for each $n \in \mathbb N$. For any set of patterns $P$, the set of all $P$-avoiding inversion sequences $\I(P) = \coprod_{n \geqslant 0} \I_n(P)$ is a combinatorial class.

If $\mathcal C$ is a combinatorial class, and $\mathcal C_n$ its subset of objects of size $n$ for each $n \in \mathbb N$, we call $(|\mathcal C_n|)_{n \in \mathbb N}$ the \emph{enumeration sequence} of $\mathcal C$ (here, the vertical bars are used to denote set cardinality).
The (ordinary) \emph{generating function} of the combinatorial class $\mathcal C = \coprod_{n \in \mathbb N} \mathcal C_n$ is the formal power series $\sum_{n \in \mathbb N} |\mathcal C_n| x^n$ in the indeterminate $x$.

Given two integer sequences $\sigma \in \mathbb N^n$ and $\tau \in \mathbb N^m$, we say that $\tau$ is a \emph{factor} of $\sigma$ if $\tau$ is a subsequence of consecutive terms of $\sigma$, i.e. if there exists two integers $a \leqslant b \in [1,n]$ such that $\tau = (\sigma_i)_{i \in [a,b]}$, or $\tau = \varepsilon$.

\subsubsection*{Statistics}
For all $n \in \mathbb N$ and $\sigma \in \mathbb N^n$, let
\begin{itemize}
    \item $\vals(\sigma) = \{\sigma_i \; : \; i \in [1,n]\}$ be the \emph{set of values} of $\sigma$,
    \item $\min(\sigma) = \min(\vals(\sigma))$ be the \emph{minimum} of $\sigma$, with the convention $\min(\varepsilon) = +\infty$,
    \item $\max(\sigma) = \max(\vals(\sigma))$ be the \emph{maximum} of $\sigma$, with the convention $\max(\varepsilon) = -1$,
    \item $\dist(\sigma) = |\vals(\sigma)|$ be the number of \emph{distinct values} of $\sigma$,
    \item $|\sigma| = n$ be the \emph{size} or \emph{length} of $\sigma$,
    \item $\firstmax(\sigma) = \min(i \in [1,n] \; : \; \sigma_i = \max(\sigma))$ be the position of the \emph{first maximum} of $\sigma$, with the convention $\firstmax(\varepsilon) = 0$,
    \item $\lastmax(\sigma) = \max(i \in [1,n] \; : \; \sigma_i = \max(\sigma))$ be the position of the \emph{last maximum} of $\sigma$, with the convention $\lastmax(\varepsilon) = 0$.
\end{itemize}

\subsubsection*{Words}
For all $n,k \in \mathbb N$, we denote by $\W_{n,k} = [0,k-1]^n$ the set of words of length $n$ over the alphabet $[0,k-1]$, and by $\overline \W_{n,k} = \{\omega \in \W_{n,k} \; : \; \dist(\omega) = k\}$ the subset of words in which each letter of the alphabet $[0,k-1]$ appears at least once\footnote{Equivalently, $\W_{n,k}$ is the set of maps $[1,n] \to [0,k-1]$, and $\overline \W_{n,k}$ is the subset of surjective maps. We remark that the set of surjective maps $[1,n] \to [1,k]$ is also known as the set of \emph{Cayley permutations} \cite{Mor_Frankel_1984} of length $n$ and maximum $k$.}. For any set of patterns $P$, we denote by $\W_{n,k}(P)$ and $\overline \W_{n,k}(P)$ the subsets of $P$-avoiding words of $\W_{n,k}$ and $\overline \W_{n,k}$. The following proposition shows that it is essentially equivalent to solve the enumeration of $\W_{n,k}(P)$ or that of $\overline \W_{n,k}(P)$.

\begin{prop} \label{propbinomwords}
    For any set of patterns $P$, for all $n,k \in \mathbb N$,
    $$|\W_{n,k}(P)| = \sum_{d=0}^{\min(n,k)} \binom{k}{d} |\overline \W_{n,d}(P)|.$$
\end{prop}
\begin{proof}
Let $n,k \geqslant 0$, $d \in [0, \min(n,k)]$, and let $\mathfrak W_{n,k,d} = \{ \omega \in \W_{n,k} \; : \; d = \dist(\omega)\}$ be the subset of $\W_{n,k}$ of words containing exactly $d$ distinct letters.
Let $F_{d,k}$ be the set of strictly increasing functions from $[0,d-1]$ to $[0,k-1]$. It is easy to see that the function $F_{d,k} \times \overline \W_{n,d} \to \mathfrak W_{n,k,d}$, $(\varphi, \omega) \mapsto (\varphi(\omega_i))_{i \in [1,n]}$ is a bijection. For any set of patterns $P$, restricting this function to the domain $F_{d,k} \times \overline \W_{n,d}(P)$ yields a bijection with the set of $P$-avoiding words in $\mathfrak W_{n,k,d}$ (the image of $(\varphi, \omega)$ is order-isomorphic to $\omega$, so it contains the same patterns).
This concludes the proof, since $\W_{n,k} = \coprod_{d=0}^{\min(n,k)} \mathfrak W_{n,k,d}$, and $|F_{d,k}| = \binom{k}{d}$.
\end{proof}

\section{Generating trees growing on the right} \label{sectionGT}

\subsection{Method} \label{trees}
A \emph{generating tree} is a rooted, labelled tree such that the label of each node determines its number of children, and their labels. Generating trees were first introduced in the context of pattern-avoiding permutations in \cite{West1, West2}. We call \emph{combinatorial generating tree} a generating tree labelled by the objects of a combinatorial class and such that each object of size $n$ labels exactly one node, at level $n$ (with the convention that the root is at level 0).
A combinatorial generating tree can be defined using a map, called \emph{ECO operator} \cite{ECO}, which constructs each object of size $n+1$ from some object of size $n$. More generally, a generating tree can always be defined by a \emph{succession rule}
$$\Omega = \begin{cases} a \\
\ell \leadsto \ell_1 \dots \ell_{c(\ell)}
\end{cases}$$
composed of an \emph{axiom} $a$, which labels the root of the tree, and a \emph{production} which associates to each label $\ell$ the labels of the children of any node labelled $\ell$ in the tree. In the example above, we denoted $c(\ell)$ the number of children of nodes labelled $\ell$, and $\ell_1, \dots, \ell_{c(\ell)}$ the labels\footnote{The same label may appear on several children of $\ell$. Formally, the production maps each label to a multiset of labels.} of those children. A simple combinatorial generating tree for the class of inversion sequences is defined by the succession rule
$$\Omega_{\text{inv}} = \begin{cases} \varepsilon \\
\sigma \leadsto \sigma \cdot i \quad \text{for} \quad i \in [0,|\sigma|].
\end{cases}$$

For any set of patterns $P$, a combinatorial generating tree for the class of $P$-avoiding inversion sequences $\I(P)$ can be obtained by restricting the above to only accept values of $i$ such that $\sigma \cdot i$ avoids the patterns in $P$. This does define a generating tree: indeed, if $\sigma \cdot i$ avoids $P$, then $\sigma$ must also avoid $P$. We call this tree the generating tree \emph{growing on the right} for inversion sequences avoiding the patterns $P$. The present section is dedicated to such generating trees, which are one of the simplest and most common construction for pattern-avoiding inversion sequences.
In all of Section \ref{sectionGT}, we call $i$ a \emph{forbidden} value for $(\sigma, P)$ (or simply for $\sigma$, when $P$ is implicit) if $\sigma \cdot i$ contains a pattern in $P$. \medbreak

Most generating trees used in the literature to solve enumeration problems are not combinatorial generating trees, but instead use labels that are much simpler (typically, integers or tuples of integers). We define a generating tree to be \emph{concise} if it does not contain two isomorphic subtrees rooted in nodes having different labels. Informally, a generating tree is concise if it involves the minimal number of labels required to describe the ``shape" of the tree.

We say that two nodes of a tree $\mathcal T$ are \emph{$\mathcal T$-equivalent} if they are roots of isomorphic subtrees of $\mathcal T$. In particular, two nodes of $\mathcal T$ which have the same label are always $\mathcal T$-equivalent: the label of a node determines its number of children and their labels, hence a label also determines the entire subtree rooted in its node, by induction. Note that $\mathcal T$ is concise if and only if two $\mathcal T$-equivalent nodes always have the same label. We now explain how to go (in favorable cases) from a combinatorial generating tree to a concise generating tree.

Assume we have a combinatorial generating tree $\mathcal T$ for a combinatorial class $\mathcal C$ and a statistic $s : \mathcal C \to L$ for some set $L$, such that for each $\sigma \in \mathcal C$, the value $s(\sigma)$ determines the number of children of the node labelled $\sigma$ in $\mathcal T$, and the value of $s$ when applied to each child. Formally, this means there exists a function $f : L \to \mathbb N^L$ (where $\mathbb N^L$ is the set of multisets of elements in $L$) such that for all $\sigma \in \mathcal C$, $f(s(\sigma))$ is the finite multiset $s(\tau_1), \dots, s(\tau_q)$, where $\tau_1, \dots, \tau_q$ are the labels of the children of the node labelled $\sigma$ in $\mathcal T$.
Replacing each label $\sigma$ by the value $s(\sigma)$ yields a generating tree $\mathcal T'$ which is isomorphic to $\mathcal T$. Notice that $\mathcal T'$ is defined by a succession rule which only involves the values of $s$ (the axiom of this rule is the image under $s$ of the label of the root of $\mathcal T$, and its production is the function we denoted $f$).
On our previous example, we can use the statistic ``size" to turn $\Omega_{\text{inv}}$ into a simpler succession rule
$$\Omega_{\text{factorial}} = \begin{cases} (0) \\
(n) \leadsto (n+1)^{n+1}
\end{cases}$$
where the production means that each node labelled $(n)$ has $n+1$ children, each labelled $(n+1)$. From this succession rule, we can easily see that there are $n!$ nodes at level $n$.

By keeping only the value $s(\sigma)$ rather than the ``complete" object $\sigma \in \mathcal C$ as a label, we can retain a lower amount of information about objects, which is still sufficient to describe the tree (up to isomorphism). We say that two objects $\sigma, \tau \in \mathcal C$ such that $s(\sigma) = s(\tau)$ are \emph{$s$-equivalent}.
Each label $\ell \in L$ of $\mathcal T'$ corresponds to the \emph{$s$-equivalence class} $s^{-1}(\ell) \subseteq \mathcal C$. \medbreak

Since $s$-equivalent objects of $\mathcal C$ are always labels of $\mathcal T$-equivalent nodes, the coarsest $s$-equivalence relation is obtained by defining $s$ to be the statistic which maps each object of $\mathcal C$ to the $\mathcal T$-equivalence class of the corresponding node. 
In particular, for any generating tree $\mathcal T$, there exists a concise generating tree $\mathcal T'$ isomorphic to $\mathcal T$, obtained by replacing the label of each node of $\mathcal T$ by its $\mathcal T$-equivalence class.

In practice, finding this tree $\mathcal T'$ requires some way of knowing whether two nodes are $\mathcal T$-equivalent, which is not always obvious.
For slightly different definitions\footnote{In \cite{Kotsireas_Mansour_Yildirim_2024}, generating trees are defined as plane trees, and two nodes are equivalent if they are roots of isomorphic plane subtrees. This relation relies on an order on the children of each node, and it is finer than the $\mathcal T$-equivalence we defined.} of generating trees and equivalence, \cite{Kotsireas_Mansour_Yildirim_2024} presents an algorithm which can test whether two nodes are equivalent in finite time, for any combinatorial generating tree growing on the right for inversion sequences avoiding a finite set of patterns. This algorithm then labels each node of the tree by an inversion sequence (the minimal sequence in lexicographic order) which corresponds to a node in the same equivalence class. \medbreak

Classically, generating trees are used to solve enumeration problems because they induce recurrence relations on the corresponding enumeration sequences (which may also be turned into equations satisfied by their generating function). Here we formalize how a succession rule can be turned into such a recurrence relation. Assume we have a combinatorial generating tree $\mathcal T$ for a combinatorial class $\mathcal C$, a generating tree $\mathcal T'$ labelled by elements in some set $L$, and a function $s : \mathcal C \to L$ such that replacing each label $\sigma$ of $\mathcal T$ by its image $s(\sigma)$ yields $\mathcal T'$. For all $(n,\ell) \in \mathbb N \times L$, let $\mathfrak c_{n,\ell} = |\{\sigma \in \mathcal C \; : \; |\sigma| = n, \, s(\sigma) = \ell\}|$, so that the number of objects of size $n$ in $\mathcal C$ is $\sum_\ell \mathfrak c_{n,\ell}$. Let $f : L \to \mathbb N^L$ be the production of $\mathcal T'$. For all $n \geqslant 1$ and $\ell \in L$, we have
$$\mathfrak c_{n, \ell} = \sum_{k \in L} f(k)(\ell) \cdot \mathfrak c_{n-1,k},$$
where $f(k)(\ell)$ counts the multiplicity of the label $\ell$ among the children of a node labelled $k$ in $\mathcal T'$. \medbreak

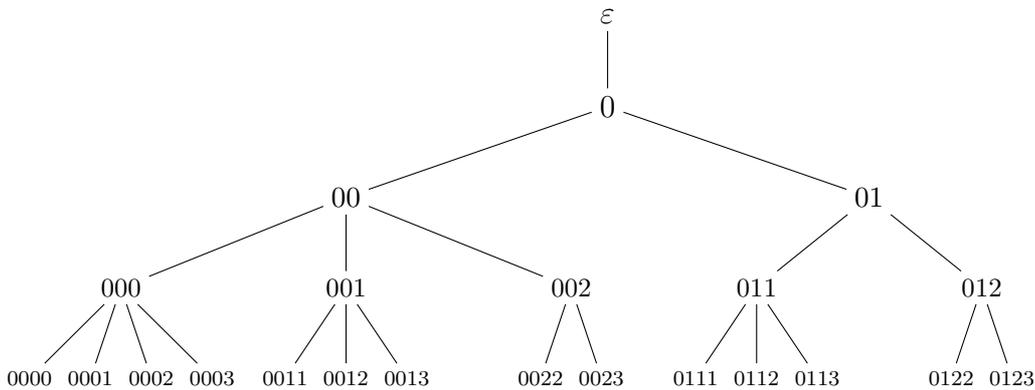
\begin{figure}[ht]
\centering
\begin{scriptsize}
\begin{tikzpicture}[level distance=15mm]
\begin{scope}[scale=0.8]
\tikzstyle{level 2}=[sibling distance=86mm]
\tikzstyle{level 3}=[sibling distance=37mm]
\tikzstyle{level 4}=[sibling distance=10mm]
\node{\large $\varepsilon$}
 child {node{\large $0$}
        child {node {\normalsize $00$}
         child {node {\small $000$}
            child {node {$0000$}
            }
            child {node {$0001$}
            }
            child {node {$0002$}
            }
            child {node {$0003$}
            }
         }
         child {node {\small $001$}
            child {node {$0011$}
            }
            child {node {$0012$}
            }
            child {node {$0013$}
            }
         }
         child {node {\small $002$}
            child {node {$0022$}
            }
            child {node {$0023$}
            }
         }
       }
       child {node {\normalsize $01$}
         child {node {\small $011$}
            child {node {$0111$}
            }
            child {node {$0112$}
            }
            child {node {$0113$}
            }
       }
         child {node {\small $012$}
            child {node {$0122$}
            }
            child {node {$0123$}
            }
       }
    }
};
\end{scope}
\end{tikzpicture}
\end{scriptsize}
\caption{First five levels of the combinatorial generating tree growing on the right for $\I(10)$.}  
\label{CombiGT10}
\end{figure}

We end this introduction with a simple example. The following succession rule describes the generating tree growing on the right for inversion sequences avoiding the pattern 10 (i.e. nondecreasing inversion sequences), represented in Figure \ref{CombiGT10}.
$$\Omega_{10} = \begin{cases} \varepsilon \\
\sigma \leadsto \sigma \cdot i \quad \text{for} \quad i \in [\max(\sigma),|\sigma|].
\end{cases}$$

For each $\sigma \in \I(10)$, let $s(\sigma)$ be the number of children of the node labelled $\sigma$ in this tree. It can be shown that $s(\varepsilon) = 1$, and $s(\sigma) = 1 + |\sigma| - \max(\sigma)$ if $\sigma$ is nonempty. By replacing each label $\sigma$ by the value $s(\sigma)$, we obtain an isomorphic generating tree, represented in Figure \ref{ConciseGT10}, and described by the succession rule
$$\Omega_{\text{Cat}} = \begin{cases} (1) \\
(k) \leadsto (i) \quad \text{for} \quad i \in [2,k+1].
\end{cases}$$
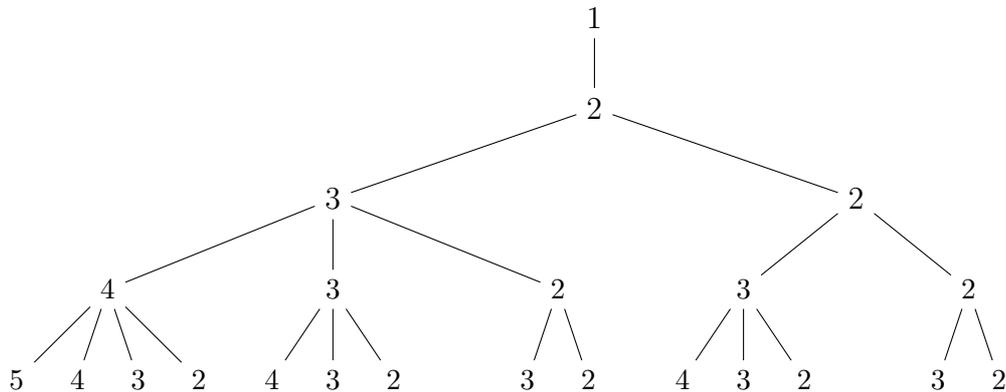
\begin{figure}[ht]
\centering
\begin{tikzpicture}[level distance=15mm]
\begin{scope}[scale=0.8]
\tikzstyle{level 2}=[sibling distance=86mm]
\tikzstyle{level 3}=[sibling distance=37mm]
\tikzstyle{level 4}=[sibling distance=10mm]
\node{\large $1$}
 child {node{\large $2$}
        child {node {\large $3$}
         child {node {$4$}
            child {node {$5$}
            }
            child {node {$4$}
            }
            child {node {$3$}
            }
            child {node {$2$}
            }
         }
         child {node {$3$}
            child {node {$4$}
            }
            child {node {$3$}
            }
            child {node {$2$}
            }
         }
         child {node {$2$}
            child {node {$3$}
            }
            child {node {$2$}
            }
         }
       }
       child {node {\large $2$}
         child {node {$3$}
            child {node {$4$}
            }
            child {node {$3$}
            }
            child {node {$2$}
            }
       }
         child {node {$2$}
            child {node {$3$}
            }
            child {node {$2$}
            }
       }
    }
};
\end{scope}
\end{tikzpicture}
\caption{First five levels of the generating tree described by the succession rule $\Omega_{\text{Cat}}$.}
\label{ConciseGT10}
\end{figure}

It is known \cite{West1} that the succession rule $\Omega_{\text{Cat}}$ describes a tree in which the number of nodes at each level $n$ is the Catalan number $C_n = \frac{1}{n+1}\binom{2n}{n}$.
It is easy to see that this succession rule describes a concise generating tree: the label of each node counts its number of children, so if two nodes are roots of isomorphic subtrees, they must have the same label.

From now on, we allow ourselves to represent a combinatorial generating tree and an isomorphic concise generating tree on the same figure, by placing two labels on each node.

\subsection{The pair \{000, 100\}}
\begin{thm} \label{thm000+100}
The enumeration of inversion sequences avoiding the patterns 000 and 100 is given by the succession rule
$$\Omega_{\{000,100\}} = \begin{cases} (1,0) \\
(a,b) \leadsto \begin{aligned}[t] & (a+1, b-1)^b \\
& (a+1-j, b+j) \quad \text{for} \quad j \in [1,a]. \end{aligned}
\end{cases}$$
\end{thm}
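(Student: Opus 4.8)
The plan is to realize the stated succession rule directly on the generating tree growing on the right for $\I(\{000,100\})$, by exhibiting an explicit statistic $\sigma \mapsto (a(\sigma),b(\sigma))$ that is constant on each label and reproduces the production. First I would characterize the class: an inversion sequence avoids $000$ and $100$ exactly when (i) every value occurs at most twice, and (ii) whenever a value $v$ occurs twice, every entry preceding its first occurrence is $\leqslant v$. From this, appending a letter $i \in [0,|\sigma|]$ to a valid $\sigma$ keeps it valid precisely when either $i \notin \vals(\sigma)$, or $i$ occurs exactly once in $\sigma$ and no larger entry precedes it (i.e. $i$ is a weak left-to-right maximum). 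Call such an $i$ \emph{allowed}; equivalently, the forbidden values are those occurring twice, together with those occurring once but dominated by an earlier larger entry. This check is routine: the appended letter can only play the role of the last entry of a pattern, so it creates $000$ iff $i$ already occurs twice, and creates $100$ iff some larger value precedes an existing copy of $i$.

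Next I would set $a(\sigma) = |\sigma| - \max(\sigma)$ (with $\max(\varepsilon) = -1$) and let $b(\sigma)$ be the number of allowed values that are $\leqslant \max(\sigma)$. The reason for splitting at the maximum is that every value in $\{\max(\sigma)+1,\dots,|\sigma|\}$ lies outside $\vals(\sigma)$ and is therefore allowed, so these are exactly the allowed values exceeding the maximum, and there are $a(\sigma)$ of them; together with the $b(\sigma)$ allowed values $\leqslant \max(\sigma)$ they account for all $a(\sigma)+b(\sigma)$ children of $\sigma$. The root $\varepsilon$ has $\max(\varepsilon) = -1$, a single allowed value $0$ and none $\leqslant -1$, hence label $(1,0)$, which is the axiom.

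The core of the argument is to compute each child's label from $(a,b)$ alone, using the structural fact that appending a letter at the very end changes neither the multiplicity nor the "no larger entry before it" status of any earlier value, so only the appended value's own status can change. For an append $i \leqslant \max(\sigma)$ (there are $b$ of these): the maximum is unchanged, so $a \mapsto a+1$; meanwhile $i$ itself flips from allowed to forbidden — it now either occurs twice, or occurs once but is preceded by the still-present maximum $> i$ — while every other value $\leqslant \max(\sigma)$ keeps its status, so $b \mapsto b-1$, giving the $b$ children labelled $(a+1,b-1)$. For an append $i = \max(\sigma)+j$ with $j \in [1,a]$: the new maximum is $\max(\sigma)+j$, so $a \mapsto (|\sigma|+1)-(\max(\sigma)+j) = a+1-j$; the $b$ allowed values below the old maximum are unaffected, while each of the $j$ values in $(\max(\sigma),\max(\sigma)+j]$ is now allowed (the new maximum because it occurs once and dominates everything, the others because they are new values), so $b \mapsto b+j$, giving the children $(a+1-j,\,b+j)$.

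I expect the main obstacle to be verifying that the resulting multiset of children labels depends only on $(a,b)$ and not on finer data of $\sigma$, which reduces to the two counting claims about how $b$ changes. The delicate points are that a newly inserted small value $i < \max(\sigma)$ becomes forbidden because the maximum now sits before it, whereas a repeated value becomes forbidden because it now occurs twice — both must decrease the count of allowed values $\leqslant \max(\sigma)$ by exactly one — and, symmetrically, that inserting a value above the maximum turns the entire block $(\max(\sigma),\max(\sigma)+j]$ into allowed values while leaving the lower allowed values intact. Once these two statements are established, the statistic $(a,b)$ reproduces $\Omega_{\{000,100\}}$ exactly; since it is defined on the generating tree growing on the right for $\I(\{000,100\})$, the level sizes of the rule count $|\I_n(\{000,100\})|$.
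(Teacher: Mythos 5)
Your proposal is correct and takes essentially the same approach as the paper: your pair $(a(\sigma),b(\sigma))$ coincides with the paper's $(|A(\sigma)|,|B(\sigma)|)$, the cardinalities of the sets of values appendable at the end strictly above, respectively weakly below, $\max(\sigma)$, and your transition analysis (appending an allowed $i\leqslant\max(\sigma)$ flips only $i$ itself to forbidden, giving $(a+1,b-1)$; appending $\max(\sigma)+j$ frees exactly the $j$ values in $(\max(\sigma),\max(\sigma)+j]$, giving $(a+1-j,b+j)$) reproduces the paper's proof step for step. Your explicit characterization of $\{000,100\}$-avoidance and of allowed values is a mild elaboration of what the paper states in one line, not a different method.
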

\begin{proof}
    For all $n \in \mathbb N$ and $\sigma \in \I_n(000,100)$, let
    \begin{itemize}
        \item $A(\sigma) = \{i > \max(\sigma) \; : \; \sigma \cdot i \in \I(000,100)\}$ be the set of values which can be inserted at the end of $\sigma$ ``strictly above" its maximum,
        \item $B(\sigma) = \{i \leqslant \max(\sigma) \; : \; \sigma \cdot i \in \I(000,100)\}$ be the set of values which can be inserted at the end of $\sigma$ ``weakly below" its maximum.
    \end{itemize}
    
    Inserting any value greater than $\max(\sigma)$ at the end of $\sigma$ cannot create an occurrence of the patterns 000 or 100. As a result, $A(\sigma) = [\max(\sigma)+1, n]$. Let $a(\sigma) = |A(\sigma)|$, and $b(\sigma) = |B(\sigma)|$. In particular, $(a(\varepsilon), b(\varepsilon)) = (1,0)$.
    
    We show that the combinatorial generating tree growing on the right for $\I(000,100)$ is isomorphic to the tree described by $\Omega_{\{000,100\}}$. This isomorphism relabels each node $\sigma$ by $(a(\sigma), b(\sigma))$.
    
    Let $n \in \mathbb N$, $\sigma \in \I_n(000,100)$, $i \in A(\sigma) \sqcup B(\sigma)$, and $\sigma' = \sigma \cdot i$.
    \begin{itemize}
        \item If $i \in B(\sigma)$, then $A(\sigma') = A(\sigma) \sqcup \{n+1\}$ and $B(\sigma') = B(\sigma) \backslash \{i\}$, since if $i < \max(\sigma)$, then $i$ becomes forbidden to avoid 100, and if $i = \max(\sigma)$, then $i$ becomes forbidden to avoid 000. In this case, $\sigma'$ satisfies $(a(\sigma'), b(\sigma')) = (a(\sigma)+1, b(\sigma)-1)$.
        \item If $i \in A(\sigma)$, then no additional values are forbidden, and the values $[\max(\sigma)+1, i]$ are now less than or equal to $\max(\sigma')$. Let $j = i - \max(\sigma)$. In this case, $\sigma'$ satisfies $(a(\sigma'), b(\sigma')) = (a(\sigma)+1-j, b(\sigma)+j)$. As $i$ ranges over $A(\sigma)$, $j$ varies from $1$ to $a(\sigma)$. \qedhere
    \end{itemize}
\end{proof}

\subsection{The pair \{102, 201\}} \label{102+201}
In this section, we present a generating tree construction to enumerate $\I(102,201)$. We later use this construction to find the generating function of $\I(102,201)$ in Theorem \ref{thmGF102+201}.

A core idea in our construction of inversion sequences avoiding 102 and 201 is that we may restrict the ``future" of combinatorial objects in a generating tree construction; we discuss this further in Remark \ref{remPhantom}. This idea comes from Pantone, who calls this a ``commitment", see \cite{Pantone201+210}.

We begin by establishing some general properties of integer sequences avoiding 102 and 201. For any $\sigma \in \mathbb N^n$ such that $\dist(\sigma) \geqslant 2$, let $\sec(\sigma) = \max(\vals(\sigma) \backslash \{\max(\sigma)\})$ be the second largest value of $\sigma$.

\begin{prop} \label{prop102+201}
    Let $\sigma$ be an integer sequence of size $n$ such that $\dist(\sigma) \geqslant 2$, let ${m = \max(\sigma)}$ and $s = \sec(\sigma)$. Then $\sigma$ avoids 102 and 201 if and only if all three following conditions are satisfied:
    \begin{enumerate}
        \item $(\sigma_i)_{i \in [1, \firstmax(\sigma)]}$ is nondecreasing,
        \item $(\sigma_i)_{i \in [\lastmax(\sigma),n]}$ is nonincreasing,
        \item $\forall i \in [\firstmax(\sigma),\lastmax(\sigma)], \, \sigma_i \in \{m,s\}$.
    \end{enumerate}
\end{prop}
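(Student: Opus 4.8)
The plan is to prove both directions of the equivalence, treating the three conditions as a structural description of where the maximum value $m$ and second-largest value $s$ must sit. I would first establish the forward direction (avoidance implies the three conditions) by contraposition, showing that violating any one condition forces an occurrence of 102 or 201. For condition 1, if $(\sigma_i)_{i \in [1,\firstmax(\sigma)]}$ has a descent at positions $i < i'$ with $\sigma_i > \sigma_{i'}$, then since everything before $\firstmax(\sigma)$ is strictly below $m$ and there is a maximum $m$ at position $\firstmax(\sigma) \geqslant i'$, the triple $(\sigma_i, \sigma_{i'}, m)$ with $\sigma_{i'} < \sigma_i < m$ forms a 201 pattern. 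Symmetrically, a violation of condition 2 (an ascent after $\lastmax(\sigma)$) together with the maximum at $\lastmax(\sigma)$ yields a 102 pattern. For condition 3, if some $\sigma_i \notin \{m,s\}$ with $\firstmax(\sigma) \leqslant i \leqslant \lastmax(\sigma)$, then $\sigma_i < s < m$ (as $\sigma_i$ is neither the max nor the second max, and is strictly between two occurrences of $m$), so taking an occurrence of $m$ before position $i$ and an occurrence of $s$ located suitably, I obtain either a 102 or a 201; the precise pattern depends on whether the chosen occurrence of $s$ sits to the left or right of position $i$, and I would need to argue that at least one valid choice always exists.

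For the converse, I would assume the three conditions and show no occurrence of 102 or 201 can exist. The key observation is that these two patterns both have their largest entry in the middle position (the "$2$"), so any occurrence $\sigma_a, \sigma_b, \sigma_c$ with $a < b < c$ of either pattern requires $\sigma_b > \sigma_a$ and $\sigma_b > \sigma_c$, i.e. a strict peak with $\sigma_b$ exceeding both its chosen neighbors, where additionally $\sigma_a \neq \sigma_c$. I would locate the middle position $b$ relative to $\firstmax(\sigma)$ and $\lastmax(\sigma)$ and derive a contradiction in each case. If $b \leqslant \firstmax(\sigma)$, then $\sigma_a \leqslant \sigma_b$ by condition 1 (nondecreasing up to the first maximum), forbidding the strict inequality $\sigma_a < \sigma_b$ needed unless $a$ lies in the same nondecreasing run; a short argument handles boundary cases. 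If $b \geqslant \lastmax(\sigma)$, condition 2 gives $\sigma_b \geqslant \sigma_c$ with the analogous obstruction. The remaining case $\firstmax(\sigma) < b < \lastmax(\sigma)$ uses condition 3: then $\sigma_b \in \{m,s\}$, and I would show a strict peak at such a $b$ with $\sigma_a \neq \sigma_c$ is impossible, because the values immediately surrounding the central plateau of $m$'s and $s$'s are too constrained.

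The main obstacle I anticipate is the bookkeeping in condition 3, both directions. In the forward direction, when $\sigma_i \notin \{m,s\}$ for $i$ strictly between the first and last maximum, I must produce a genuine occurrence, and I need to be careful about which of 102 or 201 arises — this depends on the relative positions of the witnessing occurrences of $m$ and on whether an occurrence of $s$ can be placed to exhibit the correct relative order; I would argue using the existence of maxima on both sides of $i$ (by definition of $\firstmax$ and $\lastmax$) so that a pattern of one of the two forbidden shapes is always realizable. In the converse, the subtle point is ruling out peaks whose central value is $m$ or $s$ but whose flanking entries still form 102 or 201; here I expect to invoke conditions 1 and 2 to control the values on either side of the central region and to use the definition of $s$ as the second-largest value to force $\sigma_a = \sigma_c$ (hence no valid pattern, since 102 and 201 both require distinct outer entries). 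Once the casework on the position of the central index $b$ is organized cleanly, the inequalities themselves are routine.
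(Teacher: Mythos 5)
Your forward direction is structurally sound but mislabels the patterns: a descent $\sigma_i > \sigma_{i'}$ with $i < i' < \firstmax(\sigma)$, followed by the maximum $m$, gives the triple $(\sigma_i, \sigma_{i'}, m)$ with $\sigma_{i'} < \sigma_i < m$, which is an occurrence of $102$ (middle, smallest, largest), not $201$; symmetrically, an ascent after $\lastmax(\sigma)$ preceded by $m$ gives $201$, not $102$. Since both patterns are forbidden, the contrapositive survives the relabelling, and your condition-3 sketch is fine as stated: $s$ occurs somewhere because $\dist(\sigma) \geqslant 2$, the maximum occurs strictly on both sides of $i$ because $\sigma_i < m$, and whichever side the occurrence of $s$ lies on, one of the two forbidden patterns results --- exactly the paper's argument.

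The converse, however, rests on a genuinely false premise. You assert that $102$ and $201$ ``both have their largest entry in the middle position,'' so that an occurrence is a strict peak $\sigma_a < \sigma_b > \sigma_c$. This is wrong: in $102$, read as the triple $(1,0,2)$, the largest entry is \emph{last}; in $201 = (2,0,1)$ it is \emph{first}; in both patterns the \emph{middle} entry is the \emph{smallest}, so an occurrence is a strict valley with distinct outer entries, not a peak. (You have in mind the shapes of $120$ and $021$.) Your case analysis inherits the error and would fail even on its own terms: for $b \leqslant \firstmax(\sigma)$, condition 1 gives $\sigma_a \leqslant \sigma_b$, which does not ``forbid the strict inequality $\sigma_a < \sigma_b$'' --- nondecreasing prefixes are full of strict ascents; what condition 1 forbids is a \emph{descent}, and that is the correct lever. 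The paper's converse runs: if $(\sigma_i,\sigma_j,\sigma_k)$ is an occurrence of $102$, the descent $\sigma_i > \sigma_j$ together with condition 1 forces $j > \firstmax(\sigma)$, the ascent $\sigma_j < \sigma_k$ together with condition 2 forces $j < \lastmax(\sigma)$, and condition 3 then gives $\sigma_j \in \{m,s\}$; since $\sigma_i > \sigma_j$ this forces $\sigma_j = s$ and $\sigma_i = m$, whence $\sigma_k > m$, a contradiction, with $201$ handled by reverse symmetry. Your closing idea of using $s$ to ``force $\sigma_a = \sigma_c$'' plays no role in any correct version of the argument. As written, the converse direction does not constitute a proof and needs to be rebuilt around the valley shape of the two patterns.
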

\begin{proof}
    First, we show that each condition is necessary:
    \begin{enumerate}
        \item If $i < j \in [1,\firstmax(\sigma)]$ satisfy $\sigma_i > \sigma_j$, then $\sigma_j < m$. In particular, we have $j \neq \firstmax(\sigma)$ hence $(\sigma_i, \sigma_j, m)$ is an occurrence of 102.
        \item If $i < j \in [\lastmax(\sigma), n]$ satisfy $\sigma_i < \sigma_j$, then $\sigma_i < m$. In particular, we have $i \neq \lastmax(\sigma)$ hence $(m, \sigma_i, \sigma_j)$ is an occurrence of 201.
        \item If $i \in [\firstmax(\sigma),\lastmax(\sigma)]$ satisfies $\sigma_i < s$, let $j$ be an integer such that $\sigma_j = s$. If $j < i$, then $(\sigma_j, \sigma_i, m)$ is an occurrence of 102. If $i < j$, then $(m, \sigma_i, \sigma_j)$ is an occurrence of 201.
    \end{enumerate}
    
    Assume, for the sake of contradiction, that $\sigma$ satisfies all three conditions, and that $(\sigma_i, \sigma_j, \sigma_k)$ is an occurrence of 102 for some $i < j < k \in [1,n]$. Since $\sigma_i > \sigma_j$, condition 1 implies that $j > \firstmax(\sigma)$. Since $\sigma_j < \sigma_k$, condition 2 implies that $j < \lastmax(\sigma)$. Hence by condition 3, $\sigma_j \geqslant s$, which implies that $(\sigma_i, \sigma_j) = (m,s)$ because $\sigma_i > \sigma_j$. Finally $\sigma_k > \sigma_i = m$, a contradiction. The same reasoning holds for the pattern 201, by the reverse symmetry.
\end{proof} \noindent
Proposition \ref{prop102+201} has two immediate corollaries.
\begin{corol} \label{corol102+201}
     Let $\sigma$ be a \{102, 201\}-avoiding integer sequence such that $m = \max(\sigma)$, and $s = \sec(\sigma)$. Any occurrence of the pattern 101 in $\sigma$ is a subsequence $(m,s,m)$.
\end{corol}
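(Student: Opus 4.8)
The plan is to apply Proposition \ref{prop102+201} directly. A 101-occurrence uses two distinct values, so we automatically have $\dist(\sigma) \geqslant 2$ and the three structural conditions of the proposition are available. Write $m = \max(\sigma)$ and $s = \sec(\sigma)$, and let $(\sigma_i, \sigma_j, \sigma_k)$ with $i < j < k$ be an arbitrary occurrence of 101, so that $\sigma_i = \sigma_k > \sigma_j$. The goal is to force $\sigma_i = \sigma_k = m$ and $\sigma_j = s$, after which the occurrence is exactly $(m,s,m)$.

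First I would locate the middle index $j$ relative to $\firstmax(\sigma)$ and $\lastmax(\sigma)$. If $j \leqslant \firstmax(\sigma)$, then $i < j$ places both indices in $[1, \firstmax(\sigma)]$, so condition 1 would give $\sigma_i \leqslant \sigma_j$, contradicting $\sigma_i > \sigma_j$; hence $j > \firstmax(\sigma)$. Symmetrically, if $j \geqslant \lastmax(\sigma)$, then $j < k$ places both indices in $[\lastmax(\sigma), n]$, so condition 2 would give $\sigma_j \geqslant \sigma_k$, contradicting $\sigma_j < \sigma_k$; hence $j < \lastmax(\sigma)$. In particular $j \in [\firstmax(\sigma), \lastmax(\sigma)]$.

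With $j$ in this central interval, condition 3 gives $\sigma_j \in \{m, s\}$. Since $\sigma_j < \sigma_i \leqslant m$, the value $m$ is excluded, leaving $\sigma_j = s$. Finally, $\sigma_i = \sigma_k > \sigma_j = s$, and by definition $s = \sec(\sigma)$ is the largest value of $\sigma$ strictly below $m$, so any value exceeding $s$ must equal $m$; hence $\sigma_i = \sigma_k = m$ and the occurrence is $(m,s,m)$.

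The main subtlety will be pinning down $j$ \emph{strictly} between the first and last maxima rather than only weakly, since it is precisely this that lets condition 3 apply to $\sigma_j$; once that is secured, the conclusion follows immediately from the definition of $\sec$. Because the hypotheses and the pattern 101 are invariant under the reverse symmetry exploited throughout Proposition \ref{prop102+201}, I expect no further obstacle beyond this careful positioning of the middle index.
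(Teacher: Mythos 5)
Your proof is correct and follows exactly the route the paper intends: the paper presents this corollary as an immediate consequence of Proposition \ref{prop102+201}, and your argument is precisely that derivation spelled out, with the key step of locating the middle index strictly between $\firstmax(\sigma)$ and $\lastmax(\sigma)$ so that condition 3 forces $\sigma_j = s$, after which $\sigma_i = \sigma_k = m$ follows from the definition of $\sec(\sigma)$.
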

\begin{corol} \label{corolunimod}
    An integer sequence avoids \{101, 102, 201\} if and only if it is unimodal.
\end{corol}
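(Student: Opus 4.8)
The plan is to prove the two implications separately. Recall that $\sigma$ is \emph{unimodal} if there is a position $p$ with $\sigma_1 \leqslant \cdots \leqslant \sigma_p \geqslant \cdots \geqslant \sigma_n$. For the forward implication, suppose $\sigma$ avoids $\{101,102,201\}$. If $\dist(\sigma) \leqslant 1$ then $\sigma$ is empty or constant, hence unimodal, so assume $\dist(\sigma) \geqslant 2$ and set $m = \max(\sigma)$, $s = \sec(\sigma)$. Since $\sigma$ avoids $102$ and $201$, Proposition \ref{prop102+201} applies: $\sigma$ is nondecreasing on $[1,\firstmax(\sigma)]$, nonincreasing on $[\lastmax(\sigma),n]$, and every entry on $[\firstmax(\sigma),\lastmax(\sigma)]$ equals $m$ or $s$. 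By Corollary \ref{corol102+201}, any occurrence of $101$ would be a subsequence $(m,s,m)$; since the endpoints $\firstmax(\sigma)$ and $\lastmax(\sigma)$ both carry the value $m$, an entry equal to $s$ strictly between them would create one. As $\sigma$ avoids $101$, the block $[\firstmax(\sigma),\lastmax(\sigma)]$ is therefore constant equal to $m$, and splicing the three monotone pieces shows that $\sigma$ is unimodal.

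For the converse, the key observation is that $101$, $102$ and $201$ are precisely the three length-$3$ patterns whose middle letter is the strict minimum. Hence an occurrence of any one of them at positions $i<j<k$ is exactly a \emph{valley} $\sigma_i > \sigma_j < \sigma_k$ (which of the three it is depends only on comparing $\sigma_i$ with $\sigma_k$), and conversely every valley is such an occurrence. So it suffices to check that a unimodal sequence contains no valley. Fixing $p$ as above, a valley $\sigma_i > \sigma_j < \sigma_k$ would force $j > p$ (otherwise $i < j \leqslant p$ would give $\sigma_i \leqslant \sigma_j$), after which $j < k$ both lying in the nonincreasing part yields $\sigma_k \leqslant \sigma_j$, contradicting $\sigma_j < \sigma_k$. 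Thus no valley exists, so $\sigma$ avoids $\{101,102,201\}$.

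I expect the only delicate points to lie in the forward direction: one must treat the degenerate cases $\dist(\sigma) \leqslant 1$ and $\firstmax(\sigma) = \lastmax(\sigma)$ separately, and verify that the three monotone factors agree at the junctions $\firstmax(\sigma)$ and $\lastmax(\sigma)$ so that they glue into a genuinely unimodal sequence rather than merely three locally monotone blocks. The valley reformulation used for the converse is immediate once one notes the middle-minimum characterization of the three patterns, so that step should pose no real difficulty.
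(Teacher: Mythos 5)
Your proof is correct and follows essentially the route the paper intends: the corollary is stated there without a written proof, as an immediate consequence of Proposition~\ref{prop102+201}, and your forward direction is exactly that argument --- avoidance of 101 forces the middle $\{m,s\}$-block of Proposition~\ref{prop102+201} to be constantly $m$, since an entry $s$ strictly between the first and last maximum would produce the subsequence $(m,s,m)$, after which the three monotone pieces glue at the value $m$. Your converse, packaged via the observation that 101, 102 and 201 are precisely the length-3 patterns with a strict middle minimum (so that an occurrence of any of them is exactly a valley, which a unimodal sequence cannot contain), is a clean and correct way of carrying out the routine verification the paper leaves implicit.
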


We first study inversion sequences avoiding \{101, 102, 201\}, then those which avoid \{102, 201\} and contain the pattern 101.
The enumeration of inversion sequences avoiding 101, 102 and 201 was already solved and their generating function is given in \cite{Callan_Mansour}\footnote{This article employs the same generating tree construction (that is, inserting each entry of a sequence from left to right). However, it does not present an explicit description of the labels in terms of statistics of inversion sequences, since the algorithmic approach from \cite{Kotsireas_Mansour_Yildirim_2024} was used to derive the succession rule.}. We nevertheless describe a generating tree for this class, for the sake of completeness, and in order to better introduce the more difficult case of sequences containing 101.

Let $\mathfrak A_{n,m} = \{\sigma \in \I_n(10) \; : \; m = \max(\sigma)\}$ be the set of nondecreasing inversion sequences of size $n$ and maximum $m$ for $n \geqslant 1$, and let $\mathfrak A_{0,0} = \{\varepsilon\}$ by convention. Let $\mathfrak C^{(1)}_{n,\ell} = \{\sigma \in \I_n(101,102,201) \; : \; \ell = \sigma_n < \max(\sigma) \}$ be the set of $\{101,102,201\}$-avoiding inversion sequences of size $n$, last value $\ell$, and such that $\ell$ is not the maximum.  Let $\mathfrak A = \coprod_{n,m \geqslant 0} \mathfrak A_{n,m}$, and $\mathfrak C^{(1)} = \coprod_{n > \ell \geqslant 0} \mathfrak C^{(1)}_{n,\ell}$. From Corollary \ref{corolunimod}, $\mathfrak C^{(1)}$ is the set of unimodal inversion sequences which decrease after reaching their maximum, and we have $\I(101,102,201) = \mathfrak A \sqcup \mathfrak C^{(1)}$. Let $\mathfrak a_{n,m} = |\mathfrak A_{n,m}|$, and $\mathfrak c^{(1)}_{n,\ell} = |\mathfrak C^{(1)}_{n,\ell}|$.

\begin{lem} \label{lemunimod}
For all $n > m \geqslant 0$,
$$\mathfrak a_{n,m} = \frac{\binom{n+m-1}{m} (n-m)}{n}.$$
For all $n \geqslant 3, \ell \geqslant 0$,
$$\mathfrak c^{(1)}_{n,\ell} = \sum_{i = \ell}^{n-2} \mathfrak c^{(1)}_{n-1,i} + \sum_{i = \ell+1}^{n-2}  \mathfrak a_{n-1,i}.$$
\end{lem}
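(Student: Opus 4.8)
The two formulas are independent, so I would treat them separately.

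For the closed form of $\mathfrak a_{n,m}$, the plan is to first extract a recurrence from the succession rule $\Omega_{10}$ governing nondecreasing inversion sequences. A node of $\mathfrak A_{n-1,i}$ (a nondecreasing sequence of size $n-1$ with maximum $i$, hence with last entry $i$) produces, by appending a value in $[i, n-1]$, exactly one child in each $\mathfrak A_{n,j}$ for $j \in [i, n-1]$. Reading this in reverse, every $\sigma \in \mathfrak A_{n,m}$ arises from a unique parent in $\mathfrak A_{n-1,i}$ with $i \leqslant m$, which gives $\mathfrak a_{n,m} = \sum_{i=0}^{m} \mathfrak a_{n-1,i}$ for all $0 \leqslant m \leqslant n-1$ (the would-be term $i = n-1$ vanishes since $\mathfrak a_{n-1,n-1} = 0$). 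I would then prove the closed form by induction on $n$: it suffices to check that the candidate $a_{n,m} = \frac{n-m}{n}\binom{n+m-1}{m}$ satisfies the Pascal-type relation $a_{n,m} = a_{n,m-1} + a_{n-1,m}$ (a routine binomial manipulation, with the convention $a_{n,-1} = 0$), from which $\sum_{i=0}^m a_{n-1,i} = a_{n,m}$ follows by telescoping.

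For the recurrence on $\mathfrak c^{(1)}_{n,\ell}$, I would argue bijectively by removing the last entry. By Corollary \ref{corolunimod} and the decomposition $\I(101,102,201) = \mathfrak A \sqcup \mathfrak C^{(1)}$, a sequence $\sigma$ of size $n$ lies in $\mathfrak C^{(1)}_{n,\ell}$ if and only if it is unimodal with $\sigma_n = \ell < \max(\sigma)$. Deleting $\sigma_n$ leaves a unimodal prefix $\sigma' = (\sigma_1,\dots,\sigma_{n-1})$, and since $\ell < \max(\sigma) = \max(\sigma')$, reattaching $\ell$ preserves unimodality exactly when $\sigma'_{n-1} \geqslant \ell$. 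Splitting according to whether $\sigma'$ is nondecreasing yields the two sums. If $\sigma' \in \mathfrak A_{n-1,i}$ then $\sigma'_{n-1} = i$, and the constraints $i \geqslant \ell$ together with $\ell < \max(\sigma) = i$ force $i \in [\ell+1, n-2]$, contributing $\sum_{i=\ell+1}^{n-2} \mathfrak a_{n-1,i}$; if instead $\sigma' \in \mathfrak C^{(1)}_{n-1,i}$ then $\sigma'_{n-1} = i < \max(\sigma')$, so $\ell < \max(\sigma')$ is automatic and the sole constraint $i \geqslant \ell$ contributes $\sum_{i=\ell}^{n-2}\mathfrak c^{(1)}_{n-1,i}$ (the term $i = n-2$ being $0$, so the upper limit is immaterial). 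Conversely, each such prefix with a legal choice of $\ell$ reconstructs a valid element of $\mathfrak C^{(1)}_{n,\ell}$, giving the stated equality.

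The main obstacle I anticipate is bookkeeping the summation ranges precisely. In both parts one must correctly identify which boundary terms vanish, namely $\mathfrak a_{n-1,n-1} = 0$ in the first part and $\mathfrak c^{(1)}_{n-1,n-2} = 0$ in the second, so that the inductive telescoping and the stated upper limits line up; and one must confirm that the hypothesis $n \geqslant 3$ is exactly what guarantees the prefix $\sigma'$ has size $n-1 \geqslant 2$, so that the relevant classes can be nonempty. The identity $a_{n,m} = a_{n,m-1} + a_{n-1,m}$ is the only genuinely computational step, and it is standard for these ballot numbers.
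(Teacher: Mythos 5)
Your proposal is correct and takes essentially the same approach as the paper: the same remove-the-last-entry bijection yielding $\mathfrak a_{n,m} = \sum_{i=0}^m \mathfrak a_{n-1,i}$, and the same case split on whether the prefix $\sigma'$ is nondecreasing (giving the $\mathfrak a_{n-1,i}$ terms with $i > \ell$) or already decreasing (giving the $\mathfrak c^{(1)}_{n-1,i}$ terms with $i \geqslant \ell$). Your explicit inductive check of the Pascal-type relation for the Catalan-triangle closed form merely fills in a step the paper dismisses as easy, and your boundary bookkeeping ($\mathfrak a_{n-1,n-1} = 0$, $\mathfrak c^{(1)}_{n-1,n-2} = 0$) is accurate.
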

\begin{proof}
    Let $n \geqslant 1$, $m \in [0,n-1]$, and $\sigma \in \mathfrak A_{n,m}$. By definition of $\mathfrak A_{n,m}$, we have $\sigma_n = m$. Removing the last entry $\sigma_n$ from $\sigma$ yields a sequence in $\mathfrak A_{n-1,i}$ for some $i \in [0,m]$. Conversely, $\sigma$ can be obtained by appending the value $m$ to some sequence in $\mathfrak A_{n-1,i}$. Hence, there is a bijection between the sets $\mathfrak A_{n,m}$ and $\coprod_{i = 0}^m \mathfrak A_{n-1,i}$. This yields the equation $\mathfrak a_{n,m} = \sum_{i = 0}^m \mathfrak a_{n-1,i}$,
    from which we can easily prove that $\mathfrak a_{n,m} = \frac{\binom{n+m-1}{m} (n-m)}{n}$ for all $n > m \geqslant 0$. This corresponds to Catalan's Triangle (OEIS A009766).

    Let $n \geqslant 3, \ell \geqslant 0$, $\sigma \in \mathfrak C^{(1)}_{n,\ell}$, and $i = \sigma_{n-1} \geqslant \ell$. Let $\sigma' = (\sigma_j)_{j \in [1,n-1]}$ be the sequence obtained by removing the last entry from $\sigma$. If $\sigma'$ is nondecreasing, then $i = \max(\sigma) > \ell$, and $\sigma' \in \mathfrak A_{n-1,i}$. Otherwise, $\sigma' \in \mathfrak C^{(1)}_{n,\ell}$. As in the previous case, this is a bijection between $\mathfrak C^{(1)}_{n, \ell}$ and $\left (\coprod_{i = \ell}^{n-2} \mathfrak C^{(1)}_{n-1,i} \right ) \sqcup \left (\coprod_{i = \ell+1}^{n-2} \mathfrak A_{n-1,i} \right )$.
\end{proof}
\begin{rem}
    The bijections from the proof of Lemma \ref{lemunimod} can be turned into  a succession rule, by labelling $(n,m)$ each sequence in $\mathfrak A_{n,m}$, and labelling $(\ell)$ each sequence in $\mathfrak C^{(1)}_{n,\ell}$:
    $$\Omega_{\{101, 102, 201\}} = \left \{ \begin{array}{rclll}
    (0,0) \\
    (n,m) & \leadsto & (n+1,i) & \text{for} & i \in [m,n] \\
    && (i) & \text{for} & i \in [0,m-1] \vspace{5pt}\\
    (\ell) & \leadsto & (i) & \text{for} & i \in [0,\ell].
    \end{array} \right.$$
\end{rem}
It remains to count inversion sequences in $\I(102, 201)$ which contain 101.
By Proposition \ref{prop102+201} and Corollary \ref{corol102+201}, an inversion sequence $\sigma$ avoids \{102, 201\} and contains 101 if and only if it can be split into three factors $\alpha \cdot \beta \cdot \gamma = \sigma$ such that:
\begin{enumerate}
    \item $\alpha$ is a nondecreasing inversion sequence such that $\max(\alpha) < \max(\sigma)$,
    \item $\beta$ is a word over the alphabet $\{\max(\sigma), \sec(\sigma)\}$ which contains 101, and such that $\beta_1 = \max(\sigma)$,
    \item $\gamma$ is a (possibly empty) nonincreasing word over the alphabet $[0,\sec(\sigma)-1]$.
\end{enumerate}
We distinguish several sets of sequences according to how much of the first occurrence of the pattern 101 has already appeared.
Let
\begin{itemize}
    \item $\mathfrak B^{(1)}_{n,s} = \{\sigma \in \I_n(10) \; : \; s \in [\sec(\sigma), \max(\sigma)-1]\}$, which can be seen as the sequences in which only the first 1 in a pattern 101 has yet appeared.
    \item $\mathfrak B^{(2)}_{n,s} = \{\sigma \cdot (s)^i \; : \; i \in [1,n-2], \, \sigma \in \mathfrak B^{(1)}_{n-i,s} \}$, which can be seen as the sequences in which only the first 10 in a pattern 101 has yet appeared.
    \item $\mathfrak B^{(3)}_{n,s} = \{\sigma \in \I_n(102,201) \; : \; s = \sec(\sigma) \leqslant \sigma_n, \, \sigma \text{\; contains 101} \}$ be the set of $\{102, 201\}$-avoiding inversion sequences of size $n$ and second largest value $s$ which contain 101, and for which the factor denoted $\gamma$ earlier is empty.
    \item $\mathfrak C^{(2)}_{n,\ell} = \{\sigma \in \I_n(102,201\} \; : \; \ell = \sigma_n < \sec(\sigma), \, \sigma \text{\; contains 101}\}$ be the set of $\{102, 201)$-avoiding inversion sequences of size $n$ and last value $\ell$ which contain 101, and for which the factor denoted $\gamma$ earlier is nonempty.
\end{itemize}
Let $\mathfrak B^{(2)} = \coprod_{n,s \geqslant 0} \mathfrak B^{(2)}_{n,s}$, $\mathfrak B^{(3)} = \coprod_{n,s \geqslant 0} \mathfrak B^{(3)}_{n,s}$, and $\mathfrak C^{(2)} = \coprod_{n,\ell \geqslant 0} \mathfrak C^{(2)}_{n,\ell}$.
In particular, $\{\sigma \in \I(102,201) \; : \; \sigma \; \text{contains} \; 101\} = \mathfrak B^{(3)} \sqcup \mathfrak C^{(2)}$.
\begin{rem} \label{remintersect}
The subsets of $\I(102,201)$ we have defined are not disjoint. More precisely, the following holds.
\begin{enumerate}
    \item The sets $\mathfrak B^{(1)}_{n,s}$ intersect for different values of $s$. For instance, the sequence $(0,0,2)$ belongs to both $\mathfrak B^{(1)}_{3,0}$ and $\mathfrak B^{(1)}_{3,1}$.
    \item Each sequence in a set $\mathfrak B^{(1)}_{n,s}$ also belongs to a set $\mathfrak A_{n,m}$ for some $m > s$.
    \item Each set $\mathfrak B^{(2)}_{n,s}$ is a subset of $\mathfrak C^{(1)}_{n,s}$.
\end{enumerate}
\end{rem}
Let $\mathfrak b^{(i)}_{n,s} = |\mathfrak B^{(i)}_{n,s}|$ for $i \in \{1,2,3\}$, and $\mathfrak c^{(2)}_{n,s} = |\mathfrak C^{(2)}_{n,s}|$.

\begin{lem} \label{lem101}
For all $0 \leqslant s \leqslant n-2$,
$$\mathfrak b^{(1)}_{n,s} = \mathfrak b^{(1)}_{n-1,s} + (n-1-s) \mathfrak a_{n,s}.$$
For all $0 \leqslant s \leqslant n-3$,
$$\mathfrak b^{(2)}_{n,s} = \mathfrak b^{(2)}_{n-1,s} + \mathfrak b^{(1)}_{n-1,s}.$$
For all $0 \leqslant s \leqslant n-4$,
$$\mathfrak b^{(3)}_{n,s} = 2 \mathfrak b^{(3)}_{n-1,s} + \mathfrak b^{(2)}_{n-1,s}.$$
For all $0 \leqslant \ell \leqslant n-6$,
$$\mathfrak c^{(2)}_{n,\ell} = \sum_{i = \ell}^{n-5} \mathfrak c^{(2)}_{n-1,i} + \sum_{i = \ell+1}^{n-5}  \mathfrak b^{(3)}_{n-1,i}.$$
\end{lem}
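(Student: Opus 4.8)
The plan is to establish each of the four recurrences in Lemma \ref{lem101} by the same bijective strategy used for $\mathfrak a_{n,m}$ and $\mathfrak c^{(1)}_{n,\ell}$ in Lemma \ref{lemunimod}: remove the last entry of a sequence in the relevant set and classify the resulting shorter sequence. In every case the key is to track how the statistics $\sec(\sigma)$, $\max(\sigma)$, and $\sigma_n$ evolve when we append one more entry, using Proposition \ref{prop102+201} and Corollary \ref{corol102+201} to control which appended values keep the sequence in $\I(102,201)$ and which push it from one phase of the "101 construction" ($\alpha,\beta,\gamma$) to the next.

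For the first recurrence I would fix $\sigma \in \mathfrak B^{(1)}_{n,s}$, i.e. a \emph{nondecreasing} inversion sequence with $s \in [\sec(\sigma), \max(\sigma)-1]$, and remove $\sigma_n$. Since $\sigma$ is nondecreasing, $\sigma_n = \max(\sigma)$. Two cases arise depending on whether $\sigma_n$ is a repeated maximum or a strict one. If $\max(\sigma)$ occurs more than once, deleting $\sigma_n$ leaves a nondecreasing sequence of size $n-1$ with the same maximum, still satisfying $s \in [\sec, \max-1]$, contributing the term $\mathfrak b^{(1)}_{n-1,s}$. If $\max(\sigma)$ occurs exactly once, deleting it gives a nondecreasing sequence whose new maximum is $\sec(\sigma)$; the constraint $s \in [\sec(\sigma),\max(\sigma)-1]$ together with $\sigma_n \le n-1$ forces the count of admissible values of the deleted entry to be $n-1-s$, and the shorter sequence lies in $\mathfrak A_{n-1,s}$ after renaming, giving $(n-1-s)\,\mathfrak a_{n,s}$ once the index bookkeeping is checked. (The appearance of $\mathfrak a_{n,s}$ rather than $\mathfrak a_{n-1,s}$ is a subtlety I would verify carefully via Catalan's-triangle identities.) The second recurrence is the cleanest: by the definition $\mathfrak B^{(2)}_{n,s} = \{\sigma \cdot (s)^i\}$, a sequence ends in a block of copies of $s=\sec(\sigma)$ following a $\mathfrak B^{(1)}$-sequence, so removing $\sigma_n = s$ either shortens that block (staying in $\mathfrak B^{(2)}_{n-1,s}$, when $i \ge 2$) or exposes the underlying $\mathfrak B^{(1)}_{n-1,s}$ sequence (when $i=1$), yielding $\mathfrak b^{(2)}_{n-1,s} + \mathfrak b^{(1)}_{n-1,s}$.

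For the third recurrence, a sequence in $\mathfrak B^{(3)}_{n,s}$ has $\gamma$ empty, so by Corollary \ref{corol102+201} its tail after the first maximum consists only of values in $\{m,s\}$ and $\sigma_n \in \{m,s\}$. Deleting $\sigma_n$ keeps $\sec$ equal to $s$; the factor of $2$ records that $\sigma_n$ could be either $m$ or $s$ while leaving a sequence still in $\mathfrak B^{(3)}_{n-1,s}$, and the extra $\mathfrak b^{(2)}_{n-1,s}$ captures the case where deleting $\sigma_n$ removes the last entry that completed the first $101$ occurrence, dropping the sequence back into $\mathfrak B^{(2)}_{n-1,s}$. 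The fourth recurrence mirrors the $\mathfrak c^{(1)}$ recurrence of Lemma \ref{lemunimod} exactly: a sequence in $\mathfrak C^{(2)}_{n,\ell}$ has nonempty nonincreasing $\gamma$ ending at $\ell < \sec(\sigma)$, and removing $\sigma_n$ either shortens $\gamma$ (giving $\sum_{i=\ell}^{n-5}\mathfrak c^{(2)}_{n-1,i}$, summing over the possible previous last values $i \ge \ell$) or empties $\gamma$ so that the shorter sequence lands in $\mathfrak B^{(3)}_{n-1,i}$ with $i > \ell$ (giving the second sum). The summation bounds $n-5$ and the constraint $\ell \le n-6$ come from the minimal length needed to host $\alpha$, a $101$-pattern in $\beta$, and a nonempty $\gamma$.

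\textbf{The main obstacle} I anticipate is verifying, rather than merely asserting, the disjointness and the precise index ranges in each decomposition — in particular reconciling the overlaps flagged in Remark \ref{remintersect} (e.g. $\mathfrak B^{(2)}_{n,s} \subseteq \mathfrak C^{(1)}_{n,s}$ and the multiple-$s$ membership of $\mathfrak B^{(1)}$ sequences). Because the $\mathfrak B^{(i)}$ are \emph{not} disjoint across values of $s$, I must be careful that each recurrence counts within a fixed $s$ so that these cross-$s$ overlaps never interfere, and that the ``append one entry'' map is a genuine bijection onto the claimed disjoint union for that fixed $s$. The off-by-one in the first recurrence (the $\mathfrak a_{n,s}$ versus $\mathfrak a_{n-1,s}$ issue) is the one computational point where I would slow down and check the closed form from Lemma \ref{lemunimod} directly.
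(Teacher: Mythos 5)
Your proposal follows the paper's proof essentially verbatim: all four recurrences are established by deleting the last entry and classifying the shortened sequence, and your case analyses for the second, third and fourth identities coincide with the paper's (block of $s$'s versus exposed $\mathfrak B^{(1)}$-sequence; factor $2$ from $\sigma_n \in \{m,s\}$ when $101$ survives, versus falling back to $\mathfrak B^{(2)}_{n-1,s}$ when $\sigma_n = m$ completed the first $101$; shortening versus emptying $\gamma$). The one point where your write-up is literally wrong --- and which you yourself flagged --- is the claim that, when the maximum of $\sigma \in \mathfrak B^{(1)}_{n,s}$ occurs only once, the shortened sequence ``lies in $\mathfrak A_{n-1,s}$ after renaming'': there is no renaming available, since the new maximum is $\sec(\sigma)$, which can be \emph{any} value $i \in [0,s]$, and the sets $\mathfrak A_{n-1,i}$ are genuinely different for different $i$. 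The correct statement is that deletion gives a bijection between the relevant sequences and $[s+1,n-1] \times \coprod_{i=0}^{s} \mathfrak A_{n-1,i}$, after which the paper invokes the identity $\mathfrak a_{n,s} = \sum_{i=0}^{s} \mathfrak a_{n-1,i}$ from the proof of Lemma \ref{lemunimod} --- exactly the Catalan's-triangle recurrence you anticipated needing --- to obtain the factor $\mathfrak a_{n,s}$ rather than any $\mathfrak a_{n-1,\cdot}$. So you correctly located the delicate step, and filling it in as above completes the argument; your remark that the cross-$s$ overlaps of Remark \ref{remintersect} are harmless because each recurrence is carried out at fixed $s$ is likewise consistent with the paper, which defers that bookkeeping to the phantom-object discussion following Theorem \ref{thm102+201}.
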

\begin{proof}
    We give a bijective proof of each identity by removing the last entry of an inversion sequence.

    Let $n \geqslant 2, s \in [0,n-2]$, $\sigma \in \mathfrak B^{(1)}_{n,s}$. In particular, $\sigma_n = \max(\sigma)$. Let $\sigma' = (\sigma_j)_{j \in [1,n-1]}$ be the sequence obtained by removing the last entry from $\sigma$, and let $i = \sigma_{n-1}$. If $i = \sigma_n$, then $\sigma' \in \mathfrak B^{(1)}_{n-1,s}$. Otherwise, $i = \sec(\sigma) \leqslant s$, $\sigma' \in \mathfrak A_{n-1,i}$, and $\sigma_n \in [s+1,n-1]$. This yields a bijection between $\mathfrak B^{(1)}_{n,s}$ and $\mathfrak B^{(1)}_{n-1,s} \coprod \big ( [s+1,n-1] \times \coprod_{i=0}^s \mathfrak A_{n-1,i} \big )$. We know from the proof of Lemma \ref{lemunimod} that $\coprod_{i=0}^s \mathfrak A_{n-1,i}$ is in bijection with $\mathfrak A_{n,s}$.

    Let $n \geqslant 3, s \in [0,n-3]$, and $\sigma \in \mathfrak B^{(2)}_{n,s}$. In particular, $\sigma_{n} = s$, and $\sigma_{n-1} \in \{\max(\sigma), s\}$. Let $\sigma' = (\sigma_j)_{j \in [1,n-1]}$ be the sequence obtained by removing the last entry from $\sigma$, and let $i = \sigma_{n-1}$. If $i = s$, then $\sigma' \in \mathfrak B^{(2)}_{n-1,s}$. Otherwise $i = \max(\sigma)$, and $\sigma' \in \mathfrak B^{(1)}_{n-1,s}$.

   Let $n \geqslant 4, s \in [0,n-4]$, and $\sigma \in \mathfrak B^{(3)}_{n,s}$. Let $\sigma' = (\sigma_j)_{j \in [1,n-1]}$ be the sequence obtained by removing the last entry from $\sigma$. If $\sigma'$ contains 101, then $\sigma' \in \mathfrak B^{(3)}_{n-1,s}$, and $\sigma_{n} \in \{\max(\sigma), s\}$. Otherwise $\sigma' \in \mathfrak B^{(2)}_{n-1,s}$, and $\sigma_{n} = \max(\sigma)$.

   Let $n \geqslant 6, \ell \in [0,n-6]$, and $\sigma \in \mathfrak C^{(2)}_{n,\ell}$. Let $\sigma' = (\sigma_j)_{j \in [1,n-1]}$ be the sequence obtained by removing the last entry from $\sigma$, and let $i = \sigma_{n-1} \geqslant \ell$. If $i < \sec(\sigma)$, then $\sigma' \in \mathfrak C^{(2)}_{n-1,i}$. Otherwise $i > \ell$, and $\sigma' \in \mathfrak B^{(3)}_{n-1,i}$.
\end{proof}

Let $\mathfrak C_{n,\ell} = \mathfrak C^{(1)}_{n,\ell} \sqcup \mathfrak C^{(2)}_{n, \ell}$ and $\mathfrak c_{n,\ell} = |\mathfrak C_{n,\ell}| = \mathfrak c^{(1)}_{n,\ell} + \mathfrak c^{(2)}_{n, \ell}$. Let also $\mathfrak C = \mathfrak C^{(1)} \sqcup \mathfrak C^{(2)}$.
\begin{thm} \label{thm102+201}
    For all $n \geqslant 1$,
    $$|\I_n(102,201)| = \left (\sum_{m=0}^{n-1} \mathfrak a_{n,m} \right ) + \left ( \sum_{s=0} ^{n-4} \mathfrak b^{(3)}_{n,s} \right ) + \left ( \sum_{\ell = 0}^{n-3} \mathfrak c_{n,\ell} \right ).$$
\end{thm}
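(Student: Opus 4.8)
The plan is to prove the formula by assembling a four-part disjoint decomposition of $\I_n(102,201)$ and then counting each part by the indices used to define it. First I would split $\I(102,201)$ according to whether a sequence contains the pattern 101, which gives
$$\I(102,201) = \I(101,102,201) \sqcup \{\sigma \in \I(102,201) \; : \; \sigma \text{ contains } 101\}.$$
On the first piece I would invoke the identity $\I(101,102,201) = \mathfrak A \sqcup \mathfrak C^{(1)}$, which follows from Corollary~\ref{corolunimod}: a sequence in $\I(101,102,201)$ is unimodal, and such a sequence is nondecreasing exactly when $\sigma_n = \max(\sigma)$ (placing it in $\mathfrak A$) and genuinely decreases at its end when $\sigma_n < \max(\sigma)$ (placing it in $\mathfrak C^{(1)}$). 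On the second piece I would use the already-established identity $\{\sigma \in \I(102,201) : \sigma \text{ contains } 101\} = \mathfrak B^{(3)} \sqcup \mathfrak C^{(2)}$, where the split is by whether the trailing factor $\gamma$ is empty ($\mathfrak B^{(3)}$, i.e. $\sigma_n \geqslant \sec(\sigma)$) or nonempty ($\mathfrak C^{(2)}$, i.e. $\sigma_n < \sec(\sigma)$). Restricting to size $n$ yields the key decomposition
$$\I_n(102,201) = \mathfrak A_n \sqcup \mathfrak C^{(1)}_n \sqcup \mathfrak B^{(3)}_n \sqcup \mathfrak C^{(2)}_n,$$
so that $|\I_n(102,201)| = |\mathfrak A_n| + |\mathfrak C^{(1)}_n| + |\mathfrak B^{(3)}_n| + |\mathfrak C^{(2)}_n|$.

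It then remains to match each cardinality to the corresponding sum in the statement, which reduces to determining the exact range of nonzero terms for each family. For $\mathfrak A_n$, a nondecreasing inversion sequence of size $n$ has maximum $\sigma_n \in [0,n-1]$, so $|\mathfrak A_n| = \sum_{m=0}^{n-1} \mathfrak a_{n,m}$. For $\mathfrak C^{(1)}_n$, I would argue that the maximum cannot sit at position $n$ (otherwise $\sigma_n = \max(\sigma)$), so if $p = \firstmax(\sigma)$ then $p \leqslant n-1$ and $\max(\sigma) < p$, forcing $\max(\sigma) \leqslant n-2$ and hence $\ell = \sigma_n < \max(\sigma) \leqslant n-2$, i.e. $\ell \in [0,n-3]$; thus $|\mathfrak C^{(1)}_n| = \sum_{\ell=0}^{n-3} \mathfrak c^{(1)}_{n,\ell}$. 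For the two remaining families I would pin down the minimal sizes: the smallest sequence in $\mathfrak B^{(3)}$ (for instance $(0,1,0,1)$, with $\sec = 0$) has size $4$, and in general a sequence of $\mathfrak B^{(3)}_{n,s}$ needs its value $m > s$ to first appear at a position $\geqslant m+1 \geqslant s+2$ followed by at least three entries realizing $(m,s,m)$, forcing $s \leqslant n-4$; similarly the smallest sequence in $\mathfrak C^{(2)}$ (for instance $(0,0,2,1,2,0)$, with $\sec = 1$, $\sigma_n = 0$) has size $6$, and the same position count together with $\ell < \sec \leqslant m-1$ forces $\ell \leqslant n-6$. This gives $|\mathfrak B^{(3)}_n| = \sum_{s=0}^{n-4} \mathfrak b^{(3)}_{n,s}$ and $|\mathfrak C^{(2)}_n| = \sum_{\ell=0}^{n-6} \mathfrak c^{(2)}_{n,\ell}$.

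To finish I would recombine the two $\mathfrak C$ families. Since $\mathfrak c^{(2)}_{n,\ell} = 0$ whenever $\ell > n-6$, extending the $\mathfrak C^{(2)}$ sum to the wider range $[0,n-3]$ introduces only zero terms, so $|\mathfrak C^{(2)}_n| = \sum_{\ell=0}^{n-3} \mathfrak c^{(2)}_{n,\ell}$; adding this to $|\mathfrak C^{(1)}_n| = \sum_{\ell=0}^{n-3} \mathfrak c^{(1)}_{n,\ell}$ and using $\mathfrak c_{n,\ell} = \mathfrak c^{(1)}_{n,\ell} + \mathfrak c^{(2)}_{n,\ell}$ produces $\sum_{\ell=0}^{n-3} \mathfrak c_{n,\ell}$, which is exactly the last summand of the claimed formula. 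I expect the only genuine work to be the range bookkeeping of the previous paragraph: verifying that the upper limits $n-1$, $n-4$ and $n-3$ are tight and, in particular, that extending the $\mathfrak C^{(2)}$ sum up to $n-3$ is harmless. The structural decomposition itself is immediate from the earlier identities, which is where the real combinatorial content of Proposition~\ref{prop102+201} and its corollaries has already been spent.
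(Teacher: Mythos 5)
Your proposal is correct and takes essentially the same route as the paper, whose entire proof of Theorem \ref{thm102+201} is the identical merge $\I(102,201) = \left( \mathfrak A \sqcup \mathfrak C^{(1)} \right) \sqcup \left( \mathfrak B^{(3)} \sqcup \mathfrak C^{(2)} \right) = \mathfrak A \sqcup \mathfrak B^{(3)} \sqcup \mathfrak C$ of the two decompositions established before the theorem. Your extra bookkeeping of the index ranges ($m \leqslant n-1$, $s \leqslant n-4$, $\ell \leqslant n-3$, with $\mathfrak c^{(2)}_{n,\ell} = 0$ for $\ell > n-6$ so that the $\mathfrak C^{(2)}$ sum can be harmlessly widened) is accurate and simply makes explicit what the paper leaves implicit in the definitions and in the hypotheses of Lemmas \ref{lemunimod} and \ref{lem101}.
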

\begin{proof}
    Merging our results for inversion sequences which avoid or contain 101, we obtain
    \begin{equation*}\I(102,201) = \left ( \mathfrak A \sqcup \mathfrak C^{(1)} \right ) \sqcup \left ( \mathfrak B^{(3)} \sqcup \mathfrak C^{(2)} \right ) = \mathfrak A \sqcup \mathfrak B^{(3)} \sqcup \mathfrak C. \qedhere\end{equation*}
\end{proof}

\begin{rem} \label{remC102+201}
The equations for counting $\mathfrak c^{(1)}_{n,\ell}$ and $\mathfrak c^{(2)}_{n,\ell}$ from Lemmas \ref{lemunimod} and \ref{lem101} can be merged into a single recurrence relation for $\mathfrak c_{n,\ell}$. For all $0 \leqslant \ell \leqslant n-3$,
$$\mathfrak c_{n,\ell} = \mathfrak c_{n-1,\ell} + \sum_{i = \ell+1}^{n-2} \mathfrak a_{n-1,i} + \mathfrak b^{(3)}_{n-1,i} + \mathfrak c_{n-1,i}.$$
\end{rem}

\begin{rem} \label{remSuccession102+201}
    By labelling
    \begin{itemize}
        \item $(a,n,m)$ each sequence of $\mathfrak A_{n,m}$,
        \item $(b^{(i)},s)$ each sequence of $\mathfrak B^{(i)}_{n,s}$ for $i \in \{1,2,3\}$
        \item $(c,\ell)$ each sequence of $\mathfrak C_{n,\ell}$,
    \end{itemize}
    the recurrence relations from Lemmas \ref{lemunimod}, \ref{lem101}, and Remark \ref{remC102+201} correspond to the succession rule
    
$$\Omega_{\{102,201\}} = \left \{ \begin{array}{rclll}
    (a,0,0) \\
    (a,n,m) & \leadsto & (a,n+1,i) & \text{for} & i \in [m,n] \\
    && (b^{(1)},i)^{n-i} & \text{for} & i \in [m,n-1] \\
    && (c,i) & \text{for} & i \in [0,m-1] \vspace{5pt} \\
    (b^{(1)},s) & \leadsto & (b^{(1)},s) \, (b^{(2)},s) \vspace{5pt}\\
    (b^{(2)},s) & \leadsto & (b^{(2)},s) \, (b^{(3)},s) \vspace{5pt}\\
    (b^{(3)},s) & \leadsto & (b^{(3)},s)^2 \\
    && (c,i) & \text{for} & i \in [0,s-1] \vspace{5pt}\\
    (c, \ell) & \leadsto & (c,i) & \text{for} & i \in [0, \ell].
\end{array}
\right .$$
\end{rem}

Let $\mathcal T$ be the generating tree described by the succession rule $\Omega_{\{102,201\}}$.
We now explain how to relate $\mathcal T$ with the definitions of Section \ref{trees}.
First, note that $\mathcal T$ is not isomorphic to a combinatorial generating tree for the class $\I(102,201)$, because of the intersections mentioned in Remark \ref{remintersect}.
By construction, $\mathcal T$ is isomorphic to a combinatorial generating tree for the disjoint union $\mathcal U$ of the sets $\mathfrak A$, $\{\coprod_{n \geqslant 0} \mathfrak B^{(1)}_{n,s}\}_{s \geqslant 0}$, $\mathfrak B^{(2)}$, $\mathfrak B^{(3)}$ and $\mathfrak C$. We can view the combinatorial class $\mathcal U$ as a subset of $\mathbb N \times \I(102,201)$:
$$\mathcal U \cong \left (\{0\} \times \big (\mathfrak A \sqcup \mathfrak B^{(3)} \sqcup \mathfrak C \big ) \right ) \sqcup \left (\{1\} \times \mathfrak B^{(2)} \right ) \sqcup \left (\coprod_{s \geqslant 0} \left ( \{s+2\} \times \coprod_{n \geqslant 0} \mathfrak B^{(1)}_{n,s} \right ) \right ),$$
and define the size of each object $(k, \sigma) \in \mathcal U$ as the size of $\sigma$. By the proof of Theorem \ref{thm102+201}, the objects of $\mathcal U$ of the form $(0,\sigma)$ are trivially in bijection with $\I(102,201)$. The remaining objects of $\mathcal U$, of the form $(k, \sigma)$ for $k \geqslant 1$, can be called \emph{phantom objects} as they are ultimately not counted. These  phantom objects are nevertheless necessary in our construction, since removing all the nodes of $\mathcal T$ corresponding to phantom objects would disconnect\footnote{More precisely, each node of $\mathcal T$ corresponding to an object in $\{0\} \times \mathfrak B^{(3)}$ or $\{0\} \times \mathfrak C^{(2)}$ has an ancestor which corresponds to a phantom object in $\{1\} \times \mathfrak B^{(2)}$.} $\mathcal T$. The tree $\mathcal T$ is represented in Figure \ref{GT102+201}, on page \pageref{GT102+201}.

\begin{rem} \label{remPhantom}
The phantom objects can be understood as follows. In an object $(1, \sigma) \in \mathcal U$, $\sigma \in \I(101,102,201)$ is a prefix of an inversion sequence avoiding 102 and 201, whose maximum $m$ and second maximum $s$ form an occurrence $(m,s)$ of the pattern 10 which must be completed into an occurrence $(m,s,m)$ of 101 in the future. In particular, we may only insert the values $m$ or $s$ at the end of $\sigma$; otherwise, the subsequence $(m,s,m)$ could never appear, by Corollary \ref{corol102+201}. 
Similarly, in an object $(s+2, \sigma) \in \mathcal U$, $\sigma \in \I(10)$ is a prefix of an inversion sequence avoiding 102 and 201, that has maximum $m > s$ and which must contain an occurrence $(m,s,m)$ of 101 in the future. In particular, we may only insert the values $m$ or $s$ at the end of $\sigma$; otherwise, the subsequence $(m,s,m)$ could never appear, by Corollary \ref{corol102+201}.
\end{rem}

This construction may appear more complicated than necessary. Indeed, we could simply consider the generating tree growing on the right for $\I(102,201)$ instead, and this would not involve phantom objects. However, that would be less efficient, since this tree requires more labels. Specifically, the number of distinct labels appearing at level $n$ in this tree would be quadratic\footnote{We would need to record both the values of the maximum and the second maximum of sequences in $\mathfrak A$.} in $n$, whereas our construction only involves a linear number of labels.

\begin{landscape}
\begin{figure}
\centering
\begin{footnotesize}
\begin{tikzpicture}[level distance=19mm]
\tikzstyle{level 2}=[sibling distance=70mm]
\tikzstyle{level 3}=[sibling distance=13mm]
\tikzstyle{level 6}=[sibling distance=16mm]
\node[align=center]{\large $\varepsilon$ \\ $(a,0,0)$}
child {node[align=center]{\normalsize 0 \\ $(a,1,0)$}
    child {node[align=center]{00 \\ $(a,2,0)$}
        child {node[align=center]{000 \\ $(a,3,0)$}}
        child {node[align=center]{001 \\ $(a,3,1)$}
            child {node[align=center]{0011 \\ $(a,4,1)$}}
            child {node[align=center]{0012 \\ $(a,4,2)$}}
            child {node[align=center]{0013 \\ $(a,4,3)$}
                child {node[align=center]{00133 \\ $(a,5,3)$}}
                child {node[align=center]{00134 \\ $(a,5,4)$}}
                child {node[align=center]{\textcolor{gray}{00134} \\ $(b^{(1)}, 3)$}}
                child {node[align=center]{00130 \\ $(c,0)$}}
                child {node[align=center]{00131 \\ $(c,1)$}}
                child {node[align=center]{00132 \\ $(c,2)$}
                    child {node[align=center]{001320 \\ $(c,0)$}}
                    child {node[align=center]{001321 \\ $(c,1)$}}
                    child {node[align=center]{001322 \\ $(c,2)$}}
                }
            }
            child {node[align=center]{\textcolor{gray}{0012} \\ $(b^{(1)}, 1)$}}
            child {node[align=center]{\textcolor{gray}{0013} \\ $(b^{(1)}, 1)$}}
            child {node[align=center]{\textcolor{gray}{0013} \\ $(b^{(1)}, 2)$}}
            child {node[align=center]{0010 \\ $(c,0)$}}
        }
        child {node[align=center]{002 \\ $(a,3,2)$}}
        child {node[align=center]{\textcolor{gray}{001} \\ $(b^{(1)}, 0)$}}
        child {node[align=center]{\textcolor{gray}{002} \\ $(b^{(1)}, 0)$}}
        child {node[align=center]{\textcolor{gray}{002} \\ $(b^{(1)}, 1)$}}
    }
    child {node[align=center]{01 \\ $(a,2,1)$}
        child {node[align=center]{011 \\ $(a,3,1)$}}
        child {node[align=center]{012 \\ $(a,3,2)$}
            child {node[align=center]{0122 \\ $(a,4,2)$}}
            child {node[align=center]{0123 \\ $(a,4,3)$}}
            child {node[align=center]{\textcolor{gray}{0123} \\ $(b^{(1)}, 2)$}
                child {node[align=center]{\textcolor{gray}{01233} \\ $(b^{(1)}, 2)$}}
                child {node[align=center]{\textcolor{gray}{01232} \\ $(b^{(2)}, 2)$}
                    child {node[align=center]{\textcolor{gray}{012322} \\ $(b^{(2)}, 2)$}}
                    child {node[align=center]{012323 \\ $(b^{(3)}, 2)$}
                        child {node[align=center]{0123232 \\ $(b^{(3)}, 2)$}}
                        child {node[align=center]{0123233 \\ $(b^{(3)}, 2)$}}
                        child {node[align=center]{0123230 \\ $(c,0)$}}
                        child {node[align=center]{0123231 \\ $(c,1)$}}
                    }
                }
            }
            child {node[align=center]{0120 \\ $(c,0)$}}
            child {node[align=center]{0121 \\ $(c,1)$}
                child {node[align=center]{01210 \\ $(c,0)$}}
                child {node[align=center]{01211 \\ $(c,1)$}}
            }
        }
        child {node[align=center]{\textcolor{gray}{012} \\ $(b^{(1)}, 1)$}}
        child {node[xshift = 12mm, align=center] {010 \\ $(c,0)$}
            child {node[align=center]{0100 \\ $(c,0)$}}
        }
    }
    child {node[xshift = -15mm, align=center] {\textcolor{gray}{01} \\ $(b^{(1)},0)$}
        child {node[align=center]{\textcolor{gray}{011} \\ $(b^{(1)}, 0)$}}
        child {node[align=center]{\textcolor{gray}{010} \\ $(b^{(2)}, 0)$}
            child {node[align=center]{\textcolor{gray}{0100} \\ $(b^{(2)}, 0)$}}
            child {node[align=center]{0101 \\ $(b^{(3)}, 0)$}
                child {node[align=center]{01010 \\ $(b^{(3)}, 0)$}}
                child {node[align=center]{01011 \\ $(b^{(3)}, 0)$}}
            }
        }
    }
};
\end{tikzpicture}
\end{footnotesize}
\caption{Part of the generating tree described by the succession rule $\Omega_{\{102,201\}}$. Phantom objects are colored in gray.}  
\label{GT102+201}
\end{figure}
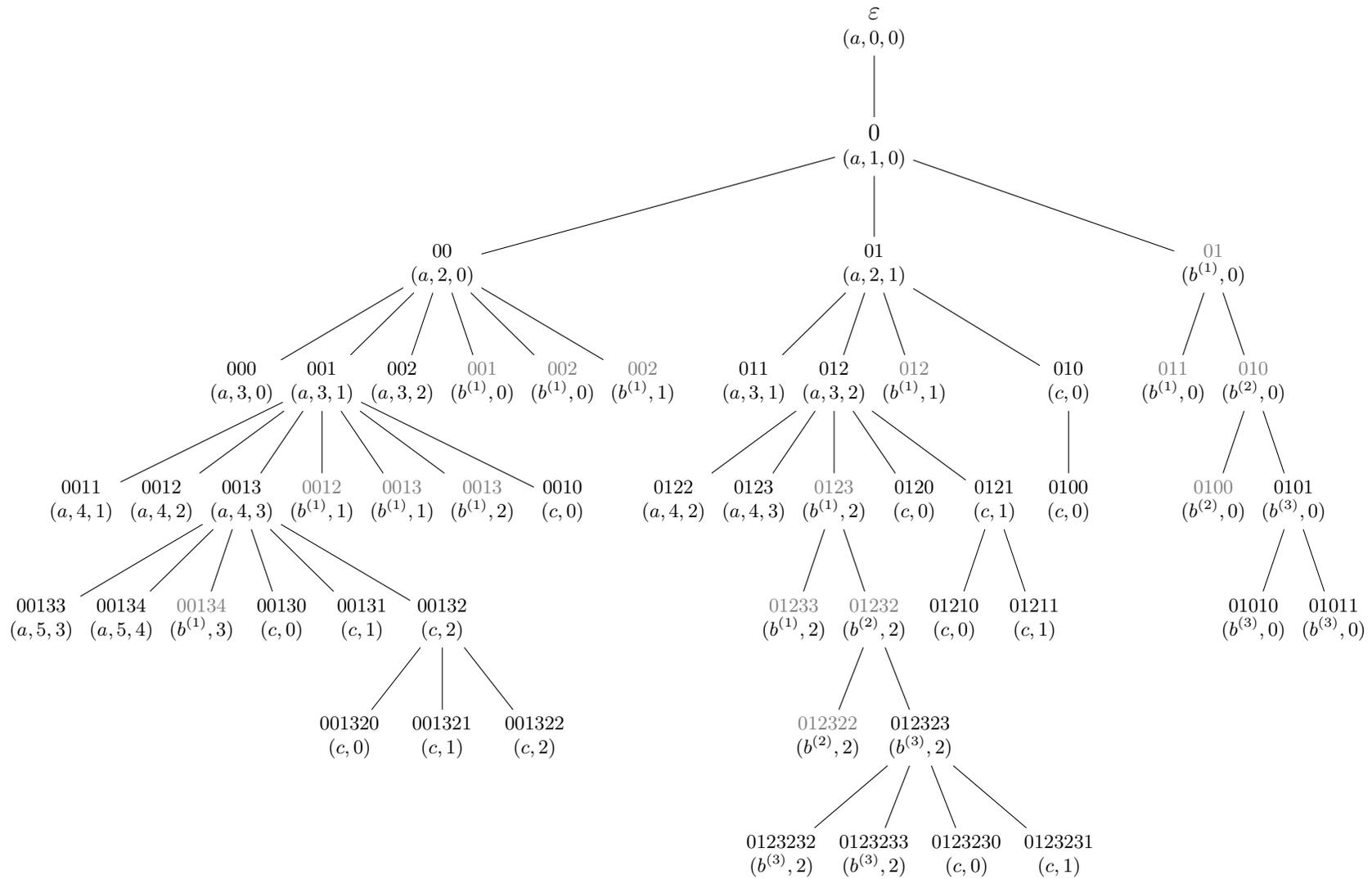
\end{landscape}

\begin{rem}
    We could directly obtain an explicit expression for $|\I_n(102, 201)|$ from the observations of Proposition \ref{prop102+201} and Corollaries \ref{corol102+201} and \ref{corolunimod}, although using it for enumeration is not as efficient as our generating tree approach.
    The sequences of $\I_n(101, 102, 201)$ are counted by
    $$1 + \sum_{m=1}^{n-1} \sum_{\ell=m+1}^n \frac{\binom{\ell+m-1}{m} (\ell-m)}{\ell} \cdot \binom{n-\ell+m-1}{n-\ell},$$
    as shown in \cite{Callan_Mansour}.
    The sequences of $\I_n(102, 201)$ which contain 101 are counted by
    $$\sum_{f=2}^{n-2} \sum_{\ell=f+2}^n \sum_{s=0}^{f-2} (f-s-1) \cdot \frac{\binom{f+s-1}{s} (f-s)}{f} \cdot (2^{\ell-f-1}-1) \cdot \binom{n-\ell+s}{n-\ell}.$$
    The summands count the number of sequences $\sigma \in \I_n(102, 201)$ such that $m = \max(\sigma)$, $s = \sec(\sigma)$, $f = \firstmax(\sigma)$, and $\ell = \lastmax(\sigma)$.
\end{rem}

\section{Removing the maximum} \label{sectionMax}

\subsection{Method}
Generally speaking, removing a term from an inversion sequence does not always create another inversion sequence. In fact, removing the $i$-th term $\sigma_i$ from an inversion sequence $\sigma \in \mathbb \I_n$ for some $i \in [1,n]$ yields an inversion sequence if and only if $\sigma_j < j-1$ for all $j \in [i+1,n]$. In particular if $\sigma_i = \max(\sigma)$, then for all $j \in [i+1,n]$ we have $\sigma_j \leqslant \sigma_i < i \leqslant j-1$, and the above condition is satisfied. Hence, removing a maximum from an inversion sequence always yields an inversion sequence.

Following this idea, we can decompose inversion sequences by removing their maxima one by one. For this decomposition to be unambiguous, we must choose the order of removal of the different occurrences of the maximum (this order is not always trivial). Observe that this decomposition defines a combinatorial generating tree $\mathcal T$ for inversion sequences. In that tree, the parent of each nonempty inversion sequence is obtained by removing its ``next" maximum in the chosen order.

For any set of patterns $P$, a combinatorial generating tree for $P$-avoiding inversion sequences can be obtained by removing all nodes of $\mathcal T$ whose label contains a pattern in $P$. Indeed, this procedure does not disconnect $\mathcal T$ since the parent (in $\mathcal T$) of any nonempty $P$-avoiding inversion sequence also avoids $P$.

Let us quickly go over each pattern that appears in this section, and establish necessary and sufficient conditions for an integer sequence to avoid any occurrence of the given pattern which involves the maximum of the sequence.
\begin{itemize}
    \item 000: The maximum does not appear more than twice.
    \item 100: All values appearing after the first maximum are distinct, or equal to the maximum.
    \item 101: All occurrences of the maximum are consecutive.
    \item 110: All maxima after the first one appear in a single factor, at the end of the sequence.
    \item 102: The subsequence to the left of the last maximum is nondecreasing, ignoring other occurrences of the maximum.
    \item 201: The subsequence to the right of the first maximum is nonincreasing, ignoring other occurrences of the maximum.
    \item 210: The subsequence to the right of the first maximum is nondecreasing, ignoring other occurrences of the maximum.
\end{itemize}
Since our generating tree construction inserts a maximum, the children of a node labelled $\sigma$ in $\mathcal T'$ are the children of $\sigma$ in $\mathcal T$ which satisfy the above conditions (for the patterns in $P$).
Such facts will be used in proofs throughout this section without explicit reference.

\subsection{The pair \{000, 102\}} \label{000+102}
In this subsection, we use a generalization of succession rules called \emph{doubled succession rules}, introduced in \cite{Ferrari_Pergola_Pinzani_Rinaldi}, which allows a node at level $n$ in a generating tree to have children at levels $n+1$ or $n+2$, denoted by $\overset{1} \leadsto$ and $\overset{2} \leadsto$ in the succession rule. We find a doubled succession rule describing a generating tree for $\I(000,102)$ (up to isomorphism), and later use this rule to compute the generating function of $\I(000,102)$ in the appendix.

For all $\sigma \in \I(000,102)$, let $\inc(\sigma)$ be the size of the longest nondecreasing factor at the beginning of $\sigma$, and let $\sites(\sigma) = \inc(\sigma) - \max(\sigma)$. In particular, $\sites(\varepsilon) = 1$.

\begin{rem}
    Let $\sigma \in \I(000,102) \backslash \{\varepsilon\}$ be a nonempty inversion sequence. Since $\sigma$ avoids the pattern 102, the factor $(\sigma_i)_{i \in [1, \firstmax(\sigma)]}$ is nondecreasing. In other words, the maximum of $\sigma$ is reached before its first descent, and therefore $\sigma_{\inc(\sigma)} = \max(\sigma)$.
    This also implies that $\inc(\sigma) > \max(\sigma)$, hence $\sites(\sigma)$ is positive.
\end{rem}

Given $n \geqslant 1, \sigma \in \mathbb N^n$, $p \in [1,n+1]$ and $v \in \mathbb N$, we denote by $[\sigma]_p^v$ the sequence obtained from $\sigma$ by inserting the value $v$ at position $p$, that is $[\sigma]_p^v = (\sigma_i)_{i \in [1,p-1]} \cdot v \cdot (\sigma_i)_{i \in [p,n]}$.
\begin{lem} \label{lem000+102}
    Let $n \geqslant 0$, $\sigma \in \I_n(000,102)$, $m = \max(\sigma)$, $i \in [1,n+1]$, and $k > 0$. Let $\sigma' = [\sigma]_i^{m+k}$ be the sequence obtained from $\sigma$ by inserting $m+k$ at position $i$. Then $\sigma' \in \I_{n+1}(000,102)$ if and only if $i \in [m+k+1, \inc(\sigma)+1]$.
\end{lem}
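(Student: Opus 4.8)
The plan is to split the claimed equivalence into the three requirements defining membership in $\I_{n+1}(000,102)$ --- being an inversion sequence, avoiding 000, and avoiding 102 --- and to observe that the first requirement produces the lower bound $i \geqslant m+k+1$, the second is automatic, and the third produces the upper bound $i \leqslant \inc(\sigma)+1$.

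First I would check the inversion sequence condition. Inserting $m+k$ at position $i$ leaves the entries at positions $[1,i-1]$ equal to $\sigma_1, \dots, \sigma_{i-1}$, so these still satisfy the inversion sequence inequalities. For a position $j \in [i+1,n+1]$ we have $\sigma'_j = \sigma_{j-1} < j-1 < j$, so these entries are fine as well. The only new constraint comes from position $i$ itself, where $\sigma'_i = m+k$ must be less than $i$; this holds exactly when $i \geqslant m+k+1$. Hence $\sigma'$ is an inversion sequence if and only if $i \geqslant m+k+1$. Avoidance of 000 is then automatic: since $k > 0$, the inserted value $m+k$ is strictly larger than $\max(\sigma)$, so it occurs exactly once in $\sigma'$; an occurrence of 000 consists of three equal entries and thus cannot involve the unique entry $m+k$, so any 000 in $\sigma'$ would already occur in $\sigma$, contradicting $\sigma \in \I_n(000,102)$.

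The heart of the argument is 102-avoidance. Since $\sigma$ avoids 102, every new occurrence of 102 in $\sigma'$ must use the inserted entry $m+k$. Because $m+k = \max(\sigma')$, it cannot play the role of the ``$0$'' nor of the ``$1$'' in a 102 pattern --- both of those roles require a strictly larger entry to appear elsewhere in the occurrence --- so it can only be the top ``$2$'', appearing as the last term. Consequently $\sigma'$ contains 102 if and only if there exist positions $p < q < i$ with $\sigma_p > \sigma_q$, that is, if and only if the prefix $(\sigma_1, \dots, \sigma_{i-1})$ fails to be nondecreasing. By the definition of $\inc(\sigma)$ as the length of the longest nondecreasing prefix of $\sigma$, this prefix is nondecreasing exactly when $i-1 \leqslant \inc(\sigma)$; therefore $\sigma'$ avoids 102 if and only if $i \leqslant \inc(\sigma)+1$.

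Combining the three steps yields $\sigma' \in \I_{n+1}(000,102)$ if and only if $m+k+1 \leqslant i \leqslant \inc(\sigma)+1$, which is the asserted interval. The only delicate point is the short case analysis showing that the inserted maximum can occupy only the top position of a 102 pattern; once that is settled, the reduction to the existence of a descent in the prefix and its translation into the statistic $\inc$ are immediate.
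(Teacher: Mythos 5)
Your proof is correct and takes essentially the same approach as the paper's: the inversion-sequence condition forces $i \geqslant m+k+1$, avoidance of 000 is automatic because $m+k$ is a strictly new maximum occurring once, and avoidance of 102 reduces to the prefix $(\sigma_1, \dots, \sigma_{i-1})$ being nondecreasing, i.e.\ $i \leqslant \inc(\sigma)+1$. Your explicit case analysis showing the inserted maximum can only play the role of the ``2'' in a 102 occurrence is just a more detailed rendering of the paper's terser argument (which exhibits the descent $(\sigma_{\inc(\sigma)}, \sigma_{\inc(\sigma)+1})$ when $i \geqslant \inc(\sigma)+2$).
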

\begin{proof}
    Since $\sigma$ does not contain the value $m+k$, inserting it cannot create an occurrence of the pattern 000. The value $m+k$ must be inserted in the interval $[m+k+1, n+1]$ in order to create an inversion sequence. By definition of $\inc(\sigma)$, $(\sigma_i)_{i \in [1, \inc(\sigma)]}$ is nondecreasing, hence inserting $m+k$ in any position $[m+k+1, \inc(\sigma)+1]$ does not create an occurrence of 102. On the contrary, inserting $m+k$ in any position $[\inc(\sigma)+2, n+1]$ creates an occurrence of 102, since $(\sigma_{\inc(\sigma)}, \sigma_{\inc(\sigma)+1})$ is an occurrence of $10$ and $m+k > \sigma_{\inc(\sigma)}$.
\end{proof}
It follows from Lemma \ref{lem000+102} that the statistic $\sites(\sigma)$ can be interpreted either as the number of values (denoted $k$ earlier) greater than $m$ which could be inserted in $\sigma$, or the number of positions (denoted $i$ earlier) where a value greater than $m$ can be inserted in $\sigma$, so that the resulting sequence is in $\I_{n+1}(000,102)$.

\begin{thm} \label{thm000+102}
The enumeration of inversion sequences avoiding the patterns 000 and 102 is given by the doubled succession rule
$$\Omega_{\{000,102\}} = \begin{cases}
(1)\\
(s) \overset{1} \leadsto (j)^{s+1-j} &\text{for} \quad j \in [1, s]\\
(s) \overset{2} \leadsto (j+1)^{s+1-j} (j)^{\binom{s+1-j}{2}} &\text{for} \quad j \in [1, s].
\end{cases}$$
\end{thm}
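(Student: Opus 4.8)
The plan is to realize the doubled succession rule as the \emph{removing the maximum} generating tree for $\I(000,102)$, relabelled by the statistic $\sites$. Since $\sigma$ avoids $000$, its maximum occurs either once or twice; accordingly, the parent of a nonempty $\sigma \in \I(000,102)$ is obtained by deleting all (one or two) occurrences of $\max(\sigma)$, which lands in $\I(000,102)$ and has strictly smaller maximum. Reading this tree from the root $\varepsilon$ downwards, a child is produced by inserting a brand-new maximal value $m+k$ (with $k \geqslant 1$ and $m = \max(\sigma)$) either as a single copy (a level-$(n{+}1)$ child, the $\overset{1}\leadsto$ arrows) or as two copies (a level-$(n{+}2)$ child, the $\overset{2}\leadsto$ arrows). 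Because the inserted value is new, no occurrence of $000$ is ever created in the single-copy case, and in the two-copy case the new value still appears only twice, so $000$-avoidance is automatic. I would relabel each node $\sigma$ by $\sites(\sigma)$, note that $\sites(\varepsilon)=1$ yields the axiom $(1)$, and then show that $s=\sites(\sigma)$ alone determines the multiset of $\sites$-values of the children.

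For the single-copy arrows I would invoke Lemma \ref{lem000+102}: inserting $m+k$ into $\sigma$ yields an element of $\I_{n+1}(000,102)$ exactly when the position $i$ lies in $[m+k+1,\inc(\sigma)+1]$, an interval which is nonempty precisely for $k \in [1,s]$. The key observation is that in $\sigma'=[\sigma]_i^{m+k}$ the longest nondecreasing prefix ends exactly at the inserted value (the following entry is some $\sigma_\ell \leqslant m < m+k$, a descent), so $\inc(\sigma')=i$ and hence $\sites(\sigma') = i-(m+k)$. As $i$ ranges over its interval this realizes every value in $[1,\, s+1-k]$ exactly once; summing over $k$, the number of children with $\sites = j$ equals the number of $k\in[1,s]$ with $k\leqslant s+1-j$, namely $s+1-j$, which matches $(s)\overset{1}\leadsto (j)^{s+1-j}$.

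For the two-copy arrows I would first characterize the valid pairs of insertion positions $p_1<p_2$ for two copies of $m+k$: the inversion-sequence condition forces $p_1 \geqslant m+k+1$, while $102$-avoidance forces the entries of $\sigma$ lying left of the last maximum (ignoring the first inserted copy) to be nondecreasing, i.e. $p_2 \leqslant \inc(\sigma)+2$. One checks these two conditions are also sufficient, since any newly created $102$ would have to use an inserted maximum as its ``$2$'', and such an occurrence is ruled out once the prefix $\sigma_1\cdots\sigma_{p_2-2}$ is nondecreasing. The resulting statistic depends on whether the two copies are adjacent: if $p_2 = p_1+1$ the nondecreasing prefix runs through both copies, so $\inc = p_2$ and $\sites = p_1+1-(m+k)$; whereas if $p_2>p_1+1$ a descent occurs immediately after the first copy, so $\inc = p_1$ and $\sites = p_1-(m+k)$. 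Counting adjacent configurations with target $\sites = j+1$ gives $s+1-j$ children labelled $(j+1)$, while counting non-adjacent ones with target $\sites = j$ gives, through the double sum $\sum_{k=1}^{s-j}(s+1-j-k) = \binom{s+1-j}{2}$, exactly $\binom{s+1-j}{2}$ children labelled $(j)$; together these produce the $\overset{2}\leadsto$ line of the rule.

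The main obstacle is the two-copy case: correctly isolating the necessary-and-sufficient position constraints $m+k+1 \leqslant p_1 < p_2 \leqslant \inc(\sigma)+2$, handling the adjacent-versus-separated dichotomy that governs $\inc$ of the child, and carrying out the double summation over $k$ and the choice of $p_2$ that collapses to the binomial coefficient. The single-copy case and the verification that $\sites$ is a bona fide generating-tree statistic (its value on each child being determined by $s$ alone) are comparatively routine.
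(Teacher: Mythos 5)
Your proposal is correct and follows essentially the same route as the paper: the generating tree with jumps obtained by deleting all (one or two) occurrences of the maximum, relabelled by $\sites$, with Lemma \ref{lem000+102} handling the single-copy arrows and a case split on adjacent versus non-adjacent copies handling the $\overset{2}\leadsto$ arrows. The only difference is cosmetic: you parameterize double insertions by final positions $m+k+1 \leqslant p_1 < p_2 \leqslant \inc(\sigma)+2$, while the paper uses pre-insertion positions $i_1 < i_2 \in [m+k+1, \inc(\sigma)+1]$; your double sum $\sum_{k=1}^{s-j}(s+1-j-k) = \binom{s+1-j}{2}$ agrees with the paper's count of pairs.
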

\begin{proof}
We consider the combinatorial generating tree (with jumps) for $\I(000,102)$ such that the parent of any $\sigma \in \I(000,102) \backslash \{\varepsilon\}$ is obtained from $\sigma$ by removing all occurrences of $\max(\sigma)$.
Due to the avoidance of 000, $\max(\sigma)$ appears either once or twice in $\sigma$, so the difference between the level of $\sigma$ and that of its parent is either 1 or 2.
We then replace each label $\sigma$ by the value $\sites(\sigma)$ defined above.
The axiom of the resulting succession rule is (1) since $\sites(\varepsilon) = 1$.

In the rest of this proof, let $n \geqslant 0$, $\sigma \in \I_n(000,102)$ and $m = \max(\sigma)$.

Let $i,k \geqslant 1$ be two integers, and $\sigma' = [\sigma]_i^{m+k}$. From Lemma \ref{lem000+102}, $\sigma' \in \I_{n+1}(000,102)$ if and only if $i \in [m+2, \inc(\sigma)+1]$ and $k \in [1,i-m-1]$. Let us now assume that is the case. By definition of $\sigma'$, we have $\inc(\sigma') = i$ and $\max(\sigma') = m+k$. In particular, $\sites(\sigma') = i - (m+k) \in [1, \sites(\sigma)]$.

Let $j \in [1, \sites(\sigma)]$. Let us count how many choices of $i$ and $k$ satisfy the identity $\sites([\sigma]_i^{m+k}) = j$.
The ordered pair $(i,k)$ is a solution whenever $j = i-(m+k)$, i.e. whenever $k = i-m-j$. Since $k$ is positive, there is one solution for each value of $i$ such that $i > m + j$. The number of solutions $(i,k)$ is the number of values $i$ in the interval $[m+j+1, \inc(\sigma)+1]$, so there are $\inc(\sigma) + 1 - (m+j) = \sites(\sigma)+1-j$ solutions. This corresponds to $(s) \overset{1} \leadsto (j)^{s+1-j}$ in the succession rule.
Now let us count in how many ways we may insert two occurrences of a value greater than $m$ in $\sigma$.
In the previous case, inserting a second $m+k$ adjacent to the first one yields a sequence $\sigma'' = [\sigma']_i^{m+k}$ which satisfies $\sites(\sigma'') = i+1 - (m+k) = \sites(\sigma')+1$. We obtain $\sites(\sigma) + 1 - j$ sequences of size $n+2$ and label $j+1$ for each $j \in [1, \sites(\sigma)]$. This corresponds to $(s) \overset{2} \leadsto (j+1)^{s+1-j}$ in the succession rule.

In order to insert a value $m+k > m$ in two non adjacent positions in $\sigma$, we choose two positions $i_1 < i_2 \in [m+k+1, \inc(\sigma)+1]$ and define $\sigma'' = [[\sigma]_{i_2}^{m+k}]_{i_1}^{m+k}$, which satisfies $\sites(\sigma'') = i_1 - (m+k) \in [1, \sites(\sigma)]$.
Let $j \in [1, \sites(\sigma)]$, and let us count how many choices of $i_1$, $i_2$ and $k$ satisfy $\sites(\sigma'') = j$. The triple $(i_1,i_2,k)$ is a solution whenever $j = i_1-(m+k)$, i.e. whenever $k = i_1 - m - j$. Since $k$ is positive, there is one solution for each $(i_1, i_2)$ such that $i_1 > m + j$. The number of solutions is the number of ordered pairs $(i_1, i_2)$ such that $i_1 < i_2$ and $i_1, i_2 \in [m+j+1, \inc(\sigma)+1]$, so there are $\binom{\sites(\sigma)+1-j}{2}$ solutions. This corresponds to $(s) \overset{2} \leadsto (j)^{\binom{s+1-j}{2}}$ in the succession rule.
\end{proof}
\noindent The generating tree described by $\Omega_{\{000,102\}}$ is represented in Figure \ref{GT000+102}.

\begin{figure}[h]
\centering
\begin{tikzpicture}[level distance=20mm]
\begin{scope}
\tikzstyle{level 1}=[sibling distance=85mm]
\node[align=center]{\large $\varepsilon$ \\ $(1)$}
child[level distance = 2cm] {node[align=center]{0 \\ $(1)$} [sibling distance = 25mm]
    child {node[align=center]{01 \\ $(1)$}
        [sibling distance = 10mm]
        child {node[align=center]{012 \\ $(1)$}
            child {node[align=center]{0123 \\ $(1)$}}
        }
        child[level distance = 4cm] {node[align=center]{0122 \\ $(2)$}}
    }
    child[level distance = 4cm] {node[align=center]{011 \\ $(2)$}
        [sibling distance = 10mm, level distance = 2cm]
        child {node[align=center]{0112 \\ $(2)$}}
        child {node[align=center]{0113 \\ $(1)$}}
        child {node[align=center]{0121 \\ $(1)$}}
    }
}
child[level distance = 4cm] {node[align=center]{00 \\ $(2)$}
    [sibling distance = 10mm, level distance = 2cm]
    child {node[align=center, xshift = -10mm]{001 \\ $(2)$}
        child {node[align=center]{0012 \\ $(2)$}}
        child {node[align=center]{0013 \\ $(1)$}}
        child {node[align=center]{0021 \\ $(1)$}}
    }
    child {node[align=center]{002 \\ $(1)$}
        child {node[align=center]{0023 \\ $(1)$}}
    }
    child {node[align=center]{010 \\ $(1)$}
        child {node[align=center]{0120 \\ $(1)$}}
    }
    child[level distance = 4cm] {node[align=center]{0011 \\ $(3)$}}
    child[level distance = 4cm] {node[align=center]{0022 \\ $(2)$}}
    child[level distance = 4cm] {node[align=center]{0101 \\ $(1)$}}
    child[level distance = 4cm] {node[align=center]{0110 \\ $(2)$}}
};
\end{scope}
\end{tikzpicture}
\caption{First five levels of the generating tree described by the succession rule $\Omega_{\{000,102\}}$.}  
\label{GT000+102}
\end{figure}
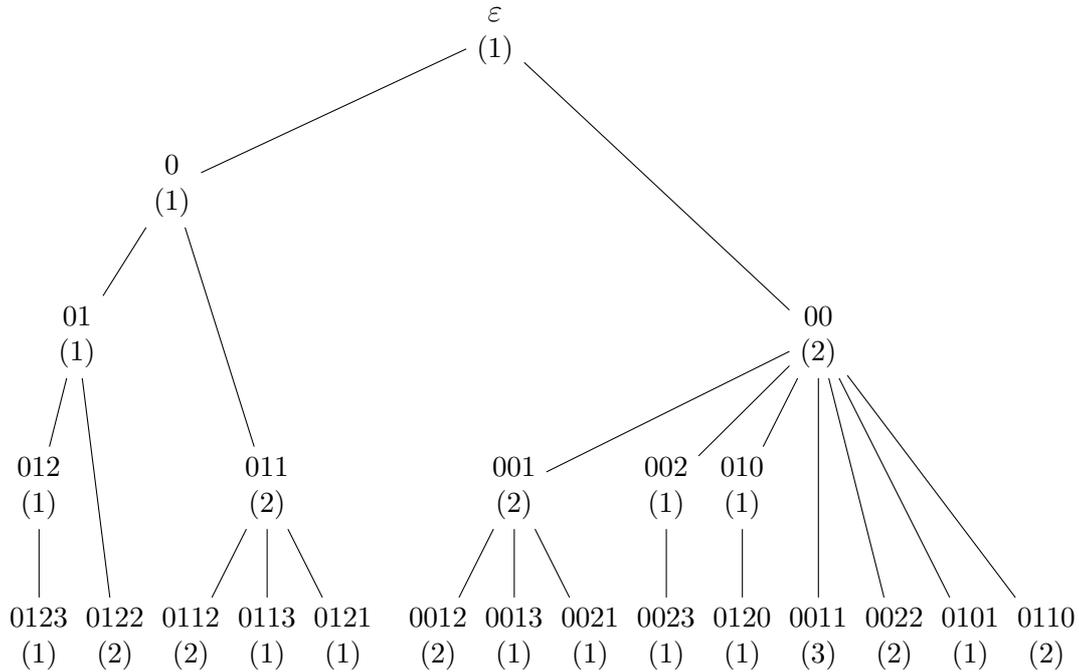

\subsection{The pair \{102, 210\}} \label{102+210}
In this section, we present a generating tree for the class $\I(102,210)$. We later use this construction to find the generating function of $\I(102,210)$ in Theorem \ref{thmGF102+210}.

For any $n \geqslant 0$ and any $\sigma \in \mathbb N^n$, let $\des(\sigma) = \{ i \in [1,n-1] \; : \; \sigma_i > \sigma_{i+1}\}$ be the set of \emph{descent tops} of $\sigma$.

\begin{prop} \label{propdescent}
Let $\sigma$ be a $\{102, 210\}$-avoiding integer sequence. Then any descent top of $\sigma$ is the position of a maximum of $\sigma$, i.e. if $d \in \des(\sigma)$ then $\sigma_d = \max(\sigma)$.
\end{prop}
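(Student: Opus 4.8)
The plan is a short proof by contradiction using a single three-element case analysis. Suppose $d \in \des(\sigma)$ but $\sigma_d < \max(\sigma)$. Since $d$ is a descent top we have $\sigma_{d+1} < \sigma_d$, and therefore $\sigma_{d+1} < \sigma_d < \max(\sigma)$. The first thing I would extract from this is that neither position $d$ nor position $d+1$ attains the maximum of $\sigma$; consequently every position $p$ with $\sigma_p = \max(\sigma)$ satisfies $p < d$ or $p > d+1$. Because a descent top forces $|\sigma| \geqslant 2$, the maximum is attained somewhere, so at least one such $p$ exists.

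I would then split according to where this maximum position lies. If some maximum sits at a position $p < d$, consider the three positions $p < d < d+1$ carrying the values $\sigma_p = \max(\sigma) > \sigma_d > \sigma_{d+1}$: their relative order is largest, middle, smallest, so $(\sigma_p, \sigma_d, \sigma_{d+1})$ is an occurrence of $210$, contradicting the avoidance hypothesis. Otherwise every maximum lies at a position $p > d+1$; picking any such $p$ and considering the positions $d < d+1 < p$ with $\sigma_d > \sigma_{d+1}$ and $\sigma_p = \max(\sigma) > \sigma_d$, the relative order is middle, smallest, largest, so $(\sigma_d, \sigma_{d+1}, \sigma_p)$ is an occurrence of $102$, again a contradiction.

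Since one of the two cases must occur and both are impossible, the assumption $\sigma_d < \max(\sigma)$ is untenable, whence $\sigma_d = \max(\sigma)$. There is essentially no genuine obstacle here: the only step needing a moment of care is the opening observation that $\sigma_{d+1} < \sigma_d < \max(\sigma)$ excludes $d$ and $d+1$ as maximum positions, because this is exactly what guarantees that a maximum lands \emph{strictly} to the left of $d$ or \emph{strictly} to the right of $d+1$, and hence that the three chosen indices are genuinely distinct and in the claimed order. Everything else is immediate from the definitions of the patterns $102$ and $210$.
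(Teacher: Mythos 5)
Your proof is correct and follows essentially the same argument as the paper: assume $\sigma_d < \max(\sigma)$, note that a maximum position $j$ must satisfy $j < d$ or $j > d+1$, and derive a 210 occurrence in the first case and a 102 occurrence in the second. The only difference is cosmetic — you spell out explicitly why $j \notin \{d, d+1\}$, which the paper states in one line.
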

\begin{proof}
    Assume, for the sake of contradiction, that $i \in \des(\sigma)$ and $\sigma_i < \max(\sigma)$. Since $\des(\sigma)$ is nonempty, $|\sigma| \geqslant 2$. Let $j \in [1,|\sigma|]$ be such that $\sigma_j = \max(\sigma)$. In particular, $j \notin \{i,i+1\}$. If $j < i$, then $(\sigma_j, \sigma_i, \sigma_{i+1})$ is an occurrence of 210. If $j > i+1$, then $(\sigma_i, \sigma_{i+1}, \sigma_j)$ is an occurrence of 102.
\end{proof}
\begin{prop} \label{propinsert102+210}
    Let $\sigma$ be a $\{102, 210\}$-avoiding integer sequence, and let $v > \max(\sigma)$.
    \begin{itemize}
        \item If $\sigma$ is nondecreasing, then inserting the value $v$ anywhere in $\sigma$ does not create any occurrence of 102 or 210.
        \item If $\sigma$ has exactly one descent $\sigma_d > \sigma_{d+1}$, then inserting the value $v$ at some position $p$ in $\sigma$ yields a $\{102, 210\}$-avoiding sequence if and only if $p = d+1$.
        \item If $\sigma$ has at least two descents, then inserting the value $v$ anywhere in $\sigma$ creates an occurrence of 102 or 210.
    \end{itemize}
\end{prop}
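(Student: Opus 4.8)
The plan is to reduce all three cases to a single clean insertion criterion. The starting observation is that $v > \max(\sigma)$ is \emph{strictly} larger than every entry of $\sigma$, so after inserting $v$ it becomes the unique maximum of the resulting sequence, call it $\tau$. Since $\sigma$ already avoids both patterns, any occurrence of $102$ or $210$ in $\tau$ must use the inserted entry, and because $v$ is the unique maximum it can only play the role of the largest symbol of the pattern: the trailing \textbf{2} of $102$, or the leading \textbf{2} of $210$.

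First I would make this precise to obtain the criterion. Suppose $v$ is inserted at position $p$, splitting $\sigma$ into a prefix $(\sigma_i)_{i \in [1,p-1]}$ and a suffix $(\sigma_i)_{i \in [p,n]}$. A new occurrence of $102$ of the form $(\sigma_i, \sigma_j, v)$ exists precisely when there are positions $i < j \leqslant p-1$ with $\sigma_i > \sigma_j$, i.e.\ exactly when the prefix is \emph{not} nondecreasing; symmetrically, a new occurrence of $210$ of the form $(v, \sigma_j, \sigma_k)$ exists precisely when the suffix is not nondecreasing. (Here "contains an inversion" and "is not nondecreasing" coincide, since a sequence that is not nondecreasing has an adjacent descent.) Consequently, inserting $v$ at position $p$ yields a $\{102,210\}$-avoiding sequence if and only if both the prefix $(\sigma_i)_{i \in [1,p-1]}$ and the suffix $(\sigma_i)_{i \in [p,n]}$ are nondecreasing.

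With this criterion the three cases are immediate bookkeeping. If $\sigma$ is nondecreasing, every prefix and every suffix is nondecreasing, so every insertion position is admissible. If $\sigma$ has a unique descent at $d$, then the prefix $(\sigma_i)_{i \in [1,p-1]}$ avoids that descent exactly when $p \leqslant d+1$, while the suffix $(\sigma_i)_{i \in [p,n]}$ avoids it exactly when $p \geqslant d+1$; the two conditions hold simultaneously only for $p = d+1$. If $\sigma$ has at least two descents $d_1 < d_2$, then for any split position $p$ at least one of $d_1, d_2$ differs from $p-1$ and hence lies strictly inside the prefix or strictly inside the suffix, so one of the two factors fails to be nondecreasing and an occurrence is always created.

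The only real content is the criterion in the second paragraph; I expect the converse direction there to be the part deserving the most care, namely checking that when both factors are nondecreasing \emph{no} occurrence involving $v$ can arise — this rests on the fact that $v$ can only be the maximal symbol of the pattern, which in turn uses $v > \max(\sigma)$ strictly. Once the criterion is established, the case analysis is purely a matter of locating the descents of $\sigma$ relative to the cut between prefix and suffix, and it is worth noting that no further appeal to the structure theory (e.g.\ Proposition \ref{propdescent}) is needed.
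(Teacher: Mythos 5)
Your proof is correct, and it shares the paper's central observation---since $v$ strictly exceeds every entry of $\sigma$, it can only play the role of the symbol $2$ in either pattern, so a new occurrence of $102$ forces a strict inversion among the entries to the left of $v$, and a new occurrence of $210$ forces one among the entries to its right. Where you diverge is in how the negative cases are dispatched. You package the observation as a standalone equivalence (insertion at position $p$ is admissible if and only if both the prefix $(\sigma_i)_{i \in [1,p-1]}$ and the suffix $(\sigma_i)_{i \in [p,n]}$ are nondecreasing) and then finish by locating the descents of $\sigma$ relative to the cut, noting that a descent at $d \neq p-1$ survives intact inside one of the two factors. The paper instead invokes Proposition \ref{propdescent}: after an insertion at any inadmissible position, the resulting sequence still contains a descent top whose value is strictly below the new maximum $v$, which immediately forces an occurrence of $102$ or $210$. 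The two arguments have essentially the same content (the proof of Proposition \ref{propdescent} is exactly your surviving-inversion argument), but yours is self-contained and yields a reusable exact characterization of the admissible positions, whereas the paper's is shorter because Proposition \ref{propdescent} was already established and is cited twice. Your converse direction---that when both factors are nondecreasing no occurrence involving $v$ can arise---is handled correctly, including the point that any occurrence in the new sequence not using $v$ would already sit inside $\sigma$, and your bookkeeping in the one-descent case (prefix clean iff $p \leqslant d+1$, suffix clean iff $p \geqslant d+1$) is accurate.
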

\begin{proof}
    We go case-by-case.
    \begin{itemize}
        \item Since $v$ is greater than all values of $\sigma$, the value $v$ could only take the role of the 2 in a pattern 102 or 210. Hence, in order for the insertion of the value $v$ to create an occurrence of either pattern, $\sigma$ must already contain the pattern 10.
        \item If $d$ is the only descent top of $\sigma$, then all occurrences $(\sigma_i, \sigma_j)$ of the pattern 10 in $\sigma$ must satisfy $i \leqslant d$ and $j > d$. In particular, inserting $v$ at position $d+1$ cannot create an occurrence of either pattern 102 or 210. If $v$ is inserted at any other position, the resulting sequence still contains a descent top of value $\sigma_d < v$, so it contains 102 or 210 by Proposition \ref{propdescent}.
        \item Inserting $v$ anywhere in $\sigma$ yields a sequence which still contains a descent top of value $\max(\sigma) < v$, and therefore contains 102 or 210 by Proposition \ref {propdescent}. \qedhere
    \end{itemize}
\end{proof}

It is now clear we want to partition $\I(102,210)$ into three subsets. For all $n \geqslant 0$, let
\begin{itemize}
    \item $\mathfrak A_n = \{\sigma \in \I_n(102, 210) \; : \; |\des(\sigma)| = 0 \} = \I_n(10)$ be the set of nondecreasing inversion sequences of size $n$,
    \item $\mathfrak B_n = \{\sigma \in \I_n(102, 210) \; : \; |\des(\sigma)| = 1 \}$ be the set of $\{102, 210\}$-avoiding inversion sequences of size $n$ having exactly one descent,
    \item $\mathfrak C_n = \{\sigma \in \I_n(102, 210) \; : \; |\des(\sigma)| \geqslant 2 \}$ be the set of $\{102, 210\}$-avoiding inversion sequences of size $n$ having at least two descents.
\end{itemize}
We define a statistic $\bounce : \I(102,210) \to \mathbb N$, which behaves differently on each subset.
\begin{itemize}
    \item For all $\sigma \in \mathfrak A_n$, let $\bounce(\sigma) = n - \sigma_n$, with the convention $\bounce(\varepsilon) = 0$.
    \item For all $\sigma \in \mathfrak B_n$, let $\bounce(\sigma) = d - \sigma_d$, where $d$ is the only element of $\des(\sigma)$.
    \item For all $\sigma \in \mathfrak C_n$, let $\bounce(\sigma) = \firstmax(\sigma) - \max(\sigma)$.
\end{itemize}
Note that when $\sigma$ is nonempty, $\bounce(\sigma)$ counts the difference between some position of the maximum of $\sigma$ and the value of this maximum; however, the exact position varies according to the number of descents of $\sigma$. In particular, $\bounce(\sigma) > 0$ if $\sigma \neq \varepsilon$.

Let $\mathfrak A_{n,k} = \{ \sigma \in \mathfrak A_n \; : \; k = \bounce(\sigma)\}$, $\mathfrak B_{n,k} = \{ \sigma \in \mathfrak B_n \; : \; k = \bounce(\sigma)\}$, and $\mathfrak C_{n,k} = \{ \sigma \in \mathfrak C_n \; : \; k = \bounce(\sigma)\}$. We also consider some subsets of $\mathfrak B_{n,k}$ and $\mathfrak C_{n,k}$.

\begin{itemize}
    \item Let $\mathfrak B^{(1)}_{n,k} = \{\sigma \in \mathfrak B_{n,k} \; : \; (\sigma_i)_{i \in [1,n] \backslash \firstmax(\sigma)} \in \mathfrak A_{n-1}\}$ be the subset of sequences of $\mathfrak B_{n,k}$ such that removing the first maximum yields a sequence in $\mathfrak A_{n-1}$. Let $\mathfrak B^{(2)}_{n,k} = \mathfrak B_{n,k} \backslash \mathfrak B^{(1)}_{n,k}$.
    \item Let $\mathfrak C^{(1)}_{n,k} = \{\sigma \in \mathfrak C_{n,k} \; : \; (\sigma_i)_{i \in [1,n] \backslash \firstmax(\sigma)} \in \mathfrak B_{n-1}\}$ be the subset of sequences of $\mathfrak C_{n,k}$ such that removing the first maximum yields a sequence in $\mathfrak B_{n-1}$. Let $\mathfrak C^{(2)}_{n,k} = \mathfrak C_{n,k} \backslash \mathfrak C^{(1)}_{n,k}$.
\end{itemize}
We give an example for each of the five cases.
\begin{itemize}
    \item $\alpha = (0,0,1,3,3,3,4,6,6) \in \mathfrak A_{9,3}$, since $\des(\alpha) = \emptyset$ and $\bounce(\alpha) = 9-6$.
    \item $\beta = (0,0,0,1,1,4,1,3,4) \in \mathfrak B^{(1)}_{9,2}$, since $\des(\beta) = \{6\}$ and  $\bounce(\beta) = 6-4$.
    \item $\beta' = (0,0,0,1,2,2,2,0,1) \in \mathfrak B^{(2)}_{9,5}$, since $\des(\beta') = \{7\}$ and $\bounce(\beta') = 7-2$.
    \item $\gamma = (0,0,1,1,2,4,3,4,3) \in \mathfrak C^{(1)}_{9,2}$, since $\des(\gamma) = \{6,8\}$ and $\bounce(\gamma) = 6-4$.
    \item $\gamma' = (0,1,1,3,3,1,3,2,3) \in \mathfrak C^{(2)}_{9,1}$, since $\des(\gamma') = \{5,7\}$ and  $\bounce(\gamma') = 4-3$.
\end{itemize}

\begin{thm} \label{thm102+210}
    The enumeration of inversion sequences avoiding the patterns 102 and 210 is given by the succession rule
    $$\Omega_{\{102,210\}} = \left \{ \begin{array}{rclll}
    (a,0) \\
    (a,k) & \leadsto & (a,i) & \text{for} & i \in [1,k+1] \\
    && (b^{(1)},i)^{k-i} & \text{for} & i \in [1,k-1] \vspace{5pt}\\
    (b^{(1)},k) & \leadsto & (b^{(1)},k) \\
    && (b^{(2)},i) & \text{for} & i \in [1,k+1] \\
    && (c^{(1)},i) & \text{for} & i \in [1,k-1] \vspace{5pt}\\
    (b^{(2)},k) & \leadsto & (b^{(2)},i) & \text{for} & i \in [1,k+1]  \vspace{5pt}\\
    (c^{(1)},k) & \leadsto & (c^{(1)},k) \\
    && (c^{(2)},i) & \text{for} & i \in [1,k] \vspace{5pt}\\
    (c^{(2)},k) & \leadsto & (c^{(2)},i) & \text{for} & i \in [1,k].
\end{array} \right.$$
\end{thm}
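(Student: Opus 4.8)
The plan is to realise $\Omega_{\{102,210\}}$ as a concise relabelling of a combinatorial generating tree for $\I(102,210)$ built by the maximum-removal method of this section: the parent of a nonempty sequence is obtained by deleting one occurrence of its maximum, which (by the general facts established at the beginning of Section \ref{sectionMax}) always returns an inversion sequence still avoiding $102$ and $210$, so this does define a valid combinatorial generating tree. Each node $\sigma$ is relabelled by the pair $(\mathrm{type}(\sigma),\bounce(\sigma))$, where the type records which of the five classes $\mathfrak A,\mathfrak B^{(1)},\mathfrak B^{(2)},\mathfrak C^{(1)},\mathfrak C^{(2)}$ contains $\sigma$ and $\bounce$ is the statistic defined above. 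I would first extract the structural picture of $\{102,210\}$-avoiders given by Propositions \ref{propdescent} and \ref{propinsert102+210}: every descent top is a maximum, so a sequence rises to its maximum and each descent falls from that maximum, whence $\mathfrak A$ is exactly the nondecreasing sequences, $\mathfrak B$ the sequences with a single descent, and $\mathfrak C$ those with at least two descents.

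The core of the argument is to compute, for each label $(\mathrm{type},k)$, the children of a node carrying it, i.e. the sequences having it as parent. A child arises by inserting a new maximum, via exactly two mechanisms: inserting a value $v>\max(\sigma)$, whose admissible positions are dictated by Proposition \ref{propinsert102+210} (anywhere if $\sigma$ is nondecreasing, only immediately after the unique descent if $\sigma$ has one descent, and nowhere if $\sigma$ has at least two descents), or inserting an extra copy $v=\max(\sigma)$ of the current maximum. For the model case $\sigma\in\mathfrak A_{n,k}$ (so $\max(\sigma)=\sigma_n=n-k$), appending a value $v\in[\max(\sigma),n]$ keeps the sequence nondecreasing and gives the $k+1$ children $(a,i)$, $i\in[1,k+1]$, while inserting $v>\max(\sigma)$ at an interior position $p\le n$ creates one descent of top $v$, landing in $\mathfrak B^{(1)}$ with $\bounce=p-v$; counting the pairs $(v,p)$ with $p-v=i$ yields exactly $k-i$ children $(b^{(1)},i)$ for $i\in[1,k-1]$, as required. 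I would then run the analogous analysis for the other four types, the new phenomenon being that a copy $v=\max(\sigma)$ can extend the maximal structure (producing the single same-label, same-bounce child seen in the $(b^{(1)},k)\leadsto(b^{(1)},k)$ and $(c^{(1)},k)\leadsto(c^{(1)},k)$ productions), or, inserted just after the descent, produce the $\mathfrak B^{(2)}$-child of bounce $k+1$ that the $v>\max$ insertions cannot reach, or create a second descent and push the sequence from $\mathfrak B$ into $\mathfrak C$; since no value above the maximum is ever insertable once there are two descents, the $\mathfrak C$-labels are fed entirely by copies of the maximum.

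The step I expect to be the main obstacle is fixing the order in which the occurrences of the maximum are removed, so that the tree is \emph{exactly} the one described by $\Omega_{\{102,210\}}$; this order is genuinely non-uniform, as the section warns. A direct check shows that always deleting the first maximum gives a nondecreasing node $(a,k)$ too many $\mathfrak B^{(1)}$-children (the surplus ones being copies of the maximum that must instead be charged to a $\mathfrak B^{(1)}$-parent), whereas always deleting the last maximum breaks the $\mathfrak B^{(1)}\leadsto\mathfrak C^{(1)}$ transitions (it lowers the $\firstmax$-based bounce). The choice that makes every production match is to delete the \emph{last} maximum when $\sigma\in\mathfrak A\cup\mathfrak B^{(1)}$ and the \emph{first} maximum when $\sigma\in\mathfrak B^{(2)}\cup\mathfrak C^{(1)}\cup\mathfrak C^{(2)}$, and the delicate part is to check that this is consistent — that each nonempty avoider has a unique parent of the predicted label and that the multiplicities ($k-i$, the extra $k+1$, and so on) come out precisely. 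In practice it is cleanest to avoid committing to one global removal operator and instead describe the children of each label directly through the insertion analysis above, since that is what ultimately controls the bookkeeping.
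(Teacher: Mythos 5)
Your overall architecture is the same as the paper's: the same five classes, the same $\bounce$ statistic, and the same per-label insertion analysis resting on Propositions \ref{propdescent} and \ref{propinsert102+210}; your treatment of $(a,k)$ and your qualitative account of which productions come from values $v>\max(\sigma)$ versus copies $v=\max(\sigma)$ are correct. The genuine gap sits exactly where you place the main obstacle: the removal rule you commit to (delete the last maximum on $\mathfrak A\cup\mathfrak B^{(1)}$, the first maximum on $\mathfrak B^{(2)}\cup\mathfrak C^{(1)}\cup\mathfrak C^{(2)}$) does not yield the tree described by $\Omega_{\{102,210\}}$. Indeed, by the very definition of $\mathfrak C^{(1)}$, deleting the first maximum of a $\mathfrak C^{(1)}$-sequence always produces a sequence in $\mathfrak B$; so under your rule no node labelled $(c^{(1)},k)$ ever has a $(c^{(1)},k)$-parent, and the production $(c^{(1)},k)\leadsto(c^{(1)},k)$ can never fire. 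Concretely, take the paper's example $\sigma=(0,0,1,1,2,4,3,4,3)\in\mathfrak C^{(1)}_{9,2}$ and its intended child $\tau=(0,0,1,1,2,4,3,4,4,3)\in\mathfrak C^{(1)}_{10,2}$, obtained by inserting $4$ right-adjacent to the rightmost descent top. The first maximum of $\tau$ is at position $6$, and deleting it gives $(0,0,1,1,2,3,4,4,3)$, a $\mathfrak B^{(2)}$-sequence with $\bounce=8-4=4$ --- not $\sigma$. So in your tree the node $(b^{(2)},4)$ acquires a $(c^{(1)},2)$-child, contradicting the production $(b^{(2)},k)\leadsto(b^{(2)},i)$ for $i\in[1,k+1]$ only, while the node $(c^{(1)},2)$ is missing its same-label child. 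Your claimed "direct check that every production matches" therefore fails on the $\mathfrak C^{(1)}$ self-loop.

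What the paper actually does is commit to a removal order that is structure-dependent \emph{within} the class $\mathfrak C^{(1)}$ (its five-step insertion order: rightmost descent top first, then the end factor, then the second rightmost descent top, then the constant run at the rightmost descent top grown to the right, then the remaining occurrences right to left). In removal terms: when the constant run of maxima at the rightmost descent top of a $\mathfrak C^{(1)}$-sequence has length at least two, the parent is obtained by deleting the rightmost entry of that run (this is what feeds $(c^{(1)},k)\leadsto(c^{(1)},k)$); only when that run is a singleton does one delete the first maximum, i.e.\ the second rightmost descent top, landing on the $\mathfrak B^{(1)}$-parent. No class-uniform first/last rule can reproduce this. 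Finally, your closing hedge --- describing the children of each label directly and avoiding a global removal operator --- is not a repair as stated: without a globally consistent parent map (equivalently, a canonical insertion order), you have not shown that every $\{102,210\}$-avoider of size $n+1$ occurs exactly once as a child of some node, and this uniqueness is precisely what the theorem asserts and what the paper's insertion order is designed to guarantee.
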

\begin{proof}
    We describe a combinatorial generating tree for $\I(102,210)$ which ``grows" inversion sequences by inserting entries in increasing order of value.
    We then label the sequences of $\I(102,210)$ according to which set they belong to: each sequence in a set $\mathfrak A_{n,k}$ is labelled $(a,k)$, each sequence in a set $\mathfrak B^{(i)}_{n,k}$ is labelled $(b^{(i)},k)$, and each sequence in a set $\mathfrak C^{(i)}_{n,k}$ is labelled $(c^{(i)},k)$, for $i \in \{1,2\}$. In particular, the empty sequence $\varepsilon$ is labelled $(a,0)$, which is the axiom of the succession rule.
    
    The order of insertion of the different occurrences of each value is not simple. We first explain it informally, before describing the full construction in detail. Let $\sigma \in \I(102,210)$ be an inversion sequence of size $n \geqslant 1$ and maximum $m$. Let $\sigma' \in \I(102,210)$ be the subsequence obtained by removing all values $m$ from $\sigma$.
    We generate $\sigma$ from $\sigma'$ by inserting entries of value $m$ in the following order.
    \begin{enumerate}
        \item The rightmost descent top of $\sigma$ (if $\sigma$ has at least one descent).
        \item Any entries of value $m$ appearing in a single factor at the end of $\sigma$, from left to right.
        \item The second rightmost descent top of $\sigma$ (if $\sigma$ has at least two descents).
        \item Any entries appearing in a constant factor of value $m$ which contains the rightmost descent top of $\sigma$ (if $\sigma$ has at least one descent), inserted from left to right, and to the right of the entry inserted at step 1.
        \item Any remaining entries of value $m$, from right to left.
    \end{enumerate}
    For instance if $\sigma = (0,0,0,1,2,2,6,6,2,3,6,4,6,6,6,4,4,6,6,6,4,6,6) \in \I_{23}(102,210)$, then the order of insertion of the 11 entries of maximal value $6$ is shown on the bottom row of the example below.
    \begin{center}
    \renewcommand{\arraystretch}{1}
    \setlength{\tabcolsep}{5pt}
    \begin{tabular}{ccccccccccccccccccccccc}
        0 & 0 & 0 & 1 & 2 & 2 & \textcolor{red} 6 & \textcolor{red} 6 & 2 & 3 & \textcolor{red} 6 & 4 & \textcolor{red} 6 & \textcolor{red} 6 & \textcolor{red} 6 & 4 & 4 & \textcolor{red} 6 & \textcolor{red} 6 & \textcolor{red} 6 & 4 & \textcolor{red} 6 & \textcolor{red} 6 \\
        &&&&&& 11 & 10 &&& 9 && 8 & 7 & 4 &&& 1 & 5 & 6 && 2 & 3
    \end{tabular}\\
    \end{center}
    This order of insertion may not be intuitive, but the resulting generating tree construction has some advantages compared to a simpler order:
    \begin{itemize}
        \item most importantly, it only requires one parameter (the value of the $\bounce$ statistic),
        \item it does not involve ``phantom objects" as in Section \ref{102+201},
        \item the resulting succession rule does not involve the size $n$ of the sequences.
    \end{itemize}

    We now turn to a step-by-step description of the generating tree, which matches the succession rule $\Omega_{\{102,210\}}$. Let $\sigma \in \I(102,210)$, $n = |\sigma|$, $k = \bounce(\sigma)$, and ${m = \max(\sigma)}$.
    \begin{itemize}
        \item Suppose $\sigma \in \mathfrak A_{n,k}$. In particular, if $\sigma$ is nonempty then $\max(\sigma) = \sigma_n = n-k$.
        \begin{itemize}
            \item Inserting any value $v \in [n-k, n]$ at the end of $\sigma$ yields a sequence in $\mathfrak A_{n+1, n+1-v}$, where $n+1-v \in [1,k+1]$. This production alone generates all nondecreasing inversion sequences.
            \item If $\sigma$ is nonempty, inserting any value $v \in [n-k+1, n-1]$ at any position ${p \in [v+1,n]}$ creates a sequence in $\mathfrak B^{(1)}_{n+1, p-v}$. For each $i = p-v \in [1,k-1]$, there are $k-i$ sequences in $\mathfrak B^{(1)}_{n+1, i}$ generated by $\sigma$ (one sequence for each value $v \in [n-k+1,n-i]$, for $p = v+i$).
        \end{itemize}
        \item Suppose $\sigma \in \mathfrak B^{(1)}_{n,k}$. In particular, $\des(\sigma) = \{k+m\}$.
        \begin{itemize}
            \item Inserting the value $m$ at the end of $\sigma$ yields a sequence in $\mathfrak B^{(1)}_{n+1,k}$.
            \item Inserting any value $v \in [m, m + k]$ between the top and bottom of the only descent of $\sigma$ (i.e. at position $k+m+1$) yields a sequence in $\mathfrak B^{(2)}_{n+1, k - v + m + 1}$.
            \item Inserting the value $m$ at any position $p \in [m+1, k+m-1]$ yields a sequence in $\mathfrak C^{(1)}_{n+1,p-m}$.
        \end{itemize}
        \item Suppose $\sigma \in \mathfrak B^{(2)}_{n,k}$.  In particular, $\des(\sigma) = \{k+m\}$. As before, inserting any value $v \in [m, m + k]$ between the top and bottom of the only descent of $\sigma$ (i.e. at position $k+m+1$) yields a sequence in $\mathfrak B^{(2)}_{n+1, k - v + m + 1}$.
        \item Suppose $\sigma \in \mathfrak C^{(1)}_{n,k}$. In particular, $\firstmax(\sigma) = k+m$.
        \begin{itemize}
            \item Inserting the value $m$ right-adjacent to the rightmost descent top of $\sigma$ (i.e. at position $\max(\des(\sigma))+1)$ yields a sequence in $\mathfrak C^{(1)}_{n+1,k}$.
            \item Inserting the value $m$ at any position $p \in [m+1, \firstmax(\sigma)]$ yields a sequence in $\mathfrak C^{(2)}_{n+1,p-m}$.
        \end{itemize}
        \item Suppose $\sigma \in \mathfrak C^{(2)}_{n,k}$. In particular, $\firstmax(\sigma) = k+m$. As before, inserting the value $m$ at any position $p \in [m+1, \firstmax(\sigma)]$ yields a sequence in $\mathfrak C^{(2)}_{n+1,p-m}$.
    \end{itemize}
    Notice that inserting some value $v \geqslant m$ at any other position would either yield a sequence which already appears in this construction, or create an occurrence of a pattern 102 or 210.
\end{proof}
\noindent The generating tree described by $\Omega_{\{102,210\}}$ is represented in Figure \ref{GT102+210}, on page \pageref{GT102+210}.

\begin{landscape}
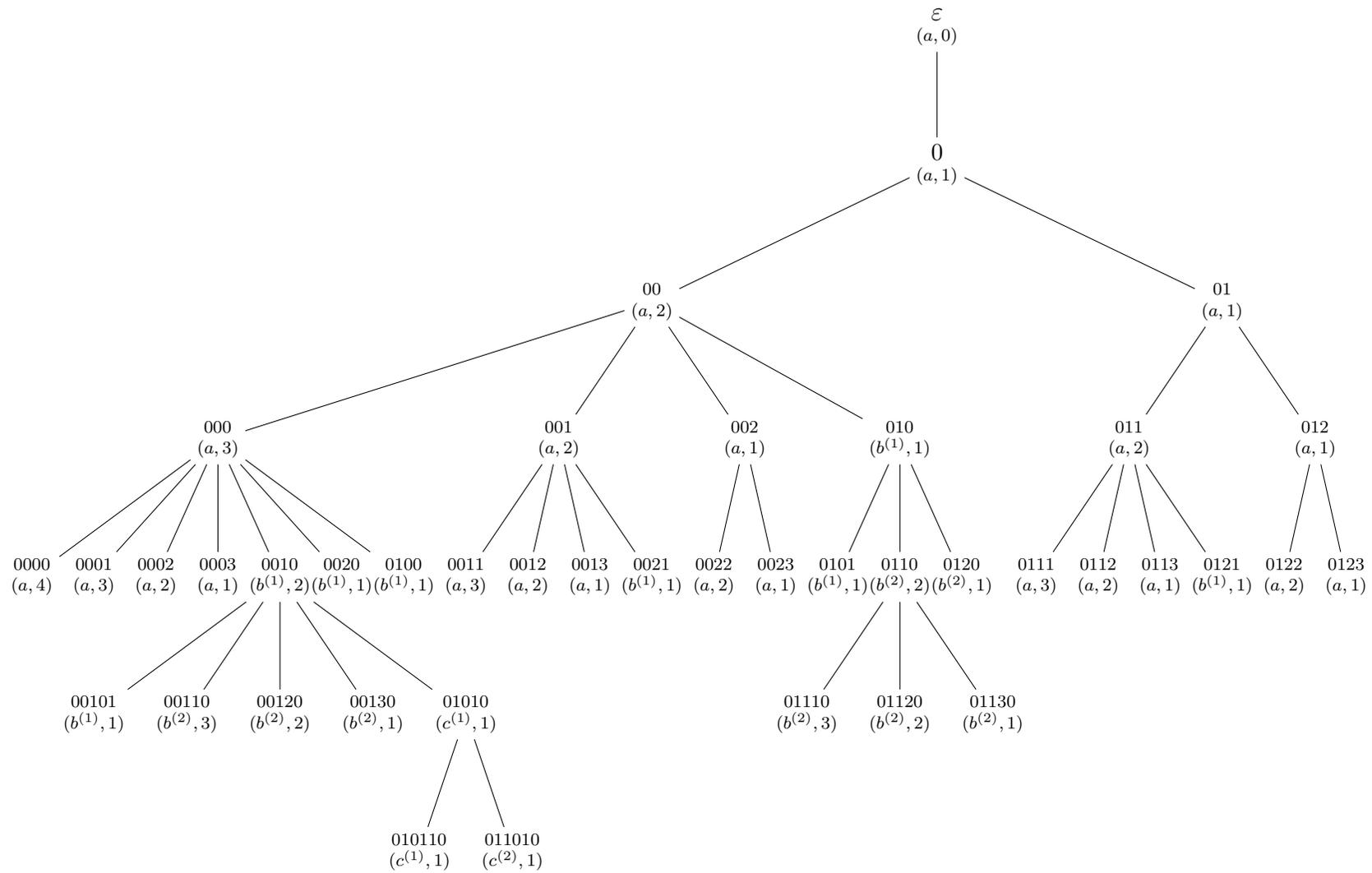
\begin{figure}
\begin{scriptsize}
\begin{tikzpicture}[level distance=22mm]
\begin{scope}
\tikzstyle{level 2}=[sibling distance=92mm]
\tikzstyle{level 3}=[sibling distance=30mm]
\tikzstyle{level 4}=[sibling distance=10mm]
\tikzstyle{level 5}=[sibling distance=15mm]
\node[align=center]{\large $\varepsilon$ \\ $(a,0)$}
child {node[align=center]{\normalsize 0 \\ $(a,1)$}
    child {node[align=center]{00 \\ $(a,2)$}
        child {node[align=center, xshift = -25mm]{000 \\ $(a,3)$}
            child {node[align=center]{0000 \\ $(a,4)$}}
            child {node[align=center]{0001 \\ $(a,3)$}}
            child {node[align=center]{0002 \\ $(a,2)$}}
            child {node[align=center]{0003 \\ $(a,1)$}}
            child {node[align=center]{0010 \\ $(b^{(1)},2)$}
                child {node[align=center]{00101 \\ $(b^{(1)},1)$}}
                child {node[align=center]{00110 \\ $(b^{(2)},3)$}}
                child {node[align=center]{00120 \\ $(b^{(2)},2)$}}
                child {node[align=center]{00130 \\ $(b^{(2)},1)$}}
                child {node[align=center]{01010 \\ $(c^{(1)},1)$}
                    child {node[align=center]{010110 \\ $(c^{(1)},1)$}}
                    child {node[align=center]{011010 \\ $(c^{(2)},1)$}}
                }
            }
            child {node[align=center]{0020 \\ $(b^{(1)},1)$}}
            child {node[align=center]{0100 \\ $(b^{(1)},1)$}}
        }
        child {node[align=center]{001 \\ $(a,2)$}
            child {node[align=center]{0011 \\ $(a,3)$}}
            child {node[align=center]{0012 \\ $(a,2)$}}
            child {node[align=center]{0013 \\ $(a,1)$}}
            child {node[align=center]{0021 \\ $(b^{(1)},1)$}}
        }
        child {node[align=center]{002 \\ $(a,1)$}
            child {node[align=center]{0022 \\ $(a,2)$}}
            child {node[align=center]{0023 \\ $(a,1)$}}
        }
        child {node[align=center, xshift = -5mm]{010 \\ $(b^{(1)},1)$}
            child {node[align=center]{0101 \\ $(b^{(1)},1)$}}
            child {node[align=center]{0110 \\ $(b^{(2)},2)$}
                child {node[align=center]{01110 \\ $(b^{(2)},3)$}}
                child {node[align=center]{01120 \\ $(b^{(2)},2)$}}
                child {node[align=center]{01130 \\ $(b^{(2)},1)$}}
            }
            child {node[align=center]{0120 \\ $(b^{(2)},1)$}}
        }
    }
    child {node[align=center]{01 \\ $(a,1)$}
        child {node[align=center]{011 \\ $(a,2)$}
            child {node[align=center]{0111 \\ $(a,3)$}}
            child {node[align=center]{0112 \\ $(a,2)$}}
            child {node[align=center]{0113 \\ $(a,1)$}}
            child {node[align=center]{0121 \\ $(b^{(1)},1)$}}
        }
        child {node[align=center]{012 \\ $(a,1)$}
            child {node[align=center]{0122 \\ $(a,2)$}}
            child {node[align=center]{0123 \\ $(a,1)$}}
        }
    }
};
\end{scope}
\end{tikzpicture}
\end{scriptsize}
\caption{Part of the generating tree described by the succession rule $\Omega_{\{102,210\}}$. }  
\label{GT102+210}
\end{figure}
\end{landscape}

\subsection{The pairs \{000, 201\} or \{000, 210\}} \label{000+201}
A bijection between $\I(000,201)$ and $\I(000,210)$ was established in \cite[Theorem 8.1]{Yan_Lin_2020}. In this section we enumerate $\I(000,201)$, although the same construction can also be applied to $\I(000,210)$ with a little more work, and results in the same equations.
Let $\mathfrak A_{n,m,p} = \{\sigma \in \I_n(000,201) \; : \; m = \max(\sigma), \; p = \firstmax(\sigma) = \lastmax(\sigma)\}$ be the set of $\{000, 201\}$-avoiding inversion sequences of size $n$, maximum $m$, and such that $m$ appears only once and at position $p$. Let $\mathfrak a_{n,m,p} = |\mathfrak A_{n,m,p}|$. By definition, $\mathfrak a_{n,m,p} = 0$ if $n < p$ or $p \leqslant m$. When $m = 0$, $\mathfrak a_{n,0,p} = \delta_{n,1} \delta_{p,1}$.
The numbers $(\mathfrak a_{n,m,p})_{n,m,p \in \mathbb N}$ can then be calculated using the following lemma.
\begin{lem} \label{lem000+201}
For all $1 \leqslant m < p \leqslant n$,
$$\mathfrak a_{n,m,p} = \sum_{s = 0}^{m-1} \sum_{q = s+1}^{p} \mathfrak a_{n-1,s,q} + (p-q+\delta_{p,q}-\delta_{p,n}) \mathfrak a_{n-2,s,q}$$

\end{lem}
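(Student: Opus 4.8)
The plan is to reuse the remove-the-maximum construction of this section. Given $\sigma \in \mathfrak A_{n,m,p}$, I delete its unique maximum $m$ (at position $p$) to obtain a sequence $\sigma' \in \I_{n-1}(000,201)$ whose maximum is the second largest value $s = \sec(\sigma) \in [0,m-1]$. Since $\sigma$ avoids $000$, the value $s$ occurs either once or twice in $\sigma'$, and these two cases will produce the two terms of the recurrence. The key structural input, which I would isolate first, is that avoidance of $201$ together with $\max(\sigma)$ being attained only at position $p$ forces the suffix $(\sigma_i)_{i\in[p+1,n]}$ to be nonincreasing (this is exactly the $201$ condition listed at the start of the section, with no other maximum to ignore). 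This single fact controls which insertions of $m$ are admissible, so I would establish it before treating either case.

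For the first term I would show that $\sigma \mapsto \sigma'$ restricts to a bijection between the sequences of $\mathfrak A_{n,m,p}$ for which $s$ appears once and $\coprod_{s=0}^{m-1}\coprod_{q=s+1}^{p}\mathfrak A_{n-1,s,q}$. Conversely, starting from $\sigma'\in\mathfrak A_{n-1,s,q}$ (so $\sigma'$ is nonincreasing from its unique maximum at $q$ onward), the sequence $[\sigma']_p^{m}$ lies in $\mathfrak A_{n,m,p}$: the value $m>s$ is a fresh unique maximum, no $000$ is created, and the new suffix after $p$ stays nonincreasing precisely when $q\le p$; if $q>p$ the ascent into the maximum $s$ at position $q$ survives in that suffix and creates a $201$. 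This accounts for the coefficient $1$ and the range $q\in[s+1,p]$, the lower bound $q\geq s+1$ being forced by $s=\sigma'_q<q$.

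The harder term is the one where $s$ appears twice, and this is where I expect the main obstacle to lie: pinning down the coefficient $p-q+\delta_{p,q}-\delta_{p,n}$. Here I would let $\tau\in\mathfrak A_{n-2,s,q}$ be $\sigma$ with $m$ and the rightmost of its two copies of $s$ deleted, where $q$ is the position of the surviving (leftmost) $s$, and count the preimages of a fixed $\tau$. Reconstructing $\sigma$ amounts to two insertions into $\tau$: placing a second $s$ at some position $r>q$ (any such $r$ preserves $\{000,201\}$-avoidance, since inside the nonincreasing tail of $\tau$ the extra copy of the maximum is ignored), and then inserting $m$ at the fixed position $p$. Admissibility of the pair reduces, via the suffix-nonincreasing criterion, to a short case analysis: when $r=q+1$ the two copies of $s$ are adjacent, no ascent is introduced, and the insertion is always valid (using $q\le p$); when $r\ge q+2$ a single ascent into the second $s$ appears at $r-1\to r$, which lies outside the suffix after $p$ exactly when $r\le p$. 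Counting the admissible $r\in\{q+1\}\cup\{q+2,\dots,\min(p,n-1)\}$ gives $1+\max(0,\min(p,n-1)-q-1)$, which equals $p-q$ when $q<p\le n-1$, equals $1$ when $q=p$, and equals $p-q-1$ when $p=n$ and $q<p$ — that is, $p-q+\delta_{p,q}-\delta_{p,n}$ in every case. The delicate part that must be verified against all edge configurations is precisely this boundary bookkeeping: the two Kronecker deltas, together with the automatic vanishing of $\mathfrak a_{n-2,s,q}$ once $q>n-2$, which disposes of the otherwise worrisome cases $p=q=n$ and $p=n,\ q=n-1$. Summing the two disjoint contributions over $s$ and $q$ then yields the stated identity.
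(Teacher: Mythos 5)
Your proposal is correct and follows essentially the same route as the paper's proof: remove the unique maximum $m$ at position $p$, split according to whether the second largest value $s$ appears once or twice, and count the admissible positions for the duplicate $s$ (your partition into the adjacent slot $r=q+1$ and the ascent-creating slots capped at $\min(p,n-1)$ matches the paper's before-$p$/after-$p$ count of $p-q-1+\delta_{p,q}$ plus $1-\delta_{p,n}$, with the boundary discrepancies absorbed by the vanishing of $\mathfrak a_{n-2,s,q}$ for $q>n-2$, exactly as you note). Your explicit isolation of the suffix-nonincreasing criterion is a point the paper leaves implicit via its section preamble, but the argument is the same.
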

\begin{proof}
    Let $m \geqslant 1, p > m, n \geqslant p$ be three integers, and let $\sigma \in \mathfrak A_{n,m,p}$. Let $\sigma'$ be the sequence obtained by removing the value $m$ from $\sigma$. Let $s = \max(\sigma')$ be the second largest value in $\sigma$, and $q = \firstmax(\sigma')$. Note that if $q < p$ then $\sigma_q = s$, and if $q \geqslant p$ then $\sigma_{q+1} = s$. In particular, if $q > p$ then the subsequence $(\sigma_p, \sigma_{p+1}, \sigma_{q+1})$ is an occurrence of the pattern 201. Hence $q$ is less than or equal to $p$.
    
    Since $\sigma'$ avoids the pattern 000, its maximum $s$ appears either once or twice.
    \begin{itemize}
        \item If $s$ appears only once in $\sigma'$, then $\sigma' \in \mathfrak A_{n-1,s,q}$. Note that inserting the value $m$ at position $p$ in any sequence in $\mathfrak A_{n-1,s,q}$ for $s < m$ and $q \leqslant p$ does not create an occurrence of 000 or 201, so this is a bijection.
        \item If $s$ appears twice, then removing the second occurrence yields an inversion sequence $\sigma'' \in \mathfrak A_{n-2,s,q}$. The possible positions for the second $s$ in $\sigma'$ are:
        \begin{itemize}
            \item Before $p$:
            $\begin{cases}
                [q+1,p-1] & \text{if} \quad q < p \\
                \emptyset & \text{if} \quad q = p
            \end{cases}$ , so there are $p-q-1+\delta_{p,q}$ choices.
            \item After $p$:
            $\begin{cases}
                \{p+1\} & \text{if} \quad p < n \\
                \emptyset & \text{if} \quad p = n
            \end{cases}$ , so there are $1-\delta_{p,n}$ choices.
        \end{itemize}
        This adds up to $p-q+\delta_{p,q}-\delta_{p,n}$ possible positions for the second $s$.
    \end{itemize}
    Summing over all possible values of $m$ and $p$ completes the proof.
\end{proof}
\begin{thm} \label{thm000+201}
For all $n \geqslant 2$,
$$|\I_n(000,201)| = \sum_{m = 0}^{n-1} \sum_{p = m+1}^n \mathfrak a_{n,m,p} + (n-p) \mathfrak a_{n-1,m,p}.$$
\end{thm}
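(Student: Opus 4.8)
The plan is to partition $\I_n(000,201)$ according to the number of times the maximal value $m = \max(\sigma)$ occurs. Since $\sigma$ avoids $000$, the value $m$ appears either once or twice in any nonempty $\sigma$, and for $n \geqslant 2$ every such $\sigma$ is nonempty. The sequences in which $m$ appears exactly once, with that unique occurrence at position $p = \firstmax(\sigma) = \lastmax(\sigma)$, are by definition precisely those counted by $\mathfrak a_{n,m,p}$; summing over all admissible $m$ and $p$ accounts for the first term $\sum_{m,p}\mathfrak a_{n,m,p}$. It then remains to count the sequences in which $m$ appears exactly twice and to show that they are enumerated by $\sum_{m,p}(n-p)\,\mathfrak a_{n-1,m,p}$.

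For this second case I would set up a bijection between the $\{000,201\}$-avoiding inversion sequences of size $n$ whose maximum occurs twice and the pairs consisting of a sequence $\tau \in \mathfrak A_{n-1,m,p}$ together with a choice of one insertion slot strictly to the right of position $p$. In the forward direction, write $p_1 = \firstmax(\sigma) < p_2 = \lastmax(\sigma)$ for the two positions of $m$, and let $\tau$ be obtained from $\sigma$ by deleting the rightmost copy of $m$, at position $p_2$. As recalled at the start of Section \ref{sectionMax}, deleting an occurrence of the maximum always yields an inversion sequence, and deletion cannot create a pattern, so $\tau \in \I_{n-1}(000,201)$; moreover $m$ still occurs in $\tau$ exactly once, at position $p_1$, whence $\tau \in \mathfrak A_{n-1,m,p_1}$. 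Since $p_2 > p_1$, the deleted entry occupied one of the slots lying at or after position $p_1$ in the length-$(n-1)$ sequence $\tau$, and there are exactly $n-p_1$ such slots; this slot is the extra datum I record.

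The crux is to verify that the inverse map is well defined, i.e. that for every $\tau \in \mathfrak A_{n-1,m,p}$ and every one of the $n-p$ slots at or after position $p$, inserting a second copy of $m$ there produces a $\{000,201\}$-avoiding inversion sequence in which $m$ occurs exactly twice. Avoidance of $000$ is immediate, since the inserted letter merely creates a second copy of $m$ and leaves all other multiplicities unchanged. The inversion-sequence condition holds because $\mathfrak a_{n-1,m,p}=0$ unless $p \geqslant m+1$, so the new occupied index is at least $p+1 \geqslant m+2 > m$, while shifting the later entries one place to the right only strengthens the inequalities $\tau_i < i$. The one genuine obstacle is $201$-avoidance, which I would handle as follows: being the maximal value, the inserted $m$ can only play the role of the leading $2$ in a forbidden $201$, so any new occurrence would require two later positions $j<k$ with values $v_j < v_k < m$; but such a pair lies to the right of the first copy of $m$ at position $p$ and would already form a $201$ together with that copy in $\tau$, contradicting $\tau \in \I_{n-1}(000,201)$. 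Hence no $201$ is created, every one of the $n-p$ insertions is valid, and the two maps are mutually inverse. Counting pairs yields exactly $\sum_{m,p}(n-p)\,\mathfrak a_{n-1,m,p}$ for the twice-occurring case, and adding the two cases gives the stated formula; the numbers $\mathfrak a_{n,m,p}$ are in turn computed from Lemma \ref{lem000+201}.
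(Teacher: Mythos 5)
Your proof is correct and takes essentially the same approach as the paper: both partition $\I_n(000,201)$ according to whether the maximum $m$ occurs once or twice (000-avoidance ruling out more), and handle the twice-occurring case by deleting the second copy of $m$ to land in $\mathfrak A_{n-1,m,p}$ and noting that it can be reinserted in any of the $n-p$ positions after $p$. Your explicit check that each such insertion cannot create a 201 (since the inserted $m$ could only serve as the leading 2, which would force a 201 already present with the first copy of $m$) is spelled out more carefully than in the paper, but it is the same argument.
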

\begin{proof}
    Let $n \geqslant 2$, $\sigma \in \I_n(000,201)$, $m = \max(\sigma)$, and $p = \firstmax(\sigma)$. Since $\sigma$ avoids 000, the maximum $m$ appears either once or twice in $\sigma$.
    \begin{itemize}
        \item If $m$ appears only once, then $\sigma \in \mathfrak A_{n,m,p}$.
        \item If $m$ appears twice, then removing the second $m$ yields a sequence $\sigma' \in \mathfrak A_{n-1,m,p}$. There are $n-p$ possible positions for the second $m$, hence $n-p$ different sequences $\sigma$ correspond to the same $\sigma'$.
    \end{itemize}
    Summing over all possible values of $m$ and $p$ completes the proof.
\end{proof}

\medskip

\begin{rem}
    The pairs of patterns studied in the next four sections contain either 101 or 110. In a sequence that avoids 101 (resp. 110), all occurrences of the maximum after the leftmost one must appear in a single factor at the end of the sequence (resp. consecutive to the leftmost maximum).
    This allows us to restrict the enumeration to inversion sequences which contain only one occurrence of the maximum without loss of generality, since each repetition of that maximum can only be placed at a single position.
\end{rem}

\subsection{The pair \{100, 110\}}
Let $\mathfrak A_{n,m,p} = \{\sigma \in \I_n(100,110) \; : \; m = \max(\sigma), \; p = \firstmax(\sigma) = \lastmax(\sigma)\}$ be the set of $\{100, 110\}$-avoiding inversion sequences of size $n$, maximum $m$, and such that $m$ appears only once and at position $p$. Let $\mathfrak a_{n,m,p} = |\mathfrak A_{n,m,p}|$. In particular, $\mathfrak a_{n,m,p} = 0$ if $n < p$ or $p \leqslant m$, and $\mathfrak a_{n,0,p} = \delta_{n,1} \delta_{p,1}$.
\begin{lem} \label{lem100+110}
Let $1 \leqslant m < p \leqslant n$. If $p \in \{n-1,n\}$,
$$\mathfrak a_{n,m,p} = \sum_{s=0}^{m-1} \sum_{q=s+1}^{n-1} \sum_{\ell = q}^{n-1} \mathfrak a_{\ell,s,q}.$$
If $p < n-1$,
$$\mathfrak a_{n,m,p} = \sum_{s=0}^{m-1} (n-p) \cdot \mathfrak a_{n-1,s,p} + \sum_{q=s+1}^{p-1} \mathfrak a_{n-1,s,q} + \mathfrak a_{n-2,s,q}.$$
\end{lem}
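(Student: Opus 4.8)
The plan is to use the ``removing the maximum'' decomposition of this section, applied to the unique maximum $m$ at position $p$. For $\sigma \in \mathfrak{A}_{n,m,p}$, write $\sigma'$ for the size-$(n-1)$ sequence obtained by deleting this single $m$; then $\sigma' \in \I_{n-1}(100,110)$ and $\max(\sigma') = \sec(\sigma) =: s < m$. The two governing facts are the characterizations of avoidance ``at the maximum'' recalled in the method. Since the reinserted $m$ will be the unique maximum, the $110$-condition is vacuous, and the $100$-condition says exactly that reinserting $m$ at position $p$ is legal if and only if the values of $\sigma'$ at positions $\geqslant p$ are pairwise distinct (an inversion sequence results as soon as $m < p$, which is assumed). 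I would set up the whole argument as a bijection that rebuilds each $\sigma$ from a smaller object by this reinsertion, deleting $m$ and sometimes one further occurrence of $s$.

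First I would treat the boundary case $p \in \{n-1,n\}$. Here the suffix of $\sigma'$ at positions $\geqslant p$ has length at most one, so the distinctness condition is automatic: both $\mathfrak{A}_{n,m,n-1}$ and $\mathfrak{A}_{n,m,n}$ are in bijection with $\{\sigma' \in \I_{n-1}(100,110) : \max(\sigma') < m\}$ (insert $m$ at the end, respectively just before the last entry). It then remains to count these $\sigma'$ by their maximum $s$ and its first position $q = \firstmax(\sigma')$. By the $110$-characterization, the occurrences of $s$ in $\sigma'$ consist of $\{q\}$ together with a (possibly empty) factor of $s$'s at the very end; deleting that end-factor gives a sequence of some size $\ell \in [q,n-1]$ lying in $\mathfrak{A}_{\ell,s,q}$, and re-appending copies of $s$ is always legal, since appending the maximum at the end never creates $100$ or $110$. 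This is a bijection, yielding the triple sum $\sum_{s}\sum_{q}\sum_{\ell=q}^{n-1}\mathfrak{a}_{\ell,s,q}$.

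For $p < n-1$ the suffix after $m$ has length $\geqslant 2$ and must be distinct, which forces $s$ to occur at most twice (an end-factor of $s$'s of length $\geqslant 2$ sitting after $m$ is impossible). I would then split into the exhaustive configurations: (A) $s$ occurs once, before position $p$; (B) $s$ occurs once, after position $p$; (C) $s$ occurs twice. A short argument from the two characterizations shows that in (C) the first $s$ is before $m$ while the second is forced to the very last position $n$, because everything strictly past the second $s$ would have to equal $m$ and there is room for only the single trailing spot. Cases (A) and (C), after also deleting the trailing $s$ in (C), are plain bijections with $\mathfrak{A}_{n-1,s,q}$ and $\mathfrak{A}_{n-2,s,q}$ for $q = \firstmax < p$, giving $\sum_{q=s+1}^{p-1}(\mathfrak{a}_{n-1,s,q} + \mathfrak{a}_{n-2,s,q})$.

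The delicate point, which I expect to be the main obstacle, is case (B), responsible for the factor $(n-p)$. There $s$ sits at a single position $q \in [p,n-1]$ of $\sigma'$, and not every $\sigma' \in \mathfrak{A}_{n-1,s,q}$ is admissible, because the values of $\sigma'$ strictly between $p$ and $q$ need not be distinct. The right bookkeeping is to normalise by sliding the unique maximum $s$ leftward to position $p$: since $s$ is the unique maximum of $\sigma'$, moving it past the smaller values preserves $100$- and $110$-avoidance and leaves an inversion sequence (as $s < p$). This sets up a bijection between the admissible $\sigma'$ and pairs consisting of an element of $\mathfrak{A}_{n-1,s,p}$ together with the recorded landing position $q \in [p,n-1]$ of $s$, of which there are $n-p$, producing the term $(n-p)\,\mathfrak{a}_{n-1,s,p}$. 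Summing the contributions of (A), (B), (C) over $s \in [0,m-1]$ then gives the second displayed formula, and I would finish by verifying that these configurations are genuinely disjoint and exhaustive, so that no sequence is double-counted or omitted.
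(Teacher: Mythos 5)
Your proposal is correct and follows essentially the same route as the paper's proof: delete the unique maximum $m$, classify by the number and location of occurrences of the second largest value $s$ (one occurrence before $p$, one after $p$, or two occurrences with the second forced to position $n$), and handle the case $q \geqslant p$ by sliding the unique maximum $s$ to position $p$, which is exactly the paper's bijection yielding the factor $(n-p)\,\mathfrak a_{n-1,s,p}$. The only slight imprecision is in case (B), where the admissibility condition is distinctness of the values of $\sigma'$ on all of $[p,n-1]$ jointly, not merely of those strictly between $p$ and $q$; this does not affect your bijection, which coincides with the paper's.
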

\begin{proof}
    Let $1 \leqslant m < p \leqslant n$, and $\sigma \in \mathfrak A_{n,m,p}$. Let $\sigma' = (\sigma_i)_{i \in [1,n] \backslash \{p\}}$ be the sequence obtained by removing the value $m$ from $\sigma$. Let $s = \max(\sigma')$ and $q = \firstmax(\sigma')$. Let $\sigma''$ be the sequence obtained by removing all terms of value $s$ except the first one from $\sigma'$, so that $\sigma'' \in \mathfrak A_{\ell,s,q}$ for some $\ell \in [q,n-1]$.

    Since $\sigma$ avoids 100, all terms of $\sigma$ after position $p$ must have distinct values. In particular, $s$ appears at most once after $p$. Since $\sigma'$ avoids 110, every $s$ after the first one must appear in a single factor at the end of $\sigma'$. At this point, there are two cases:
    \begin{enumerate}
        \item If $p \in \{n-1,n\}$, then the value $s$ could appear any number of times without creating an occurrence of 100 in $\sigma$. In that case, $\sigma''$ can be any sequence in $\mathfrak A_{\ell,s,q}$ for any $\ell \in [q,n-1]$: starting from any sequence in $\mathfrak A_{\ell,s,q}$ for any $s < m$, $q > s$, and $\ell < n$, we can append $n-\ell-1$ terms of value $s$, then insert $m$ at position $p \in \{n-1,n\}$ without creating an occurrence of 100 or 110, and this is a bijection. Thus, we obtain the first equation of the lemma.
        \item If $p < n-1$:
        \begin{itemize}
            \item If $q < p$: 
            \begin{itemize}
                \item If $\sigma$ contains a single term of value $s$, then $\sigma'' = \sigma' \in \mathfrak A_{n-1,s,q}$.
                \item If $\sigma$ contains two terms of value $s$, then the second one is $\sigma_n$ due to the avoidance of 110. In that case, $\sigma'' \in \mathfrak A_{n-2,s,q}$.
            \end{itemize}
            In either case, this is a bijection for the same reason as before. Note that $s$ cannot appear more than twice in $\sigma$, otherwise $(\sigma_p, \sigma_{n-1}, \sigma_n) = (m,s,s)$ would be an occurrence of 100.
            \item If $q \geqslant p$, then $s$ appears only once in $\sigma$ in order to avoid 100.
            In that case, $\sigma'' = \sigma'$ can be any sequence in $\mathfrak A_{n-1,s,q}$ such that all values $(\sigma''_i)_{i \in [p,n-1]}$ are distinct. Moving the value $s$ in $\sigma''$ from position $q$ to position $p$ yields a sequence $\tau \in \mathfrak A_{n-1,s,p}$. Conversely, starting from any sequence $\tau \in \mathfrak A_{n-1,s,p}$ and moving the value $s$ from position $p$ to position $q \in [p,n-1]$ yields a sequence $\sigma'' \in \mathfrak A_{n-1,s,q}$ such that all values $(\sigma''_i)_{i \in [p,n-1]}$ are distinct. Since there are $n-p$ possible values of $q$, each sequence $\tau \in \mathfrak A_{n-1,s,p}$ is obtained from $n-p$ different sequences $\sigma''$. \qedhere
        \end{itemize}
    \end{enumerate}
\end{proof}
\begin{thm} \label{thm100+110}
For all $n \geqslant 2$,
$$|\I_n(100,110)| = \sum_{\ell = 1}^n \sum_{m = 0}^{\ell-1} \sum_{p = m+1}^\ell \mathfrak a_{\ell,m,p}.$$
\end{thm}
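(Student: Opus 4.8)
The plan is to mirror the decomposition used in the proof of Theorem \ref{thm000+201}: express every sequence of $\I_n(100,110)$ in terms of a sequence with a single maximum, counted by $\mathfrak a_{\ell,m,p}$. Fix $\sigma \in \I_n(100,110)$, write $m = \max(\sigma)$ and $p = \firstmax(\sigma)$. By the avoidance of 110, all occurrences of $m$ other than the first appear in a single factor at the end of $\sigma$, as recorded in the remark preceding this subsection; say there are $t \geqslant 0$ of them, necessarily occupying positions $n-t+1, \dots, n$. I would then let $\sigma''$ be the sequence obtained from $\sigma$ by deleting these $t$ trailing maxima, so that $\sigma'' \in \mathfrak A_{\ell,m,p}$ with $\ell = n-t$: deleting positions beyond $\ell$ leaves the prefix untouched, so $m$ still occurs exactly once in $\sigma''$, at position $p$.

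First I would check that this association is a bijection between $\{\sigma \in \I_n(100,110) \; : \; \max(\sigma) = m, \; \firstmax(\sigma) = p\}$ and $\coprod_{\ell=p}^{n} \mathfrak A_{\ell,m,p}$, the inverse map sending $\sigma'' \in \mathfrak A_{\ell,m,p}$ to $\sigma = \sigma'' \cdot (m)^{\,n-\ell}$. The verification reduces to three short points: appending copies of $m$ preserves the inversion-sequence condition, since each appended entry sits at a position $j \geqslant \ell+1 > \ell \geqslant p > m$; it creates no occurrence of 110, since nothing smaller than $m$ follows the appended block; and it creates no occurrence of 100, since after position $p$ the values of $\sigma''$ are distinct and strictly below $m$, so the only repeated values are the appended $m$'s, which cannot play the role of the "0" in a 100 pattern. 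The first maximum of $\sigma$ remains at position $p$, and $\sigma''$ is recovered from $\sigma$ by stripping the trailing block of maxima, so the two maps are mutually inverse.

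Summing the bijection over all admissible $m$ and $p$ then gives
$$|\I_n(100,110)| = \sum_{m=0}^{n-1} \sum_{p=m+1}^{n} \sum_{\ell=p}^{n} \mathfrak a_{\ell,m,p},$$
and the final step is a routine reindexing: both this sum and the one in the statement range over exactly the triples satisfying $0 \leqslant m < p \leqslant \ell \leqslant n$, each contributing $\mathfrak a_{\ell,m,p}$ exactly once, so the two expressions coincide.

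I expect the only genuine obstacle to be the structural input that the extra maxima form a single factor at the end of $\sigma$; once that is granted (it follows from 110-avoidance precisely as stated in the remark), both the bijection and the reindexing are elementary. The one point requiring care is that appending maxima must not introduce a 100 pattern, which is exactly why the deletion and insertion are performed on the trailing block of maxima rather than at an arbitrary position.
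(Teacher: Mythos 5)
Your proposal is correct and takes essentially the same route as the paper's proof: the paper likewise uses 110-avoidance to locate all repeated maxima in a trailing factor, strips them to land in $\mathfrak A_{\ell,m,p}$, and inverts the map by appending $n-\ell$ copies of $m$, giving the bijection with $\coprod_{0 \leqslant m < p \leqslant \ell \leqslant n} \mathfrak A_{\ell,m,p}$. Your write-up merely makes explicit the verifications (inversion-sequence condition, no new 100 or 110, reindexing of the triple sum) that the paper leaves implicit, and your reading of the structural remark — 110-avoidance forcing the repeated maxima to the \emph{end} of the sequence — is the correct one, consistent with the paper's own proof.
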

\begin{proof}
     Let $n \geqslant 2$, $\sigma \in \I_n(100, 110)$, $m = \max(\sigma)$, and $p = \firstmax(\sigma)$. Since $\sigma$ avoids $110$, all terms of value $m$ after the first one $\sigma_p$ are in a factor $(\sigma_i)_{i \in [\ell+1, n]}$ for some $\ell \in [p,n]$. Let $\sigma'$ be the sequence obtained by removing all terms of value $m$ from $\sigma$, except the first one $\sigma_p$. By definition, $\sigma' \in \mathfrak A_{\ell, m, p}$. Since we can rebuild $\sigma$ by appending $n-\ell$ terms of value $m$ to $\sigma'$, this is a bijection between $\I_n(100,110)$ and $\coprod_{0 \leqslant m < p \leqslant \ell \leqslant n} \mathfrak A_{\ell,m,p}$.
\end{proof}

\subsection{The pair \{100, 101\}}
Let $\mathfrak A_{n,m,p} = \{\sigma \in \I_n(100,101) \; : \; m = \max(\sigma), \; p = \firstmax(\sigma) = \lastmax(\sigma)\}$ be the set of $\{100, 101\}$-avoiding inversion sequences of size $n$, maximum $m$, and such that $m$ appears only once and at position $p$. Let $\mathfrak a_{n,m,p} = |\mathfrak A_{n,m,p}|$. In particular, $\mathfrak a_{n,m,p} = 0$ if $n < p$ or $p \leqslant m$, and $\mathfrak a_{n,0,p} = \delta_{n,1} \delta_{p,1}$.

\begin{lem} \label{lem100+101}
Let $1 \leqslant m < p \leqslant n$. If $p \in \{n-1,n\}$,
$$\mathfrak a_{n,m,p} = \sum_{s=0}^{m-1} \sum_{q=s+1}^{n-1} \sum_{\ell = q}^{n-1} \mathfrak a_{\ell,s,q}.$$
If $p < n-1$,
$$\mathfrak a_{n,m,p} = \sum_{s=0}^{m-1} (n-p) \cdot \mathfrak a_{n-1,s,p} + \sum_{q=s+1}^{p-1} \sum_{\ell = n-p+q-1}^{n-1} \mathfrak a_{\ell,s,q}.$$
\end{lem}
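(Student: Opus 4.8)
The plan is to argue exactly as in the proof of Lemma \ref{lem100+110}, by removing the single occurrence of the maximum and then analyzing the second largest value; the only thing that changes is how the repeated copies of that second value may sit, which is now governed by 101-avoidance (all occurrences of the maximum consecutive) rather than by 110-avoidance (all occurrences after the first gathered at the end). Concretely, I would start from $\sigma \in \mathfrak A_{n,m,p}$, let $\sigma' = (\sigma_i)_{i \in [1,n] \backslash \{p\}}$ be obtained by deleting the unique $m$, and set $s = \max(\sigma')$, $q = \firstmax(\sigma')$. Since $\sigma'$ avoids 101 and has maximum $s$, all of its $s$'s occupy one consecutive block starting at position $q$; deleting all but the first gives a core $\sigma'' \in \mathfrak A_{\ell,s,q}$ with $\ell \in [q,n-1]$, the block having $c+1 = (n-1-\ell)+1$ copies. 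Reconstructing $\sigma$ from $\sigma''$ then amounts to choosing a block length and reinserting $m$ at position $p$ (always legal as an inversion sequence since $m < p$); the whole content of the proof is to decide, through 100-avoidance, which block lengths are compatible with a given $p$. The governing fact is that $m$ is the unique maximum, so any two equal values to the right of position $p$ would create an occurrence of 100, whence at most one copy of $s$ may lie to the right of $p$ in $\sigma$.

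For the case $p \in \{n-1,n\}$ I would note that position $p$ has at most one entry to its right, so the constraint above is vacuous: every block length is admissible and every pair $q \in [s+1,n-1]$, $\ell \in [q,n-1]$ occurs. Equivalently, whether $m$ is reinserted at position $n-1$ or at position $n$, the map $\sigma \mapsto \sigma'$ is a bijection onto the 100,101-avoiding sequences of size $n-1$ with maximum below $m$, and the block decomposition enumerates those by $\sum_{s=0}^{m-1}\sum_{q=s+1}^{n-1}\sum_{\ell=q}^{n-1}\mathfrak a_{\ell,s,q}$, which is the claimed formula.

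For $p < n-1$ I would split on the position $q$ of the first $s$ relative to $p$. If $q \geqslant p$, then all copies of $s$ lie to the right of $p$ in $\sigma$, which forces a single $s$ (so $\ell = n-1$) and forces the entries of the core on $[p,n-1]$ to be pairwise distinct; sliding that lone $s$ from position $q$ back to position $p$ is a bijection onto $\mathfrak A_{n-1,s,p}$ with multiplicity $n-p$ (one preimage per admissible $q \in [p,n-1]$), giving the term $(n-p)\,\mathfrak a_{n-1,s,p}$ after summing over $s$. If $q < p$, tracking the shift induced by reinserting $m$ shows that the block $[q,q+c]$ of $\sigma'$ leaves at most one $s$ past position $p$ precisely when $q+c \leqslant p$, i.e. when $\ell = n-1-c \geqslant n-p+q-1$; this yields $\sum_{q=s+1}^{p-1}\sum_{\ell=n-p+q-1}^{n-1}\mathfrak a_{\ell,s,q}$.

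The step I expect to be most delicate is the $q \geqslant p$ sub-case: verifying that the "slide the lone second maximum from $q$ to $p$" map is genuinely a multiplicity-$(n-p)$ bijection requires checking that the move preserves both 100- and 101-avoidance and that the distinctness of the core's entries on $[p,n-1]$ is exactly the image condition. This is the same bookkeeping as in Lemma \ref{lem100+110}, so I would reuse it essentially verbatim. The genuinely new (though routine) point is the translation of $q+c \leqslant p$ into the lower summation bound $n-p+q-1$, together with the boundary case where the block straddles $p$, so that a single $s$ lands immediately after $m$ without creating a 101 — the only entry separating the two groups of $s$'s being $m > s$.
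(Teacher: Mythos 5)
Your proof is correct and follows essentially the same route as the paper: the paper's own proof simply cites Lemma \ref{lem100+110} and notes that the only case that changes is $p < n-1$ with $q < p$, where 101-avoidance forces the copies of $s$ into a single block $(\sigma'_i)_{i \in [q,q+k]}$ with $k \in [0,p-q]$, giving $\ell = n-1-k \in [n-p+q-1,n-1]$ --- exactly your translation of $q+c \leqslant p$ into the lower summation bound. Your treatment of the other cases (the vacuous constraint for $p \in \{n-1,n\}$, and the slide-with-multiplicity-$(n-p)$ bijection for $q \geqslant p$) matches the argument of Lemma \ref{lem100+110} that the paper reuses verbatim, and your boundary observation that $(s,m,s)$ is a 010 rather than a 101 is the right justification for the straddling block.
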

\begin{proof}
    This is quite similar to the proof of Lemma \ref{lem100+110}, except that now the terms of value $s$ in $\sigma'$ appear in a single factor starting at position $q$, instead of being split into $\sigma'_q$ and a factor at the end of $\sigma'$. This makes a difference in only one case: if $p < n-1$ and $q < p$, then there may be more than two terms of value $s$. More precisely, the terms of value $s$ in $\sigma'$ are in a factor $(\sigma_i)_{i \in [q, q+k]}$ for some $k \in [0, p-q]$. In particular $\sigma'' = (\sigma'_i)_{i \in [1,n-1] \backslash [q+1,q+k]} \in \mathfrak A_{\ell,s,q}$, so $\ell = n-1-k \in [n-p+q-1, n-1]$.
\end{proof}

\begin{thm} \label{thm100+101}
For all $n \geqslant 2$,
$$|\I_n(100,101)| = \sum_{\ell = 1}^n \sum_{m = 0}^{\ell-1} \sum_{p = m+1}^\ell \mathfrak a_{\ell,m,p}.$$
\end{thm}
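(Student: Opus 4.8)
The plan is to mirror the bijective argument used for Theorem~\ref{thm100+110}, whose statement is word-for-word identical: I will exhibit a size-preserving bijection between $\I_n(100,101)$ and the disjoint union $\coprod_{0 \leqslant m < p \leqslant \ell \leqslant n} \mathfrak A_{\ell,m,p}$, from which the claimed triple sum follows at once by summing $\mathfrak a_{\ell,m,p}$ over that index set. The only structural ingredient that changes, compared with the $\{100,110\}$ case, is \emph{where} the repeated maxima sit inside the sequence, so the bulk of the work is identifying that location and checking that the forward and backward maps are well defined.

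First I would fix $\sigma \in \I_n(100,101)$, set $m = \max(\sigma)$ and $p = \firstmax(\sigma)$, and record the key structural fact coming from $101$-avoidance: since any two occurrences of $m$ separated by a strictly smaller entry would form an occurrence of $101$, all entries equal to $m$ must form a single factor $(\sigma_i)_{i \in [p, \lastmax(\sigma)]}$ beginning at position $p$. I then let $\sigma'$ be the sequence obtained by deleting every occurrence of $m$ except the first. Because each deleted entry is a maximum of the sequence from which it is removed, the opening discussion of Section~\ref{sectionMax} guarantees that $\sigma'$ is again an inversion sequence; as a subsequence of $\sigma$ it still avoids $100$ and $101$, and it now contains $m$ exactly once, at position $p$. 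Writing $\ell = n - (\lastmax(\sigma) - p)$ for its length, this places $\sigma'$ in $\mathfrak A_{\ell,m,p}$ with $m < p \leqslant \ell \leqslant n$.

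The inverse map reinserts the removed copies: given $\sigma' \in \mathfrak A_{\ell,m,p}$ I would insert $n-\ell$ entries equal to $m$ immediately after position $p$, rebuilding a contiguous block of maxima. The number of inserted entries is forced by $\ell$, so the pairing is manifestly one-to-one in both directions, and the real content of the argument is the verification that reinsertion lands back inside $\I_n(100,101)$; I expect this to be the main, though routine, obstacle. One checks three things: that the inserted entries, of value $m < p$, respect the inversion-sequence inequality at their new positions and do not push any later entry past its index; that no new $101$ arises, since the maxima stay contiguous and every other repeated value is inherited from $\sigma'$; and that no new $100$ arises, because the entries following the block are exactly those following position $p$ in $\sigma'$, and $\sigma'$—whose unique maximum sits at $p$—can contain no repeated value after $p$, lest it form a $100$ together with that maximum.

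Combining the two directions yields the bijection $\I_n(100,101) \cong \coprod_{0 \leqslant m < p \leqslant \ell \leqslant n} \mathfrak A_{\ell,m,p}$, and passing to cardinalities gives $|\I_n(100,101)| = \sum_{\ell=1}^n \sum_{m=0}^{\ell-1} \sum_{p=m+1}^\ell \mathfrak a_{\ell,m,p}$. Note that Lemma~\ref{lem100+101} is not needed for this step; it is what subsequently makes the right-hand side effectively computable.
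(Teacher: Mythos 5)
Your proof is correct and is essentially the paper's own argument: the paper proves Theorem~\ref{thm100+101} by pointing to the proof of Theorem~\ref{thm100+110} with the block of repeated maxima placed immediately after the first maximum (forced by $101$-avoidance) rather than at the end of the sequence, which is exactly the bijection you construct. Your verification that reinserting the $n-\ell$ copies of $m$ after position $p$ creates no occurrence of $100$ or $101$ (using that $\sigma'$ has distinct entries after $p$) just makes explicit what the paper leaves implicit.
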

\begin{proof}
Very similar to the proof of Theorem \ref{thm100+110}, but now the factor of repetitions of the maximum is placed immediately after the first maximum, instead of being at the end of the sequence.
\end{proof}

\subsection{The pair \{110, 201\}}

Let $\mathfrak A_{n,m,p} = \{\sigma \in \I_n(110,201) \; : \; m = \max(\sigma), \; p = \firstmax(\sigma) = \lastmax(\sigma)\}$ be the set of $\{110, 201\}$-avoiding inversion sequences of size $n$, maximum $m$, and such that $m$ appears only once and at position $p$. Let $\mathfrak a_{n,m,p} = |\mathfrak A_{n,m,p}|$. In particular, $\mathfrak a_{n,m,p} = 0$ if $n < p$ or $p \leqslant m$, and $\mathfrak a_{n,0,p} = \delta_{n,1} \delta_{p,1}$.

\begin{lem}
Let $1 \leqslant m < p \leqslant n$. If $p \in \{n-1,n\}$,
$$\mathfrak a_{n,m,p} = \sum_{s=0}^{m-1} \sum_{q=s+1}^{n-1} \sum_{\ell = q}^{n-1} \mathfrak a_{\ell,s,q}.$$
If $p < n-1$,
$$\mathfrak a_{n,m,p} = \sum_{s=0}^{m-1} \sum_{q=s+1}^{p} \mathfrak a_{n-1,s,q} + \sum_{\ell = q}^{p-1+\delta_{p,q}} \mathfrak a_{\ell,s,q}.$$
\end{lem}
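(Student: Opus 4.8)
The plan is to argue exactly as in the proofs of Lemmas~\ref{lem100+110} and~\ref{lem100+101}: start from $\sigma\in\mathfrak{A}_{n,m,p}$, remove its unique maximum $m$ (sitting at position $p$) to get $\sigma'\in\I_{n-1}(110,201)$, set $s=\sec(\sigma)=\max(\sigma')$ and $q=\firstmax(\sigma')$, and then delete every occurrence of $s$ except the first to obtain $\sigma''\in\mathfrak{A}_{\ell,s,q}$, where $\ell=n-1-t$ and $t\ge 0$ is the number of deleted copies. First I would record the structural facts supplied by the pattern characterizations at the head of the section. Since $m$ is the unique maximum, avoidance of $201$ forces $(\sigma_i)_{i>p}$ to be nonincreasing, and no occurrence of $110$ can involve the single $m$; so the \emph{only} constraint created by inserting $m$ is this nonincreasing suffix. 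Avoidance of $110$ forces the occurrences of $s$ after the first to form a factor at the end of $\sigma'$, i.e. $\sigma'=\sigma''\cdot s^{t}$. Finally $q\le p$ always: if $s$ occurs to the right of $p$ then, the suffix being nonincreasing, it must occur at $p+1$, whence $q=p$; otherwise $q<p$.

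Next I would set up the reconstruction. Given $\sigma''\in\mathfrak{A}_{\ell,s,q}$ and a target position $p$, form $\sigma'=\sigma''\cdot s^{\,n-1-\ell}$ and insert $m$ at position $p$; I claim this describes a bijection between $\mathfrak{A}_{n,m,p}$ and the set of legal triples, where ``legal'' means precisely that the length-$(n-p)$ tail of $\sigma''\cdot s^{\,n-1-\ell}$ stays nonincreasing after the insertion. The analysis of legality splits on $t$. If $t=0$ (so $\ell=n-1$) the suffix is a tail of $\sigma''$, which is nonincreasing exactly when $p\ge q$. If $t\ge 1$ the suffix ends in the appended block $s^{t}$, so it is nonincreasing exactly when either $\ell\le p-1$ (the block is split by $m$, forcing $q<p$ and giving the normal form $\sigma=\sigma''\cdot s^{\,p-1-\ell}\cdot m\cdot s^{\,n-p}$), or $p=q=\ell$ (the unique copy of $s$ in $\sigma''$ is immediately followed by the whole block, i.e. $m$ sits just before all copies of $s$, giving $\sigma=(\sigma''_i)_{i<p}\cdot m\cdot s^{\,n-p}$).

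Then I would assemble the sums. For $p<n-1$, the case $t=0$ contributes $\sum_{s}\sum_{q=s+1}^{p}\mathfrak{a}_{n-1,s,q}$, while $t\ge 1$ contributes, for each $(s,q)$, the terms $\sum_{\ell=q}^{p-1}\mathfrak{a}_{\ell,s,q}$ when $q<p$ together with the single term $\mathfrak{a}_{p,s,p}$ when $q=p$; these combine into $\sum_{\ell=q}^{\,p-1+\delta_{p,q}}\mathfrak{a}_{\ell,s,q}$, which is the second displayed identity. For $p\in\{n-1,n\}$ the nonincreasing‑suffix condition is vacuous ($p=n$) or constrains a single entry ($p=n-1$), so every choice with $s\le m-1$ and $s<q\le \ell\le n-1$ is legal; the $t=0$ and $t\ge 1$ contributions then merge into the single triple sum $\sum_{s=0}^{m-1}\sum_{q=s+1}^{n-1}\sum_{\ell=q}^{n-1}\mathfrak{a}_{\ell,s,q}$ (one checks that the boundary $q=n-1$ admits no $t\ge 1$ term, as that would require $\ell=n-1$ with $t\ge 1$, but it is already accounted for by $t=0$).

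The main obstacle is the interaction, through the single $201$ constraint, between appending the extra copies of $s$ and inserting $m$: the appended copies of $s$ exceed the smaller entries preceding them, so $m$ must be inserted \emph{inside} or at the very front of the trailing $s$‑block, which is what produces the cutoff $\ell\le p-1$ and the exceptional value $\ell=p$ when $q=p$, hence the $\delta_{p,q}$. The real work is checking that $\sigma\mapsto(\sigma'',p)$ is genuinely bijective, which amounts to proving that in every $\sigma\in\mathfrak{A}_{n,m,p}$ the portion from the second occurrence of $s$ onward is forced into the normal form $s^{\ast}\,m\,s^{\ast}$ (all entries past the second $s$ lie in $\{s,m\}$ by $110$‑avoidance, and the ones after $p$ are all $s$ by the nonincreasing suffix). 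Once this normal form is established, the case distinctions above are routine bookkeeping.
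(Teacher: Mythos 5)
Your proposal is correct and follows essentially the same route as the paper: remove the unique maximum $m$ at position $p$, then remove all copies of the second-largest value $s$ except the first to land in $\mathfrak A_{\ell,s,q}$, with the same structural inputs ($q \leqslant p$ via $201$-avoidance, the trailing $s$-block via $110$-avoidance, and the nonincreasing suffix after $p$ forcing that suffix to be all $s$'s) and the same case split producing $\ell \in [q,p-1]$ for $q<p$ versus $\ell = p$ for $q=p$, i.e.\ the $\delta_{p,q}$ term. Your reconstruction/legality analysis is just a more explicit writing-out of the bijection the paper asserts, including the $p \in \{n-1,n\}$ case that the paper delegates to the proof of Lemma \ref{lem100+110}.
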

\begin{proof}
Let $1 \leqslant m < p \leqslant n$, and $\sigma \in \mathfrak A_{n,m,p}$. Let $\sigma' = (\sigma_i)_{i \in [1,n] \backslash \{p\}}$ be the sequence obtained by removing the value $m$ from $\sigma$. Let $s = \max(\sigma')$ and $q = \firstmax(\sigma')$. In particular $q \leqslant p$, otherwise $(\sigma_p, \sigma_{p+1}, \sigma_{q+1})$ would be an occurrence of 201. Let $\sigma''$ be the sequence obtained by removing all terms of value $s$ except the first one from $\sigma'$, so that $\sigma'' \in \mathfrak A_{\ell,s,q}$ for some $\ell \in [q,n-1]$.
\begin{itemize}
    \item If $p \in \{n-1,n\}$, we obtain the same equation as in Lemma \ref{lem100+110}, for the same reason.
    \item If $p < n-1$,
    \begin{itemize}
        \item If $s$ appears only once in $\sigma'$, then $\sigma'' = \sigma'$ can be any sequence in $\mathfrak A_{n-1,s,q}$
        \item If $s$ appears more than once, then every $s$ after the first one must be in a single factor at the end of $\sigma'$ to avoid 110. Since $\sigma$ avoids 201, the factor $(\sigma_i)_{i \in [p+1,n]}$ is nonincreasing, which means every term in this factor has value $s$. The sequence $\sigma''$ obtained has length $\ell \in [q, p-1]$ if $q < p$, or $\ell = p$ if $q = p$. \qedhere
    \end{itemize}
\end{itemize}
\end{proof}

\begin{thm} \label{thm110+201}
For all $n \geqslant 2$,
$$|\I_n(110,201)| = \sum_{\ell = 1}^n \sum_{m = 0}^{\ell-1} \sum_{p = m+1}^\ell \mathfrak a_{\ell,m,p}.$$
\end{thm}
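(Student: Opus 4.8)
The plan is to re-use, essentially verbatim, the bijective decomposition from the proof of Theorem \ref{thm100+110}, since the right-hand side and the definition of $\mathfrak a_{\ell,m,p}$ have exactly the same shape here, only with the forbidden pair changed to $\{110,201\}$. Concretely, given $\sigma \in \I_n(110,201)$ I would set $m = \max(\sigma)$ and $p = \firstmax(\sigma)$, then remove from $\sigma$ every occurrence of the value $m$ except the first one $\sigma_p$, obtaining a sequence $\sigma'$ in which $m$ appears exactly once. The map sending $\sigma$ to $\sigma'$, together with the recovered length $\ell = |\sigma'|$, is the candidate bijection onto $\coprod_{0 \leqslant m < p \leqslant \ell \leqslant n} \mathfrak A_{\ell,m,p}$.

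First I would justify that $\sigma'$ lands in the claimed set. Because $\sigma$ avoids $110$, its copies of $m$ after the first must sit in a single factor at the end: the occurrences of $m$ are exactly at position $p$ and in some interval $[\ell+1,n]$. This is the one structural fact the argument needs, and it is precisely the fact already exploited in Lemma \ref{lem100+110} and Theorem \ref{thm100+110}. Deleting the $n-\ell$ trailing copies of $m$ then yields a sequence $\sigma'$ of size $\ell$ whose maximum $m$ appears once, at position $p$; since deleting entries cannot create a forbidden pattern, $\sigma'$ still avoids $\{110,201\}$, so $\sigma' \in \mathfrak A_{\ell,m,p}$ with $p \leqslant \ell \leqslant n$.

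The inverse map appends $n-\ell$ copies of $m = \max(\sigma')$ to the right of a sequence $\sigma' \in \mathfrak A_{\ell,m,p}$, and I would check the result stays in $\I_n(110,201)$. It is an inversion sequence because each appended value $m$ sits at a position $i \geqslant \ell+1 > p > m$, so $m < i$. It still avoids both patterns because the appended entries are copies of the maximal value placed at the very end, so in any putative occurrence of $110$ or $201$ they could only play the role of the largest, leftmost symbol, a role incompatible with occupying the last positions; hence no new occurrence can be formed, and none can lie entirely inside $\sigma'$ by hypothesis. This is the only place where the argument genuinely differs from that of Theorem \ref{thm100+110}, with $201$-avoidance now playing the role that $100$-avoidance played there, and it is the step I expect to require the most care, although it remains elementary.

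Finally I would note the two maps are mutually inverse: $\ell$ together with $\sigma'$ determines $\sigma$ uniquely (append the correct number of maxima), and the count of trailing maxima of $\sigma$ recovers $\ell$. Summing $|\mathfrak A_{\ell,m,p}| = \mathfrak a_{\ell,m,p}$ over the index set $0 \leqslant m < p \leqslant \ell \leqslant n$ then gives the stated triple sum $\sum_{\ell = 1}^n \sum_{m = 0}^{\ell-1} \sum_{p = m+1}^\ell \mathfrak a_{\ell,m,p}$, completing the proof.
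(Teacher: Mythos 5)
Your proposal is correct and follows essentially the same route as the paper, whose proof of Theorem \ref{thm110+201} simply declares itself identical to that of Theorem \ref{thm100+110}, the only new point being exactly the one you verify: appending copies of the maximum after the first maximum cannot create an occurrence of 201 (nor of 110). One negligible slip: when $p = \ell$ the trailing run of equal maxima in $\sigma$ has length $n-\ell+1$, so $\ell$ is best recovered as $n$ minus the number of occurrences of the maximum after the first one (equivalently as $|\sigma'|$), rather than as $n$ minus the count of trailing maxima; this does not affect the bijection.
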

\begin{proof}
Identical to the proof of Theorem \ref{thm100+110}, since inserting a term of maximal value anywhere after the first maximum cannot create an occurrence of 201.
\end{proof}

\subsection{The pair \{101, 210\}} \label{101+210}

Let $\mathfrak A_{n,m,p} = \{\sigma \in \I_n(101,210) \; : \; m = \max(\sigma), \; p = \firstmax(\sigma) = \lastmax(\sigma)\}$ be the set of $\{101, 210\}$-avoiding inversion sequences of size $n$, maximum $m$, and such that $m$ appears only once and at position $p$. Let $\mathfrak a_{n,m,p} = |\mathfrak A_{n,m,p}|$. In particular, $\mathfrak a_{n,m,p} = 0$ if $n < p$ or $p \leqslant m$, and $\mathfrak a_{n,0,p} = \delta_{n,1} \delta_{p,1}$.

\begin{lem}
Let $1 \leqslant m < p \leqslant n$. If $p \in \{n-1,n\}$,
$$\mathfrak a_{n,m,p} = \sum_{s=0}^{m-1} \sum_{q=s+1}^{n-1} \sum_{\ell = q}^{n-1} \mathfrak a_{\ell,s,q}.$$
If $p < n-1$,
$$\mathfrak a_{n,m,p} = \sum_{s=0}^{m-1} \left ( \sum_{q = p}^{n-1} \mathfrak a_{q,s,p} \right ) + \sum_{q=s+1}^{p-1} \mathfrak a_{q,s,q} + \sum_{\ell = n-p+q}^{n-1} \mathfrak a_{\ell,s,q}.$$
\end{lem}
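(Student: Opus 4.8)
The plan is to reuse the backbone of the preceding lemmas of this section. Starting from $\sigma\in\mathfrak{A}_{n,m,p}$, I would remove its unique maximum $m$ (at position $p$) to obtain $\sigma'$ of size $n-1$, set $s=\max(\sigma')$ and $q=\firstmax(\sigma')$, and then contract all occurrences of $s$ except the first into a single one, producing a smaller sequence $\sigma''\in\mathfrak{A}_{\ell,s,q}$. The identity will come from counting how many $\sigma$ reduce to each $\sigma''$; the only real subtlety is where $\sigma''$ lands and whether an extra correction is needed before it does.

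The two avoidance conditions pin down the occurrences of $s$ in $\sigma$. Because $\sigma$ avoids 101 and $m$ occurs only at $p$, between two occurrences of $s$ there can be no smaller value and the only larger value available is the single $m$; hence the $s$'s form one consecutive factor, possibly split into two by the entry $m$ at position $p$. Because $\sigma$ avoids 210, the factor $(\sigma_i)_{i\in[p+1,n]}$ is nondecreasing, so any $s$'s after $p$ occupy a terminal factor; combined with the previous remark, if $s$ occurs on both sides of $p$ then the part before $p$ ends exactly at $p-1$ and the whole suffix $(\sigma_i)_{i\in[p+1,n]}$ equals $s$. This partitions $\mathfrak{A}_{n,m,p}$ into three disjoint families: all $s$'s before $p$ (then $q<p$), all after $p$ (then $q\geqslant p$), or on both sides (then $q<p$).

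For $p\in\{n-1,n\}$ the suffix to the right of $p$ is empty or a single entry, so the avoidance conditions no longer constrain how many times $s$ occurs: for each prescribed number of $s$'s there is a unique placement (a consecutive block in $\sigma'$, split by $m$ when $p=n-1$) keeping $\sigma$ in $\mathfrak{A}_{n,m,p}$. Thus each $\sigma$ comes from a unique $\sigma''\in\mathfrak{A}_{\ell,s,q}$ with $\ell\in[q,n-1]$, and summing over $s$, $q$, $\ell$ gives the first identity, exactly as in the first case of Lemma \ref{lem100+110}.

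For $p<n-1$ I would match the three families, in increasing order of difficulty, to the three summands of the second identity. If all $s$'s lie before $p$ they fill a block $[q,q+k-1]$ with $1\leqslant k\leqslant p-q$, so contraction leaves $\sigma''\in\mathfrak{A}_{\ell,s,q}$ with $\ell=n-k\in[n-p+q,n-1]$ and $q\in[s+1,p-1]$; this is the third summand. If $s$ occurs on both sides of $p$, the part before $p$ is the block $[q,p-1]$ and the suffix is entirely $s$, so contraction leaves a sequence of size exactly $q$ with $s$ last, i.e. $\sigma''\in\mathfrak{A}_{q,s,q}$; this is the second summand. The hard case is when all $s$'s lie after $p$: contraction then puts the single $s$ at a position $\geqslant p$, not at $p$, so I would slide that $s$ leftward to position $p$, exactly as in the $q\geqslant p$ case of Lemma \ref{lem100+110}. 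I expect this sliding bijection to be the main obstacle: one must verify that it preserves the inversion-sequence property (sliding $s$ left only increases the positions of the entries it passes, and $s<m<p$, so all constraints $\sigma_i<i$ remain satisfied) as well as the avoidance of 101 and 210, and that the inverse map — slide $s$ back, append the correct number of $s$'s, and reinsert $m$ at $p$ — recovers $\sigma$ uniquely. This sends the family bijectively onto $\coprod_{L=p}^{n-1}\mathfrak{A}_{L,s,p}$, the size $L=t-1$ recording the first position $t$ of the terminal $s$-block; this is the first summand. Summing the three contributions over $s$ gives the second identity.
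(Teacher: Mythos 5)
Your proposal is correct and follows essentially the same route as the paper's proof: remove the unique maximum, record the second maximum $s$ and its first position $q$, contract the repeated occurrences of $s$ to obtain $\sigma'' \in \mathfrak A_{\ell,s,q}$, and split into three families according to whether the $s$'s lie before $p$, after $p$, or on both sides of $p$, which match the three summands exactly as in the paper (whose $p \in \{n-1,n\}$ case likewise defers to the earlier lemmas). In particular, your sliding bijection for the all-after-$p$ family is precisely the paper's map ``move the value $s$ from position $q$ to position $p$'' onto $\mathfrak A_{q,s,p}$, and the extra verifications you sketch for it (preservation of the subdiagonal property and of $\{101,210\}$-avoidance, and the explicit inverse) are sound details that the paper leaves implicit.
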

\begin{proof}
Let $1 \leqslant m < p \leqslant n$, and $\sigma \in \mathfrak A_{n,m,p}$. Let $\sigma' = (\sigma_i)_{i \in [1,n] \backslash \{p\}}$ be the sequence obtained by removing the value $m$ from $\sigma$. Let $s = \max(\sigma')$ and $q = \firstmax(\sigma')$. Let $\sigma''$ be the sequence obtained by removing all terms of value $s$ except the first one from $\sigma'$, so that $\sigma'' \in \mathfrak A_{\ell,s,q}$ for some $\ell \in [q,n-1]$.
\begin{itemize}
    \item If $p \in \{n-1,n\}$, we obtain the same equation as in Lemma \ref{lem100+101}, for the same reason.
    \item If $p < n-1$:
    \begin{itemize}
        \item If $q \geqslant p$, then every term in the factor $(\sigma_i)_{i \in [q+1, n]}$ has value $s$, because $\sigma$ avoids 210. In that case $\sigma''$ is a sequence in $\mathfrak A_{q,s,q}$ such that $(\sigma''_i)_{i \in [p,q]}$ is nondecreasing. The set of such sequences $\sigma''$ is in bijection with $\mathfrak A_{q,s,p}$ (to see that, simply move the value $s$ from position $q$ to position $p$).
        
        \item If $q < p$, let $k+1$ be the number of terms of value $s$ in $\sigma$. In particular, $k = |\sigma'| - |\sigma''| = n-1-\ell$. Note that the terms of value $s$ in $\sigma'$ are exactly $(\sigma'_i)_{i \in [q, q+k]}$.
        \begin{itemize}
            \item If $k \geqslant p-q$, then $\sigma_{p+1} = s$, which implies that every term $(\sigma_i)_{i \in [q, n] \backslash \{p\}}$ has value $s$ because $\sigma$ avoids 210. In particular $k = n-1-q$, $\ell = q$, and we obtain a sequence $\sigma'' \in \mathfrak A_{q,s,q}$.
            \item If $k < p-q$, then the value $s$ does not appear after position $p$, and we obtain a sequence $\sigma'' \in \mathfrak A_{\ell,s,q}$ for $\ell = n-1-k \in [n-p+q,n-1]$. \qedhere
        \end{itemize}
    \end{itemize}
\end{itemize}
\end{proof}

\begin{thm} \label{thm101+210}
For all $n \geqslant 2$,
$$|\I_n(101,210)| = \sum_{\ell = 1}^n \sum_{m = 0}^{\ell-1} \sum_{p = m+1}^\ell \mathfrak a_{\ell,m,p}.$$
\end{thm}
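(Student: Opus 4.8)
The plan is to prove the identity exactly as in Theorems \ref{thm100+110} and \ref{thm100+101}: I would exhibit a size-preserving bijection between $\I_n(101,210)$ and the disjoint union $\coprod_{0 \leqslant m < p \leqslant \ell \leqslant n} \mathfrak A_{\ell,m,p}$, obtained by removing all but the first occurrence of the maximum. Summing $\mathfrak a_{\ell,m,p}$ over this index set then gives the stated triple sum, with the numbers $\mathfrak a_{\ell,m,p}$ supplied by the preceding lemma. The only ingredient genuinely specific to the pair $\{101,210\}$ is checking that the inverse insertion stays inside the class; the rest is identical bookkeeping to the earlier theorems.

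First I would set up the forward map. Given $\sigma \in \I_n(101,210)$ with $m = \max(\sigma)$ and $p = \firstmax(\sigma)$, I would argue that avoiding $101$ forces every occurrence of $m$ to be consecutive: if two maxima were separated by a position carrying a smaller value $v < m$, then $(m,v,m)$ would be an occurrence of $101$. Hence the maxima fill a contiguous block $[p, \lastmax(\sigma)]$, and deleting all of them except $\sigma_p$ yields a sequence $\sigma'$ whose maximum $m$ occurs only once, at position $p$, so $\sigma' \in \mathfrak A_{\ell,m,p}$ with $\ell = n - (\lastmax(\sigma) - p)$. Since the block lies in positions $p$ through $\lastmax(\sigma) \leqslant n$ and $\sigma_p = m < p$, the constraints $0 \leqslant m < p \leqslant \ell \leqslant n$ hold. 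The inverse map starts from an arbitrary $\sigma' \in \mathfrak A_{\ell,m,p}$ and inserts $n-\ell$ copies of $m$ immediately to the right of position $p$; these two operations are plainly mutually inverse once we know the inserted sequence again avoids both patterns.

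The hard part — and the step distinguishing this proof from that of Theorem \ref{thm100+101}, where the second pattern was $100$ — will be verifying that inserting a block of maxima right after position $p$ cannot create an occurrence of $210$. I would use that $\sigma'$ avoids $210$ and has its maximum only at $p$: this forces the factor $(\sigma'_i)_{i>p}$ to be nondecreasing, because any descent after position $p$ together with $\sigma'_p = m$ would itself be a $210$. Consequently no inserted copy of $m$ can play the role of the ``2'' in a $210$ (there is no descent to its right), and since $m$ is the maximum an inserted entry can never be the ``1'' or the ``0'' either; thus no new $210$ arises. Avoidance of $101$ is immediate, as the only occurrences of $m$ form a single consecutive block. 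With the bijection established, I conclude $|\I_n(101,210)| = \sum_{\ell=1}^n \sum_{m=0}^{\ell-1} \sum_{p=m+1}^\ell \mathfrak a_{\ell,m,p}$.
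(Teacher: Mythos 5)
Your proposal is correct and is essentially the paper's own argument: the paper proves Theorem \ref{thm101+210} by the same bijection as Theorems \ref{thm100+110} and \ref{thm100+101} (avoidance of 101 forces the maxima into a consecutive block starting at $\firstmax(\sigma)$, remove all but the first, reinsert immediately after position $p$), noting only that reinserting maximal values after the first maximum cannot create a 210. Your verification via the nondecreasing suffix $(\sigma'_i)_{i>p}$ is exactly the justification the paper leaves implicit, so you have simply spelled out the same proof in more detail.
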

\begin{proof}
Identical to the proof of Theorem \ref{thm100+101}, since inserting a term of maximal value anywhere after the first maximum cannot create an occurrence of 210.
\end{proof}

\subsection{A generating tree perspective on the previous constructions}

The inversion sequences studied in Sections \ref{000+201} to \ref{101+210} could also be generated by inserting their values one by one, in increasing order (and inserting any repetitions of a value from left to right). This yields generating trees which differ only slightly from our construction. We chose not to present them that way, since it would require distinguishing several subsets of inversion sequences, as we did in Sections \ref{102+201} and \ref{102+210}.
In all five cases, the resulting succession rules would have two integer parameters, which always record the number of positions where a new maximum may be inserted, and the number of possible values of a new maximum. The first parameter is influenced by pattern-avoidance, and the second one is used to maintain the ``subdiagonal" property characterizing inversion sequences.

Note that Sections \ref{000+102} and \ref{102+210} use such generating tree constructions, but only have one integer parameter.
\begin{itemize}
    \item For the pair of patterns $\{000, 102\}$, the two parameters are one and the same, and correspond to the statistic we denoted $\sites$, as explained in the paragraph just before Theorem \ref{thm000+102}.
    \item For the pair $\{102, 210\}$:
    \begin{itemize}
        \item a new maximum cannot be inserted in a sequence with several descents; it follows that such sequences only require one parameter to record the number of positions where repetitions of the current maximum could be inserted.
        \item in a sequence which has 0 or 1 descents, the positions where a new maximum may be inserted are trivial (for 0 descents any position is fine, and for 1 descent the only available position is between the top and the bottom of the descent), so only the number of possible values of a new maximum is required.
    \end{itemize}
\end{itemize}

\section{Splitting at the first maximum} \label{sectionSplitMax}
\subsection{Method} 
The method described in this section is a generalization of the construction used in \cite{Mansour_Shattuck_2015} for 120-avoiding inversion sequences.

Let $\sigma \in \I_n$ be an inversion sequence of size $n \geqslant 1$, $m = \max(\sigma)$, and $p = \firstmax(\sigma)$. By definition, $p > m$. Let $\alpha = (\sigma_i)_{i \in [1,p-1]}$, $\beta = (\sigma_i)_{i \in [p,n]}$ be two factors of $\sigma$, which satisfy $\alpha \cdot \beta = \sigma$.
The case $m = 0$ corresponds to constant inversion sequences, whose enumeration is trivial. When $m > 0$, we use this decomposition to count inversion sequences $\sigma$ from the number of possible choices for $\alpha$ and $\beta$.
Note that $\alpha$ is an inversion sequence of size $p-1$ and maximum less than $m$, and $\beta$ is a word of length $n-p+1$ over the alphabet $[0,m]$ such that $\beta_1 = m$. More precisely, the following holds (where $\W_{n,k}$ is the set of words of length $n$ over the alphabet $[0,k-1]$).
\begin{rem} \label{remsplit}
    For all $0 \leqslant m < p < n$, $\{\sigma \in \I_n \; : \; m = \max(\sigma), \, p = \firstmax(\sigma)\} = \{\alpha \cdot \beta \; : \; \alpha \in \I_{p-1}, \, \max(\alpha) < m, \, \beta \in \W_{n-p+1,m+1} ,\, \beta_1 = m\}$.
\end{rem}
Now we can express pattern avoidance on $\sigma$ in terms of necessary and sufficient conditions on $\alpha$ and $\beta$.
\begin{rem} \label{remsplitpattern}
    Let $\tau$ be a pattern of length $k \geqslant 2$. Then $\sigma$ avoids $\tau$ if and only if all three following conditions are satisfied:
\begin{enumerate}
    \item $\alpha$ avoids $\tau$,
    \item $\beta$ avoids $\tau$,
    \item There are no $i_1 < i_2 < \dots < i_k \in [1,n]$ such that $i_1 < p$, $i_k \geqslant p$ and $(\sigma_{i_j})_{j \in [1,k]}$ is an occurrence of $\tau$.
\end{enumerate}
\end{rem}
Condition 3 is difficult to work with in general, but it can be turned into simple conditions on $\alpha$ and $\beta$ for certain patterns, such as 120 or 010.
\begin{prop} \label{propsplit120}
If $\tau = 120$, condition 3 is equivalent to $\max(\alpha) \leqslant \min(\beta)$.
\end{prop}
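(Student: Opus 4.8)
The plan is to prove the equivalence by two implications, working directly with the characterization of a $120$-occurrence: a triple of positions $i_1 < i_2 < i_3$ is an occurrence of $120$ in $\sigma$ precisely when $\sigma_{i_3} < \sigma_{i_1} < \sigma_{i_2}$. Condition 3 of Remark \ref{remsplitpattern} forbids exactly those triples that \emph{straddle} the cut, i.e. with $i_1 < p$ and $i_3 \geqslant p$. Two structural facts will drive the argument: every entry of $\alpha$ is at most $\max(\alpha) < m$, whereas $\sigma_p = \beta_1 = m = \max(\sigma)$ is the global maximum, attained at the very first position of $\beta$.

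First I would show that $\max(\alpha) \leqslant \min(\beta)$ forces condition 3 to hold. In any straddling triple the first entry $\sigma_{i_1}$ lies in $\alpha$ and the last entry $\sigma_{i_3}$ lies in $\beta$, so $\sigma_{i_1} \leqslant \max(\alpha) \leqslant \min(\beta) \leqslant \sigma_{i_3}$. This contradicts the requirement $\sigma_{i_3} < \sigma_{i_1}$ that is inherent to the pattern $120$, so no straddling occurrence can exist. Note that this direction uses only the relative sizes of the smallest and largest entries, and not the value playing the role of the $2$ in the middle.

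For the converse I would argue by contraposition and exhibit an explicit straddling occurrence whenever $\max(\alpha) > \min(\beta)$. The natural choice is to take $i_2 = p$, so that $\sigma_{i_2} = m$ plays the role of the $2$; to take for $i_1$ a position of $\alpha$ with $\sigma_{i_1} = \max(\alpha)$; and to take for $i_3$ a position of $\beta$ attaining $\min(\beta)$. Since $\min(\beta) < \max(\alpha) < m$, the inequalities $\sigma_{i_3} < \sigma_{i_1} < \sigma_{i_2}$ hold, giving an occurrence of $120$. The one point requiring care, and the main (mild) obstacle, is verifying the index ordering $i_1 < i_2 < i_3$: the inequality $i_1 < p$ is automatic since $i_1$ indexes $\alpha$, but I must check that $\min(\beta)$ is attained \emph{strictly after} position $p$. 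This is exactly where the hypothesis $\beta_1 = m$ is essential: because $\min(\beta) < \max(\alpha) < m = \beta_1$, the minimum of $\beta$ cannot be attained at its first entry, so any index $i_3$ realizing it satisfies $i_3 > p$. This produces the desired straddling triple and, together with the first implication, completes the equivalence.
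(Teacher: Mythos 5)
Your proof is correct and takes essentially the same route as the paper's: both directions rest on the witness triple $(\max(\alpha), m, \min(\beta))$ and the inequality chain $\max(\alpha) \geqslant \sigma_{i_1} > \sigma_{i_3} \geqslant \min(\beta)$. Your explicit verification that $\min(\beta)$ is attained strictly after position $p$ (because $\min(\beta) < m = \beta_1$) is a detail the paper leaves implicit, and you handle it correctly.
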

\begin{proof}
    If $\max(\alpha) > \min(\beta)$, then $(\max(\alpha), m, \min(\beta))$ is an occurrence of 120.
    Conversely, if there are three integers $i_1 < i_2 < i_3$ such that $i_1 < p$, $i_3 \geqslant p$ and $(\sigma_{i_1}, \sigma_{i_2}, \sigma_{i_3})$ is an occurrence of 120, then $\max(\alpha) \geqslant \sigma_{i_1} > \sigma_{i_3} \geqslant \min(\beta)$.
\end{proof}
\begin{prop} \label{propsplit010}
    If $\tau = 010$, condition 3 is equivalent to $\vals(\alpha) \cap \vals(\beta) = \emptyset$.
\end{prop}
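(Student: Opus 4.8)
The plan is to prove the equivalence by showing that a \emph{crossing} occurrence of $010$ exists (that is, a violation of condition 3) if and only if $\vals(\alpha) \cap \vals(\beta) \neq \emptyset$, and then to negate both statements. Throughout I will use that an occurrence of $010$ is a subsequence of the form $(v, w, v)$ with $w > v$, together with the structural fact that $\sigma_p = m$ while $\max(\alpha) < m$ (no value of $\alpha$ reaches $m$, since $p$ is the \emph{first} position attaining the maximum). In other words, position $p$ carries a value strictly larger than every value occurring in $\alpha$, which makes it a natural candidate for the middle ``$1$'' of a crossing $010$.

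The direction ``crossing occurrence $\Rightarrow$ nonempty intersection'' is immediate. If $i_1 < i_2 < i_3$ with $i_1 < p \leqslant i_3$ form an occurrence of $010$, then $\sigma_{i_1} = \sigma_{i_3} =: v$; since $i_1 < p$ we have $v \in \vals(\alpha)$, and since $i_3 \geqslant p$ we have $v \in \vals(\beta)$, so $v$ lies in the intersection.

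For the converse, suppose $v \in \vals(\alpha) \cap \vals(\beta)$. From $v \in \vals(\alpha)$ I obtain $v \leqslant \max(\alpha) < m$, so in particular $v \neq m = \sigma_p$. Hence, although $v$ occurs in $\beta$, it cannot occur at position $p$ itself, and there is some position $i_3 > p$ with $\sigma_{i_3} = v$. Choosing any $i_1 < p$ with $\sigma_{i_1} = v$ and setting $i_2 = p$ gives $i_1 < p < i_3$ with $\sigma_{i_2} = m > v = \sigma_{i_1} = \sigma_{i_3}$, which is a crossing occurrence of $010$.

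The only delicate point, and the step I would watch most carefully, is ensuring that the $\beta$-occurrence of the shared value $v$ can be taken strictly to the right of $p$, so that $p$ is genuinely available as the middle letter of the pattern. This is precisely where the inequality $\max(\alpha) < m$ (equivalently $v < m$) is used: it rules out $v = m = \sigma_p$ and thereby forces the relevant occurrence of $v$ in $\beta$ to an index greater than $p$. Once this is secured, both implications close, and negating them yields that condition 3 holds if and only if $\vals(\alpha) \cap \vals(\beta) = \emptyset$.
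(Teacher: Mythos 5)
Your proof is correct and follows essentially the same route as the paper: exhibiting the crossing occurrence $(v,m,v)$ with the middle entry at position $p$ in one direction, and reading off the shared value $\sigma_{i_1} = \sigma_{i_3}$ from any crossing occurrence in the other. The paper's version is terser; your explicit justification that $v \leqslant \max(\alpha) < m$ forces the $\beta$-occurrence of $v$ strictly to the right of $p$ is exactly the implicit step the paper leaves to the reader, and it is handled correctly.
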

\begin{proof}
    If $v \in \vals(\alpha) \cap \vals(\beta)$, then $(v,m,v)$ is an occurrence of 010.
    Conversely, if there are three integers $i_1 < i_2 < i_3$ such that $i_1 < p$, $i_3 \geqslant p$ and $(\sigma_{i_1}, \sigma_{i_2}, \sigma_{i_3})$ is an occurrence of 010, then $\sigma_{i_1} = \sigma_{i_3} \in \vals(\alpha) \cap \vals(\beta)$.
\end{proof}
Sections \ref{symmetry} to $\ref{000+120}$ are focused on the pattern 120, while Sections \ref{010} to \ref{010+110} are focused on the pattern 010.

We always count the possible choices for $\alpha$ by recurrence, since $\alpha$ is an inversion sequence that is shorter than $\sigma$ and must avoid the same patterns.
On the other hand, the words $\beta$ form a different family of objects which must be enumerated independently. This could be solved by applying the same decomposition around the first maximum to words, since Remark \ref{remsplit} has an analogue for words\footnote{More precisely, the set of words $\omega \in \W_{n,k}$ such that $m = \max(\omega)$ and $p = \firstmax(\omega)$ (note that $p$ does not have to be greater than $m$ now) is in bijection with the set of ordered pairs $(\alpha, \beta)$ such that $\alpha \in \W_{p-1,m}$ and $\beta \in \W_{n-p+1,m+1}$ satisfies $\beta_1 = m$. Remark \ref{remsplitpattern} and Propositions \ref{propsplit120} and \ref{propsplit010} apply to these words as well.}. We use generating trees instead, since it results in simpler and more efficient recurrence formulas.

Depending on the pair of patterns studied, we consider different types of words in our decomposition. We always denote $\beta$ the word defined as above, $\gamma = (\beta_i)_{i \in [2,n-p]}$ the same word without the first letter $m$ (so that $\sigma = \alpha \cdot m \cdot \gamma$), and $\gamma' = (\beta_i)_{i \in [1,n-p], \beta_i \neq m}$ the word obtained by removing all letters $m$ from $\beta$.

\subsection{A useful symmetry} \label{symmetry}

Given two integers $n,k \in \mathbb N$, a \emph{composition} of $n$ into $k$ (possibly empty) \emph{parts} is an integer sequence $\lambda = (\lambda_1, \dots, \lambda_k) \in \mathbb N^k$ such that $\sum_{i=1}^k \lambda_i = n$.
Let $n,k \in \mathbb N$ and let $\lambda$ be a composition of $n$ into $k$ parts. We denote by $\W_\lambda \subseteq \W_{n,k}$ the set of words in which each letter $\ell \in [0,k-1]$ appears exactly $\lambda_{\ell+1}$ times (i.e. permutations of the multiset $\{\ell^{\lambda_{\ell+1}} \; : \; \ell \in [0, k-1]\}$). As before, we denote by $\W_\lambda(P)$ the subset of words avoiding some set of patterns $P$. The following theorem is a result of \cite{Atkinson_Linton_Walker} (up to symmetry).
\begin{thm} \label{thmSymmetry}
    Let $k \in \mathbb N$. The function $\mathbb N^k \to \mathbb N, \; (\lambda_1, \dots, \lambda_k) \mapsto |\W_\lambda(120)|$ is symmetric.
\end{thm}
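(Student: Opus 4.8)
The plan is to prove the stronger statement that the function is invariant under every transposition of two \emph{adjacent} coordinates $\lambda_i \leftrightarrow \lambda_{i+1}$; since adjacent transpositions generate the symmetric group, this suffices. Fixing such an $i$ and writing $v = i-1$, swapping $\lambda_i$ and $\lambda_{i+1}$ amounts to exchanging the number of occurrences of the two consecutive letters $v$ and $v+1$, while leaving the number of occurrences of every other letter unchanged. So I would fix a \emph{skeleton} consisting of the positions of all letters different from $v$ and $v+1$, and exhibit a bijection between the $120$-avoiding fillings of the remaining slots by $a$ copies of $v$ and $b$ copies of $v+1$, and those using $b$ copies of $v$ and $a$ copies of $v+1$.

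First I would classify the occurrences of $120$ according to how many of their three entries take a value in $\{v,v+1\}$. No occurrence can have all three entries in $\{v,v+1\}$, since $120$ has three distinct values. An occurrence with exactly one such entry uses that entry together with one strictly smaller and one strictly larger letter of the skeleton, so whether the entry equals $v$ or $v+1$ is irrelevant: such occurrences depend only on the skeleton, and hence impose exactly the same feasibility conditions on both multiplicity vectors. If any of them is present, the word contains $120$ for \emph{every} filling, so both counts vanish and the equality is trivial. The occurrences with exactly two entries in $\{v,v+1\}$ are precisely two configurations: \textbf{(A)} a $v$ followed by a $v+1$ followed by a letter $<v$, and \textbf{(B)} a $v+1$ followed by a letter $>v+1$ followed by a $v$.

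The problem thus reduces to counting the binary fillings of the slots that avoid (A) and (B), and showing this count is symmetric in $a$ and $b$. Here I would exploit that both constraints are \emph{monotone}, in the spirit of turning condition~3 of Remark~\ref{remsplitpattern} into a condition on values: (A) forces the slots lying before the last letter $<v$ to read as a block $(v+1)^{\ast}\, v^{\ast}$, while (B) forces the slots, grouped into the blocks cut out by the letters $>v+1$, to be sorted so that every $v$ lies in a block no later than every $v+1$. The feasibility conditions extracted above constrain how the letters $<v$ and $>v+1$ may interleave the slots (for instance, no slot may precede a letter $>v+1$ that itself precedes a letter $<v$), and I expect these to force the two sorted regions to sit in disjoint or nested zones, so that a valid filling is determined by independent ``how many $v+1$'s in each zone'' choices whose number is a product of binomial coefficients, manifestly symmetric under $a \leftrightarrow b$.

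The main obstacle is exactly this last step: using the feasibility conditions to show that (A) and (B) decouple into independent sorted choices, and that the resulting count is genuinely symmetric (equivalently, building the explicit involution on fillings that swaps the two multiplicities). As a sanity check, when no letters $<v$ or $>v+1$ are present, neither (A) nor (B) can occur, every one of the $\binom{a+b}{a}$ fillings is valid, and symmetry is immediate. An alternative, shorter route would be to \emph{deduce} the statement from the cited result of Atkinson, Linton and Walker: reversing a word gives $|\W_\lambda(120)| = |\W_\lambda(021)|$, and complementing the alphabet gives $|\W_\lambda(120)| = |\W_{\overline\lambda}(102)|$, where $\overline\lambda$ is the reversal of $\lambda$; if \cite{Atkinson_Linton_Walker} establishes symmetry for one of these symmetry-equivalent patterns, then ours follows, since reversing the coordinate order is itself a permutation of the coordinates.
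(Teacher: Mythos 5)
Your reduction to adjacent transpositions, and your classification of the occurrences of $120$ meeting the letters $\{v,v+1\}$ (the filling-independent ones, plus the two configurations (A) and (B)), are both correct. But the strategy they serve --- fixing the skeleton of all letters different from $v$ and $v+1$ and bijecting or counting the fillings --- breaks at exactly the step you flagged as the main obstacle, and it breaks because the refined statement is \emph{false}, not merely hard to establish. Take $k=3$, $v=0$, $v+1=1$, and fix the skeleton consisting of a single letter $2$ at position $2$ of a word of length $4$ (slots at positions $1,3,4$). With two $0$'s and one $1$, the $120$-avoiding fillings are $0210$ and $0201$ (while $1200$ contains $(1,2,0)$), so there are two; with one $0$ and two $1$'s, the only avoiding filling is $0211$, since $1201$ and $1210$ both contain the type-(B) occurrence $(1,2,0)$. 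The fixed-skeleton counts are $2 \neq 1$, so no involution on fillings can exist, and in particular no ``product of binomial coefficients'' formula symmetric in $(a,b)$ is available. The symmetry holds only in aggregate over skeleton placements: the skeleton with the $2$ at position $3$ gives counts $1$ and $2$, compensating the above, and indeed $|\W_{(2,1,1)}(120)| = |\W_{(1,2,1)}(120)| = 9$. Any correct swap argument must therefore also move the letters outside $\{v,v+1\}$; the decoupling you hoped for cannot be salvaged with the skeleton held fixed.

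Your alternative route, by contrast, is sound and is in fact all the paper itself does: the paper gives no proof of Theorem \ref{thmSymmetry}, presenting it as a result of \cite{Atkinson_Linton_Walker} ``up to symmetry,'' which is precisely the reverse/complement translation you describe ($|\W_\lambda(120)| = |\W_\lambda(021)|$ by word reversal, and $|\W_\lambda(120)| = |\W_{\overline\lambda}(102)|$ by complementation, with $\overline\lambda$ the reversal of $\lambda$; since reversing the coordinates of $\lambda$ is itself a permutation, symmetry for any pattern in this orbit implies it for $120$). So your fallback matches the paper's treatment --- but note it is a citation, not a proof, and your attempted direct proof, as it stands, cannot be completed.
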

\noindent In other words, if $\lambda, \mu \in \mathbb N^k$ are two sequences such that there exists a permutation $\pi$ of $[1,k]$ which satisfies $\mu_i = \lambda_{\pi(i)}$ for all $i \in [1,k]$, then the identity $|\W_\lambda(120)| = |\W_\mu(120)|$ holds.
This result was generalized in \cite{Albert_Aldred_Atkinson_Handley_Holton} and \cite{Savage_Wilf} to show that the number $|\W_\lambda(120)|$ remains unchanged when the pattern 120 is replaced by any permutation of $\{0,1,2\}$.

\subsection{The pair \{011, 120\}} \label{011+120}
Let $\mathfrak A_{n,m} = \{\sigma \in \I_n(011,120) \; : \; m = \max(\sigma)\}$ be the set of $\{011,120\}$-avoiding inversion sequences of size $n$ and maximum $m$. Let $\mathfrak B_{n,k} = \{ \omega \in \W_{n,k}(120) \; : \; \omega_i = \omega_j \implies i = j\}$ be the set of 120-avoiding words of length $n$ over the alphabet $[0,k-1]$ which do not contain any repeated letter. Let $\mathfrak C_{n,k} = \{\omega \in \W_{n,k} (120) \; : \; \omega_i = \omega_j \neq 0 \implies i = j\}$ be the set of 120-avoiding words of length $n$ over the alphabet $[0,k-1]$ which do not contain any nonzero repeated letter.
Let $\mathfrak a_{n,m} = |\mathfrak A_{n,m}|$, $\mathfrak b_{n,k} = |\mathfrak B_{n,k}|$, and $\mathfrak c_{n,k} = |\mathfrak C_{n,k}|$.

\begin{thm} \label{thm011+120}
For all $0 < m < n$,
$$\mathfrak a_{n,m} = \sum_{p=m+1}^n \mathfrak c_{n-p,m} + \sum_{j=1}^{m-1} \mathfrak a_{p-1,j} \cdot \mathfrak b_{n-p,m-j-1}.$$
\end{thm}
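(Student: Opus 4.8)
The plan is to use the decomposition $\sigma = \alpha \cdot \beta$ around the first maximum described in Remark \ref{remsplit}, together with the pattern translation of Remark \ref{remsplitpattern} and Proposition \ref{propsplit120}. Fix $\sigma \in \mathfrak A_{n,m}$ with $m \geqslant 1$, and set $p = \firstmax(\sigma)$, $\alpha = (\sigma_i)_{i \in [1,p-1]}$, $\gamma = (\sigma_i)_{i \in [p+1,n]}$. Since $p > m \geqslant 1$ we have $p \geqslant 2$, so $\alpha$ is a nonempty inversion sequence and in particular $\sigma_1 = 0$. The first step, and the one that makes the pattern 011 so restrictive here, is to observe that $m$ occurs exactly once in $\sigma$: if $m$ occurred again at some position $p' > p$, then $(\sigma_1, \sigma_p, \sigma_{p'}) = (0, m, m)$ would be an occurrence of 011. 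Hence $\beta = m \cdot \gamma$, where $\gamma$ is a word of length $n-p$ over the alphabet $[0, m-1]$, and $\sigma = \alpha \cdot m \cdot \gamma$ with $\alpha \in \I_{p-1}$ and $\max(\alpha) < m$.

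Next I would translate ``$\sigma$ avoids $\{011,120\}$'' into conditions on $\alpha$ and $\gamma$. Because $m$ appears only once and is the maximum, it participates in no occurrence of 011 or 120 inside $\beta$, so $\beta$ avoids both patterns if and only if $\gamma$ does. By Remark \ref{remsplitpattern} and Proposition \ref{propsplit120}, avoidance of 120 amounts to $\alpha$ and $\gamma$ avoiding 120 and $\max(\alpha) \leqslant \min(\gamma)$ (the latter being vacuous when $\gamma$ is empty). For 011 I would work out condition 3 of Remark \ref{remsplitpattern} directly: a crossing occurrence $(\sigma_{i_1}, \sigma_{i_2}, \sigma_{i_3})$ with $i_1 < p \leqslant i_3$ forces $\sigma_{i_2} = \sigma_{i_3} < m$, and analysing whether $i_2$ lies in $\alpha$ or in $\gamma$ shows that such an occurrence exists if and only if either $\gamma$ has a repeated nonzero letter (take $i_1 = 1$, since $\sigma_1 = 0$) or some value $w \in \vals(\alpha) \cap \vals(\gamma)$ is preceded in $\alpha$ by a strictly smaller entry. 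A clean simplification worth recording is that a word over $\mathbb N$ has no repeated nonzero letter if and only if it avoids 011, since any 011 occurrence needs two equal entries exceeding a third.

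Finally I would split on $j = \max(\alpha)$. If $j = 0$ then $\alpha = 0^{p-1}$ is the unique such prefix, the condition $\max(\alpha) \leqslant \min(\gamma)$ is automatic, and the only constraint on $\gamma$ is that it avoid 120 and have no repeated nonzero letter; this is exactly $\gamma \in \mathfrak C_{n-p,m}$, contributing $\mathfrak c_{n-p,m}$. If $j \in [1, m-1]$ then $\alpha$ ranges over $\mathfrak A_{p-1,j}$, giving $\mathfrak a_{p-1,j}$ choices; the 120-condition gives $\min(\gamma) \geqslant j$, whence $\vals(\alpha) \cap \vals(\gamma) \subseteq \{j\}$, and since $\sigma_1 = 0 < j$ precedes the first $j$ of $\alpha$, the 011 analysis forces both $j \notin \vals(\gamma)$ and all letters of $\gamma$ distinct. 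Thus $\gamma$ is a 120-avoiding word with distinct letters over $[j+1,m-1]$, an alphabet of size $m-j-1$; relabelling it to $[0,m-j-2]$ preserves patterns and distinctness, so there are $\mathfrak b_{n-p,m-j-1}$ such $\gamma$. Summing the two cases over all admissible $p \in [m+1,n]$ yields the claimed identity. I expect the main obstacle to be the careful bookkeeping of the crossing 011-condition in the second case, namely pinning down that the combined 120- and 011-constraints force $\vals(\gamma) \subseteq [j+1,m-1]$ with all entries distinct, and verifying the converse that every admissible pair $(\alpha,\gamma)$ reassembles into a genuine $\{011,120\}$-avoiding inversion sequence.
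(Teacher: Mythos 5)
Your proof is correct and follows essentially the same route as the paper's: decompose $\sigma = \alpha \cdot m \cdot \gamma$ at the first maximum (noting $m$ occurs only once because $(0,m,m)$ would be an occurrence of 011), split on $j = \max(\alpha)$, and identify the admissible $\gamma$ with $\mathfrak C_{n-p,m}$ when $j = 0$ and, after relabelling, with $\mathfrak B_{n-p,m-j-1}$ when $j \geqslant 1$. One aside is misstated --- ``no repeated nonzero letter'' is \emph{not} equivalent to avoiding 011 (e.g.\ $(1,1)$ or $(5,5,3)$ avoids 011 yet repeats a nonzero letter); only the implication you actually rely on (no repeated nonzero letter forces 011-avoidance) holds, and since your crossing analysis with $\sigma_1 = 0$ supplies the reverse direction inside $\sigma$ itself, the slip is harmless to the argument.
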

\begin{proof}
    Let $\sigma \in \mathfrak A_{n,m}$, $p = \firstmax(\sigma)$, $\alpha = (\sigma_i)_{i \in [1,p-1]}$, $\gamma = (\sigma_i)_{i \in [p+1,n]}$, and let $j = \max(\alpha)$. In particular, $j < m$.
    \begin{itemize}
        \item If $j = 0$, then $\alpha = (0)^{p-1}$, and $\gamma \in \mathfrak C_{n-p,m}$ since $\sigma_1 = 0$ implies that any nonzero repeated letter creates an occurrence of the pattern 011.
        \item If $j \geqslant 1$, then $\alpha \in \mathfrak A_{p-1,j}$. By Proposition \ref{propsplit120}, $\gamma$ is a 120-avoiding word of length $n-p$ over the alphabet $[j+1,m-1]$ (since repeating $j$ or $m$ would create an occurrence of 011) in which all letters are distinct (to avoid 011). Subtracting $j+1$ from every letter in $\gamma$ yields a word in $\mathfrak B_{n-p,m-j-1}$. \qedhere
    \end{itemize}
    Observing that these two constructions are bijective concludes the proof.
\end{proof}

Let us now enumerate the families of words $(\mathfrak B_{n,k})_{n,k \in \mathbb N}$ and $(\mathfrak C_{n,k})_{n,k \in \mathbb N}$.
\begin{prop} \label{prop011+120}
    For all $n,k \geqslant 0$,
    $$\mathfrak b_{n,k} = \frac{\binom{k}{n} \binom{2n}{n} }{n+1}.$$
\end{prop}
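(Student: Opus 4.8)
The plan is to reduce the enumeration of $\mathfrak B_{n,k}$ to the classical count of pattern-avoiding permutations. First I would observe that every word $\omega \in \mathfrak B_{n,k}$ uses $n$ pairwise distinct letters from the alphabet $[0,k-1]$, so reading $\omega$ from left to right records two independent pieces of data: the $n$-element set $\vals(\omega) \subseteq [0,k-1]$ of letters used, and the order in which they appear. The set may be any subset of size $n$, which accounts for the factor $\binom{k}{n}$ (and automatically yields $\mathfrak b_{n,k} = 0$ whenever $n > k$, consistent with the convention $\binom{k}{n} = 0$, as well as $\mathfrak b_{0,k} = 1$ from the empty word).

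Next I would make the order-isomorphism precise. For a fixed $n$-subset $S \subseteq [0,k-1]$, the map sending an arrangement of $S$ to the permutation of $[1,n]$ obtained by replacing the $i$-th smallest element of $S$ by $i$ is a bijection from the arrangements of $S$ to the permutations of length $n$. Since containment of a pattern depends only on the relative order of the entries, this bijection preserves occurrences of $120$; hence the $120$-avoiding arrangements of $S$ are in bijection with the $120$-avoiding permutations of $[1,n]$, and this count does not depend on $S$. Therefore $\mathfrak b_{n,k} = \binom{k}{n}\, p_n$, where $p_n$ denotes the number of permutations of $[1,n]$ avoiding the pattern $120$.

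Finally I would invoke the classical result that the number of permutations of length $n$ avoiding any single pattern of length $3$ equals the Catalan number $C_n = \frac{1}{n+1}\binom{2n}{n}$; the pattern $120$ (which corresponds to $231$ in one-based notation) is one such pattern, so $p_n = C_n$. Substituting gives $\mathfrak b_{n,k} = \binom{k}{n}\, C_n = \frac{\binom{k}{n}\binom{2n}{n}}{n+1}$, as claimed.

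I do not anticipate a serious obstacle here: the real content is simply the recognition that a distinct-letter $120$-avoiding word is, up to relabelling, a $120$-avoiding permutation, after which everything reduces to a known enumeration and a routine check of the boundary cases $n = 0$ and $n > k$. If one preferred a self-contained argument rather than citing the permutation result, the alternative would be to supply the standard Catalan bijection (or a reversible-deletion/generating-tree argument in the spirit of $\Omega_{\text{Cat}}$ from Section \ref{trees}) establishing that $120$-avoiding permutations are counted by $C_n$; but given how standard that fact is, such an excursion seems unnecessary.
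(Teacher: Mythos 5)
Your argument is correct and is essentially identical to the paper's own proof: both factor a word of $\mathfrak B_{n,k}$ into the choice of its $n$-element set of letters, giving $\binom{k}{n}$, and the choice of their order, which by order-isomorphism is a $120$-avoiding permutation counted by the Catalan number $C_n$. Your version merely makes explicit the relabelling bijection and the boundary cases $n=0$ and $n>k$, which the paper leaves implicit.
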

\begin{proof}
    There are $\binom{k}{n}$ possible choices for the set of $n$ distinct letters in $[0,k-1]$ which appear in a word of $\mathfrak B_{n,k}$. Once this set of letters is decided, choosing their order amounts to choosing a 120-avoiding permutation of size $n$, counted by the Catalan number ${C_n = \frac{1}{n+1} \binom{2n}{n}}$.
\end{proof}

\begin{lem} \label{lem011+120}
For all $n \geqslant 1, k \geqslant 2$,
$$\mathfrak c_{n,k} = 2 \mathfrak c_{n,k-1} + \mathfrak c_{n-1,k} - \mathfrak c_{n-1,k-1} - \frac{\binom{k-2}{n} \binom{2n}{n}}{n+1}.$$
\end{lem}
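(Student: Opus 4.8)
The plan is to peel off the largest letter. Since a word in $\mathfrak C_{n,k}$ that never uses the letter $k-1$ is exactly an element of $\mathfrak C_{n,k-1}$, we have $\mathfrak c_{n,k}=\mathfrak c_{n,k-1}+N_{n,k}$, where $N_{n,k}$ counts the words of $\mathfrak C_{n,k}$ that do use $k-1$; it thus suffices to prove that $N_{n,k}=\mathfrak c_{n,k-1}+\mathfrak c_{n-1,k}-\mathfrak c_{n-1,k-1}-\mathfrak b_{n,k-2}$. In such a word the letter $k-1$ is the strict maximum and, being nonzero, occurs exactly once, at some position $p$; write $\alpha=(\omega_i)_{i<p}$ and $\delta=(\omega_i)_{i>p}$. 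The maximum can only play the role of the ``$2$'' in an occurrence of $120$, so by the word analogue of Remark \ref{remsplitpattern} together with the argument of Proposition \ref{propsplit120}, $\omega$ avoids $120$ if and only if $\alpha\cdot\delta$ does and $\max(\alpha)\le\min(\delta)$. Hence $N_{n,k}$ counts ordered pairs $(\alpha,\delta)$ of $120$-avoiding words over $[0,k-2]$ with distinct nonzero letters, satisfying $|\alpha|+|\delta|=n-1$, $\max(\alpha)\le\min(\delta)$, and with no nonzero letter common to $\alpha$ and $\delta$ (equivalently, $\alpha\cdot\delta\in\mathfrak C_{n-1,k-1}$).

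Next I would classify these pairs according to $j=\max(\alpha)$. When $\alpha$ is empty or constant equal to $0$ (that is, $\max(\alpha)\le 0$), there is no further constraint on $\delta$ beyond membership in $\mathfrak C$, and letting $b=|\delta|$ range over $[0,n-1]$ these pairs contribute exactly $\sum_{b=0}^{n-1}\mathfrak c_{b,k-1}$. When $j=\max(\alpha)\ge 1$, the conditions $\min(\delta)\ge j$ and $j\notin\vals(\delta)$ force $\delta$ to take values only in $[j+1,k-2]$; in particular every letter of $\delta$ is then nonzero, hence distinct, so after subtracting $j+1$ the word $\delta$ is counted by the family $\mathfrak b$ of Proposition \ref{prop011+120}. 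This is precisely the mechanism that introduces a $\mathfrak b$-term, and ultimately the summand $\mathfrak b_{n,k-2}$, into the recurrence.

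Collecting the two cases gives $N_{n,k}=\sum_{b=0}^{n-1}\mathfrak c_{b,k-1}+\sum_{a\ge 1}\sum_{j\ge 1}\#\{\alpha\in\mathfrak C_{a,k-1}:\max(\alpha)=j\}\cdot\mathfrak b_{n-1-a,\,k-2-j}$. Observing that $\#\{\alpha\in\mathfrak C_{a,k-1}:\max(\alpha)=j\}=N_{a,j+1}$ exposes a self-similar structure, so the cleanest route to the closed four-term form is strong induction on $k$, with the direct check $N_{n,2}=n$ as base case: substitute the induction hypothesis for each $N_{a,j+1}$ and the explicit value $\mathfrak b_{m,t}=\binom{t}{m}C_m$ from Proposition \ref{prop011+120}, then collapse the resulting nested binomial sums. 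The structural steps above are routine; the genuine obstacle is this final re-summation, where one must telescope the running sums $\sum_{b}\mathfrak c_{b,k-1}$ against the binomial contributions and apply the hockey-stick and Vandermonde identities to recover exactly $\mathfrak c_{n,k-1}+\mathfrak c_{n-1,k}-\mathfrak c_{n-1,k-1}-\mathfrak b_{n,k-2}$. A generating-function reformulation, encoding the $\mathfrak c_{n,k}$ and $\mathfrak b_{n,k}$ in one bivariate series and turning the convolution defining $N_{n,k}$ into an algebraic identity, is a viable alternative for dispatching this last step.
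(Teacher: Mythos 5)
Your decomposition is sound as far as it goes: splitting a word of $\mathfrak C_{n,k}$ containing the (necessarily unique) letter $k-1$ at that letter, and characterizing admissibility by $\alpha\cdot\delta\in\mathfrak C_{n-1,k-1}$ together with $\max(\alpha)\leqslant\min(\delta)$, is correct, and your resulting identity $N_{n,k}=\sum_{b=0}^{n-1}\mathfrak c_{b,k-1}+\sum_{a\geqslant 1}\sum_{j\geqslant 1}N_{a,j+1}\,\mathfrak b_{n-1-a,\,k-2-j}$ checks out on small cases. But the proof stops exactly where the statement actually lives. The lemma is a \emph{local} four-term recurrence, while your identity is a genuine two-dimensional convolution, self-similar through the quantities $N_{a,j+1}$ for \emph{all} $j+1\leqslant k-1$; collapsing the former out of the latter is not a routine application of hockey-stick and Vandermonde, and your sketch of "strong induction on $k$, substitute, telescope" does not indicate how the induction closes (the hypothesis gives you a recurrence for each $N_{a,j+1}$, not a closed form, so substituting it reproduces nested convolutions rather than eliminating them). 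You concede this is "the genuine obstacle," and indeed it is: as written, the crux of the lemma is asserted, not proved.

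For contrast, the paper avoids any re-summation by conditioning not on the maximum but on the letter $1$. Since nonzero letters are unrepeated, $1$ occurs at most once, and three elementary bijections produce the four terms directly: words without a $1$ map to $\mathfrak C_{n,k-1}$ by decrementing nonzero letters; if some $0$ lies right of the $1$, 120-avoidance forces those $0$'s into a single block adjacent to the $1$, so deleting the last $0$ bijects onto the words of $\mathfrak C_{n-1,k}$ containing a $1$, counted by $\mathfrak c_{n-1,k}-\mathfrak c_{n-1,k-1}$; and if every $0$ lies left of the $1$, decrementing nonzero letters bijects onto the words of $\mathfrak C_{n,k-1}$ containing a $0$, counted by $\mathfrak c_{n,k-1}-\mathfrak b_{n,k-2}$ with $\mathfrak b_{n,k-2}=\binom{k-2}{n}C_n$. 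Each move changes $(n,k)$ by at most one in each coordinate, which is precisely why the recurrence is local. If you want to salvage your maximum-splitting route, the bivariate generating-function reformulation you mention at the end is probably the realistic way to finish, but it would need to be carried out in full; as submitted, the argument has a genuine gap.
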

\begin{proof}
    Let $n \geqslant 0, k \geqslant 2$, and $\omega \in \mathfrak C_{n,k}$. We consider three different cases, based on the position of the letter $1$ in $\omega$.
    \begin{itemize}
        \item If $\omega$ does not contain the letter 1, then subtracting 1 from every nonzero letter in $\omega$ yields a word in $\mathfrak C_{n,k-1}$, and this is a bijection. As such, the words of $\mathfrak C_{n,k}$ which do not contain the letter 1 are counted by $\mathfrak c_{n,k-1}$.
        \item Otherwise, $\omega$ contains exactly one occurrence of the letter 1. This implies that $n \geqslant 1$ and $k \geqslant 2$.
        \begin{itemize}
            \item If there is any occurrence of the letter 0 appearing to the right of the letter 1 in $\omega$, then every such occurrence must be in a single factor adjacent to the letter 1 (otherwise, $\omega$ would contain the pattern 120). Hence, removing the last letter 0 in $\omega$ can yield any word in $\mathfrak C_{n-1,k}$ which contains the letter 1, and this is a bijection. We know from the previous case that the words of $\mathfrak C_{n-1,k}$ which do not contain the letter 1 are counted by $\mathfrak c_{n-1,k-1}$, hence the number of possible words $\omega$ in this case is $\mathfrak c_{n-1,k} - \mathfrak c_{n-1,k-1}$.
            \item Otherwise, every letter 0 in $\omega$ appears to the left of the letter 1. Subtracting 1 from every nonzero letter in $\omega$ then yields a word in $\mathfrak C_{n,k-1}$ which contains the letter 0. Note this defines a bijection: its inverse takes any word in $\mathfrak C_{n,k-1}$ which contains the letter 0, adds 1 to every nonzero letter, and replaces the last letter 0 by the letter 1. Since the words of $\mathfrak C_{n,k-1}$ which do not contain the letter 0 are trivially in bijection with $\mathfrak B_{n,k-2}$, we find there are $\mathfrak c_{n,k-1} - \mathfrak b_{n,k-2}$ possible choices for the word $\omega$ in this case. \qedhere
        \end{itemize}
    \end{itemize}
\end{proof}
We also give an explicit expression for $\mathfrak c_{n,k}$ in Lemma \ref{lemExpl100+120}.

\subsection{The pair \{100, 120\}}
Let $\mathfrak A_{n,m} = \{\sigma \in \I_n(100,120) \; : \; m = \max(\sigma)\}$ be the set of $\{100,120\}$-avoiding inversion sequences of size $n$ and maximum $m$. Let $\mathfrak B_{n,k} = \{\omega \in \W_{n,k} (120) \; : \; \omega_i = \omega_j \neq k-1 \implies i = j\}$ be the set of 120-avoiding words of length $n$ over the alphabet $[0,k-1]$ which do not contain any non-maximal repeated letter.
Let $\mathfrak a_{n,m} = |\mathfrak A_{n,m}|$, and $\mathfrak b_{n,k} = |\mathfrak B_{n,k}|$.
\begin{thm} \label{thm100+120}
    For all $0 < m < n$,
    $$\mathfrak a_{n,m} = \sum_{p = m+1}^n \sum_{j = 0}^{m-1} \mathfrak a_{p-1,j} \cdot \mathfrak b_{n-p,m-j+1}.$$
\end{thm}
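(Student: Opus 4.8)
The plan is to apply the decomposition of Section \ref{sectionSplitMax} at the first maximum. Fix $\sigma \in \mathfrak A_{n,m}$, set $p = \firstmax(\sigma)$, $\alpha = (\sigma_i)_{i \in [1,p-1]}$, $\gamma = (\sigma_i)_{i \in [p+1,n]}$, and $j = \max(\alpha)$, so that $\sigma = \alpha \cdot m \cdot \gamma$ (writing $\beta = m \cdot \gamma$ as in Remark \ref{remsplit}). Since $\sigma_p = m < p$ we have $m < p \leqslant n$, and since $p$ is the first position of the maximum, $j = \max(\alpha) < m$, i.e. $0 \leqslant j \leqslant m-1$. The factor $\alpha$ is a $\{100,120\}$-avoiding inversion sequence of size $p-1$ and maximum $j$, hence is counted by $\mathfrak a_{p-1,j}$; and $\gamma$ is a word of length $n-p$ over $[0,m]$. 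The goal is to show that $\sigma \mapsto (\alpha, \gamma)$ is a bijection onto the pairs with $\alpha \in \mathfrak A_{p-1,j}$ and $\gamma$ a shifted copy of a word in $\mathfrak B_{n-p,m-j+1}$, then sum over $p$ and $j$.

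I would first translate avoidance of the two patterns through Remark \ref{remsplitpattern}. For the pattern $120$: condition 3 is, by Proposition \ref{propsplit120}, exactly $\max(\alpha) \leqslant \min(\beta)$, which forces every entry of $\gamma$ to be $\geqslant j$, so $\gamma$ is a word over $[j,m]$; condition 2 (that $\beta$ avoids $120$) reduces to $\gamma$ avoiding $120$, because the leading entry $m$ of $\beta$ is maximal and first and so cannot play the middle (largest) role of a $120$ occurrence. For the pattern $100$: condition 2 (that $\beta$ avoids $100$) is equivalent to requiring that no value strictly below $m$ is repeated in $\gamma$ — a repeated value $v < m$ in $\gamma$ yields the occurrence $(m,v,v)$ using the leading maximum (or a $100$ wholly inside $\gamma$), while the maximal value $m$ itself may be repeated freely since no entry exceeds it.

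The one step without a ready-made proposition in the text, and hence the main obstacle, is condition 3 of Remark \ref{remsplitpattern} for the pattern $100$ (spanning occurrences). I would dispatch it by observing that it follows automatically once $\max(\alpha) \leqslant \min(\beta)$ holds: any occurrence $(\sigma_{i_1}, \sigma_{i_2}, \sigma_{i_3})$ of $100$ with $i_1 < p \leqslant i_3$ would satisfy $\sigma_{i_2} = \sigma_{i_3} < \sigma_{i_1} \leqslant \max(\alpha) = j$, contradicting the fact that every entry of $\sigma$ from position $p$ onward is $\geqslant j$. Collecting these facts, $\sigma$ avoids $\{100,120\}$ if and only if $\alpha \in \mathfrak A_{p-1,j}$ and $\gamma$ is a $120$-avoiding word over $[j,m]$ in which every value below $m$ occurs at most once. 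Subtracting $j$ from each letter of $\gamma$ is then a bijection onto $\mathfrak B_{n-p,\,m-j+1}$ (the alphabet $[j,m]$ becomes $[0,m-j]$ of size $m-j+1$, and its maximal letter $m-j$ is the shift of $m$, so the ``only the maximal letter may repeat'' condition matches precisely). The inverse map takes $(\alpha,\omega)$ with $\alpha \in \mathfrak A_{p-1,j}$ and $\omega \in \mathfrak B_{n-p,m-j+1}$ to $\sigma = \alpha \cdot m \cdot (\omega + j)$, which is readily checked to be an inversion sequence of size $n$ with maximum $m$ first attained at position $p$. Summing $\mathfrak a_{p-1,j}\cdot \mathfrak b_{n-p,m-j+1}$ over $p \in [m+1,n]$ and $j \in [0,m-1]$ yields the claimed identity.
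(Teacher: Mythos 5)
Your proof is correct and follows essentially the same route as the paper's: the decomposition $\sigma = \alpha \cdot m \cdot \gamma$ at the first maximum, with $\alpha \in \mathfrak A_{p-1,j}$, Proposition \ref{propsplit120} forcing $\gamma \in [j,m]^{n-p}$, the avoidance of 100 forbidding repeated values below $m$ in $\gamma$, and the shift by $j$ giving a word in $\mathfrak B_{n-p,m-j+1}$. You are in fact somewhat more thorough than the paper, which leaves implicit both the bijectivity check and the verification that spanning occurrences of 100 are automatically excluded once $\max(\alpha) \leqslant \min(\beta)$ holds.
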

\begin{proof}
    Let $\sigma \in \mathfrak A_{n,m}$, $p = \firstmax(\sigma)$, $\alpha = (\sigma_i)_{i \in [1,p-1]}$, $\gamma = (\sigma_i)_{i \in [p+1,n]}$, and let ${j = \max(\alpha)}$. In particular, $\alpha \in \mathfrak A_{p-1,j}$. By Proposition \ref{propsplit120}, $\gamma \in [j,m]^{n-p}$. Since $\sigma$ avoids 100, $\gamma$ cannot contain any repeated value lower than $m$. Subtracting $j$ from each letter of $\gamma$ yields a word in $\mathfrak B_{n-p,m-j+1}$.
\end{proof}

As in Section \ref{011+120}, let $\mathfrak C_{n,k} = \{\omega \in \W_{n,k} (120) \; : \; \omega_i = \omega_j \neq 0 \implies i = j\}$ be the set of 120-avoiding words of length $n$ over the alphabet $[0,k-1]$ which do not contain any nonzero repeated letter, and let $\mathfrak c_{n,k} = |\mathfrak C_{n,k}|$. The number of words $\mathfrak c_{n,k}$ was counted in Lemma \ref{lem011+120}.
\begin{lem}
    For all $n,k \geqslant 0,$ $\mathfrak b_{n,k} = \mathfrak c_{n,k}$.
\end{lem}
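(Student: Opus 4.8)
The plan is to express both $\mathfrak b_{n,k}$ and $\mathfrak c_{n,k}$ as sums of the symmetric quantities $|\W_\lambda(120)|$ over suitable sets of compositions, and then to match the two sums term by term through a parts-permuting bijection, invoking Theorem \ref{thmSymmetry}. This routes everything through the symmetry result rather than through an explicit letter-level bijection, which is cleaner because both families avoid the \emph{same} pattern $120$ and differ only in which single letter is allowed to repeat.

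First I would refine the partition $\W_{n,k}(120) = \coprod_\lambda \W_\lambda(120)$, where $\lambda$ ranges over the compositions of $n$ into $k$ parts and $\lambda_{\ell+1}$ records the number of occurrences of the letter $\ell$. Under this refinement, the defining condition of $\mathfrak B_{n,k}$ (no repeated letter other than the maximal letter $k-1$) becomes exactly $\lambda_1, \dots, \lambda_{k-1} \leqslant 1$ with $\lambda_k$ unconstrained, while the defining condition of $\mathfrak C_{n,k}$ (no repeated letter other than $0$) becomes $\lambda_2, \dots, \lambda_k \leqslant 1$ with $\lambda_1$ unconstrained. Hence, with $\lambda$ ranging over compositions of $n$ into $k$ parts,
$$\mathfrak b_{n,k} = \sum_{\lambda_1, \dots, \lambda_{k-1} \leqslant 1} |\W_\lambda(120)|, \qquad \mathfrak c_{n,k} = \sum_{\lambda_2, \dots, \lambda_k \leqslant 1} |\W_\lambda(120)|.$$

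Then I would apply the reversal map $\lambda = (\lambda_1, \dots, \lambda_k) \mapsto \bar\lambda = (\lambda_k, \dots, \lambda_1)$, which is an involution on compositions of $n$ into $k$ parts and sends the index set of the first sum bijectively onto that of the second: if $\lambda_1, \dots, \lambda_{k-1} \leqslant 1$, then $\bar\lambda_2, \dots, \bar\lambda_k = \lambda_{k-1}, \dots, \lambda_1 \leqslant 1$, and conversely. Since $\bar\lambda$ is a permutation of the parts of $\lambda$, Theorem \ref{thmSymmetry} gives $|\W_\lambda(120)| = |\W_{\bar\lambda}(120)|$ for every $\lambda$, so the two sums agree under this bijection, which yields $\mathfrak b_{n,k} = \mathfrak c_{n,k}$.

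The only genuine obstacle is bookkeeping rather than any combinatorial difficulty: one must translate the conditions "only the maximum repeats'' and "only $0$ repeats'' into the correct constraints on the parts of $\lambda$ (the index is shifted by one, since letter $\ell$ corresponds to the part $\lambda_{\ell+1}$), and then verify that reversal indeed moves the single unconstrained part from the last position to the first. Once these identifications are made, Theorem \ref{thmSymmetry} closes the argument immediately; in particular one does not need the finer pattern-swap strengthening of \cite{Albert_Aldred_Atkinson_Handley_Holton, Savage_Wilf}, precisely because both families avoid $120$.
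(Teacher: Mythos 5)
Your proof is correct and takes essentially the same route as the paper: both decompose $\mathfrak B_{n,k}$ and $\mathfrak C_{n,k}$ into the classes $\W_\lambda(120)$ indexed by compositions with at most one part unconstrained, and then invoke Theorem \ref{thmSymmetry} to permute the parts. The only cosmetic difference is your choice of part-permutation — you reverse the whole composition $\lambda \mapsto \bar\lambda$ in a single sum, whereas the paper first groups words by the multiplicity $r$ of the distinguished letter and matches $\W_{\lambda \cdot r}(120)$ with $\W_{r \cdot \lambda}(120)$ via a cyclic shift.
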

\begin{proof}
Let $\mathfrak B_{n,k,r} = \{ \omega \in \mathfrak B_{n,k} \; : \; r = |\{i \in [1,n] \; : \; \omega_i = k-1\}|\}$ be the subset of $\mathfrak B_{n,k}$ of words in which the largest letter $k-1$ appears exactly $r$ times. Likewise, let $\mathfrak C_{n,k,r} = \{ \omega \in \mathfrak C_{n,k} \; : \; r = |\{i \in [1,n] \; : \; \omega_i = 0\}|\}$ be the subset of $\mathfrak C_{n,k}$ of words in which the letter 0 appears exactly $r$ times. By definition, we have
$$\mathfrak B_{n,k} = \coprod_{r = 0}^n \mathfrak B_{n,k,r} \quad \text{and} \quad \mathfrak C_{n,k} = \coprod_{r = 0}^n \mathfrak C_{n,k,r}.$$
Let $\mathfrak D_{n,k} = \{\lambda \in \{0,1\}^k \; : \; n = \sum_{i = 1}^k \lambda_i\}$ be the set of compositions of $n$ into $k$ parts of size 0 or 1.
In the notation of Section \ref{symmetry}, for all $r \geqslant 0$, 
$$\mathfrak B_{n,k,r} = \coprod_{\lambda \in \mathfrak D_{n-r,k-1}} \W_{\lambda \cdot r}(120) \quad \text{and} \quad \mathfrak C_{n,k,r} = \coprod_{\lambda \in \mathfrak D_{n-r,k-1}} \W_{r \cdot \lambda}(120).$$
By Theorem \ref{thmSymmetry}, for all $r \geqslant 0$ and $\lambda \in \mathfrak D_{n-r,k-1}$, $|\W_{\lambda \cdot r}(120)| = |\W_{r \cdot \lambda}(120)|$. This implies that $|\mathfrak B_{n,k,r}| = |\mathfrak C_{n,k,r}|$ for all $n,k,r \geqslant 0$, hence $\mathfrak b_{n,k} = \mathfrak c_{n,k}$.
\end{proof}

We also provide an explicit expression of $\mathfrak b_{n,k}$.
\begin{lem} \label{lemExpl100+120}
For all $n \geqslant 0, k \geqslant 1$,
$$\mathfrak b_{n,k} = \frac{1}{n+1} \sum_{d=0}^{\min(n,k)} \binom{k-1}{d} \binom{n+d}{n} (n-d+1).$$ 
\end{lem}
\begin{proof}
Let $\mathfrak E_{n,k} = \{\omega \in \mathfrak B_{n,k+1} \; : \; [0,k-1] \subseteq \vals(\omega) \}$ be the subset of words of $\mathfrak B_{n,k+1}$ which contain all letters $[0,k-1]$ (and may contain the letter $k$). In particular, for all $n \geqslant 0$, $\mathfrak E_{n,k}$ is empty if $k > n$, and $\mathfrak E_{n,0} = \{(0)^n\}$. Note that any word of $\mathfrak E_{n,k}$ contains exactly one occurrence of each letter in $[0,k-1]$, and $n-k$ occurrences of the letter $k$. Let $\mathfrak e_{n,k} = |\mathfrak E_{n,k}|$.

The same construction as in Proposition \ref{propbinomwords} gives the following identity, seeing that $\binom{k-1}{d}$ is the number of increasing functions $[0,d] \to [0,k-1]$ such that $d \mapsto k-1$.
For all  $n \geqslant 0, k \geqslant 1$,
$$\mathfrak b_{n,k} = \sum_{d=0}^{\min(n,k)} \binom{k-1}{d} \mathfrak e_{n,d}.$$

Let us now count the words of $\mathfrak E_{n,k}$.
Let $1 \leqslant k \leqslant n$, and $\omega \in \mathfrak E_{n,k}$.
    \begin{itemize}
        \item If $\omega_n = k$, then removing $\omega_n$ yields a word in $\mathfrak E_{n-1,k}$ and this is a bijection.
        \item If $\omega_n < k$, then no letter $k$ may appear after the letter $k-1$ in $\omega$, otherwise the subsequence $(k-1,k,\omega_n)$ would be an occurrence of 120. Replacing every letter $k$ in $\omega$ by $k-1$ yields a word $\omega' \in \mathfrak E_{n,k-1}$ (since each letter in $[0,k-2]$ still appears exactly once in $\omega'$), and this is a bijection. Indeed, it can be reversed by taking any word $\omega' \in \mathfrak E_{n,k-1}$ and replacing every letter $k-1$ except the last one by the letter $k$ (notice that $\omega' \in \mathfrak E_{n,k-1}$ must contain the letter $k-1$ since $k \leqslant n$).
    \end{itemize}
    Hence, $(\mathfrak e_{n,k})_{n,k \geqslant 0}$ satisfies the recurrence relation $\mathfrak e_{n,k} = \mathfrak e_{n-1,k} + \mathfrak e_{n,k-1}$ for all $1 \leqslant k \leqslant n$, with initial conditions $\mathfrak e_{n,0} = 1$ for all $n \geqslant 0$ and $\mathfrak e_{0,k} = 0$ for all $k \geqslant 1$.
    From this recurrence, we can easily prove by induction that for all $0 \leqslant k \leqslant n$,
    $$\mathfrak e_{n,k} = \frac{\binom{n+k}{n} (n-k+1)}{n+1},$$ which shows that $(\mathfrak e_{n,k})_{n,k \geqslant 0}$ is Catalan's triangle (entry A009766 of the OEIS).
\end{proof}

\subsection{The pair \{120, 201\}}
Let $\mathfrak A_{n,m} = \{\sigma \in \I_n(120,201) \; : \; m = \max(\sigma)\}$ be the set of $\{120,201\}$-avoiding inversion sequences of size $n$ and maximum $m$. Let $\mathfrak B_{n,k} = \{\omega \in \W_{n,k+1}(120) \; : \; \omega_i < \omega_j \neq k \implies j < i\}$ be the set of 120-avoiding words of length $n$ over the alphabet $[0,k]$ whose non-maximal letters appear in nonincreasing order. Let $\mathfrak a_{n,m} = |\mathfrak A_{n,m}|$, and $\mathfrak b_{n,k} = |\mathfrak B_{n,k}|$.
\begin{thm} \label{thm120+201}
For all $0 < m < n$,
$$\mathfrak{a}_{n,m} = \sum_{p = m+1}^n \sum_{j = 0}^{m-1} \mathfrak{a}_{p-1,j} \cdot \mathfrak{b}_{n-p, m-j}.$$
\end{thm}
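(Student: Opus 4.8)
The plan is to apply the \emph{splitting at the first maximum} method of this section. Given $\sigma \in \mathfrak A_{n,m}$, I would set $p = \firstmax(\sigma)$, $\alpha = (\sigma_i)_{i \in [1,p-1]}$ and $\gamma = (\sigma_i)_{i \in [p+1,n]}$, so that $\sigma = \alpha \cdot m \cdot \gamma$, and put $j = \max(\alpha)$. The goal is to exhibit a bijection between $\mathfrak A_{n,m}$ and the disjoint union, over $p \in [m+1,n]$ and $j \in [0,m-1]$, of $\mathfrak A_{p-1,j} \times \mathfrak B_{n-p,\,m-j}$, sending $\sigma$ to the pair $(\alpha,\, \gamma - j)$. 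Summing the fiber sizes then gives the stated recurrence.

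First I would record the structural facts that hold for any inversion sequence (Remark \ref{remsplit}): $p > m$, $\alpha$ is an inversion sequence of size $p-1$ with $\max(\alpha) = j < m$, and $\beta = m \cdot \gamma$ is a word over $[0,m]$ with $\beta_1 = m$. By Remark \ref{remsplitpattern}, $\sigma$ avoids $120$ and $201$ if and only if $\alpha$ and $\beta$ each avoid both patterns and no occurrence of either crosses the split. Proposition \ref{propsplit120} identifies the $120$ cross-condition as $\max(\alpha) \leqslant \min(\beta)$; since $\beta_1 = m$, this reads $\min(\gamma) \geqslant j$, so $\gamma$ takes values in $[j,m]$, and $\alpha \in \mathfrak A_{p-1,j}$.

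The heart of the argument is translating the avoidance of $\beta = m \cdot \gamma$ into the single condition defining $\mathfrak B$. Because the leading $m$ is the global maximum and sits in first position, it cannot play the role of the $1$ or $0$ of a $120$, nor any role in a $201$ other than the $2$; hence $\beta$ avoids $120$ exactly when $\gamma$ does, and the $201$ occurrences of $\beta$ that use position $1$ are precisely the ascents of $\gamma$ ending on a non-maximal value. Forbidding these is exactly the condition ``$\omega_i < \omega_j \neq \max \implies j < i$'' once $\gamma$ is shifted down by $j$. I would then observe that this same condition, together with $120$-avoidance, already forces $\gamma$ to avoid $201$ internally: in any putative $201$ the middle entry is strictly below the maximum, hence is weakly dominated by every preceding letter, contradicting that it must exceed the $0$. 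Finally, the $201$ cross-condition is automatic: a crossing $201$ would have its $2$ inside $\alpha$, so of value $\leqslant j$, while its $1$ lies weakly right of position $p$, so of value $\geqslant j$ by the established range of $\gamma$ — contradicting that the $2$ exceeds the $1$. Together these show $\gamma - j \in \mathfrak B_{n-p,\,m-j}$.

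For the reverse map, given $\alpha \in \mathfrak A_{p-1,j}$ and $\omega \in \mathfrak B_{n-p,\,m-j}$, I would set $\gamma = \omega + j$ and $\sigma = \alpha \cdot m \cdot \gamma$, and verify that $\sigma$ is a genuine inversion sequence (the inserted $m$ satisfies $m \leqslant p-1 < p$ since $p \geqslant m+1$, and every later entry is $\leqslant m < i$), that $\max(\sigma) = m$ with $\firstmax(\sigma) = p$, and that the three avoidance conditions hold by reversing the implications above. The main obstacle I anticipate is exactly this matching step: recognizing that the two-pattern avoidance of $\beta$ collapses to the one defining condition of $\mathfrak B$, and in particular that imposing ``non-maximal letters weakly decreasing'' on top of $120$-avoidance buys $201$-avoidance of $\gamma$ for free, so that no extra parameter or bookkeeping is required.
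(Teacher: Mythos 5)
Your proposal is correct and follows essentially the same route as the paper: the same split $\sigma = \alpha \cdot m \cdot \gamma$ at the first maximum, with Proposition \ref{propsplit120} giving $\gamma \in [j,m]^{n-p}$ and the avoidance of 201 against $\sigma_p = m$ giving precisely the ``non-maximal letters nonincreasing'' condition defining $\mathfrak B_{n-p,m-j}$ after shifting by $j$. The paper compresses the remaining verifications into ``we easily observe that this construction is a bijection,'' whereas you spell them out (internal 201-avoidance of $\gamma$ coming for free, the crossing 201 being impossible by the value ranges, and the inverse map landing in $\mathfrak A_{n,m}$), all of which check out.
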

\begin{proof}
     Let $\sigma \in \mathfrak A_{n,m}$, $p = \firstmax(\sigma)$, $\alpha = (\sigma_i)_{i \in [1,p-1]}$, $\gamma = (\sigma_i)_{i \in [p+1,n]}$, and let $j = \max(\alpha)$. In particular, $\alpha \in \mathfrak A_{p-1,j}$. By Proposition \ref{propsplit120}, $\gamma \in [j,m]^{n-p}$.
     Since $\sigma$ avoids 201 and $\sigma_p = m$, all letters different from $m$ in $\gamma$ appear in nonincreasing order. By subtracting $j$ from all letters in $\gamma$, we obtain a word in the set $\mathfrak B_{n-p,m-j}$. We easily observe that this construction is a bijection.
\end{proof}
\begin{rem} \label{remnonincwords}
    For all $n \geqslant 0$ and $k \geqslant 1$, there are $\binom{k+n-1}{n}$ nonincreasing words of length $n$ over the alphabet $[0,k-1]$. Indeed, the set of nonincreasing words of length $n$ over the alphabet $[0,k-1]$ is in bijection with the set of compositions of $n$ into $k$ (possibly empty) parts. More precisely, for each composition of $n$ into $k$ parts $\lambda$, there is a single nonincreasing word in the set $\W_\lambda$ (using the notation from Section $\ref{symmetry}$).
\end{rem}
\begin{lem}
    For all $n,k \geqslant 1$,
    $$\mathfrak b_{n,k} = \binom{k+n+1}{n} + k \left (2^n - 1 - \frac{n(n+1)}{2} \right ) - n.$$
\end{lem}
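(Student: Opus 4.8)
The plan is to set up a recurrence for $\mathfrak b_{n,k}$ by decomposing each word according to the position of its first letter equal to the maximum $k$, and then to check that the proposed closed form satisfies this recurrence together with the initial value $\mathfrak b_{0,k}=1$. Before decomposing I would record the structural fact that drives everything: a word $\omega\in\W_{n,k+1}$ whose non-maximal letters (those different from $k$) are nonincreasing avoids $120$ if and only if no letter $k$ lies strictly between two non-maximal positions carrying strictly decreasing values. Indeed, the ``$2$'' of any occurrence of $120$ is its unique largest entry; since the non-maximal letters are nonincreasing, this entry cannot be a non-maximal letter, so it must equal $k$, and then the ``$1$'' and ``$0$'' form exactly a strictly decreasing pair of non-maximal letters straddling that $k$. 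This is precisely the membership condition defining $\mathfrak B_{n,k}$, rephrased so that pattern avoidance becomes a local condition on the letters $k$.

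Next I would split $\mathfrak B_{n,k}$ into the words containing no letter $k$ and those containing at least one. The first kind are exactly the nonincreasing words of length $n$ over $[0,k-1]$, counted by $\binom{k+n-1}{n}$ (Remark \ref{remnonincwords}). For a word $\omega$ of the second kind, set $p=\firstmax(\omega)$ and $\alpha=(\omega_i)_{i\in[1,p-1]}$, which is nonincreasing over $[0,k-1]$. Applying the structural fact to the first $k$, at position $p$, the suffix $(\omega_i)_{i\in[p,n]}$ is forced in one of three ways according to the shape of $\alpha$:
\begin{itemize}
    \item if $\alpha$ is empty, then $\omega=k\cdot\beta$ and the leading $k$ can never be the ``$2$'' of a $120$, so $\beta$ ranges freely over $\mathfrak B_{n-1,k}$, contributing $\mathfrak b_{n-1,k}$;
    \item if $\alpha=(j)^{p-1}$ is constant and nonempty, then every non-maximal letter after position $p$ must equal $j$, so the suffix is an arbitrary word over $\{j,k\}$, and summing $\sum_{s=1}^{n-1}2^{n-1-s}$ over the length $s$ of $\alpha$ and over $j\in[0,k-1]$ gives $k\,(2^{n-1}-1)$;
    \item if $\alpha$ is non-constant, then no non-maximal letter may appear after position $p$, forcing $\omega=\alpha\cdot k^{\,n-p+1}$; counting non-constant nonincreasing words $\alpha$ of each length $s\in[2,n-1]$ gives $\sum_{s=2}^{n-1}\bigl(\binom{k+s-1}{s}-k\bigr)$.
\end{itemize}
These three cases are disjoint and exhaustive because $\alpha$ is determined by $\omega$, so adding the four contributions and simplifying via the hockey-stick identity $\sum_{s=0}^{n-1}\binom{k-1+s}{s}=\binom{k+n-1}{n-1}$ and Pascal's rule $\binom{k+n-1}{n}+\binom{k+n-1}{n-1}=\binom{k+n}{n}$ yields the recurrence
$$\mathfrak b_{n,k}=\mathfrak b_{n-1,k}+\binom{k+n}{n}+k\,2^{n-1}-kn-1.$$

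Finally I would verify that the claimed formula satisfies this recurrence. Writing $f(n)=\binom{k+n+1}{n}+k\bigl(2^n-1-\tfrac{n(n+1)}{2}\bigr)-n$, Pascal's rule gives $\binom{k+n+1}{n}-\binom{k+n}{n-1}=\binom{k+n}{n}$, while $2^n-2^{n-1}=2^{n-1}$ and $\tfrac{n(n+1)}{2}-\tfrac{(n-1)n}{2}=n$; hence $f(n)-f(n-1)=\binom{k+n}{n}+k\,2^{n-1}-kn-1$, matching the recurrence exactly, and $f(0)=1=\mathfrak b_{0,k}$, so induction on $n$ closes the argument.

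The main obstacle is the case analysis of the suffix following the first $k$: one must argue carefully that a non-constant prefix $\alpha$ forbids any later non-maximal letter, that a constant prefix $(j)^{p-1}$ permits an arbitrary $\{j,k\}$-suffix, and that the empty-prefix case collapses bijectively onto $\mathfrak B_{n-1,k}$. All three rely on the structural fact above together with the global nonincreasing constraint on non-maximal letters. Once these cases are pinned down, the binomial bookkeeping and the concluding induction are routine; in particular this route avoids having to evaluate the double sum that a direct ``distribute the letters $k$ into admissible gaps'' count would produce.
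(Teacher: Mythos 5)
Your proposal is correct and takes essentially the same route as the paper: the same split into words with and without the letter $k$, the same case analysis on the prefix before the first $k$ (empty, constant, or non-constant, with the constraints $v \leqslant \min(\alpha)$ and $v \geqslant \max(\alpha)$ forcing the suffix), and the same recurrence $\mathfrak b_{n,k} = \mathfrak b_{n-1,k} + \binom{k+n}{n} + k(2^{n-1}-n) - 1$. The only cosmetic difference is the final step, where the paper telescopes the recurrence from $\mathfrak b_{0,k}=1$ while you verify the guessed closed form by induction; the two computations are equivalent.
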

\begin{proof}
    Let $n,k \geqslant 1$, and $\omega \in \mathfrak B_{n,k}$.
    \begin{itemize}
        \item If the letter $k$ does not appear in $\omega$, then $\omega$ can be any nonincreasing word of length $n$ over the alphabet $[0,k-1]$. By Remark \ref{remnonincwords}, there are $\binom{k+n-1}{n}$ such words.
        \item Otherwise, let $p = \firstmax(\omega)$ be the position of the first letter $k$ in $\omega$.
        \begin{itemize}
            \item If $p = 1$, then $\omega_1 = k$ and $(\omega_i)_{i \in [2,n]}$ can be any word of $\mathfrak B_{n-1,k}$.

            \item If $p \geqslant 2$, let $\alpha = (\omega_i)_{i \in [1,p-1]}$ and $\gamma = (\omega_i)_{i \in [p+1,n]}$. In particular, $\alpha$ is a nonincreasing word of length $p-1$ over the alphabet $[0,k-1]$. By Remark \ref{remnonincwords}, there are $\binom{k+p-2}{p-1}$ possible words $\alpha$. Note that any letter $v \in \vals(\gamma)$ such that $v \neq k$ must satisfy both $v \leqslant \min(\alpha)$ (by the nonincreasing property) and $v \geqslant \max(\alpha)$ (to avoid 120). We distinguish two cases:
            \begin{itemize}
                \item If $\alpha$ is constant, then $\alpha = (v)^{p-1}$ for some $v \in [0,k-1]$, and $\gamma \in \{v, k\}^{n-p}$. In that case, there are $k$ possible choices for $\alpha$, and for each $\alpha$ there are $2^{n-p}$ possible choices for $\gamma$.
                \item Otherwise, there are $\binom{k+p-2}{p-1} - k$ remaining choices for $\alpha$. Since $\alpha$ is not constant, $\min(\alpha) < \max(\alpha)$, hence no letter in $[0,k-1]$ may appear in $\gamma$, so $\gamma = (k)^{n-p}$.
            \end{itemize}
        \end{itemize}
    \end{itemize}
    We obtain the following equation for all $n,k \geqslant 1$:
    \begin{align*}
        \mathfrak b_{n,k} &= \binom{k+n-1}{n} + \mathfrak b_{n-1,k} + \sum_{p=2}^n \left (k \cdot 2^{n-p} + \binom{k+p-2}{p-1} - k \right ) \\
        &= \binom{k+n-1}{n} + \mathfrak b_{n-1,k} + k (2^{n-1} - 1) + \binom{k+n-1}{n-1} - 1 - k (n-1) \\
        &= \mathfrak b_{n-1,k} + \binom{k+n}{n} + k (2^{n-1} - n) - 1.
    \end{align*}
    We conclude the proof with a telescoping sum. For all $n,k \geqslant 1$,
    \begin{align*}
        \mathfrak b_{n,k} &= \mathfrak b_{0,k} + \sum_{i=1}^n \left (\mathfrak b_{i,k} - \mathfrak b_{i-1,k} \right ) \\
        &= 1 + \sum_{i=1}^n \left ( \binom{k+i}{i} + k (2^{i-1} - i) - 1 \right ) \\
        &= \binom{k+n+1}{n} + k \left (2^n - 1 - \frac{n(n+1)}{2} \right ) - n. \qedhere
    \end{align*}
\end{proof}

\subsection{The pair \{110, 120\}}
Let $\mathfrak A_{n,m} = \{\sigma \in \I_n(110,120) \; : \; m = \max(\sigma)\}$ be the set of $\{110,120\}$-avoiding inversion sequences of size $n$ and maximum $m$. Let $\mathfrak B_{n,k} = \coprod_{\ell \in [0,n]} \W_{\ell,k} (110,120)$ be the set of $\{110,120\}$-avoiding words of length at most $n$ over the alphabet $[0,k-1]$.
Let $\mathfrak a_{n,m} = |\mathfrak A_{n,m}|$, and $\mathfrak b_{n,k} = |\mathfrak B_{n,k}|$.
\begin{thm} \label{thm110+120}
    For all $0 < m < n$,
    $$\mathfrak a_{n,m} = \sum_{p=m+1}^n \sum_{j=0}^{m-1} \mathfrak a_{p-1,j} \cdot \mathfrak b_{n-p,m-j}.$$
\end{thm}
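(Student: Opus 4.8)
The plan is to reuse the decomposition around the first maximum that underlies this entire section. Given $\sigma \in \mathfrak A_{n,m}$ with $0 < m < n$, I set $p = \firstmax(\sigma)$ and write $\sigma = \alpha \cdot \beta$ with $\alpha = (\sigma_i)_{i \in [1,p-1]}$ and $\beta = (\sigma_i)_{i \in [p,n]} = m \cdot \gamma$, where $\gamma = (\sigma_i)_{i \in [p+1,n]}$ has length $n-p$. Putting $j = \max(\alpha)$, the factor $\alpha$ is a $\{110,120\}$-avoiding inversion sequence of length $p-1$ with maximum $j < m$, that is $\alpha \in \mathfrak A_{p-1,j}$; since $\firstmax(\sigma) > \max(\sigma)$ we have $p \in [m+1,n]$ and $j \in [0,m-1]$. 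By Remarks \ref{remsplit} and \ref{remsplitpattern}, counting $\mathfrak A_{n,m}$ reduces to counting, for each pair $(p,j)$, the admissible words $\gamma$ (the choices of $\alpha$ being supplied by the recursion on $\mathfrak a_{p-1,j}$).

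First I would determine the constraints on $\gamma$. Proposition \ref{propsplit120} turns the cross-split condition for $120$ into $\max(\alpha) \leqslant \min(\beta)$, which forces $\gamma \in [j,m]^{n-p}$. The cross-split condition for $110$ is then automatic: a crossing occurrence would be a subsequence $\sigma_{i_1} = \sigma_{i_2} > \sigma_{i_3}$ with $i_1 < p \leqslant i_3$, whence $\sigma_{i_1} \leqslant j$ while $\sigma_{i_3} \geqslant \min(\beta) \geqslant j$, contradicting $\sigma_{i_1} > \sigma_{i_3}$. The decisive observation involves the leading maximum $\sigma_p = m$: if some entry of value $m$ inside $\gamma$ were followed by a strictly smaller entry, then together with $\sigma_p$ it would form an occurrence of $110$. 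Hence every entry of value $m$ in $\gamma$ must lie at the very end, so $\gamma = \delta \cdot m^{t}$ for some $t \geqslant 0$ and some word $\delta \in [j,m-1]^{n-p-t}$ containing no letter $m$.

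Next I would check that, for $\gamma$ of this shape, $\beta = m \cdot \delta \cdot m^{t}$ avoids $\{110,120\}$ if and only if $\delta$ does. One direction is immediate, since $\delta$ is a subword of $\beta$. For the converse, a short case analysis shows the copies of $m$ create no new forbidden pattern: in a putative occurrence of $110$ the two equal top entries cannot both equal $m$ (no smaller entry follows the trailing block of $m$'s, and the leading $m$ has no $m$ strictly between it and a smaller value), and if those tops are smaller than $m$ the bottom entry cannot be one of the $m$'s; similarly, in a putative occurrence of $120$ the apex cannot be an $m$. Thus any occurrence would already lie in $\delta$. Consequently, the admissible words $\gamma$ of length $n-p$ correspond bijectively to pairs $(\delta,t)$ with $\delta$ a $\{110,120\}$-avoiding word over $[j,m-1]$ and $t = n-p-|\delta| \geqslant 0$.

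Finally I would translate this into $\mathfrak b_{n-p,m-j}$. As $\delta$ ranges over all $\{110,120\}$-avoiding words over the alphabet $[j,m-1]$ of length $\ell \in [0,n-p]$ (the trailing $t = n-p-\ell$ copies of $m$ being determined), subtracting $j$ from each letter gives an order-isomorphic word over $[0,m-j-1]$, so the number of admissible $\gamma$ is $\sum_{\ell=0}^{n-p} |\W_{\ell,m-j}(110,120)| = \mathfrak b_{n-p,m-j}$. Summing $\mathfrak a_{p-1,j}\cdot \mathfrak b_{n-p,m-j}$ over $p \in [m+1,n]$ and $j \in [0,m-1]$ gives the claimed identity. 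I expect the pattern-preservation equivalence for $\beta$ to be the only delicate point: one must verify carefully that neither the leading $m$ nor the trailing block $m^{t}$ can serve as an endpoint or apex of a forbidden pattern, and it is exactly the freedom in $t$ that is absorbed by the ``length at most $n-p$'' built into the definition of $\mathfrak B_{n-p,m-j}$.
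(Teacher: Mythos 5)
Your proof is correct and takes essentially the same route as the paper's: the split $\sigma = \alpha \cdot m \cdot \gamma$ at the first maximum, Proposition \ref{propsplit120} to force $\gamma \in [j,m]^{n-p}$, the observation that avoidance of 110 together with $\sigma_p = m$ pushes all occurrences of $m$ in $\gamma$ into a single terminal block, and removal of that block (absorbing its length into the ``words of length at most $n-p$'' in the definition of $\mathfrak B_{n-p,m-j}$) after subtracting $j$. The only difference is that you spell out the pattern-preservation case analysis that the paper compresses into ``we easily observe that this construction is a bijection,'' a harmless elaboration of the same argument.
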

\begin{proof}
    Let $\sigma \in \mathfrak A_{n,m}$, $p = \firstmax(\sigma)$, $\alpha = (\sigma_i)_{i \in [1,p-1]}$, $\gamma = (\sigma_i)_{i \in [p+1,n]}$, and let $j = \max(\alpha)$. In particular, $\alpha \in \mathfrak A_{p-1,j}$. By Proposition \ref{propsplit120}, $\gamma \in [j,m]^{n-p}$. Since $\sigma$ avoids 110, if $\gamma_q = m$ for some $q \in [1,n-p]$ then $\gamma_i = m$ for all $i \in [q,n-p]$. In other words, all occurrences of $m$ in $\gamma$ (if any) must appear in a single factor at the end of $\gamma$. Let $k \in [0,n-p]$ be the number of occurrences of $m$ in $\gamma$, and $\gamma' = (\gamma_i)_{i \in [1,n-p-k]}$ be the word obtained by removing all letters $m$ from $\gamma$. By subtracting $j$ from all letters in $\gamma'$, we obtain a word in the set $\mathfrak B_{n-p,m-j}$. We easily observe that this construction is a bijection.
\end{proof}
Let $\mathfrak C_{n,k} = \overline \W_{n,k} (110,120)$ be the set of \{110, 120\}-avoiding words of length $n$ which contain all letters of the alphabet $[0,k-1]$. Let $\mathfrak c_{n,k} = |\mathfrak C_{n,k}|$. By definition of $\mathfrak B_{n,k}$ and Proposition \ref{propbinomwords}, for all $n,k \geqslant 0$,
$$\mathfrak b_{n,k} = \sum_{\ell = 0}^n \sum_{d=0}^{\min(\ell,k)} \binom{k}{d} \mathfrak c_{\ell,d}.$$

\begin{lem} \label{lem110+120}
For all $1 \leqslant k \leqslant n$,
$$\mathfrak{c}_{n,k} = \frac{1}{k} \binom{n-1}{k-1} \binom{n+k}{k-1}.$$
\end{lem}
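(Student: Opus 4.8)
The plan is to count $\mathfrak C_{n,k}=\overline\W_{n,k}(110,120)$ directly, applying the first-maximum decomposition of Section \ref{sectionSplitMax} to words (valid by the word-analogue of Remarks \ref{remsplit} and \ref{remsplitpattern}). A convenient preliminary observation is that a word avoids both $110$ and $120$ if and only if it has no indices $i<j<l$ with $\omega_i\le\omega_j$ and $\omega_l<\omega_i$: the two patterns merge into the single condition ``no weak non-descent followed by a value strictly below its bottom''. Given $\omega\in\mathfrak C_{n,k}$ with $m=k-1$ and $p=\firstmax(\omega)$, I would write $\omega=\alpha\cdot m\cdot\gamma$ as in Remark \ref{remsplit}.

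Using Remark \ref{remsplitpattern}, Proposition \ref{propsplit120} and the combined condition, the crossing condition becomes exactly $\max(\alpha)\le\min(\gamma)$. Setting $a=\max(\alpha)$, this splits the alphabet: every value below $a$ can occur only in $\alpha$, so $\alpha$ is surjective onto $[0,a]$, i.e.\ $\alpha\in\mathfrak C_{\,\cdot\,,a+1}$; by $110$-avoidance the copies of $m$ inside $\gamma$ form a terminal block $m^t$; and the middle part $\delta$ (that is, $\gamma$ with its terminal $m$'s deleted) is a $\{110,120\}$-avoiding word over $[a,m-1]$ covering $[a+1,m-1]$, counted by $\mathfrak c_{\,\cdot\,,k-1-a}+\mathfrak c_{\,\cdot\,,k-2-a}$ according to whether the value $a$ reappears. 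The boundary case $p=1$ (empty $\alpha$) is handled separately: then $\gamma=\delta\cdot m^t$ with $\delta$ surjective onto all of $[0,m-1]$.

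Summing over $a$, over the lengths of $\alpha$ and $\delta$, and over $t\ge0$ yields a convolution recurrence for $\mathfrak c_{n,k}$. Encoding it through $\mathcal C(x,y)=\sum_{k}\bigl(\sum_n\mathfrak c_{n,k}x^n\bigr)y^k$, and writing $u=\tfrac{x}{1-x}$ and $W=\mathcal C-1$, I expect it to collapse to the single algebraic equation $W=u\bigl(y(1+W)^2+W^2\bigr)$, a quadratic in $W$ (one can cross-check the first coefficients $C_1=u$ and $C_2=2u^2+u^3$, the latter giving $\mathfrak c_{n,2}=\binom{n+1}{2}-1$, which agrees with the target).

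Finally I would extract $\mathfrak c_{n,k}=[x^ny^k]\mathcal C$ by Lagrange inversion in $u$ applied to $W=u\,\Psi(W,y)$ with $\Psi=y(1+W)^2+W^2$; since $[x^n]u^N=\binom{n-1}{N-1}$, this already produces a factor $\binom{n-1}{\,\cdot\,}$, and a routine computation gives $\mathfrak c_{n,k}=\sum_N\frac1N\binom{n-1}{N-1}\binom Nk\binom{2k}{N+1}$. The main obstacle is the final step: collapsing this hypergeometric sum to the product $\frac1k\binom{n-1}{k-1}\binom{n+k}{k-1}$ (the lone $\frac1k$ being the hallmark of the Lagrange extraction). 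This can be done either by a Vandermonde/WZ-type simplification, or, avoiding the sum entirely, by verifying that the claimed closed form satisfies the convolution recurrence together with the base case $\mathfrak c_{n,1}=1$ by induction — the bookkeeping of the decomposition (the overlap at the value $a$ and the terminal block $m^t$) being the only delicate part of the combinatorial half.
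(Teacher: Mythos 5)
Your proposal is correct, but it takes a genuinely different route from the paper. The paper proves Lemma \ref{lem110+120} with a generating tree: letters are inserted in decreasing order of value (repeats from right to left), the words are refined by the statistic $\dec(\omega)$ (the length of the initial strictly decreasing run), which gives the linear three-parameter recurrence $\mathfrak c_{n,k,s} = \mathfrak c_{n,k,s+1}+\mathfrak c_{n-1,k,s}+\mathfrak c_{n-1,k-1,s-1}$; the closed form for $\mathfrak c_{n,k,s}$ is then proved by induction and summed over $s$. You instead transport the first-maximum decomposition of Section \ref{sectionSplitMax} to the words themselves, and your bookkeeping checks out: merging $110$ and $120$ into the single condition ``no $i<j<l$ with $\omega_i\leqslant\omega_j$ and $\omega_l<\omega_i$'' is valid, the crossing condition does collapse to $\max(\alpha)\leqslant\min(\gamma)$, surjectivity then forces $\alpha$ to be surjective onto $[0,a]$, and $110$-avoidance (using the leading $m$ as the first $1$ of a potential $110$) confines the other copies of $m$ to a terminal block; the resulting convolution in $u=\frac{x}{1-x}$ indeed gives $W=u\bigl(y(1+W)^2+W^2\bigr)$, which I verified independently (together with your cross-checks $C_1=u$, $C_2=2u^2+u^3$), and Lagrange inversion correctly yields $\mathfrak c_{n,k}=\sum_N \frac1N\binom{n-1}{N-1}\binom{N}{k}\binom{2k}{N+1}$. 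The one step you flag as the main obstacle is in fact routine, so your argument is complete: writing $\frac1N\binom{N}{k}=\frac1k\binom{N-1}{k-1}$, then applying trinomial revision $\binom{n-1}{N-1}\binom{N-1}{k-1}=\binom{n-1}{k-1}\binom{n-k}{N-k}$, and finally Vandermonde, $\sum_j\binom{n-k}{j}\binom{2k}{k-1-j}=\binom{n+k}{k-1}$, gives exactly $\frac1k\binom{n-1}{k-1}\binom{n+k}{k-1}$ with no WZ machinery and no need for the messier inductive verification of the convolution recurrence. As for what each approach buys: the paper's route produces the refined numbers $\mathfrak c_{n,k,s}$, which it reuses in Lemma \ref{lem010+120} to exhibit the bijection between $\overline\W_{n,k}(110,120)$ and $\overline\W_{n,k}(010,120)$ (your route does not give that identification for free), while your route yields the stronger structural fact that the bivariate generating function is algebraic of degree $2$, and the closed form drops out mechanically instead of having to be guessed before an induction.
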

\begin{proof}
For all $0 \leqslant k \leqslant n$ and $\omega \in \mathfrak C_{n,k}$, let $\dec(\omega)$ be the size of the longest strictly decreasing factor at the beginning of $\omega$. In particular, if $n > 0$ then $\omega_{\dec(\omega)}$ is the first letter 0 in $\omega$ (otherwise the first 0 would appear after a weak increase, creating an occurrence of 110 or 120). Also note that $\dec(\omega) \leqslant k$ since all letters of a strictly decreasing factor are distinct. Let $\mathfrak C_{n,k,s} = \{\omega \in \mathfrak C_{n,k} \; : \; s = \dec(\omega)\}$, and $\mathfrak c_{n,k,s} = |\mathfrak C_{n,k,s}|$.

We consider the combinatorial generating tree for $\coprod_{n \geqslant k \geqslant 0} \mathfrak C_{n,k}$ obtained by inserting letters in decreasing order of value, and inserting repeats of a letter from right to left. This amounts to inserting the letter 0 before the first 0 in a word $\omega \in \mathfrak C_{n,k}$, or inserting the letter 0 in $\omega+1$.
This indeed defines a generating tree since any nonempty word $\omega$ has a unique parent of size $|\omega|-1$, obtained by removing the first 0 from $\omega$, and subtracting 1 from every letter if the resulting word does not contain the letter 0.

For any word $\omega \in \mathfrak C_{n,k}$, the positions where the letter 0 may be inserted before the first 0 in $\omega$ are $[1,\dec(\omega)]$, and this clearly cannot create an occurrence of 110 or 120. 
The positions where the letter 0 may be inserted in $\omega+1$ without creating an occurrence of 110 or 120 are $[1,\dec(\omega)+1]$.
This implies that each word of $\mathfrak C_{n,k,s}$ has one child in $\mathfrak C_{n+1,k,i}$ for all $i \in [1,s]$, and one child in $\mathfrak C_{n+1,k+1,i}$ for all $i \in [1,s+1]$. We obtain the following equation for all $2 \leqslant i \leqslant k \leqslant n$:
$$\mathfrak c_{n,k,i} = \sum_{s = i}^{n-1} \mathfrak c_{n-1,k,s} + \sum_{s = i-1}^{n-1} \mathfrak c_{n-1,k-1,s}.$$
In particular, $\mathfrak c_{n,k,i} - \mathfrak c_{n,k,i+1} = \mathfrak c_{n-1,k,i} + \mathfrak c_{n-1,k-1,i-1}$, so a simpler recurrence relation holds: for all $2 \leqslant s \leqslant k \leqslant n$,
$$\mathfrak c_{n,k,s} = \mathfrak c_{n,k,s+1} + \mathfrak c_{n-1,k,s} + \mathfrak c_{n-1,k-1,s-1}.$$
From this equation, we can easily prove by induction that for all $1 \leqslant s \leqslant k \leqslant n$,
$$\mathfrak c_{n,k,s} = \frac{s (n+k-s-1)!}{(n-k)! k! (k-s)!}.$$

Finally, the expression of $\mathfrak c_{n,k}$ is obtained by summing over $s$. For all $1 \leqslant k \leqslant n$,
\begin{align*}
    \mathfrak c_{n,k} &= \sum_{s=1}^k \mathfrak c_{n,k,s} \\
    &= \sum_{s=1}^k \frac{s (n+k-s-1)!}{(n-k)! k! (k-s)!} \\
    &= \frac{1}{k} \binom{n-1}{k-1} \sum_{s=1}^k s \binom{n+k-s-1}{n-1} \\
    &= \frac{1}{k} \binom{n-1}{k-1} \sum_{i=1}^k \sum_{s=i}^k \binom{n+k-s-1}{n-1} \\
    &= \frac{1}{k} \binom{n-1}{k-1} \sum_{i=1}^k \binom{n+k-i}{n} \\
    &= \frac{1}{k} \binom{n-1}{k-1} \binom{n+k}{k-1}. \qedhere
\end{align*}
\end{proof}

\subsection{The pair \{010, 120\}} \label{010+120}
Let $\mathfrak A_{n,m} = \{\sigma \in \I_n(010,120) \; : \; m = \max(\sigma)\}$ be the set of $\{010,120\}$-avoiding inversion sequences of size $n$ and maximum $m$. Let $\mathfrak B_{n,k} = \W_{n,k} (010,120)$ be the set of \{010, 120\}-avoiding words of length $n$ over the alphabet $[0,k-1]$. Let $\mathfrak a_{n,m} = |\mathfrak A_{n,m}|$, and $\mathfrak b_{n,k} = |\mathfrak B_{n,k}|$.
\begin{thm} \label{thm010+120}
For all $0 < m < n$,
$$\mathfrak{a}_{n,m} = \sum_{p = m+1}^n \sum_{j = 0}^{m-1} \mathfrak{a}_{p-1,j} \cdot \mathfrak{b}_{n-p, m-j}.$$
\end{thm}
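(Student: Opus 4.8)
The plan is to reuse the first-maximum decomposition employed in the preceding theorems of this section. Given $\sigma \in \mathfrak A_{n,m}$ with $m > 0$, I set $p = \firstmax(\sigma)$ and split $\sigma = \alpha \cdot (m) \cdot \gamma$ with $\alpha = (\sigma_i)_{i \in [1,p-1]}$ and $\gamma = (\sigma_i)_{i \in [p+1,n]}$, writing $j = \max(\alpha)$. Since $m > 0$ forces $\sigma_1 = 0 \neq m$, we have $p \geqslant 2$, so $\alpha$ is nonempty and $j \in [0,m-1]$; moreover $\alpha$ is a factor of $\sigma$, hence it avoids both patterns, and $\max(\alpha) = j < m$, so $\alpha \in \mathfrak A_{p-1,j}$.

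Next I would translate the two crossing conditions of Remark \ref{remsplitpattern} into constraints on $\gamma$, writing $\beta = (m) \cdot \gamma$ for the factor starting at the first maximum. By Proposition \ref{propsplit120}, avoidance of $120$ across the cut is equivalent to $\max(\alpha) \leqslant \min(\beta)$, which (as $j < m$) amounts to $\gamma \in [j,m]^{n-p}$. By Proposition \ref{propsplit010}, avoidance of $010$ across the cut is equivalent to $\vals(\alpha) \cap \vals(\beta) = \emptyset$, hence to $\vals(\alpha) \cap \vals(\gamma) = \emptyset$ since $m \notin \vals(\alpha)$. This last point is the crux of the argument: because $\vals(\alpha) \subseteq [0,j]$ while $\vals(\gamma) \subseteq [j,m]$, the only value the two ranges could share is $j$ itself, and $j = \max(\alpha) \in \vals(\alpha)$; the $010$ condition therefore forces $j \notin \vals(\gamma)$, so that in fact $\gamma \in [j+1,m]^{n-p}$. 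This shrinking of the alphabet from $[j,m]$ to $[j+1,m]$ is exactly what produces the alphabet of size $m-j$ (rather than $m-j+1$) appearing in the statement.

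It then remains to package $\gamma$ as a word counted by $\mathfrak b$. As a factor of $\sigma$, the word $\gamma$ avoids both $010$ and $120$, and subtracting $j+1$ from each of its letters preserves order-isomorphism, producing a word $\gamma'$ of length $n-p$ over $[0,m-j-1]$ that avoids both patterns, i.e. $\gamma' \in \mathfrak B_{n-p,m-j}$. Conversely, I would verify that for any $p \in [m+1,n]$, any $\alpha \in \mathfrak A_{p-1,j}$ and any $\gamma' \in \mathfrak B_{n-p,m-j}$, the sequence $\sigma = \alpha \cdot (m) \cdot (\gamma' + (j+1))$ lies in $\I_n$ (the subdiagonal condition holds since every entry from position $p$ onward is at most $m < p$), has maximum $m$ first attained at position $p$, and meets all three conditions of Remark \ref{remsplitpattern} for both patterns; this makes $\sigma \mapsto (p,\alpha,\gamma')$ a bijection onto $\coprod_{p,j} \mathfrak A_{p-1,j} \times \mathfrak B_{n-p,m-j}$. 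Summing $\mathfrak a_{p-1,j} \cdot \mathfrak b_{n-p,m-j}$ over $p \in [m+1,n]$ and $j \in [0,m-1]$ then yields the claimed recurrence.

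I expect no serious obstacle, as the decomposition is identical to the ones already carried out for $\{120,201\}$ and $\{110,120\}$; the only genuinely new point is the value-disjointness argument forcing $j \notin \vals(\gamma)$. The most error-prone part is the bookkeeping of the alphabet size and the shift by $j+1$ rather than $j$, which I would cross-check against the recurrence governing $\mathfrak b_{n,k} = |\W_{n,k}(010,120)|$.
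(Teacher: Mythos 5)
Your proposal is correct and follows essentially the same route as the paper: the decomposition $\sigma = \alpha \cdot m \cdot \gamma$ at the first maximum, with Remark \ref{remsplitpattern} and Propositions \ref{propsplit120} and \ref{propsplit010} combining to force $\gamma$ to be a $\{010,120\}$-avoiding word over the alphabet $[j+1,m]$ (the value-disjointness excluding $j$ itself), in bijection with $\mathfrak B_{n-p,m-j}$. The only difference is presentational: you verify the converse (bijectivity and the subdiagonal condition) explicitly, where the paper asserts it directly.
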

\begin{proof}
    It follows from Remarks \ref{remsplit} and \ref{remsplitpattern} and Propositions \ref{propsplit120} and \ref{propsplit010} that for $0 < m < n$, $\mathfrak{A}_{n,m}$ is the set of inversion sequences $\alpha \cdot m \cdot \gamma$ such that:
    \begin{itemize}
        \item $\alpha \in \mathfrak{a}_{p-1,j}$ for some $p \in [m+1,n]$ and $j \in [0,m-1]$,
        \item $\gamma \in [0,m]^{n-p}$ avoids 010 and 120 and satisfies $\min(\gamma) > \max(\alpha) = j$.
    \end{itemize}
    In other words, $\gamma$ is a \{010, 120\}-avoiding word of length $n-p$ over the alphabet $[j+1,m]$, and the set of such words is clearly in bijection with $\mathfrak B_{n-p, m-j}$.
\end{proof}
Let $\mathfrak C_{n,k} = \overline \W_{n,k} (010,120)$ be the set of \{010, 120\}-avoiding words of length $n$ which contain all letters of the alphabet $[0,k-1]$. Let $\mathfrak c_{n,k} = |\mathfrak C_{n,k}|$. By Proposition \ref{propbinomwords}, for all $n,k \geqslant 0$,
$$\mathfrak b_{n,k} = \sum_{d=0}^{\min(n,k)} \binom{k}{d} \mathfrak c_{n,d}.$$
\begin{lem} \label{lem010+120}
For all $n,k \geqslant 1$,
$$\mathfrak{c}_{n,k} = \frac{1}{k} \binom{n-1}{k-1} \binom{n+k}{k-1}.$$
\end{lem}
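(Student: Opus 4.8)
The plan is to follow the proof of Lemma \ref{lem110+120} closely, building a combinatorial generating tree for the surjective words $\coprod_{n \geqslant k \geqslant 0} \mathfrak C_{n,k}$ by inserting a new smallest letter, and refining $\mathfrak c_{n,k}$ by a suitable prefix statistic. The two structural facts I would establish first are that, in any $\{010,120\}$-avoiding word, (i) all occurrences of $0$ form a single factor (two zeroes with a larger value between them would be an occurrence of $010$), and (ii) the factor preceding this block of zeroes is nonincreasing (a strict ascent before a $0$ would, together with that $0$, form an occurrence of $120$). Note the contrast with Lemma \ref{lem110+120}: here repeated values are allowed before the block of zeroes, so the relevant statistic is the length $s(\omega)$ of the \emph{longest nonincreasing prefix} of $\omega$, whereas the $\{110,120\}$ case used the longest strictly decreasing prefix, since $(v,v,0)$ is an occurrence of $110$.

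Next I would describe the children of a word $\omega \in \mathfrak C_{n,k}$ with $s(\omega) = s$, using the same parent map as in Lemma \ref{lem110+120} (remove the first $0$, and decrement every letter if no $0$ remains). The insertion analysis is local: a single new $0$ inserted into $\omega + 1$ at a position $p$ is the global minimum, hence can only play the role of the low entry of a $120$, so the result avoids $\{010,120\}$ if and only if the prefix before $p$ is nonincreasing, i.e. $p \in [1, s+1]$; this yields $s+1$ children in $\mathfrak C_{n+1,k+1}$, whose longest nonincreasing prefix is exactly $p$. Inserting a repeated $0$ must keep the zeroes consecutive by (i), so it can only extend the block, giving a single child in $\mathfrak C_{n+1,k}$ with longest nonincreasing prefix $s+1$. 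Writing $\mathfrak c_{n,k,s}$ for the number of words of $\mathfrak C_{n,k}$ with $s(\omega) = s$, this translates into the recurrence $\mathfrak c_{n,k,s} = \mathfrak c_{n-1,k,s-1} + \sum_{s' = s-1}^{n-1} \mathfrak c_{n-1,k-1,s'}$ for $1 \leqslant s \leqslant n$, with $\mathfrak c_{1,1,1} = 1$.

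Finally I would solve this recurrence and sum over $s$, and I expect this last step to be the main obstacle. The tree here is genuinely different from the one in Lemma \ref{lem110+120}: a node with statistic $s$ has $s+2$ children instead of $2s+1$, so the refined closed form for $\mathfrak c_{n,k,s}$ is not the one found there and carries boundary corrections — for instance one computes $\mathfrak c_{n,2,s} = s - \delta_{s,n}$ — which means a term-by-term comparison at the refined level fails even though the totals agree. I would therefore guess the closed form for $\mathfrak c_{n,k,s}$ from small cases and verify it by induction from the recurrence (alternatively, solve for the bivariate generating function $\sum_{n,s} \mathfrak c_{n,k,s}\, x^n u^s$ and extract coefficients), and then check that $\sum_s \mathfrak c_{n,k,s}$ collapses to $\frac{1}{k} \binom{n-1}{k-1}\binom{n+k}{k-1}$. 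An appealing but seemingly less direct alternative would be to exhibit a bijection $\mathfrak C_{n,k} \to \overline \W_{n,k}(110,120)$ and invoke Lemma \ref{lem110+120}: the two families do coincide on permutations, where both reduce to $120$-avoiding permutations, but they differ on words with repeated letters and a natural bijection is not evident.
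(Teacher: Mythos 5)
Your tree and recurrence are correct as far as they go. The two structural facts hold, the parent map (remove the first $0$, decrement if no $0$ remains) is well defined on $\coprod_{n \geqslant k \geqslant 0} \mathfrak C_{n,k}$, and I checked that your recurrence $\mathfrak c_{n,k,s} = \mathfrak c_{n-1,k,s-1} + \sum_{s'=s-1}^{n-1} \mathfrak c_{n-1,k-1,s'}$ reproduces the refined counts on small cases: for $(n,k)=(3,2)$ it gives $1,2,2$ (sum $5$), for $(4,3)$ it gives $5,7,6,3$ (sum $21$), for $(4,4)$ it gives $5,5,3,1$ (sum $14$), all agreeing with $\frac{1}{k}\binom{n-1}{k-1}\binom{n+k}{k-1}$, and your boundary formula $\mathfrak c_{n,2,s}=s-\delta_{s,n}$ is right. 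But the proposal stops exactly at the decisive step: it reduces the lemma to a recurrence that is never solved, and your own data show the refined numbers carry boundary corrections for every $k$ (for $k=3$ one gets $2,2,1$; then $5,7,6,3$; then $9,14,15,12,6$), so "guess a closed form and verify by induction" is a genuine unresolved chore, not a formality; the generating-function fallback, which leads to equations of the shape $(1-xu)\,C_k(x,u) = \frac{xu}{u-1}\bigl(u\,C_{k-1}(x,u) - C_{k-1}(x,1)\bigr)$, would work but is again real work. As written, this is a correct reduction of the statement to an unsolved recurrence, not a proof.

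The missed idea is that the route you dismiss in your last sentence is precisely the paper's proof — and it requires no explicit bijection. The paper grows the words of $\mathfrak C_{n,k}$ in the opposite direction from yours: it inserts letters in \emph{increasing} order of value (a new maximal letter $k$, or a repeat of $k-1$ before the first $k-1$), and tracks $s = |\Sites(\omega)|$, where $\Sites(\omega)$ is the set of positions $q \leqslant \firstmax(\omega)$ at which every letter to the left is smaller than every letter from $q$ onward. With that choice, a node with statistic $s$ has exactly one child in $\mathfrak C_{n+1,k,i}$ for each $i \in [1,s]$ and one child in $\mathfrak C_{n+1,k+1,i}$ for each $i \in [1,s+1]$ — the same $2s+1$ children, i.e. \emph{verbatim} the succession rule of the $\{110,120\}$ tree in Lemma \ref{lem110+120}, with the same initial condition $\mathfrak c_{1,1,1}=1$. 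Equality of all refined counts then follows by induction (this is the "bijection" the paper exhibits: recursive, via isomorphic generating trees, never as an explicit map), and the closed form is inherited for free from Lemma \ref{lem110+120}. So your observation that a term-by-term comparison fails is true for your statistic, but it is an artifact of growing the words from the bottom: your tree has $s+2$ children per node and a statistic bounded by $n$, while the paper's has $2s+1$ children and a statistic bounded by $k$, matching the $\{110,120\}$ case exactly. The freedom to choose the insertion order so that two generating trees become isomorphic is the idea your proposal is missing.
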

\begin{proof}
We consider the combinatorial generating tree for $\coprod_{n \geqslant k \geqslant 0} \mathfrak C_{n,k}$ obtained by inserting letters in increasing order of value, and inserting repeats of a letter from right to left. In other words, at each step of the construction, we either insert a letter $k$ in a word $\omega \in \mathfrak C_{n,k}$, or insert a letter $k-1$ before the first $k-1$ in $\omega$.

For all $1 \leqslant k \leqslant n$ and $\omega \in \mathfrak C_{n,k}$, let
$$\Sites(\omega) = \{q \in [1, \firstmax(\omega)] \; : \; \max((\omega_i)_{i \in [1,q-1]}) < \min((\omega_i)_{i \in [q,n]})\}$$
be the set of positions where the letter $k-1$ may be inserted before the first letter $k-1$ in $\omega$ without creating an occurrence of 010 or 120. 
The set of positions where the letter $k$ may be inserted in $\omega$ without creating an occurrence of 010 or 120 is $\Sites(\omega) \sqcup \{n+1\}$. For all $1 \leqslant s \leqslant k \leqslant n$, let $\mathfrak C_{n,k,s} = \{\omega \in \mathfrak C_{n,k} \; : \; s = |\Sites(\omega)|\}$.

Let $1 \leqslant s \leqslant k \leqslant n$ and $\omega \in \mathfrak C_{n,k,s}$. Let $q_1 < q_2 < \dots < q_s$ be the elements of $\Sites(\omega)$. Then for all $i \in [1,s]$, inserting the letter $k-1$ at position $q_i$ yields a word of $\mathfrak C_{n+1,k,i}$, and inserting the letter $k$ at position $q_i$ yields a word of $\mathfrak C_{n+1,k+1,i}$. Additionally, inserting the letter $k$ at position $n+1$ yields a word in $\mathfrak C_{n+1,k+1,s+1}$. Hence the following equation holds: for all $2 \leqslant i \leqslant k \leqslant n$,
$$\mathfrak c_{n,k,i} = \sum_{s = i}^{n-1} \mathfrak c_{n-1,k,s} + \sum_{s = i-1}^{n-1} \mathfrak c_{n-1,k-1,s}.$$
The same recurrence relation was found in the proof of Lemma \ref{lem110+120}, with the same initial condition $\mathfrak c_{1,1,1} = 1$. This exhibits a bijection between the sets of words $\overline \W_{n,k}(010, 120)$ and $\overline \W_{n,k}(110, 120)$ (which trivially extends to a bijection between $\W_{n,k}(010, 120)$ and $\W_{n,k}(110, 120)$), and concludes our proof.
\end{proof} 

\subsection{The pair \{101, 120\}} \label{101+120}

Let $\mathfrak A_{n,m} = \{\sigma \in \I_n(101, 120) \; : \; m = \max(\sigma)\}$ be the set of $\{101, 120\}$-avoiding inversion sequences of size $n$ and maximum $m$. Since the sequences in $\mathfrak A_{n,m}$ avoid 101, all occurrences of their maximum $m$ are consecutive. Let $\mathfrak A'_{n,m} = \{\sigma \in \mathfrak A_{n,m} \; : \; \sigma_n = m\}$ be the subset of $\mathfrak A_{n,m}$ of sequences whose last value is equal to their maximum.
Let $\mathfrak B_{n,k} = \coprod_{\ell \in [0,n]} \W_{\ell,k}(101,120)$ be the set of $\{101, 120\}$-avoiding words of length at most $n$ over the alphabet $[0,k-1]$. Let $\mathfrak a_{n,m} = |\mathfrak A_{n,m}|$, $\mathfrak a'_{n,m} = |\mathfrak A'_{n,m}|$, and $\mathfrak b_{n,k} = |\mathfrak B_{n,k}|$.
\begin{thm} \label{thm101+120}
For all $0 < m < n$,
$$\mathfrak{a}_{n,m} = \sum_{p = m+1}^n \sum_{j = 0}^{m-1} \mathfrak{a}'_{p-1,j} \cdot \mathfrak{b}_{n-p, m-j} + (\mathfrak{a}_{p-1,j} - \mathfrak{a}'_{p-1,j}) \cdot \mathfrak{b}_{n-p, m-j-1},$$
$$\mathfrak{a}'_{n,m} = \sum_{p = m+1}^{n} \sum_{j = 0}^{n-1} \mathfrak{a}_{p-1,j}.$$
\end{thm}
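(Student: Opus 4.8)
The plan is to reuse the section's decomposition around the first maximum, specializing the general conditions of Remark~\ref{remsplitpattern} and Proposition~\ref{propsplit120} to the pair $\{101,120\}$. Take $\sigma \in \mathfrak A_{n,m}$ with $0 < m < n$, set $p = \firstmax(\sigma)$ and write $\alpha = (\sigma_i)_{i \in [1,p-1]}$, so that $\alpha \in \mathfrak A_{p-1,j}$ for $j = \max(\alpha) \in [0,m-1]$ and $p \in [m+1,n]$. Because $\sigma$ avoids $101$, all occurrences of its maximum $m$ are consecutive; since the first one sits at position $p$, we may write $\sigma = \alpha \cdot m^t \cdot \delta$, where $t \geqslant 1$ is the size of the block of maxima and $\delta$ is a (possibly empty) suffix all of whose values are $< m$. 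The first equation will count the admissible triples $(\alpha,t,\delta)$, and the second equation will be the specialization $\delta = \varepsilon$.

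Next I would translate avoidance of $\{101,120\}$ into conditions on $\alpha$, the block, and $\delta$. By Remark~\ref{remsplitpattern}, $\sigma$ avoids a given pattern iff $\alpha$ does, $\beta = m^t\delta$ does, and there is no crossing occurrence. Internally, the leading block of maxima cannot play any role in $101$ or $120$, so $m^t\delta$ avoids $\{101,120\}$ iff $\delta$ does. For the crossing conditions, Proposition~\ref{propsplit120} shows that $120$ imposes exactly $\max(\alpha)=j \leqslant \min(\beta)$, i.e. every value of $\delta$ is $\geqslant j$. The crossing condition for $101$ is the heart of the argument: using $\min(\delta) \geqslant j = \max(\alpha)$, one checks that any crossing $101$ must read $(j,v,j)$ with $v < j$, its first two entries lying in $\alpha$ and its last entry lying in $\delta$. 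Hence such an occurrence exists iff the value $j$ reappears in $\delta$ \emph{and} $\alpha$ contains its maximum $j$ followed by a strictly smaller value. Since $\alpha$ itself avoids $101$, the latter happens precisely when $\alpha$ does not end with its maximum, that is when $\alpha \in \mathfrak A_{p-1,j} \setminus \mathfrak A'_{p-1,j}$.

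With this dichotomy in hand I would count the admissible suffixes. The map $\delta \mapsto \gamma := m^{\,t-1}\cdot\delta$ (prepending to $\delta$ the remaining maxima) is a bijection between admissible pairs $(t,\delta)$ of fixed total suffix length $n-p$ and $\{101,120\}$-avoiding words $\delta$ of length \emph{at most} $n-p$ over the relevant alphabet: indeed $t-1 = (n-p)-|\delta|$ is forced, and the crossing conditions on $\sigma$ depend only on $\vals(\delta)$ and $\min(\delta)$. This is exactly why $\mathfrak B_{n-p,\cdot}$ is defined as words of length at most $n-p$. When $\alpha \in \mathfrak A'_{p-1,j}$ the value $j$ is allowed, so $\delta$ ranges over $\{101,120\}$-avoiding words with letters in $[j,m-1]$, an alphabet of size $m-j$, contributing $\mathfrak b_{n-p,m-j}$; when $\alpha \in \mathfrak A_{p-1,j}\setminus\mathfrak A'_{p-1,j}$ the value $j$ is forbidden, so $\delta$ has letters in $[j+1,m-1]$, an alphabet of size $m-j-1$, contributing $\mathfrak b_{n-p,m-j-1}$. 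Summing over $p \in [m+1,n]$ and $j \in [0,m-1]$ yields the first equation. For the second, $\sigma \in \mathfrak A'_{n,m}$ corresponds to $\delta = \varepsilon$, so $\gamma = m^{\,n-p}$ contains no value below $m$; the $101$-crossing obstruction is then vacuous regardless of $\alpha$, and $\alpha$ may be any element of $\mathfrak A_{p-1,j}$ with $j=\max(\alpha)\in[0,m-1]$, giving $\mathfrak a'_{n,m} = \sum_{p=m+1}^n \sum_{j=0}^{m-1} \mathfrak a_{p-1,j}$.

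I expect the main obstacle to be the crossing-$101$ analysis. One must verify carefully that every crossing $101$ takes the form $(j,v,j)$ with $v<j$ (which relies on combining $\min(\delta)\geqslant j$ from the $120$ condition with $\max(\alpha)=j$), and that the existence of such a crossing is equivalent to the clean statement ``$\alpha$ does not end with its maximum''; this equivalence is precisely what makes the split into $\mathfrak a'$ and $\mathfrak a - \mathfrak a'$ exact. A secondary point needing care is the bookkeeping that absorbs the variable block length $t$ into the ``length at most $n-p$'' convention, together with checking that each of the two constructions is genuinely a bijection.
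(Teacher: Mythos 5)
Your proof is correct and takes essentially the same route as the paper's: the same decomposition $\sigma = \alpha \cdot (m)^t \cdot \delta$ at the first maximum, the same dichotomy according to whether $\alpha$ ends with its maximum (your crossing-$101$ analysis is exactly the paper's observation that otherwise $(j,\sigma_{p-1})$ is an occurrence of $10$ forcing $j \notin \vals(\delta)$, just spelled out in both directions), and the same absorption of the block length $t$ into the ``length at most $n-p$'' convention defining $\mathfrak B_{n,k}$, with $\delta = \varepsilon$ giving the second equation. One remark: your derivation yields $\sum_{j=0}^{m-1}$ in the second equation, which is the correct range since $j = \max(\alpha) < m$; the upper bound $n-1$ printed in the theorem statement is evidently a typo.
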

\begin{proof}
     Let $\sigma \in \mathfrak A_{n,m}$, $p = \firstmax(\sigma)$, $\alpha = (\sigma_i)_{i \in [1,p-1]}$, $\gamma = (\sigma_i)_{i \in [p+1,n]}$, and let $j = \max(\alpha)$. In particular, $\alpha \in \mathfrak A_{p-1,j}$ and $\gamma \in [j,m]^{n-p}$. Let $\ell = \lastmax(\sigma) \in [p,n]$ be the position of the last value $m$ in $\sigma$. Since $\sigma$ avoids 101, all occurrences of the maximum $m$ are consecutive, hence their positions are $[p, \ell]$. Let $\gamma' = (\sigma_i)_{i \in [\ell+1,n]}$ be the word obtained by removing all letters $m$ from $\gamma$.
     \begin{itemize}
         \item If $\alpha \in \mathfrak A'_{p-1,j}$, then $\gamma'$ can be any $\{101,120\}$-avoiding word of length $n-\ell \in [0,n-p]$ over the alphabet $[j,m-1]$. Subtracting $j$ from all letters in $\gamma'$ yields any word in the set $\mathfrak B_{n-p, m-j}$.
         \item If $\alpha \in \mathfrak A_{p-1,j} \backslash \mathfrak A'_{p-1,j}$, then $\sigma_{p-1} < j$. The subsequence $(j, \sigma_{p-1})$ is an occurrence of the pattern 10, which implies that the value $j$ does not appear in $\gamma'$ (otherwise $\sigma$ would contain 101). In that case, $\gamma'$ is a word over the alphabet $[j+1,m-1]$, and the number of possible words $\gamma'$ is $\mathfrak b_{n-p,m-j-1}$.
     \end{itemize}

     Finally, if $\sigma \in \mathfrak A'_{n,m}$ then $\gamma = (m)^{n-p}$ and $\alpha$ can be any sequence in $\mathfrak A_{p-1,j}$. This yields the second equation.
\end{proof}

It remains to count the words of $\mathfrak B_{n,k}$. Let $F(x,y) = \sum_{\ell,k \geqslant 0} |\W_{\ell,k}(101,120)| x^\ell y^k$ be the ordinary generating function of $\{101,120\}$-avoiding words.
The enumeration of $\{101,120\}$-avoiding words was solved in \cite{Mansour_2006}, with the following expression:
$$F(x,y) = \frac{(1-x)^2 - (1-2x)y - \sqrt{(1-x)^4 - 2(1-x)^2 y + (1-4x^2+4x^3)y^2}}{2xy(1-y)}.$$
The generating function of $\mathfrak B_{n,k}$ is 
\begin{align*}
    B(x,y) &= \sum_{n,k \geqslant 0} \mathfrak b_{n,k} x^n y^k \\
    &= \sum_{n,k \geqslant 0} \sum_{\ell \in [0,n]} |\W_{\ell,k}(101,120)| x^n y^k \\
    &= \sum_{\ell,k \geqslant 0} |\W_{\ell,k}(101,120)| \sum_{n \geqslant \ell} x^n y^k \\
    &= \sum_{\ell,k \geqslant 0} |\W_{\ell,k}(101,120)| \frac{x^\ell y^k}{1-x} \\
    &= \frac{F(x,y)}{1-x}.
\end{align*}

\subsection{The pair \{000, 120\}} \label{000+120}

Given a sequence $\sigma \in \mathbb N^n$ and an integer $v \in \mathbb N$, let $\occ(\sigma, v) = |\{i \in [1,n] \; : \; \sigma_i = v\}|$ be the number of occurrences of the value $v$ in $\sigma$.

Let $\mathfrak A_{n,m,r} = \{\sigma \in \I_n(000, 120) \; : \; m = \max(\sigma), \, r = \occ(\sigma, m) \}$ be the set of $\{000, 120\}$-avoiding inversion sequences of size $n$, maximum $m$, and $r$ occurrences of $m$.
Let $\mathfrak B_{n,k} = \W_{n,k}(000,120)$ be the set of $\{000, 120\}$-avoiding words of length $n$ over the alphabet $[0,k-1]$, and $\mathfrak C_{n,k} = \{ \omega \in \mathfrak B_{n,k} \; : \; \occ(\omega, k-1) = 1\}$ be the subset of words of $\mathfrak B_{n,k}$ in which the largest letter $k-1$ appears exactly once. We also consider the subsets of $\mathfrak B_{n,k}$ and $\mathfrak C_{n,k}$ of words in which the letter 0 appears less than twice: let $\mathfrak B_{n,k,\cancel{00}} = \{ \omega \in \mathfrak B_{n,k} \; : \; \occ(\omega, 0) < 2\}$ and $\mathfrak C_{n,k,\cancel{00}} = \{ \omega \in \mathfrak C_{n,k} \; : \; \occ(\omega, 0) < 2\}$. Let $\mathfrak a_{n,m,r} = |\mathfrak A_{n,m,r}|$, $\mathfrak b_{n,k} = |\mathfrak B_{n,k}|$, $\mathfrak c_{n,k} = |\mathfrak C_{n,k}|$, $\mathfrak b_{n,k,\cancel{00}} = |\mathfrak B_{n,k,\cancel{00}}|$, and $\mathfrak c_{n,k,\cancel{00}} = |\mathfrak C_{n,k,\cancel{00}}|$.

\begin{thm} \label{thm000+120}
For all $0 < m < n$,
$$\mathfrak{a}_{n,m,1} = \sum_{p = m+1}^n \sum_{j = 0}^{m-1} \mathfrak{a}_{p-1,j,1} \cdot \mathfrak{b}_{n-p, m-j, \cancel{00}} + \mathfrak{a}_{p-1,j,2} \cdot \mathfrak{b}_{n-p, m-j-1},$$
$$\mathfrak{a}_{n,m,2} = \sum_{p = m+1}^n \sum_{j = 0}^{m-1} \mathfrak{a}_{p-1,j,1} \cdot \mathfrak{c}_{n-p, m-j+1, \cancel{00}} + \mathfrak{a}_{p-1,j,2} \cdot \mathfrak{c}_{n-p, m-j}.$$
\end{thm}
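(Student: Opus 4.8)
The plan is to apply the splitting method of Remark~\ref{remsplit} to each $\sigma \in \mathfrak A_{n,m,r}$ with $0 < m < n$. Writing $p = \firstmax(\sigma)$, I decompose $\sigma = \alpha \cdot m \cdot \gamma$ with $\alpha = (\sigma_i)_{i \in [1,p-1]}$ and $\gamma = (\sigma_i)_{i \in [p+1,n]}$, and set $j = \max(\alpha) \in [0,m-1]$. Because $\sigma$ avoids $000$, both $m$ and $j$ occur at most twice in $\sigma$; in particular $r \in \{1,2\}$ and $r' := \occ(\alpha,j) \in \{1,2\}$, so $\alpha \in \mathfrak A_{p-1,j,r'}$. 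By Remark~\ref{remsplitpattern} and Proposition~\ref{propsplit120}, $\sigma$ avoids $120$ if and only if $\alpha$ and $\gamma$ both avoid $120$ and $\max(\alpha) \leqslant \min(\beta)$, whence $\gamma \in [j,m]^{n-p}$.

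The key observation is that the inequality $j \leqslant \min(\gamma)$ forces $j$ to be the only value shared by $\alpha$ and $\gamma$; since $\beta_1 = m > j$, avoidance of $000$ across the split then reduces to two purely local constraints on $\gamma$, namely $\occ(\gamma,m) = r-1$ and $\occ(\gamma,j) \leqslant 2 - r'$. I would then enumerate the admissible $\gamma$ in each of the four cases indexed by $(r,r')$, shifting $\gamma$ down to an alphabet starting at $0$ so as to match it with one of the word classes $\mathfrak B$, $\mathfrak B_{\cancel{00}}$, $\mathfrak C$, $\mathfrak C_{\cancel{00}}$.

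Concretely, when $r = 1$ the word $\gamma$ omits $m$, so $\gamma \in [j,m-1]^{n-p}$. If $r' = 1$ then $j$ may still occur once in $\gamma$, and subtracting $j$ yields a $\{000,120\}$-avoiding word over $[0,m-j-1]$ in which the letter $0$ occurs at most once, i.e. an element of $\mathfrak B_{n-p,m-j,\cancel{00}}$; if $r' = 2$ then $j$ is barred from $\gamma$, so subtracting $j+1$ yields an arbitrary element of $\mathfrak B_{n-p,m-j-1}$. Summing these subcases over $p$ and $j$ gives the first equation. The case $r = 2$ is identical except that $\gamma$ now contains exactly one copy of $m$; this restores $m$ to the alphabet and forces the largest letter of the shifted word to appear exactly once, so that $\mathfrak B_{n-p,m-j,\cancel{00}}$ and $\mathfrak B_{n-p,m-j-1}$ are replaced by $\mathfrak C_{n-p,m-j+1,\cancel{00}}$ and $\mathfrak C_{n-p,m-j}$, yielding the second equation.

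I expect the only delicate point to be the bookkeeping of the value $j$: one must argue that it is the unique value common to $\alpha$ and $\gamma$ (a consequence of the $120$-separation), and track its total multiplicity so as not to exceed two. This is exactly what produces the $\cancel{00}$ restriction in the terms where $\alpha$ ends with a single copy of its maximum, and its absence in the terms where $\alpha$ already carries two copies. Once this is in place, verifying that each of the four constructions is a bijection and that the index ranges $p \in [m+1,n]$, $j \in [0,m-1]$ are exhaustive is routine, and the recurrence is well founded since $\alpha$ is shorter than $\sigma$ while $\gamma$ is counted independently as a word.
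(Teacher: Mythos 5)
Your proposal is correct and follows essentially the same route as the paper: the split $\sigma = \alpha \cdot m \cdot \gamma$ at the first maximum, Proposition~\ref{propsplit120} to force $\gamma \in [j,m]^{n-p}$, the four cases indexed by $(\occ(\sigma,m), \occ(\alpha,j)) = (r,r')$, and the shifts by $j$ or $j+1$ landing in $\mathfrak B_{n-p,m-j,\cancel{00}}$, $\mathfrak B_{n-p,m-j-1}$, $\mathfrak C_{n-p,m-j+1,\cancel{00}}$, $\mathfrak C_{n-p,m-j}$ respectively. If anything, you are more explicit than the paper (which closes with ``we easily observe that this construction is a bijection'') about why cross-split $000$-avoidance reduces to the local constraints $\occ(\gamma,m) = r-1$ and $\occ(\gamma,j) \leqslant 2-r'$, namely that $j$ is the only value $\alpha$ and $\gamma$ can share.
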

\begin{proof}
     Let $\sigma \in \mathfrak A_{n,m}$, $p = \firstmax(\sigma)$, $\alpha = (\sigma_i)_{i \in [1,p-1]}$, $\gamma = (\sigma_i)_{i \in [p+1,n]}$, and let $j = \max(\alpha)$. In particular, $\alpha \in \mathfrak A_{p-1,j}$ and $\gamma \in [j,m]^{n-p}$ by Proposition \ref{propsplit120}.
     \begin{itemize}
         \item If $\sigma \in \mathfrak A_{n,m,1}$, then the letter $m$ cannot appear in $\gamma$, so $\gamma \in [j,m-1]^{n-p}$.
         \begin{itemize}
             \item If $\alpha \in \mathfrak A_{p-1,j,1}$ then the letter $j$ appears at most once in $\gamma$ (to avoid the pattern 000). By subtracting $j$ from every letter in $\gamma$, we obtain a word in the set $\mathfrak B_{n-p,m-j, \cancel{00}}$.
             \item If $\alpha \in \mathfrak A_{p-1,j,2}$ then the letter $j$ cannot appear in $\gamma$. By subtracting $j+1$ from every letter in $\gamma$, we obtain a word in the set $\mathfrak B_{n-p,m-j-1}$.
         \end{itemize}
         \item If $\sigma \in \mathfrak A_{n,m,2}$, then the letter $m$ appears exactly once in $\gamma$.
         \begin{itemize}
             \item If $\alpha \in \mathfrak A_{p-1,j,1}$ then subtracting $j$ from every letter in $\gamma$ yields a word in the set $\mathfrak C_{n-p,m-j+1, \cancel{00}}$.
             \item If $\alpha \in \mathfrak A_{p-1,j,2}$ then subtracting $j+1$ from every letter in $\gamma$ yields a word in the set $\mathfrak C_{n-p,m-j}$.
        \end{itemize}
     \end{itemize}
     We easily observe that this construction is a bijection.
\end{proof}

\noindent For $a,b \in \{1,2\}$, let $\mathfrak D_{n,k,a,b} = \{ \omega \in \overline \W_{n,k}(000,120) \; : \; a = \occ(\omega, 0), \, b = \occ(\omega, k-1) \}$ be the set of $\{000,120\}$-avoiding words of length $n$ which contain every letter of the alphabet $[0,k-1]$ and in which the letter $0$ appears $a$ times and the letter $k-1$ appears $b$ times. Let $\mathfrak d_{n,k,a,b} = |\mathfrak D_{n,k,a,b}|$.
\begin{prop}
For all $n,k \geqslant 1$,
$$\mathfrak b_{n,k} = \sum_{d = 1}^{\min(n,k)} \binom{k}{d} \sum_{a,b \in \{1,2\}} \mathfrak d_{n,d,a,b},$$

$$\mathfrak c_{n,k} = \sum_{d = 1}^{\min(n,k)} \binom{k-1}{d-1} \sum_{a \in \{1,2\}} \mathfrak d_{n,d,a,1},$$

$$\mathfrak b_{n,k,\cancel{00}} = \sum_{d = 1}^{\min(n,k)} \left ( \binom{k-1}{d-1} \sum_{b \in \{1,2\}} \mathfrak d_{n,d,1,b} + \binom{k-1}{d} \sum_{a,b \in \{1,2\}} \mathfrak d_{n,d,a,b} \right ),$$

$$\mathfrak c_{n,k,\cancel{00}} = \sum_{d = 1}^{\min(n,k)} \left ( \binom{k-2}{d-2} \mathfrak d_{n,d,1,1} + \binom{k-2}{d-1} \sum_{a \in \{1,2\}} \mathfrak d_{n,d,a,1} \right ).$$
\end{prop}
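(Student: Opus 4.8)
The plan is to refine the letter-counting bijection of Proposition \ref{propbinomwords} by additionally keeping track of the number of occurrences of the smallest and largest \emph{used} letters. The starting observation is that, since every word involved avoids $000$, each value occurs at most twice; hence in a surjective word of $\overline\W_{n,d}(000,120)$ every letter of $[0,d-1]$ occurs exactly once or twice. In particular the extreme letters $0$ and $d-1$ satisfy $\occ(\cdot,0)\in\{1,2\}$ and $\occ(\cdot,d-1)\in\{1,2\}$, which is exactly why the parameters $a,b$ range over $\{1,2\}$ and why $\overline\W_{n,d}(000,120)$ splits as $\coprod_{a,b\in\{1,2\}}\mathfrak D_{n,d,a,b}$.

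First I would treat $\mathfrak b_{n,k}$ as a warm-up. Applying Proposition \ref{propbinomwords} with $P=\{000,120\}$ and substituting $|\overline\W_{n,d}(000,120)|=\sum_{a,b\in\{1,2\}}\mathfrak d_{n,d,a,b}$ yields the first identity directly (the term $d=0$ vanishes because $n\geqslant 1$). For $\mathfrak c_{n,k}$ I would run the same decomposition by $d=\dist(\omega)$, but observe that the condition $\occ(\omega,k-1)=1$ forces $k-1\in\vals(\omega)$, so the increasing injection $\varphi\colon[0,d-1]\to[0,k-1]$ recovering $\omega$ from its order-isomorphic surjective word $\bar\omega$ must satisfy $\varphi(d-1)=k-1$, and moreover $\occ(\bar\omega,d-1)=b=1$. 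There are $\binom{k-1}{d-1}$ such injections (the remaining $d-1$ images are chosen inside $[0,k-2]$), while $a=\occ(\bar\omega,0)$ stays free in $\{1,2\}$; this gives the second identity.

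For the two $\cancel{00}$ identities the new feature is a case split on whether the alphabet letter $0$ is used at all, since the constraint $\occ(\omega,0)<2$ permits $\occ(\omega,0)\in\{0,1\}$. When $\occ(\omega,0)=0$ the letter $0$ is absent, so $\varphi(0)\geqslant 1$ and $\bar\omega$ is unrestricted at its bottom; when $\occ(\omega,0)=1$ we must have $\varphi(0)=0$ and $a=\occ(\bar\omega,0)=1$. Counting admissible injections in each case gives $\binom{k-1}{d}$ (all images in $[1,k-1]$) and $\binom{k-1}{d-1}$ (bottom fixed to $0$, the rest in $[1,k-1]$), producing the third identity. The fourth identity combines this bottom case split with the top constraint already used for $\mathfrak c_{n,k}$: fixing $\varphi(d-1)=k-1$ and $b=1$ throughout, the case $\occ(\omega,0)=0$ contributes $\binom{k-2}{d-1}$ injections with $a\in\{1,2\}$, and the case $\occ(\omega,0)=1$ contributes $\binom{k-2}{d-2}$ injections with $a=1$, matching the stated formula.

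The only real obstacle is bookkeeping: $\occ(\omega,0)$ and $\occ(\omega,k-1)$ refer to fixed alphabet letters, not to the smallest and largest letters actually occurring in $\omega$, so each condition must be translated both into membership of $0$ or $k-1$ in the image of $\varphi$ (which selects the binomial coefficient counting the injections) and into an occurrence count inside $\bar\omega$ (which fixes $a$ or $b$). Keeping these two roles separate in every case, and remembering that $\occ(\omega,k-1)=1$ automatically forces $k-1$ to be used whereas $\occ(\omega,0)<2$ does not force $0$ to be used, is the crux; once this dictionary is set up, each identity is a one-line sum over $d$.
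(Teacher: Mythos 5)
Your proof is correct and follows essentially the same route as the paper's: both refine the bijection of Proposition~\ref{propbinomwords} by tracking the occurrence counts of the extreme letters, and both reduce each identity to counting the increasing injections $\varphi \colon [0,d-1] \to [0,k-1]$ subject to the endpoint constraints ($\varphi(d-1)=k-1$ for the top condition, and $\varphi(0)=0$ versus $\varphi(0)>0$ for the bottom case split), yielding exactly the coefficients $\binom{k}{d}$, $\binom{k-1}{d-1}$, $\binom{k-1}{d}$, $\binom{k-2}{d-2}$, $\binom{k-2}{d-1}$. Your ``dictionary'' distinguishing the role of $0$ and $k-1$ as fixed alphabet letters (membership in the image of $\varphi$) from the occurrence counts $a,b$ inside the surjective word is precisely the observation the paper's proof records in its enumerated list.
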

\begin{proof}
    Let $F_{d,k}$ be the set of increasing functions from $[0,d-1]$ to $[0,k-1]$. The four identities above can be proven using the same construction as in the proof of Proposition \ref{propbinomwords}, observing that
    \begin{enumerate}
        \item $\binom{k}{d}$ = $|F_{d,k}|$,
        \item $\binom{k-1}{d-1} = |\{\varphi \in F_{d,k} \; : \; \varphi(d-1) = k-1\}|$,
        \item $\binom{k-1}{d-1} = |\{\varphi \in F_{d,k} \; : \; \varphi(0) = 0\}|$, and $\binom{k-1}{d} = |\{\varphi \in F_{d,k} \; : \; \varphi(0) > 0\}|$,
        \item $\binom{k-2}{d-2} = |\{\varphi \in F_{d,k} \; : \; \varphi(0) = 0, \, \varphi(d-1) = k-1\}|$, and $\binom{k-2}{d-1} = |\{\varphi \in F_{d,k} \; : \; \varphi(0) > 0, \, \varphi(d-1) = k-1\}|$. \qedhere
    \end{enumerate}
\end{proof}

Let $\mathfrak E_{n,k} = \{\lambda \in \{1,2\}^k \; : \; n = \sum_{i = 1}^k \lambda_i \}$ be the set of compositions of $n$ into $k$ parts of size 1 or 2. Equivalently, $\mathfrak E_{n,k}$ is the set of words of length $k$ over the alphabet $\{1,2\}$ which contain exactly $2k-n$ occurrences of the letter 1 and $n-k$ occurrences of the letter 2. Observe that $\mathfrak E_{n,k}$ is empty if $n < k$ or $n > 2k$.

\begin{rem} \label{rem000+120}
    In the notation of Section \ref{symmetry},
$$\mathfrak D_{n,k,a,b} = \coprod_{\substack{\lambda \in \mathfrak E_{n,k} \\ (\lambda_1, \lambda_k) = (a,b)}} \W_{\lambda}(120).$$
\end{rem}

Let $\mathfrak F_{n,k} = \W_{(2)^{n-k} \cdot (1)^{2k-n}}(120)$ be the set of 120-avoiding words of length $n$ over the alphabet $[0,k-1]$, which contain exactly 2 occurrences of each letter in $[0,n-k-1]$ and 1 occurrence of each letter in $[n-k,k-1]$, and let $\mathfrak f_{n,k} = |\mathfrak F_{n,k}|$.

\begin{prop}
    For all $k \geqslant 2, n \in [k,2k]$ and $a,b \in \{1,2\}$,
    $$\mathfrak d_{n,k,a,b} = \binom{k-2}{n-k+2-a-b} \mathfrak f_{n,k}$$
\end{prop}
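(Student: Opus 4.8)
The plan is to combine the decomposition of $\mathfrak{D}_{n,k,a,b}$ given in Remark \ref{rem000+120} with the symmetry result of Theorem \ref{thmSymmetry}. First I would observe that every composition $\lambda \in \mathfrak{E}_{n,k}$ has the same multiset of parts: since each of the $k$ parts is either $1$ or $2$ and they sum to $n$, there must be exactly $n-k$ parts equal to $2$ and $2k-n$ parts equal to $1$. In particular, the word $(2)^{n-k} \cdot (1)^{2k-n}$ used to define $\mathfrak{F}_{n,k}$ is a permutation of every such $\lambda$, so Theorem \ref{thmSymmetry} gives $|\W_\lambda(120)| = \mathfrak{f}_{n,k}$ for all $\lambda \in \mathfrak{E}_{n,k}$.

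Using Remark \ref{rem000+120}, this lets me factor $\mathfrak{f}_{n,k}$ out of the count:
$$\mathfrak{d}_{n,k,a,b} = \sum_{\substack{\lambda \in \mathfrak{E}_{n,k} \\ (\lambda_1, \lambda_k) = (a,b)}} |\W_\lambda(120)| = \mathfrak{f}_{n,k} \cdot \left| \{\lambda \in \mathfrak{E}_{n,k} \; : \; (\lambda_1, \lambda_k) = (a,b)\} \right|,$$
so it only remains to count the compositions in $\mathfrak{E}_{n,k}$ with prescribed first and last parts.

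For this final count I would fix $\lambda_1 = a$ and $\lambda_k = b$ and examine the middle block $(\lambda_2, \dots, \lambda_{k-1})$, which must be a composition of $n - a - b$ into $k-2$ parts each equal to $1$ or $2$. Writing $s$ for the number of parts equal to $2$ in this block, the sum condition $2s + (k-2-s) = n-a-b$ forces $s = n-k+2-a-b$, and the block is then completely determined by the choice of which $s$ of the $k-2$ positions carry a $2$. Hence there are exactly $\binom{k-2}{n-k+2-a-b}$ such compositions, which yields the claimed formula, with the usual convention that the binomial coefficient vanishes when $s$ falls outside $[0,k-2]$ (precisely the cases in which no valid composition exists).

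The argument is essentially immediate once the symmetry theorem is invoked; the only point requiring a little care is verifying that $\mathfrak{f}_{n,k}$ is genuinely independent of $\lambda$, i.e. that the multiset of parts is constant across $\mathfrak{E}_{n,k}$, which is exactly the $n-k$ twos and $2k-n$ ones computed above. I therefore do not expect a real obstacle here: the main bookkeeping is checking the boundary hypotheses $k \geqslant 2$ and $n \in [k,2k]$, which guarantee that $\mathfrak{E}_{n,k}$ is nonempty and that both the middle block of length $k-2$ and the binomial coefficient $\binom{k-2}{n-k+2-a-b}$ are well-defined.
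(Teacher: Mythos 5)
Your proposal is correct and follows essentially the same route as the paper's proof: both invoke Theorem \ref{thmSymmetry} to see that $|\W_\lambda(120)| = \mathfrak f_{n,k}$ uniformly over $\lambda \in \mathfrak E_{n,k}$, and both count the compositions with prescribed endpoints by noting that the middle block of length $k-2$ must contain exactly $n-k+2-a-b$ parts equal to $2$, giving the binomial coefficient via Remark \ref{rem000+120}. Your explicit check that every $\lambda \in \mathfrak E_{n,k}$ has the same multiset of parts ($n-k$ twos and $2k-n$ ones) makes precise a step the paper leaves implicit, but the argument is otherwise identical.
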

\begin{proof}

For all $k \geqslant 2, n \in [k,2k]$ and $a,b \in \{1,2\}$, 
the set of sequences $\lambda \in \mathfrak E_{n,k}$ such that $\lambda_1 = a$ and $\lambda_k = b$ is the set of sequences of the form $a \cdot \lambda' \cdot b$ where $\lambda'$ is a word of length $k-2$ over the alphabet $\{1,2\}$ which contains exactly $n-k+2-a-b$ occurrences of the letter 2. Hence, there are $\binom{k-2}{n-k+2-a-b}$ such sequences.

By Theorem \ref{thmSymmetry}, the number of words $|\W_{\lambda}(120)|$ is independent of the choice of ${\lambda \in \mathfrak E_{n,k}}$, and counted by $\mathfrak f_{n,k}$. Remark \ref{rem000+120} concludes the proof.
\end{proof}

\begin{lem}
    The following recurrence relation holds for all $k \geqslant 1, n \in [k+1,2k]$:
    $$\mathfrak f_{n,k} = \mathfrak f_{n,k+1} - \mathfrak f_{n-1,k},$$
    with initial conditions $\mathfrak f_{n,n} = \frac{1}{n+1}\binom{2n}{n}$ for all $n \geqslant 0$.
\end{lem}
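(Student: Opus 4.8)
The plan is to prove the equivalent recurrence $\mathfrak f_{n,k+1} = \mathfrak f_{n,k} + \mathfrak f_{n-1,k}$ by a combinatorial decomposition of $\mathfrak F_{n,k+1}$. Since $n \leqslant 2k$, the composition defining $\mathfrak f_{n,k+1}$ has at least two parts equal to $1$, so by Theorem \ref{thmSymmetry} I may compute $\mathfrak f_{n,k+1}$ using the representative in which the two largest letters $k$ and $k-1$ each appear exactly once; in this representative the letters $0,\dots,n-k-2$ appear twice and $n-k-1,\dots,k$ appear once. The key structural tool is the split of a word at its unique maximum: writing $\omega = \alpha \cdot k \cdot \gamma$, Remarks \ref{remsplit} and \ref{remsplitpattern} together with Proposition \ref{propsplit120} show that $\omega$ avoids $120$ if and only if $\alpha$ and $\gamma$ avoid $120$ and $\max(\alpha) \leqslant \min(\gamma)$ (the front letter $k$ of $\beta=k\gamma$ can only participate in a $120$ pattern as the "$2$" of a pattern straddling the split).

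First I would split $\mathfrak F_{n,k+1}$ according to whether its last letter equals $k$. If $\omega_n = k$, removing this unique maximum yields a word over $[0,k-1]$ with exactly the multiplicities of $\mathfrak F_{n-1,k}$, and conversely appending $k$ at the end of any word of $\mathfrak F_{n-1,k}$ never creates a $120$ (the appended global maximum has nothing to its right). This case is thus in bijection with $\mathfrak F_{n-1,k}$ and contributes $\mathfrak f_{n-1,k}$.

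The second case is where the real work lies. If $\omega_n \neq k$, then $\gamma$ is nonempty, and because $k-1$ occurs once while $\max(\alpha) \leqslant \min(\gamma)$, the unique $k-1$ must lie inside $\gamma$: if it were in $\alpha$ then $\max(\alpha) = k-1$ would force every letter of the nonempty $\gamma$ to equal $k-1$, contradicting uniqueness. In particular $\max(\alpha) \leqslant k-2$. I then replace the unique $k$ by $k-1$, obtaining $\alpha \cdot (k-1) \cdot \gamma$, a word over $[0,k-1]$ in which $k-1$ now appears twice. Using $\max(\alpha) \leqslant k-2$ and $\max(\alpha) \leqslant \min(\gamma)$, I would check that the modified position cannot serve as the "$2$", "$1$", or "$0$" of any new $120$ occurrence, so the result avoids $120$; it has the $n-k$ doubled letters $\{0,\dots,n-k-2,k-1\}$ and $2k-n$ single letters, hence by Theorem \ref{thmSymmetry} such words are counted by $\mathfrak f_{n,k}$. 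The inverse map promotes the \emph{first} occurrence of the maximal letter $k-1$ back to $k$. The main obstacle is verifying this inverse is well defined and $120$-avoidance-preserving: for a $120$-avoiding target word $\tau$ whose maximum $k-1$ first occurs at position $q$, I would show $\max((\tau_i)_{i<q}) \leqslant \min((\tau_i)_{i>q})$, since any straddling pair $\tau_{i_1}>\tau_{i_3}$ with $i_1<q<i_3$ would give the $120$ occurrence $(\tau_{i_1},\tau_q,\tau_{i_3})$. By the structural result this makes the promoted word $120$-avoiding, and it lies in the second case because $q<n$; the two maps are then visibly mutual inverses. This case therefore contributes $\mathfrak f_{n,k}$.

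Summing the two cases gives $\mathfrak f_{n,k+1} = \mathfrak f_{n-1,k} + \mathfrak f_{n,k}$, which rearranges to the stated recurrence. For the initial condition I would observe that $\mathfrak F_{n,n} = \W_{(1)^n}(120)$ is the set of $120$-avoiding permutations of an $n$-element set, classically enumerated by the Catalan number $C_n = \frac{1}{n+1}\binom{2n}{n}$, so $\mathfrak f_{n,n} = C_n$.
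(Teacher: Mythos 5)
Your proof is correct and takes essentially the same route as the paper's: the paper splits $\mathfrak F_{n,k}$ according to whether the unique second-largest letter lies to the left or right of the unique maximum, which is exactly your dichotomy on whether the maximum is the last letter, and in both arguments the first case is handled by deleting the trailing maximum while the second replaces the maximum by the second-largest letter and invokes the symmetry of Theorem \ref{thmSymmetry}, with the same Catalan initial condition. The only difference is one of detail: you make explicit the inverse map (promoting the first occurrence of the newly doubled maximal letter) and the preservation of $120$-avoidance via the split-at-first-maximum structure, both of which the paper's proof leaves implicit.
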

\begin{proof}
    Let $k \geqslant 2, n \in [k,2k-2]$, and $\omega \in \mathfrak F_{n,k}$. In particular, $\omega$ contains a single letter $k-1$ and a single letter $k-2$.
    \begin{itemize}
        \item If the letter $k-2$ is on the left of $k-1$, then $k-1$ is the last letter of $\omega$ (because $\omega$ avoids the pattern 120). In that case, removing the letter $k-1$ yields a word in $\mathfrak F_{n-1,k-1}$, and this is a bijection.
        \item Otherwise, the letter $k-2$ is on the right of $k-1$. In that case, replacing the letter $k-1$ by $k-2$ yields a word in $\W_{(2)^{n-k} \cdot (1)^{2k-n-2} \cdot (2)}(120)$, and this is a bijection. By Theorem \ref{thmSymmetry}, this set is equinumerous with $\W_{(2)^{n-k+1} \cdot (1)^{2k-n-2}}(120) = \mathfrak F_{n,k-1}$.
    \end{itemize}
    This shows that for all $k \geqslant 2$ and $n \in [k, 2k-2]$, $\mathfrak f_{n,k} = \mathfrak f_{n,k-1} + \mathfrak f_{n-1,k-1}$, which is equivalent to the recurrence relation of the lemma.

    For all $n \geqslant 0$, $\mathfrak F_{n,n}$ is trivially in bijection with the set of 120-avoiding permutations of size $n$, which is known to be counted by the Catalan number $C_n = \frac{1}{n+1}\binom{2n}{n}$.
\end{proof}
\noindent The numbers $\mathfrak f_{n,k}$ can be found in entry A059346 of the OEIS.

\subsection{The pattern 010} \label{010}

We denote by $\Ustirling{n}{k}$ the unsigned Stirling numbers of the first kind, which count the number of permutations of size $n$ with $k$ cycles (among other combinatorial interpretations, cf. entry A132393 of the OEIS).

Let $\mathfrak A_{n,m,d} = \{\sigma \in \I_n(010) \; : \; m = \max(\sigma), \, d = \dist(\sigma)\}$ be the set of $010$-avoiding inversion sequences of size $n$, maximum $m$, and having $d$ distinct values. Let $\mathfrak B_{n,k} = \{\omega \in \overline \W_{n,k}(010) \; : \; \omega_1 = k-1\}$ be the set of $010$-avoiding words of length $n$ which contain all letters of the alphabet $[0,k-1]$ and begin with their largest letter. Let $\mathfrak a_{n,m,d} = |\mathfrak A_{n,m,d}|$ and $\mathfrak b_{n,k} = |\mathfrak B_{n,k}|$.

\begin{thm} \label{thm010}
For all $2 \leqslant d \leqslant m + 1 \leqslant n$,
$$\mathfrak{a}_{n,m,d} =  \sum_{i = 0}^{d-1} \binom{m-i}{d-i-1} \sum_{p = m+1}^n \mathfrak b_{n-p+1, d-i} \sum_{j = 0}^{m-1} \mathfrak{a}_{p-1,j,i}.$$
\end{thm}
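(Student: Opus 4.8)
The plan is to apply the decomposition at the first maximum developed in Section \ref{sectionSplitMax}, specialized to the pattern $010$ through Proposition \ref{propsplit010}. I would fix $\sigma \in \mathfrak A_{n,m,d}$ and set $p = \firstmax(\sigma)$, so that $p \in [m+1,n]$ because $\sigma_p = m < p$. Writing $\alpha = (\sigma_i)_{i \in [1,p-1]}$ and $\beta = (\sigma_i)_{i \in [p,n]}$, Remark \ref{remsplit} gives that $\alpha \in \I_{p-1}$ with $j := \max(\alpha) < m$, and that $\beta$ is a word of length $n-p+1$ over $[0,m]$ with $\beta_1 = m = \max(\beta)$. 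By Remark \ref{remsplitpattern} together with Proposition \ref{propsplit010}, $\sigma$ avoids $010$ if and only if $\alpha$ avoids $010$, $\beta$ avoids $010$, and $\vals(\alpha) \cap \vals(\beta) = \emptyset$.

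Next I would read off the distinct-value statistic from this decomposition. Since $\vals(\sigma) = \vals(\alpha) \sqcup \vals(\beta)$ is a disjoint union, we have $\dist(\sigma) = \dist(\alpha) + \dist(\beta)$; writing $i := \dist(\alpha)$, the requirement $\dist(\sigma) = d$ becomes $\dist(\beta) = d-i$. This is the point that lets the two halves be counted independently: for each fixed $\alpha$ (hence fixed $i$, fixed $j$, and fixed value set $\vals(\alpha) \subseteq [0,m-1]$), the admissible words $\beta$ are exactly those of length $n-p+1$ that avoid $010$, begin with their maximum $m$, and use a value set $V = \vals(\beta)$ satisfying $m \in V$, $|V| = d-i$, and $V \cap \vals(\alpha) = \emptyset$.

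I would then count these $\beta$ in two independent steps. First, the value set $V$ is obtained by adjoining $m$ to any choice of $d-i-1$ values drawn from $[0,m-1] \setminus \vals(\alpha)$; since $\vals(\alpha) \subseteq [0,m-1]$ has exactly $i$ elements, this leaves $m-i$ available values, giving $\binom{m-i}{d-i-1}$ choices, a count depending only on $i$ and not on the particular values of $\alpha$. Second, once $V$ is fixed, the order-preserving relabelling $V \to [0,d-i-1]$ puts the admissible $\beta$ in bijection with $\mathfrak B_{n-p+1,d-i}$: the relabelling preserves $010$-avoidance, preserves surjectivity onto the alphabet, and preserves the property of starting with the largest letter, since $m = \max V$ maps to $d-i-1$. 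Hence for each such $\alpha$ there are $\binom{m-i}{d-i-1}\,\mathfrak b_{n-p+1,d-i}$ admissible words $\beta$.

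Finally I would assemble the sum. Grouping the sequences $\alpha \in \I_{p-1}(010)$ by $i = \dist(\alpha)$ and $j = \max(\alpha) \in [0,m-1]$ contributes the factor $\sum_{j=0}^{m-1} \mathfrak a_{p-1,j,i}$, and because the $\beta$-count depends only on $i$ it factors out cleanly; summing over $p \in [m+1,n]$ and $i \in [0,d-1]$ yields exactly the stated formula. The boundary terms are harmless: the $i=0$ summand vanishes because $\alpha$ is nonempty ($p-1 \geqslant m \geqslant 1$) forces $\dist(\alpha) \geqslant 1$, while the hypotheses $d \leqslant m+1$ and the range $i \leqslant d-1$ keep every binomial coefficient well defined. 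The only point needing genuine care is verifying that the number of admissible value sets $V$ depends solely on $i = \dist(\alpha)$ and not on which specific values occur in $\alpha$; this is precisely what makes the inner sum over $j$ separate from the rest, and it rests on the inclusion $\vals(\alpha) \subseteq [0,m-1]$ guaranteed by $\max(\alpha) < m$.
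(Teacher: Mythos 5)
Your proof is correct and takes essentially the same approach as the paper's: the split $\sigma = \alpha \cdot \beta$ at the first maximum, Proposition \ref{propsplit010} forcing $\vals(\alpha) \cap \vals(\beta) = \emptyset$, the count $\binom{m-i}{d-i-1}$ of possible value sets for $\beta$ (which must contain $m$), and the order-preserving relabelling identifying the admissible $\beta$ with $\mathfrak B_{n-p+1,d-i}$. Your extra observations (that $\dist(\sigma) = \dist(\alpha) + \dist(\beta)$, that the count of value sets depends only on $i = \dist(\alpha)$, and that the $i = 0$ summand vanishes since $\alpha$ is nonempty) merely make explicit what the paper leaves implicit.
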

\begin{proof}
    Let $\sigma \in \mathfrak{A}_{n,m,d}$, $p = \firstmax(\sigma)$, $\alpha = (\sigma_i)_{i \in [1,p-1]}$, and $\beta = (\sigma_i)_{i \in [p, n]}$.
    
    Since $\sigma$ avoids the pattern 010, $\alpha$ and $\beta$ avoid 010, and $\vals(\alpha) \cap \vals(\beta) = \emptyset$ by Proposition \ref{propsplit010}. In particular, $\alpha \in \mathfrak{A}_{p-1,j,i}$ for some $j < m$ and $i < d$, and $\beta$ is a 010-avoiding word of length $n-p+1$, which contains exactly $d-i$ distinct values chosen from the remaining $m+1-i$ (that is, all values in $[0, m]$ except for the $i$ values in $\alpha$), and such that $\beta_1 = m$. Since $m$ is always in $\beta$, there are $\binom{m-i}{d-i-1}$ possible choices for the set $\vals(\beta)$.
    Once the values of $\beta$ are chosen, there are $\mathfrak b_{n-p+1, d-i}$ ways to arrange them into a word avoiding 010 and beginning with its largest letter.
\end{proof}

\begin{lem} \label{lem010}
For all $n, k \geqslant 1$,
$$\mathfrak{b}_{n,k} = \Ustirling{n}{n+1-k}.$$
\end{lem}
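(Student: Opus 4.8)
The plan is to prove the two–term recurrence
$$\mathfrak b_{n,k} = \mathfrak b_{n-1,k} + (n-1)\,\mathfrak b_{n-1,k-1} \qquad (2 \leqslant k \leqslant n),$$
and then to observe that it coincides with the classical recurrence $\Ustirling{n}{j} = \Ustirling{n-1}{j-1} + (n-1)\Ustirling{n-1}{j}$ for the unsigned Stirling numbers of the first kind under the substitution $j = n+1-k$: the term $\Ustirling{n-1}{j-1}$ becomes $\mathfrak b_{n-1,k}$ and $(n-1)\Ustirling{n-1}{j}$ becomes $(n-1)\mathfrak b_{n-1,k-1}$. For the base cases I would note that $\mathfrak b_{n,1} = 1 = \Ustirling{n}{n}$ (the only word being $(0)^n$), that $\mathfrak b_{n,k} = 0 = \Ustirling{n}{n+1-k}$ whenever $k > n$, and that $\mathfrak b_{n,n} = (n-1)! = \Ustirling{n}{1}$ (permutations of $[0,n-1]$ starting with $n-1$). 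A straightforward induction on $n$ then finishes the argument.

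The key structural observation, valid for any 010-avoiding word, is that all occurrences of its minimum value $0$ are consecutive: a strictly larger letter sitting between two $0$'s would be an occurrence of 010. For $\sigma \in \mathfrak B_{n,k}$ with $k \geqslant 2$ we have $\omega_1 = k-1 \neq 0$, so this block of $0$'s lies entirely in positions $[2,n]$. I would then partition $\mathfrak B_{n,k}$ according to whether $0$ appears at least twice or exactly once, and realize each part as a bijection. If $0$ appears at least twice, deleting the leftmost $0$ (which shortens the block but keeps a $0$, hence keeps surjectivity and the first-letter-maximum property, and cannot create a pattern) yields a word in $\mathfrak B_{n-1,k}$; conversely one recovers $\sigma$ by inserting a $0$ immediately to the left of the existing $0$-block, giving a bijection with $\mathfrak B_{n-1,k}$. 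If $0$ appears exactly once, deleting it and subtracting $1$ from every remaining letter yields a word in $\mathfrak B_{n-1,k-1}$ (length $n-1$, surjective onto $[0,k-2]$, still 010-avoiding, still starting with its maximum $k-2$), and I would additionally record the position of the deleted $0$.

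The heart of the matter, and the step I expect to require the most care, is the count attached to this last case: I must show that each $\tau \in \mathfrak B_{n-1,k-1}$ arises from exactly $n-1$ words $\sigma \in \mathfrak B_{n,k}$. Starting from $\tau$, add $1$ to all letters (obtaining a word on $[1,k-1]$) and insert a single $0$. Insertion at position $1$ is forbidden, since it would destroy the first-letter-maximum property for $k \geqslant 2$; but insertion at any of the $n-1$ positions in $\{2,\dots,n\}$ is admissible, because a value that occurs only once can never be an outer entry of a pattern 010, and $0$, being the minimum, can never be its middle entry — so inserting a unique $0$ never creates an occurrence of 010. This yields a bijection between the ``unique $0$'' part of $\mathfrak B_{n,k}$ and $[1,n-1]\times\mathfrak B_{n-1,k-1}$, producing exactly the coefficient $n-1$. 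It is worth highlighting that the hypothesis $\omega_1 = \max(\omega)$ is precisely what excludes the single front position and turns what would be $n$ insertion slots into $n-1$, so that the resulting succession rule matches the Stirling recurrence rather than an off-by-one variant.
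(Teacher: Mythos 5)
Your proof is correct and takes essentially the same approach as the paper: both arguments partition $\mathfrak B_{n,k}$ according to whether the letter $0$ occurs once or several times, obtain the recurrence $\mathfrak b_{n,k} = \mathfrak b_{n-1,k} + (n-1)\,\mathfrak b_{n-1,k-1}$ from the consecutiveness of the $0$'s in a 010-avoiding word (with the condition $\omega_1 = k-1$ excluding the front insertion slot and producing the factor $n-1$), and conclude by matching this to the Stirling recurrence under $j = n+1-k$ together with the initial conditions. Your added verification that inserting a unique $0$ can never create an occurrence of 010 is a detail the paper leaves implicit, but it does not change the argument.
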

\begin{proof}
    Let $n \geqslant k \geqslant 2$, and $\omega \in \mathfrak{B}_{n,k}$. Since $\omega$ avoids the pattern $010$, all letters 0 in $\omega$ are consecutive.
    \begin{itemize}
    \item If $\omega$ contains several letters 0, then removing one of them yields a word $\omega' \in \mathfrak{B}_{n-1,k}$, and this is clearly a bijection.
    \item If $\omega$ contains a single letter 0, then removing it and subtracting 1 from all other letters yields a word $\omega' \in \mathfrak{B}_{n-1,k-1}$. Since $\omega_1 = k-1 > 0$, there are $n-1$ possible positions for the letter 0 in $\omega$, so exactly $n-1$ words $\omega \in \mathfrak{B}_{n,k}$ yield the same $\omega'$.
    \end{itemize}
    Hence the following recurrence relation holds for all $n \geqslant k \geqslant 2$:
    $$\mathfrak{b}_{n,k} = \mathfrak{b}_{n-1,k} + (n-1) \mathfrak{b}_{n-1,k-1}.$$
    This recurrence relation is also satisfied by the Stirling numbers of the first kind $\Ustirling{n}{n+1-k}$, and we can easily verify that initial conditions also match.
\end{proof}

\subsection{The pair of patterns \{000, 010\}} \label{000+010}

Let $\mathfrak A_{n,m,d} = \{ \sigma \in \I_n(000,010) \; : \; m = \max(\sigma), \, d = \dist(\sigma) \}$ be the set of $\{000, 010\}$-avoiding inversion sequences of size $n$, maximum $m$, and having $d$ distinct values. Let $\mathfrak B_{n,k} = \{ \omega \in \overline \W_{n,k}(000,010) \; : \; \omega_1 = k-1\}$ be the set of $\{000, 010\}$-avoiding words of length $n$ which contain all letters of the alphabet $[0,k-1]$ and begin with their largest letter.
Let $\mathfrak a_{n,m,d} = |\mathfrak A_{n,m,d}|$, and $\mathfrak b_{n,k} = |\mathfrak B_{n,k}|$.
Since the sequences of $\mathfrak A_{n,m,d}$ and the words of $\mathfrak B_{n,k}$ avoid the pattern 000, we have $\mathfrak a_{n,m,d} = 0$ if $n > 2d$, and $\mathfrak b_{n,k} = 0$ if $n > 2k$.
\begin{thm} \label{thm000+010}
For all $2 \leqslant d \leqslant m+1 \leqslant n$,
$$\mathfrak a_{n,m,d} = \sum_{i = 0}^{d-1} \binom{m-i}{d-i-1} \sum_{p = m+1}^n \mathfrak b_{n-p+1,d-i} \sum_{j = 0}^{m-1} \mathfrak a_{p-1,j,i}.$$
\end{thm}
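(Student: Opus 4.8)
The plan is to reuse, essentially verbatim, the decomposition from the proof of Theorem \ref{thm010}, since the right-hand side of the identity is literally identical; the only new ingredient is the additional avoidance of 000, which I claim imposes no extra constraint across the split. Given $\sigma \in \mathfrak A_{n,m,d}$, I would set $p = \firstmax(\sigma)$, $\alpha = (\sigma_i)_{i \in [1,p-1]}$, and $\beta = (\sigma_i)_{i \in [p,n]}$, so that $\sigma = \alpha \cdot \beta$ with $\beta_1 = m = \max(\sigma)$ and $\max(\alpha) < m$ by definition of the first maximum.

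By Remark \ref{remsplitpattern} applied to the patterns 010 and 000, the sequence $\sigma$ avoids both if and only if each of $\alpha$ and $\beta$ avoids both patterns and no occurrence of either pattern straddles position $p$. Proposition \ref{propsplit010} turns the straddling condition for 010 into $\vals(\alpha) \cap \vals(\beta) = \emptyset$. The key observation—and the step I expect to be the crux—is that this disjointness makes the straddling condition for 000 \emph{automatic}: a cross-boundary occurrence of 000 would require a single value appearing both in $\alpha$ and in $\beta$, which is now impossible. Hence the constraints reduce exactly to those of Theorem \ref{thm010}, except that each factor must additionally avoid 000: namely $\alpha$ is a $\{000,010\}$-avoiding inversion sequence, $\beta$ is a $\{000,010\}$-avoiding word beginning with its maximum, and $\vals(\alpha) \cap \vals(\beta) = \emptyset$.

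It then remains to count, which proceeds identically to Theorem \ref{thm010}. The factor $\alpha$ is a $\{000,010\}$-avoiding inversion sequence of length $p-1$, so $\alpha \in \mathfrak A_{p-1,j,i}$ for some $j \in [0,m-1]$ and $i \in [0,d-1]$. The factor $\beta$ is a $\{000,010\}$-avoiding word of length $n-p+1$ beginning with its largest letter $m$, using $d-i$ distinct values drawn from $[0,m] \backslash \vals(\alpha)$ and necessarily including $m$; since the remaining $d-i-1$ values are chosen from the $m-i$ elements of $[0,m-1] \backslash \vals(\alpha)$, there are $\binom{m-i}{d-i-1}$ choices for $\vals(\beta)$. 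Once these values are fixed, relabelling them to $[0,d-i-1]$ shows the number of admissible arrangements of $\beta$ is $\mathfrak b_{n-p+1,d-i}$. Summing over $p$, $i$, and $j$ yields the stated recurrence; the construction is clearly a bijection, so no further calculation is required. The enumeration of $\mathfrak b_{n,k}$ itself would be handled in a separate lemma, analogous to Lemma \ref{lem010}, but since the theorem only asserts the recurrence relating $\mathfrak a$ to $\mathfrak b$, it is not needed here.
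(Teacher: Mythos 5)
Your proposal is correct and matches the paper's proof, which simply declares the argument identical to that of Theorem \ref{thm010} after noting the very observation you identify as the crux: an occurrence of 000 cannot straddle the split at the first maximum, because Proposition \ref{propsplit010} (avoidance of 010) already forces $\vals(\alpha) \cap \vals(\beta) = \emptyset$. The counting step you carry out is exactly the one from Theorem \ref{thm010}, so nothing further is needed.
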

\begin{proof}
    Identical to the proof of Theorem \ref{thm010}: the pattern 000 could not spread over $\alpha$ and $\beta$, since the avoidance of 010 already implies that $\alpha$ and $\beta$ do not share any values.
\end{proof}

\begin{lem} \label{lem000+010}
For all $n, k \geqslant 2$,
$$\mathfrak b_{n,k} = (n-1) \mathfrak b_{n-1,k-1} + (n-2) \mathfrak b_{n-2,k-1}.$$
\end{lem}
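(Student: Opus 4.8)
The plan is to reproduce the bijective removal argument from the proof of Lemma~\ref{lem010}, but now exploiting the avoidance of $000$ to control the multiplicity of the smallest letter. First I would fix $n,k \geqslant 2$ and take $\omega \in \mathfrak B_{n,k}$. Since $\omega$ is surjective onto $[0,k-1]$, the letter $0$ occurs at least once, and since $\omega$ avoids $000$ it occurs at most twice; moreover, because $\omega$ avoids $010$ and $0$ is its minimal value, any two occurrences of $0$ must be consecutive (a strictly larger value between them would complete an occurrence of $010$). I would also note that $\omega_1 = k-1 \geqslant 1$, so no occurrence of $0$ can lie at position $1$. This splits the analysis into exactly two cases according to whether $\occ(\omega,0)$ equals $1$ or $2$.

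In the first case ($\occ(\omega,0) = 1$), I would delete the unique letter $0$ and subtract $1$ from every remaining letter. The resulting word $\omega'$ has length $n-1$, is surjective onto $[0,k-2]$, still avoids $000$ and $010$ (deletion cannot create a pattern), and begins with $k-2$ since position $1$ was untouched; hence $\omega' \in \mathfrak B_{n-1,k-1}$. Conversely, from any $\omega' \in \mathfrak B_{n-1,k-1}$ one recovers such an $\omega$ by adding $1$ to every letter and inserting a single $0$. Inserting one new minimal letter can never create an occurrence of $010$ (which needs two equal outer letters) nor of $000$ (which needs three), so every insertion position is admissible for pattern avoidance; the only constraint is that the $0$ must not land at position $1$, which leaves $n-1$ valid positions. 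This case therefore contributes $(n-1)\mathfrak b_{n-1,k-1}$.

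In the second case ($\occ(\omega,0) = 2$), the two $0$'s form a consecutive factor $00$ occupying positions $i,i+1$ with $i \in [2,n-1]$. Deleting this factor and subtracting $1$ from the remaining letters yields, by the same reasoning as above, a word $\omega' \in \mathfrak B_{n-2,k-1}$. Conversely, each $\omega' \in \mathfrak B_{n-2,k-1}$ gives rise to such an $\omega$ by adding $1$ to every letter and inserting the factor $00$; again no insertion can create $000$ or $010$, and the factor may start at any position in $[2,n-1]$ so as to keep the first letter equal to $k-1$, giving $n-2$ admissible positions. This case contributes $(n-2)\mathfrak b_{n-2,k-1}$, and summing the two disjoint cases yields the stated recurrence.

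I expect the only delicate point to be the justification that inserting the minimal letter (once, or as the factor $00$) never introduces a forbidden pattern, together with the careful bookkeeping of admissible positions once the constraint $\omega_1 = k-1$ is imposed; everything else is a direct transcription of the argument already used for the single pattern $010$.
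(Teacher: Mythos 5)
Your proposal is correct and matches the paper's proof essentially verbatim: the paper likewise splits by whether $0$ occurs once or twice (consecutively, by $010$-avoidance; at most twice, by $000$-avoidance), deletes the $0$'s and subtracts $1$, and counts $n-1$ resp.\ $n-2$ insertion positions compatible with $\omega_1 = k-1$. Your extra verification that inserting the minimal letter (or the factor $00$) cannot create an occurrence of $000$ or $010$ is a sound elaboration of a step the paper leaves implicit.
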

\begin{proof}
    Let $n \geqslant k \geqslant 2$, and $\omega \in \mathfrak B_{n,k}$. Since $\omega$ avoids $010$, all letters 0 in $\omega$ are consecutive. Since $\omega$ avoids $000$, $\omega$ has at most two letters 0.
    \begin{itemize}
    \item If $\omega$ contains a single letter 0, then removing it and subtracting $1$ from all other letters yields a word $\omega' \in \mathfrak B_{n-1,k-1}$. Since $\omega_1 = k-1 > 0$, there are $n-1$ possible positions for the letter 0 in $\omega$, so exactly $n-1$ words $\omega \in \mathfrak B_{n,k}$ yield the same $\omega'$.
    \item If $\omega$ contains two letters 0, then removing them and subtracting $1$ from all other letters yields a word $\omega' \in \mathfrak B_{n-2,k-1}$. There are $n-2$ possible positions for the two consecutive letters 0 in $\omega$, so exactly $n-2$ words $\omega \in \mathfrak B_{n,k}$ yield the same $\omega'$. \qedhere
    \end{itemize}
\end{proof}

\subsection{Forbidden values}

In the remainder of Section \ref{sectionSplitMax}, if $\alpha$ is a sequence avoiding a set of patterns $P$, we say that a value $v \in \{0, \dots, \max(\alpha)\}$ is \emph{forbidden} by $\alpha$ and $P$ if $\alpha \cdot m v$ contains a pattern in $P$ when $m > \max(\alpha)$. We denote by $\forb(\alpha, P)$ the set of values forbidden by $\alpha$ and $P$, or simply $\forb(\alpha)$ when there is no ambiguity.

In our decomposition around the first maximum of an inversion sequence $\sigma = \alpha \cdot m \cdot \gamma$, if $\gamma$ contains a value in $\forb(\alpha, \tau)$, then $\sigma$ contains $\tau$. This means that in order for $\sigma$ to avoid $\tau$, $\gamma$ must be a word over the alphabet $[0,m] \backslash \forb(\alpha)$. This is in fact a weaker version of condition 3 from Remark \ref{remsplitpattern}, since it only ensures that $\sigma$ does not contain any occurrence of $\tau$ whose last entry only is in $\gamma$.

Note that Propositions \ref{propsplit120}, \ref{propsplit010} can be expressed in terms of forbidden values:
$$\forb(\alpha, 120) = [0, \max(\alpha)-1], \qquad \forb(\alpha, 010) = \vals(\alpha),$$
and these identities hold for any integer sequence $\alpha$ (not only for inversion sequences). In particular, if $\alpha$ is an inversion sequence such that $\max(\alpha) > 0$, then $|\forb(\alpha, 010)| \geqslant 2$.

Earlier, we refined the enumeration of pattern-avoiding inversion sequences $\sigma$ according to one or two parameters (in addition to their size). The first one was the maximum $m$ of $\sigma$, and is required for our decomposition around the first maximum (so that the maximum of the left part $\alpha$ is less than $m$). For every pair of pattern which contained 120, the number of forbidden values was redundant with the maximum of $\sigma$, and therefore unnecessary.
In the previous two cases (the patterns 010 and \{000, 010\}), the second parameter counting the number of distinct values $d$ of $\sigma$ was in fact the number of forbidden values $|\forb(\sigma,010)|$.

For each pair of patterns $P$ which follows, there is no simple equivalent description of $|\forb(\sigma,P)|$, and introducing this parameter allows us to solve the enumeration of $P$-avoiding inversion sequences. 

\subsection{The pairs \{010, 201\} and \{010, 210\}} \label{010+210}
A bijection between $\I(010, 201)$ and $\I(010, 210)$ was established in \cite{Yan_Lin_2020}. In this section we work with the pair of patterns $\{010,210\}$, although our construction can also be applied to inversion sequences avoiding the pair $\{010,201\}$, resulting in the same equations.

\begin{rem} \label{rem010+210}
    Let $\sigma$ be a $\{010, 210\}$-avoiding integer sequence. Let $q$ be the largest value of $\sigma$ such that a larger value appears to its left, or $q = 0$ if there is no such value (i.e. if $\sigma$ is nondecreasing).
    Then we have $\forb(\sigma,210) = [0,q-1]$. Recalling also that $\forb(\sigma,010) = \vals(\sigma)$, we find $\forb(\sigma, \{010,210\}) = [0,q-1] \sqcup \{i \in \vals(\sigma) \; : \; i \geqslant q\}$.
\end{rem}

Let $\mathfrak A_{n,m,f} = \{\sigma \in \I_n(010,210) \; : \; m = \max(\sigma), \, f = |\forb(\sigma)|\}$ be the set of $\{010, 210\}$-avoiding inversion sequences of size $n$, maximum $m$, and having $f$ forbidden values. Let $\mathfrak B_{n,k} = \{ \omega \in \W_{n,k+1}(010) \; : \; \omega_i < \omega_j < k \implies i < j \; \text{ and } \; k-1 \in \vals(\omega)\}$ if $k > 0$, or $\mathfrak B_{n,k} = \{(0)^n\}$ if $k = 0$, be the set of $010$-avoiding words $\omega$ of length $n$ over the alphabet $[0, k]$ such that the subword $\omega'$ obtained by removing all letters $k$ from $\omega$ is nondecreasing, and $k-1$ is the maximum of $\omega'$.
Let $\mathfrak a_{n,m,f} = |\mathfrak A_{n,m,f}|$, and $\mathfrak b_{n,k} = |\mathfrak B_{n,k}|$.

\begin{thm} \label{thm010+210}
For all $2 \leqslant f \leqslant m+1 \leqslant n$,
$$\mathfrak a_{n,m,f} = \sum_{p = m+1}^n \sum_{i=0}^{f-1} \mathfrak b_{n-p,f-i-1} \sum_{j = 0}^{m-1}  \mathfrak a_{p-1,j,i}.$$
\end{thm}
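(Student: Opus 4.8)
The plan is to apply the decomposition around the first maximum from Remark \ref{remsplit}. Fix $\sigma \in \mathfrak A_{n,m,f}$, set $p = \firstmax(\sigma)$, and write $\sigma = \alpha \cdot m \cdot \gamma$ with $\alpha = (\sigma_i)_{i \in [1,p-1]}$ and $\gamma = (\sigma_i)_{i \in [p+1,n]}$. Since $m \geqslant 1$ (which follows from $2 \leqslant f \leqslant m+1$) we have $p \geqslant m+1 \geqslant 2$, so $\alpha$ is a nonempty $\{010,210\}$-avoiding inversion sequence with $\max(\alpha) = j < m$; writing $i = |\forb(\alpha)|$ places $\alpha \in \mathfrak A_{p-1,j,i}$ and accounts for the factor $\mathfrak a_{p-1,j,i}$ and the sums over $p$ and $j$. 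The whole content of the theorem is then the claim that, for a fixed such $\alpha$, the words $\gamma$ completing $\alpha \cdot m$ into a sequence of $\mathfrak A_{n,m,f}$ are in bijection with $\mathfrak B_{n-p,\,f-i-1}$. Throughout, for a $\{010,210\}$-avoiding sequence $\tau$ I write $q(\tau)$ for the threshold of Remark \ref{rem010+210}, so that $\forb(\tau) = [0,q(\tau)-1] \sqcup \{v \in \vals(\tau) : v \geqslant q(\tau)\}$.

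First I would describe the admissible $\gamma$. By the very definition of forbidden values, if $\gamma$ contained some $v \in \forb(\alpha)$ then $\alpha \cdot m \cdot v$ -- a subsequence of $\sigma$ -- would contain $010$ or $210$; hence $\gamma$ is a word over $A := [0,m] \setminus \forb(\alpha) = \{v \in [q(\alpha),m] : v \notin \vals(\alpha)\}$, a set of size $m+1-i$ whose largest element is $m$. This automatically gives $\vals(\alpha) \cap \vals(\gamma) = \emptyset$, which by Proposition \ref{propsplit010} rules out any $010$ straddling the cut. For $210$, any descent $\gamma_s > \gamma_t$ between two non-maximal letters would yield the occurrence $(m,\gamma_s,\gamma_t)$ using the maximum at position $p$, so the subword of non-$m$ letters of $\gamma$ must be nondecreasing. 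I would then verify the converse -- that for any $\gamma$ over $A$ with nondecreasing non-$m$ part no forbidden pattern can straddle the cut -- by the short case analysis on which entries of a putative $010$/$210$ lie in $\alpha$ and which in $m\gamma$, using that every value of $A$ is at least $q(\alpha)$, the largest value of $\alpha$ possessing a larger value to its left.

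The key step, and the one I expect to be the main obstacle, is to identify the parameter of $\mathfrak B$. Let $g = \max(\vals(\gamma) \setminus \{m\})$ when $\gamma$ has a non-$m$ letter (the degenerate case $\gamma \in \{m\}^{n-p}$ is treated separately). Because every non-$m$ letter of $\gamma$ has the maximum at position $p$ to its left, while no value of $\alpha$ greater than $g$ has a larger value to its left (as $q(\alpha) \leqslant g$), one gets $q(\sigma) = g$, and then Remark \ref{rem010+210} gives $\forb(\sigma) = [0,g] \sqcup (\vals(\alpha) \cap (g,m)) \sqcup \{m\}$. Comparing this with $\forb(\alpha)$ and using $|A \cap [q(\alpha),g]| = (g - q(\alpha) + 1) - |\vals(\alpha) \cap [q(\alpha),g]|$, a direct count collapses to $f - i - 1 = |\{v \in A : v \leqslant g\}|$; that is, $g$ is precisely the $(f-i-1)$-th smallest element of $A$. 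Setting $k = f-i-1$, the order-preserving relabelling of the $k$ smallest elements of $A$ onto $[0,k-1]$ together with $m \mapsto k$ sends $\gamma$ to a word $\omega$ over $[0,k]$ whose non-$k$ letters are nondecreasing with maximum $k-1$ (the image of $g$), so that $\omega \in \mathfrak B_{n-p,k}$; the requirement $k-1 \in \vals(\omega)$ in the definition of $\mathfrak B_{n-p,k}$ records that $g$ actually occurs, which is what makes $k$ recoverable from $\omega$.

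Finally I would check that the map is a bijection by exhibiting its inverse: given $\alpha \in \mathfrak A_{p-1,j,i}$ and $\omega \in \mathfrak B_{n-p,k}$, send the letter $k$ to $m$ and $[0,k-1]$ order-preservingly onto the $k$ smallest elements of $A$ to recover $\gamma$, hence $\sigma = \alpha \cdot m \cdot \gamma$; the degenerate case $k=0$ corresponds to $\mathfrak B_{n-p,0} = \{(0)^{n-p}\}$ and $\gamma = (m)^{n-p}$. A routine verification shows this reconstructs a $\{010,210\}$-avoiding inversion sequence whose maximum is $m$ (since $p \geqslant m+1$) and, by the computation above run in reverse, whose number of forbidden values is exactly $i + 1 + k = f$, so $\sigma \in \mathfrak A_{n,m,f}$. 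Summing the resulting identity over $p \in [m+1,n]$, $i \in [0,f-1]$, and $j \in [0,m-1]$ gives the stated formula.
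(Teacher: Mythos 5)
Your proof is correct and follows essentially the same route as the paper: decompose $\sigma = \alpha \cdot m \cdot \gamma$ at the first maximum, observe via Propositions \ref{propsplit010} and Remark \ref{rem010+210} that $\gamma$ is a $010$-avoiding word over $\Sigma = [0,m]\setminus\forb(\alpha)$ with nondecreasing non-$m$ part, compute $\forb(\sigma) = \forb(\alpha) \sqcup \{v \in \Sigma : v \leqslant \max(\gamma')\} \sqcup \{m\}$, and relabel to get the bijection with $\mathfrak B_{n-p,f-i-1}$. The only cosmetic difference is that you treat the case $\gamma = (m)^{n-p}$ separately and spell out the inverse map, where the paper absorbs the degenerate case with the convention $\max(\gamma') = -1$.
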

\begin{proof}
    Let $\sigma \in \mathfrak A_{n,m,f}$, $p = \firstmax(\sigma)$, $\alpha = (\sigma_i)_{i \in [1,p-1]}$, and $\gamma = (\sigma_i)_{i \in [p+1,n]}$. Let $\gamma'$ be the subsequence of $\gamma$ obtained by removing all values $m$ from $\gamma$.
    Since $\sigma$ avoids the pattern 010, $\alpha$ and $\gamma$ avoid 010, and $\vals(\alpha) \cap \vals(\gamma) = \emptyset$ by Proposition \ref{propsplit010}. Since $\sigma$ avoids the pattern 210, $\alpha$ avoids 210, and $\gamma'$ is nondecreasing. 
    
    The subsequence $\alpha$ is in $\mathfrak A_{p-1,j,i}$ for some $j < m$ and $i \leqslant f$. Notice that we actually have $i < f$ since $m$ is a forbidden value for $\sigma$ (because of the avoidance of 010), but not for $\alpha$ (since $m > \max(\alpha))$.
    The subsequence $\gamma$ is a 010-avoiding word of length $n-p$ over the alphabet $\Sigma = [0,m] \backslash \forb(\alpha)$ of size $m+1-i$, and such that $\gamma'$ is nondecreasing.
    
    The maximum of $\gamma'$ is the largest letter of $m \cdot \gamma$ such that a larger letter appears to its left, and $m$ is the only value in $m \cdot \gamma$ which is greater than $\max(\gamma')$. Hence, by Remark \ref{rem010+210}, $\forb(\sigma) = \forb(\alpha) \sqcup \{\ell \in \Sigma \; : \: \ell \leqslant \max(\gamma')\} \sqcup \{m\}$ (this still holds if $\gamma'$ is empty, i.e. $\max(\gamma') = -1$).
    Since $|\{\ell \in \Sigma \; : \: \ell \leqslant \max(\gamma')\}| = |\forb(\sigma)| - |\forb(\alpha)| - |\{m\}| = f-i-1$, replacing the letters $\{\ell \in \Sigma \; : \; \ell \leqslant \max(\gamma')\}$ by $[0,f-i-2]$ and the letter $m$ by $f-i-1$ yields a bijection between the words $\gamma$ and the words of $\mathfrak B_{n-p,f-i-1}$. \qedhere
\end{proof}

Let $\mathfrak C_{n,k}$ be the set of $010$-avoiding words $\omega$ of length $n$ over the alphabet $[0, k-1] \sqcup \{\infty\}$ (where $\infty$ is the largest letter) such that the subword $\omega'$ defined by removing all letters $\infty$ from $\omega$ is nondecreasing, and $k-1$ is the maximum of $\omega'$. Clearly, taking any word $\omega \in \mathfrak C_{n,k}$ and replacing each letter $\infty$ in $\omega$ by the letter $k$ yields a word in $\mathfrak B_{n,k}$, and this is a bijection. It is more convenient for the proof of the following lemma to count the words of $\mathfrak C_{n,k}$ rather than those of $\mathfrak B_{n,k}$.

Let $\mathfrak C^{(1)}_{n,k} = \{\omega \in \mathfrak C_{n,k} \; | \; \omega_n = \infty\}$, and $\mathfrak C^{(2)}_{n,k} = \{\omega \in \mathfrak C_{n,k} \; | \; \omega_n \neq \infty\}$, so that $\mathfrak C_{n,k} = \mathfrak C^{(1)}_{n,k} \sqcup \mathfrak C^{(2)}_{n,k}$.
Let $\mathfrak c^{(1)}_{n,k} = |\mathfrak C^{(1)}_{n,k}|$, and $\mathfrak c^{(2)}_{n,k} = |\mathfrak C^{(2)}_{n,k}|$. In particular, $\mathfrak b_{n,k} = \mathfrak c^{(1)}_{n,k} + \mathfrak c^{(2)}_{n,k}$.
\begin{lem} \label{lem010+210}
For all $n \geqslant 2, k \geqslant 1$,
$$\mathfrak b_{n,k} = \mathfrak b_{n,k-1} + 2 \mathfrak b_{n-1,k} - \mathfrak b_{n-2,k} - \mathfrak b_{n-1,k-1} + \mathfrak b_{n-2,k-1}.$$
\end{lem}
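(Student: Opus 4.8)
The plan is to work throughout with the sets $\mathfrak C_{n,k}$ rather than $\mathfrak B_{n,k}$ (they are in bijection, so $|\mathfrak C_{m,i}| = \mathfrak b_{m,i}$), and to split the count along the last letter via the decomposition $\mathfrak b_{n,k} = \mathfrak c^{(1)}_{n,k} + \mathfrak c^{(2)}_{n,k}$ already introduced. First I would dispose of $\mathfrak c^{(1)}_{n,k}$: removing the trailing $\infty$ of a word in $\mathfrak C^{(1)}_{n,k}$ yields a word of $\mathfrak C_{n-1,k}$, and conversely appending an $\infty$ to any word of $\mathfrak C_{n-1,k}$ is always legal, since a newly added maximal last letter could only play the role of the final entry of a pattern $010$, which is impossible for the largest letter. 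Hence $\mathfrak c^{(1)}_{n,k} = \mathfrak b_{n-1,k}$, and consequently $\mathfrak c^{(2)}_{n,k} = \mathfrak b_{n,k} - \mathfrak b_{n-1,k}$.

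Next I would analyse $\mathfrak C^{(2)}_{n,k}$. If the last letter of $\omega \in \mathfrak C^{(2)}_{n,k}$ is finite, it is the last entry of the nondecreasing subword $\omega'$, so it equals $\max(\omega') = k-1$. The crucial structural remark is that in any word of $\mathfrak C_{n,k}$ the occurrences of $k-1$ are contiguous: two occurrences of $k-1$ with an $\infty$ strictly between them would form an occurrence of $010$. I would then split according to the multiplicity of $k-1$. When $k-1$ occurs at least twice, the penultimate letter is also $k-1$, and deleting the last letter is a bijection onto $\mathfrak C^{(2)}_{n-1,k}$ (appending $k-1$ to a word already ending in $k-1$ never creates $010$), contributing $\mathfrak c^{(2)}_{n-1,k}$. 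When $k-1$ occurs exactly once, deleting it leaves an arbitrary $010$-avoiding word of length $n-1$ over $[0,k-2] \sqcup \{\infty\}$ whose finite subword is nondecreasing, now with \emph{no} constraint on its finite maximum, because appending a unique maximal letter cannot create $010$. Writing $\mathfrak D_{n-1,k-1}$ for the set of such words, this case contributes $|\mathfrak D_{n-1,k-1}|$.

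To evaluate $|\mathfrak D_{m,j}|$ I would grade by the finite maximum: the all-$\infty$ word contributes $1$, and a word whose finite subword has maximum $i-1$ (for $i \in [1,j]$) is exactly a word of $\mathfrak C_{m,i}$, giving $|\mathfrak D_{m,j}| = 1 + \sum_{i=1}^{j} \mathfrak b_{m,i}$. Collecting the two cases yields $\mathfrak c^{(2)}_{n,k} = \mathfrak c^{(2)}_{n-1,k} + 1 + \sum_{i=1}^{k-1} \mathfrak b_{n-1,i}$, and substituting $\mathfrak c^{(2)}_{n,k} = \mathfrak b_{n,k} - \mathfrak b_{n-1,k}$ (together with the same identity at $n-1$) gives the intermediate relation $\mathfrak b_{n,k} = 2\mathfrak b_{n-1,k} - \mathfrak b_{n-2,k} + 1 + \sum_{i=1}^{k-1} \mathfrak b_{n-1,i}$.

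The final step is to write this same relation with $k$ replaced by $k-1$ and subtract the two instances: the constant $1$ cancels, and the difference of the partial sums $\sum_{i=1}^{k-1}\mathfrak b_{n-1,i} - \sum_{i=1}^{k-2}\mathfrak b_{n-1,i}$ collapses to $\mathfrak b_{n-1,k-1}$, producing exactly the claimed five-term recurrence. The hard part is the bookkeeping in the $\mathfrak C^{(2)}$ analysis, namely justifying the contiguity of the $k-1$'s and the principle that appending a unique maximal letter is always safe, together with checking the small boundary cases ($k=1$, short words, and the empty finite subword), for which the conventions on $\mathfrak b_{n,0}$ and the all-$\infty$ word must be treated separately so that the subtraction argument remains valid.
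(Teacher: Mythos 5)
Your proof is correct and takes essentially the same route as the paper: the same splitting $\mathfrak b_{n,k} = \mathfrak c^{(1)}_{n,k} + \mathfrak c^{(2)}_{n,k}$ with last-letter deletion giving $\mathfrak c^{(1)}_{n,k} = \mathfrak b_{n-1,k}$, the same intermediate relation $\mathfrak b_{n,k} = 2\mathfrak b_{n-1,k} - \mathfrak b_{n-2,k} + \sum_{i=0}^{k-1}\mathfrak b_{n-1,i}$ (your explicit $+1$ is exactly the paper's $i=0$ term, since $\mathfrak b_{n-1,0} = 1$), and the same final subtraction in $k$; your case split of $\mathfrak C^{(2)}_{n,k}$ by the multiplicity of $k-1$ is equivalent, via your contiguity remark, to the paper's split by the penultimate letter ($k-1$, $\infty$, or a smaller finite letter). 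One small point within the boundary checks you already flagged: your form of the intermediate relation (with the bare $+1$) is false at $k=0$, so for $k=1$ the subtraction step should be bypassed — the claimed recurrence at $k=1$ reduces directly to the intermediate relation using $\mathfrak b_{m,0}=1$ — whereas the paper's formulation with the sum starting at $i=0$ remains valid at $k=0$ and telescopes uniformly.
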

\begin{proof}
    Let $n \geqslant 1, k \geqslant 0$, and $\omega \in \mathfrak C_{n,k}$.
    \begin{itemize}
        \item If $\omega \in \mathfrak C^{(1)}_{n,k}$, then $\omega_n = \infty$. Removing $\omega_n$ yields a word in $\mathfrak{C}_{n-1,k}$.
        \item If $\omega \in \mathfrak C^{(2)}_{n,k}$, then by the nondecreasing property, $\omega_n = k-1$. 
        \begin{itemize}
            \item If $\omega_{n-1} = k-1$, then removing $\omega_n$ yields a word in $\mathfrak C^{(2)}_{n-1,k}$.
            \item If $\omega_{n-1} = \infty$, then the avoidance of the pattern 010 ensures $\omega$ cannot contain another letter $k-1$, therefore removing $\omega_n$ yields a word in $\mathfrak C^{(1)}_{n-1,i}$ for some $i < k$.
            \item Otherwise, $\omega_{n-1} = i$ for some $i < k$, and removing $\omega_n$ yields a word in $\mathfrak C^{(2)}_{n-1,i}$.
        \end{itemize}
    \end{itemize}
    The maps described above are all bijections, hence for all $n \geqslant 1, k \geqslant 0$,
    $$\mathfrak c^{(1)}_{n,k} = \mathfrak b_{n-1,k},$$
    and for all $n \geqslant 2, k \geqslant 0$,
    \begin{align*}
    \mathfrak c^{(2)}_{n,k} &= \mathfrak c^{(2)}_{n-1,k} + \sum_{i=0}^{k-1} \mathfrak c^{(1)}_{n-1,i} + \mathfrak c^{(2)}_{n-1,i} \\
    &= \mathfrak b_{n-1,k} - \mathfrak c^{(1)}_{n-1,k} + \sum_{i=0}^{k-1} \mathfrak b_{n-1,i} \\
    &= \mathfrak b_{n-1,k} - \mathfrak b_{n-2,k} + \sum_{i=0}^{k-1} \mathfrak b_{n-1,i}.
    \end{align*}
    By summing $\mathfrak c^{(1)}_{n,k}$ and $\mathfrak c^{(2)}_{n,k}$, we have for all $n \geqslant 2, k \geqslant 0$,
    $$\mathfrak b_{n,k} = 2 \mathfrak b_{n-1,k} - \mathfrak b_{n-2,k} + \sum_{i=0}^{k-1} \mathfrak b_{n-1,i}.$$
    We conclude by telescoping the sum over $i$. For all $n \geqslant 2, k \geqslant 1$, 
    \begin{align*}
        \mathfrak b_{n,k} - \mathfrak b_{n,k-1} &= 2 \mathfrak b_{n-1,k} - \mathfrak b_{n-2,k} + \sum_{i=0}^{k-1} \mathfrak b_{n-1,i} - (2 \mathfrak b_{n-1,k-1} - \mathfrak b_{n-2,k-1} + \sum_{i=0}^{k-2} \mathfrak b_{n-1,i}) \\
        &= 2 \mathfrak b_{n-1,k} - \mathfrak b_{n-2,k} - \mathfrak b_{n-1,k-1} + \mathfrak b_{n-2,k-1}. \qedhere
    \end{align*}
\end{proof}

\subsection{The pair \{010, 110\}} \label{010+110}

\begin{rem} \label{rem010+110}
    Let $\sigma$ be a $\{010, 110\}$-avoiding integer sequence. Let $q$ be the largest repeated value in $\sigma$, or $q = 0$ if there is no such value.
    Then $\forb(\sigma, 110) = [0,q-1]$. Recalling that $\forb(\sigma, 010) = \vals(\sigma)$, we find $\forb(\sigma, \{010, 110\}) = [0,q-1] \sqcup \{ i \in \vals(\sigma) \; : \; i \geqslant q\}$.
\end{rem}

Let $\mathfrak{A}_{n,m,f} = \{ \sigma \in \I_n(010, 110) \; : \; m = \max(\sigma), \, f = |\forb(\sigma)|\}$ be the set of $\{010,110\}$-avoiding inversion sequences of size $n$, maximum $m$, and having $f$ forbidden values. 
Let $\mathfrak B_{n,k,f} = \{ \omega \in \W_{n,k}(010, 110) \; : \; f = |\forb(\omega)|\}$ be the set of $\{010, 110\}$-avoiding words of length $n$ over the alphabet $[0, k-1]$ having $f$ forbidden values. Let $\mathfrak C_{n,k} = \coprod_{\ell \in [0,n]} \W_{\ell,k} (010,110)$ be the set of $\{010, 110\}$-avoiding words of length at most $n$ over the alphabet $[0, k-1]$.
Let $\mathfrak a_{n,m,f} = |\mathfrak A_{n,m,f}|$, $\mathfrak b_{n,k,f} = |\mathfrak B_{n,k,f}|$, and $\mathfrak c_{n,k} = |\mathfrak C_{n,k}|$.
\begin{thm} \label{thm010+110}
For all $2 \leqslant f \leqslant m+1 \leqslant n$,
$$\mathfrak a_{n,m,f} = \sum_{p=m+1}^{n} \sum_{i=0}^{f-1} \sum_{j=0}^{m-1} \mathfrak a_{p-1,j,i} \cdot (\mathfrak b_{n-p, m-i, f-i-1} + \delta_{f,m+1} \cdot \mathfrak{c}_{n-p-1, m-i}).$$
\end{thm}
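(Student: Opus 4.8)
The plan is to imitate the decomposition-around-the-first-maximum arguments used throughout Section \ref{sectionSplitMax}, in particular Theorems \ref{thm010} and \ref{thm010+210}, but now keeping track of the number of forbidden values $f = |\forb(\sigma)|$ rather than the number of distinct values. Given $\sigma \in \mathfrak A_{n,m,f}$, I would set $p = \firstmax(\sigma)$, write $\alpha = (\sigma_i)_{i \in [1,p-1]}$ and $\gamma = (\sigma_i)_{i \in [p+1,n]}$, so that $\sigma = \alpha \cdot m \cdot \gamma$. By Remark \ref{remsplitpattern} and Proposition \ref{propsplit010}, avoidance of $010$ forces $\vals(\alpha) \cap \vals(\gamma) = \emptyset$ and both factors to avoid $010$; avoidance of $110$ restricts $\gamma$ further (no repeated value of $\gamma$ may sit below a larger value to its left). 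I would record $\alpha \in \mathfrak A_{p-1,j,i}$ for some $j < m$ and some $i = |\forb(\alpha)|$, and then argue that $\gamma$ must be a $\{010,110\}$-avoiding word over the alphabet $[0,m] \setminus \forb(\alpha)$, which has $m+1-i$ available letters.

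The \textbf{key bookkeeping step} is relating $f = |\forb(\sigma)|$ to $i = |\forb(\alpha)|$ using Remark \ref{rem010+110}. As in the proof of Theorem \ref{thm010+210}, $m$ is always forbidden for $\sigma$ (it is a repeated value once $\gamma$ is nonempty, and in any case lies in $\vals(\sigma)$) but not for $\alpha$; and the remaining new forbidden values are exactly those letters of the alphabet $\Sigma = [0,m] \setminus \forb(\alpha)$ that lie weakly below the largest repeated value $q$ of $\sigma$ contributed by $\gamma$. Counting these, I expect $\forb(\sigma) = \forb(\alpha) \sqcup \{\ell \in \Sigma : \ell < q\} \sqcup \{m\}$, giving $f - i - 1$ forbidden values coming from $\gamma$. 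After relabeling the available letters of $\gamma$ so that the alphabet becomes $[0,m-i]$ with $m$ sent to the top letter, $\gamma$ is counted by $\mathfrak b_{n-p, m-i, f-i-1}$, which explains the main summand.

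The \textbf{main obstacle}, and the reason for the Kronecker-delta correction term $\delta_{f,m+1} \cdot \mathfrak c_{n-p-1,m-i}$, is the boundary case $f = m+1$. When $f = m+1$ every value in $[0,m]$ is forbidden, which by Remark \ref{rem010+110} means the largest repeated value $q$ equals $m$ itself: the maximum $m$ is \emph{repeated} in $\sigma$. In that situation the decomposition degenerates because $\gamma$ can begin with additional copies of $m$, and I would peel off the factor of repeated $m$'s. Since $\sigma$ avoids $110$, these extra $m$'s (if more than the single first maximum) must be consecutive to the first maximum; since $\sigma$ avoids $010$, no value of $\gamma$ equals $m$ after that block. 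Removing the first repeated $m$ leaves a genuine $\{010,110\}$-avoiding word of length at most $n-p-1$ over the $m-i$ remaining letters, counted by $\mathfrak c_{n-p-1,m-i}$, and this contribution appears only when $f = m+1$, hence the delta. The remaining work is routine: verify the ranges $p \in [m+1,n]$, $i \in [0,f-1]$, $j \in [0,m-1]$, check that the map $\sigma \mapsto (\alpha, \gamma)$ together with the relabeling is a bijection onto the indicated product sets, and sum. I would also separately confirm the degenerate sub-cases where $\gamma$ is empty, to make sure the formula's lower limits and the $\delta_{f,m+1}$ term are consistent with $\max(\sigma)$ appearing once versus twice.
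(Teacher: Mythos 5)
Your overall architecture is the same as the paper's: decompose $\sigma = \alpha \cdot m \cdot \gamma$ at the first maximum, record $i = |\forb(\alpha)|$, match the remaining $f-i-1$ forbidden values of $\sigma$ to the forbidden-value statistic of the word $\gamma$, and gate the repeated-maximum case by $\delta_{f,m+1}$. However, your key bookkeeping identity is wrong: $\forb(\sigma) = \forb(\alpha) \sqcup \{\ell \in \Sigma : \ell < q\} \sqcup \{m\}$ is the accounting that works for the pair $\{010,210\}$ (proof of Theorem \ref{thm010+210}), where 210-avoidance blanket-forbids every value below the relevant threshold. For $\{010,110\}$, the pattern 110 only forbids values strictly below the largest repeated letter $q$, and by Remark \ref{rem010+110} the remaining forbidden values are the \emph{values of $\sigma$} weakly above $q$ --- in particular the distinct, non-repeated letters of $\gamma$, which are forbidden via 010. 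The correct identity is $\forb(\sigma) = \forb(\alpha) \sqcup \{\ell \in \Sigma : \ell < q\} \sqcup \{\ell \in \vals(\gamma) : \ell \geqslant q\} \sqcup \{m\}$. Concretely, for $\sigma = (0,0,2,1) \in \I_4(010,110)$ one has $\alpha = (0,0)$, $\gamma = (1)$, $i = |\forb(\alpha)| = |\{0\}| = 1$, and $q = 0$; your formula predicts $\forb(\sigma) = \{0,2\}$, whereas in fact $\forb(\sigma) = \vals(\sigma) = \{0,1,2\}$ and $f = 3$, the missing value $1$ being exactly a distinct letter of $\gamma$. Since $\mathfrak B_{n-p,m-i,f-i-1}$ classifies words by their own forbidden-value statistic, which contains the analogous $\{\ell \in \vals(\omega) : \ell \geqslant q\}$ part, your accounting misfiles sequences into the wrong $f$-class and the claimed bijection is not well defined as written.

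Your treatment of the case $f = m+1$ contains two further errors. First, $f = m+1$ does \emph{not} imply that $m$ is repeated: $\sigma = (0,0,1,2)$ has $\forb(\sigma) = \{0,1,2\}$, so $f = 3 = m+1$, while the maximum $2$ occurs only once. The implication the formula needs is the converse: if $\gamma$ contains the letter $m$, then $m$ is the largest repeated value, so $\forb(\sigma) = [0,m]$ and $f = m+1$; and at $f = m+1$ \emph{both} summands contribute, $\mathfrak b_{n-p,\,m-i,\,m-i}$ counting the sequences in which $m$ occurs once and $\mathfrak c_{n-p-1,\,m-i}$ those in which it recurs --- so reading $f = m+1$ as "the decomposition degenerates into the repeated-$m$ case" would miscount. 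Second, 110-avoidance forces the repeated $m$'s to form a suffix at the \emph{end} of $\sigma$ (the paper writes $\gamma = \gamma' \cdot (m)^k$ with $k \geqslant 1$); "consecutive to the first maximum" is the behavior associated with 101, not 110, and under your placement any nonempty $\gamma'$ following the block of $m$'s would itself create an occurrence $(m,m,\gamma'_1)$ of 110, so your description could only yield $\gamma' = \varepsilon$ and could never produce the count $\mathfrak c_{n-p-1,\,m-i}$ you assert. With the corrected forbidden-value identity and the corrected placement of the $m$-block, the rest of your plan (ranges, bijectivity, empty-$\gamma$ checks) goes through as in the paper.
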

\begin{proof}
    Let $\sigma \in \mathfrak A_{n,m,f}$, $p = \firstmax(\sigma)$, $\alpha = (\sigma_i)_{i \in [1,p-1]}$, and $\gamma = (\sigma_i)_{i \in [p+1,n]}$.
    Since $\sigma$ avoids the pattern 010, $\alpha$ and $\gamma$ avoid 010, and $\vals(\alpha) \cap \vals(\gamma) = \emptyset$ by Proposition \ref{propsplit010}. Since $\sigma$ avoids the pattern 110, $\alpha$ and $\gamma$ avoid 110, and all letters of $\gamma$ are greater than any repeated letter of $\alpha$.
    
    The subsequence $\alpha$ is in $\mathfrak A_{p-1,j,i}$ for some $j < m$ and $i < f$ (for the same reason as in the proof of Theorem \ref{thm010+210}). The subsequence $\gamma$ is a $\{010,110\}$-avoiding word of length $n-p$ over the alphabet $\Sigma = [0,m] \backslash \forb(\alpha)$ of size $m+1-i$. Additionally, if a letter $m$ appears in $\gamma$, then all letters to its right are also $m$; otherwise $\sigma$ would contain the pattern 110. We distinguish two cases for $\gamma$.
    \begin{itemize}
        \item If $\gamma$ does not contain the letter $m$, then $\gamma$ is a $\{010,110\}$-avoiding word of length $n-p$ over an alphabet of size $m-i$. Let $q$ be the largest repeated letter in $\gamma$, or $q = 0$ if there is no such letter. From Remark \ref{rem010+110}, we observe that the forbidden values of $\sigma$ are $\forb(\sigma) = \forb(\alpha) \sqcup \{\ell \in \Sigma \; : \; \ell < q\} \sqcup \{\ell \in \vals(\gamma) \; : \; \ell \geqslant q\} \sqcup \{m\}$. In particular, $|\{\ell \in \Sigma \; : \; \ell < q\} \sqcup \{\ell \in \vals(\gamma)\; : \; \ell \geqslant q\}| = f-i-1$. Replacing the letters of $\Sigma$ by $[0,m-i]$ (while preserving their order) yields a bijection between the words $\gamma$ and the words of $\mathfrak B_{n-p,m-i,f-i-1}$.
        \item If $\gamma$ contains the letter $m$, then $\gamma$ can be written as $\gamma' \cdot (m)^k$, where $\gamma'$ does not contain the letter $m$, and $k > 0$. The number of such words $\gamma'$ is $\mathfrak c_{n-p-1,m-i}$.
        In this case, $m$ is the largest repeated letter in $\sigma$ as well as the maximum of $\sigma$. This occurs if and only if $f = m+1$ (i.e. all values $[0,m]$ are forbidden by $\sigma$). \qedhere
    \end{itemize}
\end{proof}

By definition,
$$\mathfrak C_{n,k} = \coprod_{\ell = 0}^n \coprod_{f = 0}^k \mathfrak B_{\ell,k,f}, \qquad \text{hence} \qquad \mathfrak c_{n,k} = \sum_{\ell = 0}^n \sum_{f = 0}^k \mathfrak b_{\ell,k,f}.$$
We count the words of $\mathfrak B_{n,k,f}$ in Lemma \ref{lem010+110}. In order to do so, we first construct the words of $\overline \W_{n,k}(010, 110)$ by inserting their letters in increasing order from $0$ to $k-1$ (all such letters necessarily appearing), and inserting repeats of a letter from left to right.
For all $\omega \in \overline \W_{n,k}(010, 110)$, let $\Sites(\omega) = \{ i \in [1,n+1] \; : \; \vals((\omega_j)_{j < i}) \cap \vals((\omega_j)_{j \geqslant i}) = \emptyset\}$ be the set of positions (called \emph{active sites}) where the letter $k$ may be inserted in $\omega$ without creating an occurrence of the pattern 010. In particular, $1$ and $n+1$ are always active sites of $\omega$. Note also that inserting the letter $k$ in $\omega$ cannot create an occurrence of the pattern 110: the letter $k$ cannot take the role of the letter 0 in the pattern 110 since it is greater than all letters of $\omega$, and it cannot take the role of the letter 1 since it only appears once.

Let $\mathfrak D_{n,k,s} = \{ \omega \in \overline \W_{n,k}(010, 110) \; : \; s = |\Sites(\omega)|\}$ be the set of $\{010, 110\}$-avoiding words of length $n$ containing all letters of the alphabet $[0,k-1]$ and having $s$ active sites. Let $\mathfrak D_{n,k,s}^{(1)}$ be the subset of $\mathfrak D_{n,k,s}$ of words containing exactly one letter $k-1$, and $\mathfrak D_{n,k,s}^{(2)}$ be the subset of remaining words (i.e. words containing at least two letters $k-1$ if $k \geqslant 1$, or the empty word if $(n,k,s) = (0,0,1)$), so that $\mathfrak D_{n,k,s} = \mathfrak D_{n,k,s}^{(1)} \sqcup \mathfrak D_{n,k,s}^{(2)}$. Let $\mathfrak d_{n,k,s} = |\mathfrak D_{n,k,s}|$, $\mathfrak d_{n,k,s}^{(1)} = |\mathfrak D_{n,k,s}^{(1)}|$, and $\mathfrak d_{n,k,s}^{(2)} = |\mathfrak D_{n,k,s}^{(2)}|$.

\begin{lem} \label{lem010+110GT}
For all $n,k,s \geqslant 1$,
$$\mathfrak d_{n,k,s}^{(1)} = (s-1) \mathfrak d_{n-1,k-1,s-1}.$$
For all $n,s \geqslant 2, k \geqslant 1$,
$$\mathfrak d_{n,k,s}^{(2)} = \mathfrak d_{n,k,s+1}^{(2)} + \mathfrak d_{n-1,k,s}^{(2)} - \mathfrak d_{n-1,k,s+1}^{(2)} + \mathfrak d_{n-2,k-1,s-1}.$$
\end{lem}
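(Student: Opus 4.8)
The plan is to analyze the generating tree defined just before the statement, in which a word of $\overline{\W}_{n,k}(010,110)$ is grown either by inserting a fresh maximal letter $k$ at an active site, or by inserting a repeated copy of the current maximum $k-1$; the two displayed recurrences correspond to these two moves. Throughout, the central technical tool is an understanding of how the set $\Sites(\cdot)$ transforms under each insertion. First I would record the following local computation for a new-maximum insertion: if $\omega' \in \mathfrak D_{n-1,k-1,s'}$ has active sites $p_1 < \cdots < p_{s'}$ and we insert a unique letter $k-1$ at $p_i$, then a direct check of the disjointness condition at each cut shows that a cut at position $c \le p_i$ of the result is active if and only if $c$ is active in $\omega'$, while a cut at $c > p_i$ is active if and only if $c-1$ is active in $\omega'$; since $p_i$ itself is active and therefore gives rise to two active cuts, the resulting word has exactly $s'+1$ active sites.

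For the first identity, I would remove the unique letter $k-1$ from a word of $\mathfrak D^{(1)}_{n,k,s}$: this yields a word $\omega'$ in $\overline{\W}_{n-1,k-1}(010,110)$, and the removed letter must have occupied an active site of $\omega'$ (otherwise an occurrence of $010$ would appear; no occurrence of $110$ can be created by a unique maximal letter). By the computation above the number of active sites increases by exactly one, so $\omega' \in \mathfrak D_{n-1,k-1,s-1}$, and each of its $s-1$ active sites yields a distinct preimage; this gives $\mathfrak d^{(1)}_{n,k,s} = (s-1)\,\mathfrak d_{n-1,k-1,s-1}$. A byproduct I would record for later use is that the words of $\mathfrak D^{(1)}_{n-1,k}$ are in bijection with pairs $(\omega'',i)$ where $\omega'' \in \mathfrak D_{n-2,k-1}$ and $i$ is the rank of the chosen active site among those of $\omega''$, and that the number of active sites of such a word lying weakly to the left of its unique maximal letter equals exactly $i$.

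For the second identity, the key observations are that in a word of $\mathfrak D^{(2)}_{n,k,s}$ all maximal letters past the second one form a suffix (a consequence of avoiding $110$, forcing any repeat-insertion to append $k-1$ at the very end), and that appending $k-1$ deletes every active site lying strictly to the right of the first maximal letter while creating one new active site at the end. I would then split according to the parent obtained by deleting this last letter: if the parent lies in $\mathfrak D^{(2)}_{n-1,k,s'}$, its active sites already consist of those weakly left of its first maximum together with the end site, so appending preserves the count and contributes $\mathfrak d^{(2)}_{n-1,k,s}$; if the parent lies in $\mathfrak D^{(1)}_{n-1,k}$, the child has $i+1$ active sites, where $i$ is the left-rank from the byproduct above, so the children with $s$ active sites are counted by the words $\omega'' \in \mathfrak D_{n-2,k-1}$ having at least $s-1$ active sites. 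Combining these contributions gives the summation recurrence
\begin{equation*}
\mathfrak d^{(2)}_{n,k,s} = \mathfrak d^{(2)}_{n-1,k,s} + \sum_{s'' = s-1}^{n-1} \mathfrak d_{n-2,k-1,s''},
\end{equation*}
and subtracting the same identity written at $s+1$ collapses the sum to the single term $\mathfrak d_{n-2,k-1,s-1}$, yielding the stated recurrence.

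The main obstacle I anticipate is exactly the rank-dependence in the $\mathfrak D^{(1)}$-parent case: unlike the new-maximum move, appending a repeat does not shift the active-site count by a fixed amount, so the contribution cannot be read off from $s'$ alone. Overcoming this requires the finer bijection with pairs $(\omega'',i)$ together with the observation that the left-rank $i$ is precisely the quantity governing the child's active-site count; this is what converts the contribution into a sum over $s'' \ge s-1$ and makes the telescoping work. I would also take care to verify the boundary cases (the empty word, single-letter words, and the smallest values of $n$, $k$ and $s$) so that both the summation recurrence and its difference are valid on the stated ranges.
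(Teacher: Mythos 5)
Your proof is correct and follows essentially the same route as the paper: the same generating tree on $\coprod_{n,k} \overline \W_{n,k}(010,110)$, the same computation of how $\Sites$ transforms under each insertion, the same intermediate recurrence $\mathfrak d^{(2)}_{n,k,s} = \mathfrak d^{(2)}_{n-1,k,s} + \sum_{j = s-1}^{n-1} \mathfrak d_{n-2,k-1,j}$, and the same final telescoping. The only organisational difference is that the paper performs the two insertions of the new maximum simultaneously (a two-level jump in the tree, which makes the child's site count $i+1$ immediate from $\Sites(\omega') = \{p_j \; : \; j \in [1,i]\} \sqcup \{n+3\}$), whereas you delete one letter at a time and recover the same count via the left-rank statistic of the $\mathfrak D^{(1)}$-parent --- the identical computation, merely factored into two single steps.
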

\begin{proof}
Let $n,k,s \geqslant 0$, $\omega \in \mathfrak D_{n,k,s}$, and let $(p_1, \dots, p_s)$ be the values of $\Sites(\omega)$ in increasing order (in particular, $p_1 = 1$ and $p_s = n+1$). We consider three different ways in which the word $\omega$ can ``grow".
\begin{enumerate}
    \item Inserting a letter $k$.  Let $i \in [1,s]$, and let $\omega'$ be the word obtained by inserting the letter $k$ at position $p_i$ in $\omega$. Then $\Sites(\omega') = \{p_j \; : \; j \in [1,i]\} \sqcup \{p_j + 1 \; : \; j \in [i,s]\}$, so $\omega' \in \mathfrak D^{(1)}_{n+1,k+1,s+1}$.
    \item Inserting two occurrences of the letter $k$. This is similar to the previous case, since the rightmost letter $k$ can only be inserted at the end of the word in order to avoid the pattern 110.  Let $i \in [1,s]$, and let $\omega'$ be the word obtained by inserting one letter $k$ at position $p_i$ in $\omega$, and one letter $k$ at the end of the resulting word (at position $n+2$). Then $\Sites(\omega') = \{p_j \; : \; j \in [1,i]\} \sqcup \{n+3\}$, so $\omega' \in \mathfrak D^{(2)}_{n+2,k+1,i+1}$.
    \item Inserting a repeat of letter $k-1$, to the right of the rightmost letter $k-1$, and only if $\omega$ already contains at least two occurrences of the letter $k-1$ (i.e. $\omega \in \mathfrak D^{(2)}_{n,k,s}$ and $k > 0$). In that case, every letter $k-1$ in $\omega$ except (possibly) the leftmost $k-1$ must be in a single factor at the end of $\omega$, in order to avoid 110.
    Let $\omega'$ be the word obtained by inserting the letter $k-1$ at position $n+1$ in $\omega$. Then $\Sites(\omega') = \big (\Sites(\omega) \backslash \{n+1\} \big ) \sqcup \{n+2\}$, so $\omega' \in \mathfrak D^{(2)}_{n+1,k,s}$.
\end{enumerate}
It can be seen that any word in a set $\mathfrak D_{n,k,s}$ for some $n,k,s \geqslant 0$ can be obtained in exactly one way from the construction described above, starting from the empty word $\varepsilon$. In other words, we described a combinatorial generating tree for the class $\coprod_{n,k \geqslant 0} \overline \W_{n,k}(010,110)$. 

From item 1, we have for all $n,k,s \geqslant 1$,
$$\mathfrak d_{n,k,s}^{(1)} = (s-1) \mathfrak d_{n-1,k-1,s-1}.$$

From items 2 and 3, we have for all $n,s \geqslant 2, k \geqslant 1$,
$$\mathfrak d_{n,k,s}^{(2)} = \mathfrak d_{n-1,k,s}^{(2)} + \sum_{j = s-1}^{n-1} \mathfrak d_{n-2,k-1,j},$$
which may be rewritten
\begin{align*}
    \mathfrak d_{n,k,s}^{(2)} - \mathfrak d_{n,k,s+1}^{(2)} &= \mathfrak d_{n-1,k,s}^{(2)} - \mathfrak d_{n-1,k,s+1}^{(2)} + \mathfrak d_{n-2,k-1,s-1}. \qedhere
\end{align*}
\end{proof}

For any integer sequence $\sigma$, let $\rep(\sigma) = \max( v \in \vals(\sigma) \; : \; \exists i \neq j, \; \sigma_i = \sigma_j = v)$ be the largest repeated value in $\sigma$, with the convention $\rep(\sigma) = -1$ if $\sigma$ does not contain any repeated value. Let $\top(\sigma) = |\{i \in \vals(\sigma) \; : \; i > \rep(\sigma)\}|$ be the number of values of $\sigma$ which are greater than its largest repeated value (if no value is repeated, then $\top(\sigma)$ is the number of distinct values of $\sigma$, or equivalently the size of $\sigma$).

We call \emph{unused letters} of a word $\omega \in \W_{n,k}$ the letters in the set $[0,k-1] \backslash \vals(\omega)$. Note this definition relies not only on $\omega$, but also on the alphabet considered (e.g. each word of $\W_{n,k}$ is also in $\W_{n,k+1}$, but has different unused letters).

\begin{lem} \label{lem010+110}
For all $n,k,f \geqslant 0$,
$$\mathfrak b_{n,k,f} = \sum_{t = 0}^{f} \binom{t+k-f}{k-f} \sum_{b = 0}^{f-t} \binom{f-t-1}{b} \sum_{s = 1}^{n-t+1} \frac{(s+t-1)!}{(s-1)!} \mathfrak d_{n-t,f-b-t,s}^{(2)}.$$
\end{lem}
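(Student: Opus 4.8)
The plan is to count each word $\omega \in \mathfrak B_{n,k,f}$ by first recording, as in Proposition \ref{propbinomwords}, its \emph{shape} together with the increasing injection that embeds the used letters into the alphabet $[0,k-1]$, and then to build the shape itself by the generating tree of Lemma \ref{lem010+110GT}. The crucial preliminary observation is a reformulation of the forbidden-value count. For a $\{010,110\}$-avoiding word $\omega$ with $q = \rep(\omega)$, Remark \ref{rem010+110} gives $\forb(\omega) = [0,q-1] \sqcup \{i \in \vals(\omega) : i \geqslant q\}$; since the values of $\omega$ that are at least $q$ are exactly $q$ together with the $\top(\omega)$ values lying above it, this yields the identity $f = |\forb(\omega)| = \rep(\omega) + 1 + \top(\omega)$ (valid also when $\omega$ has no repeat, using $\rep(\omega) = -1$). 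This identity is what lets the two binomial factors and the parameters of $\mathfrak d^{(2)}$ emerge.

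First I would separate the alphabet data. Writing $d = \dist(\omega)$ and $t = \top(\omega)$, I decompose $\omega$ as a pair $(\varphi, \nu)$, where $\nu \in \overline \W_{n,d}(010,110)$ is the relabelling of $\omega$ to a contiguous alphabet and $\varphi : [0,d-1] \to [0,k-1]$ is the increasing injection recording the used letters. Since $\varphi$ is order-preserving, $\top(\omega) = \top(\nu) = t$ and $\rep(\omega) = \varphi(\rep(\nu)) = \varphi(d-t-1)$, so by the identity above the constraint $|\forb(\omega)| = f$ is equivalent to $\varphi(d-t-1) = f - t - 1 =: q$. For fixed $d,t$ I then count the injections $\varphi$ with $\varphi(d-t-1) = q$: the restriction of $\varphi$ to $[0,d-t-1]$ is an increasing injection into $[0,q]$ sending the top to $q$, contributing $\binom{q}{d-t-1} = \binom{f-t-1}{f-d}$ choices, while the restriction to $[d-t,d-1]$ is an increasing injection of $t$ elements into $[q+1,k-1]$, contributing $\binom{k-1-q}{t} = \binom{t+k-f}{k-f}$ choices. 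These are exactly the two binomials in the statement, once one sets $b = f - d$.

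Second I would count the shapes $\nu \in \overline \W_{n,d}(010,110)$ with $\top(\nu) = t$. The top $t$ letters of such a $\nu$ each appear once and are, in the generating tree of Lemma \ref{lem010+110GT}, precisely the last $t$ letters inserted, each created as a single new maximum (the single-insertion case, item 1 in that proof). Removing them leaves a surjective word over $[0,d-t-1]$ whose largest letter $\rep(\nu)$ is repeated, i.e. an element of $\mathfrak D^{(2)}_{n-t,\,d-t,\,s}$ for some number of active sites $s$. Re-inserting the $t$ top letters one at a time, each insertion places a new maximum at one of the current active sites and raises the number of active sites by exactly one; starting from $s$ sites this gives $s(s+1)\cdots(s+t-1) = \frac{(s+t-1)!}{(s-1)!}$ reconstructions. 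Hence the number of shapes with parameters $d,t$ is $\sum_{s} \frac{(s+t-1)!}{(s-1)!}\,\mathfrak d^{(2)}_{n-t,\,d-t,\,s}$, with $s$ ranging in $[1,n-t+1]$. Multiplying by the injection count, summing over $d$ and $t$, and reindexing $d$ by $b = f-d$ (so that $d-t = f-b-t$ and $\binom{f-t-1}{f-d} = \binom{f-t-1}{b}$) produces the claimed formula.

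The main obstacle I expect is the bookkeeping around $q = \rep$: one must verify carefully that the forbidden set splits cleanly at $q$ into a \emph{lower} block $[0,q-1]$, all of whose letters are forbidden whether used or not (which is why the lower injection is counted by a binomial over the full $q$ positions), and an \emph{upper} block consisting of the used letters $\geqslant q$, and that the unused alphabet letters above $q$ are genuinely free and independent of the word structure. I would also check the degenerate cases — no repeated value ($q = -1$, $t = f = n$, empty $\mathfrak D^{(2)}$ core) and $b$ reaching the top of its range, where $\binom{f-t-1}{b}$ vanishes — to confirm that the stated summation ranges are correct.
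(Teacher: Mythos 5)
Your proposal is correct and follows essentially the same route as the paper's proof: you use the identity $f = \rep(\omega) + \top(\omega) + 1$ from Remark \ref{rem010+110}, normalize the alphabet, strip the $t$ singleton letters above $\rep(\omega)$ to land in $\mathfrak D^{(2)}_{n-t,f-b-t,s}$, and recover the factor $\frac{(s+t-1)!}{(s-1)!}$ by reinserting them one at a time via item 1 of Lemma \ref{lem010+110GT}, exactly as the paper does. Your only cosmetic deviation is phrasing the two binomial factors as counts of increasing injections with $\varphi(d-t-1) = f-t-1$ rather than directly as choices of the unused-letter sets above and below $\rep(\omega)$, which is the same count seen from the other side.
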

\begin{proof}
    Let $\omega \in \mathfrak B_{n,k,f}$. In particular, by Remark $\ref{rem010+110}$, $f = \rep(\omega) + \top(\omega) + 1$, and $k-f$ is the number of unused letters of $\omega$ greater than $\rep(\omega)$.
    Let $t = \top(\omega)$, $d = \dist(\omega)$, and $b = f - d$ be the number of unused letters in $\omega$ less than $\rep(\omega)$. To summarize, over the alphabet $[0,k-1]$, $\omega$ has:
    \begin{itemize}
        \item $t$ letters greater than $\rep(\omega)$,
        \item $k-f$ unused letters greater than $\rep(\omega)$,
        \item $b$ unused letters less than $\rep(\omega)$,
        \item $\rep(\omega)-b = f-t-1-b$ letters less than $\rep(\omega)$, if $\omega \neq \varepsilon$.
    \end{itemize}

    Replacing the $d$ letters of $\vals(\omega)$ by $[0,d-1]$ while preserving their order (by shifting the values so that there are no more unused letters) yields a word $\omega' \in \overline \W_{n,d}$. Further removing all letters greater than $\rep(\omega')$ from $\omega'$ yields a word $\omega'' \in \mathfrak D^{(2)}_{n-t, f-b-t, s}$ for some $s \in [1,n-t+1]$.
    
    For any $n,k,f,t,b,s$, and $\omega'' \in \mathfrak D^{(2)}_{n-t, f-b-t, s}$, there are $\binom{t+k-f}{k-f} \binom{f-t-1}{b} \frac{(s+t-1)!}{(s-1)!}$ words $\omega \in \mathfrak B_{n,k,f}$ whose image under the above construction is $\omega''$. Indeed, there are
    \begin{itemize}
        \item $\binom{t+k-f}{k-f}$ possible sets of unused letters greater than $\rep(\omega)$,
        \item $\binom{f-t-1}{b}$ possible sets of unused letters less than $\rep(\omega)$ (this still holds if $\omega = \varepsilon$),
        \item $\frac{(s+t-1)!}{(s-1)!}$ possible placements for the $t$ letters greater than $\rep(\omega)$. This can be seen by inserting each of those $t$ letters in increasing order: inserting a letter greater than the maximum in a word corresponds to item 1 in the proof of Lemma \ref{lem010+110GT}, so there are $s$ possible positions for the smallest inserted letter, $s+1$ for the next letter, and so on. \qedhere
    \end{itemize}
\end{proof}

\section{Shifted inversion sequences} \label{sectionShift}

\subsection{Method}
For all $n,s \in \mathbb N$, let $\I_n^s = \{\sigma \in \mathbb N^n \; : \; \sigma_i < i+s \quad \forall i \in [1,n] \}$ be the set of \emph{$s$-shifted} inversion sequences of size $n$.
In particular, $\I_n^s \subseteq \I_n^{s+1}$, and $\I_n^0 = \I_n$. An $s$-shifted inversion sequence of size $n$ can be seen as an inversion sequence of size $n+s$ whose first $s$ entries were removed. More precisely, for all $n, s \geqslant 0$, $\I_{n+s} = \{\sigma \cdot \tau \; : \; (\sigma, \tau) \in \I_s \times \I_n^s\}$. For any set of patterns $P$, we denote by $\I_n^s(P)$ the set of $P$-avoiding $s$-shifted inversion sequences of size $n$.

In this section, we study some patterns for which it is easier to split sequences around their minimum. We use a decomposition of sequences around their first minimum, similar to that of Section \ref{sectionSplitMax}, although a shifted inversion sequence now naturally appears on the right side when this decomposition is applied to an inversion sequence.

In Section \ref{sectionSplitMax}, the right side of the decomposition was a word, since the value of the maximum was fixed. A similar event occurs in the upcoming cases, when decomposing sequences around their first minimum: the sequences of Sections \ref{010+102} and \ref{100+102} avoid the pattern 102, so when the left side of the decomposition is nonempty (i.e. the first value of the sequence is not the minimum), all values of the right side must be less than or equal to all values of the left side, hence the right side is a word on a fixed alphabet, once again. It follows that shifted inversion sequences only appear on the right side of this decomposition when the left side is empty. Naturally, the left side of the decomposition around the first minimum of an inversion sequence is always empty, since the leftmost value of a nonempty inversion sequence is always 0. It follows that words only appear when this decomposition is recursively applied to a shifted inversion sequence.

For instance, applying this decomposition to $(0,0,0,0,4,4,5,0,3,4,2,3,0) \in \I_{13}(102)$ will first remove each leading zero, yielding the sequence $(4,4,5,0,3,4,2,3,0) \in \I_9^4(102)$, then split it into a shifted inversion sequence $(4,4,5)$ and a word $(3,4,2,3,0)$.

\subsection{The pair \{010, 102\}} \label{010+102}
For the generating function of $\I(010,102)$, see \cite{Testart_GT_left}.

Let $\mathfrak A_{n,s} = \I_n^s(010,102)$ be the set of $\{010, 102\}$-avoiding $s$-shifted inversion sequences of size $n$. Let $\mathfrak B_{n,k} = \{\omega \in \W_{n,k}(010,102) \; : \; \max(\omega) = k-1\}$ be the set of $\{010, 102\}$-avoiding words of length $n$ over the alphabet $[0,k-1]$ which contain the letter $k-1$. Let $\mathfrak a_{n,s} = |\mathfrak A_{n,s}|$ and $\mathfrak b_{n,k} = |\mathfrak B_{n,k}|$. In particular, $|\I_n(010,102)| = \mathfrak a_{n,0}$.

\begin{thm} \label{thm010+102}
For all $n \geqslant 1, s \geqslant 0$,
    $$\mathfrak a_{n,s} =  (\sum_{z = 0}^n \mathfrak a_{n-z,s+z-1} ) + ( \sum_{z=1}^{n-1}  \mathfrak a_{n-z,s-1} ) + \sum_{r = 1}^{n-2} \sum_{m = 1}^{s} \mathfrak b_{r,m} \sum_{\ell=0}^{n-r-1} (n-r-\ell-\delta_{\ell,0}) \mathfrak a_{\ell,s-m-1} .$$

\end{thm}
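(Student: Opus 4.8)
The plan is to decompose every $\sigma \in \mathfrak A_{n,s}$ around its first minimum, as announced in the introduction to this section. Writing $\mu = \min(\sigma)$, letting $p$ be the position of the first occurrence of $\mu$, and setting $\alpha = (\sigma_i)_{i \in [1,p-1]}$ (the prefix strictly before the first minimum), I would first establish two structural facts. Avoidance of $010$ forces all copies of $\mu$ to be consecutive: a value greater than $\mu$ sitting between two occurrences of $\mu$ would be an occurrence of $010$. Hence $\sigma = \alpha \cdot (\mu)^c \cdot \gamma$ with $c = \occ(\sigma,\mu) \geqslant 1$, where every entry of $\alpha$ and of $\gamma$ is strictly greater than $\mu$. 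Avoidance of $102$, reasoned exactly as in Proposition~\ref{propsplit120} but around the minimum, shows that whenever $\alpha$ is nonempty one must have $\max(\gamma) \leqslant \min(\alpha)$: an entry of $\alpha$, the minimum $\mu$, and a larger entry of $\gamma$ would form a $102$. Combined with $\vals(\gamma) \subseteq (\mu,\infty)$, this confines $\gamma$ to the fixed alphabet $(\mu,\min(\alpha)]$, so $\gamma$ is a \emph{word}, while $\alpha$ remains a genuine (shifted) inversion sequence; this is the mechanism by which the words counted by $\mathfrak b$ enter the recurrence.

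Next I would split $\mathfrak A_{n,s}$ into three cases matching the three summands. The first summand collects the sequences with $\mu \geqslant 1$ together with those having $\mu = 0$ and empty prefix. If $\mu \geqslant 1$, subtracting $1$ from every entry is a bijection onto $\I^{s-1}_n(010,102)$, contributing the $z=0$ term $\mathfrak a_{n,s-1}$. If $\mu = 0$ with $p = 1$, then $\sigma = (0)^c \cdot \gamma$; here nothing precedes the minimum, so \emph{no} $102$ constraint links the leading block to $\gamma$, and peeling the block while reindexing shows $\gamma$ ranges freely over $\I^{c+s}_{n-c}(010,102)$ with all entries $\geqslant 1$, i.e. over $\mathfrak a_{n-c,c+s-1}$ after subtracting $1$. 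Summing over $c \in [1,n]$ gives the remaining $z \geqslant 1$ terms of $\sum_{z=0}^n \mathfrak a_{n-z,s+z-1}$. The second summand collects $\mu = 0$ with nonempty prefix and \emph{empty} suffix: the $0$'s form a trailing block $(0)^{c}$, the prefix $\alpha$ (of length $\ell = n-c$) has $\min(\alpha) \geqslant 1$ and no straddling constraint, so it is counted by $\mathfrak a_{\ell,s-1}$, and summing over $\ell \in [1,n-1]$ produces $\sum_{z=1}^{n-1}\mathfrak a_{n-z,s-1}$.

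The third summand is the genuine case $\mu = 0$ with both $\alpha$ and $\gamma$ nonempty, and it is where the real work lies. Setting $r = |\gamma|$ and $m = \max(\gamma) \geqslant 1$, subtracting $1$ turns $\gamma$ into a $\{010,102\}$-avoiding word of length $r$ over $[0,m-1]$ attaining its maximum $m-1$, i.e. an element counted by $\mathfrak b_{r,m}$; meanwhile $\min(\alpha) \geqslant m$. I would count the admissible prefixes $\alpha$ by value-shifting them down past the boundary value $m$, which is what produces the reduced shift $s-m-1$ and the length parameter $\ell$ of $\mathfrak a_{\ell,s-m-1}$, and I would read off the multiplicity of the $0$-block and its placement as the coefficient $n-r-\ell-\delta_{\ell,0}$.

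The hard part will be precisely this last bookkeeping. The subtlety is the boundary value $m = \max(\gamma)$: it is the only value at which the $010$- and $102$-constraints can interact, since a common value of $\alpha$ and $\gamma$ can occur only at $\min(\alpha) = \max(\gamma)$, and whether this is admissible depends on the relative positions of that value in $\alpha$ (an earlier $\alpha$-entry exceeding it together with a $\gamma$-entry equal to it would recreate $010$). Separating $\min(\alpha) = m$ from $\min(\alpha) > m$ is what forces the $\delta_{\ell,0}$ correction and the exact value-shift by $m+1$; getting the multiplicity factor $n-r-\ell-\delta_{\ell,0}$ right, and checking that the resulting map is a genuine bijection with no double counting across the three cases, is the crux of the proof. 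I would validate the bookkeeping on the small cases $n \leqslant 3$ (where, e.g., the factor specialises to $c-\delta_{\ell,0}$ for a block of multiplicity $c$) before writing the general argument, and I would invoke a separate lemma for the enumeration of the auxiliary family $\mathfrak b_{r,m}$.
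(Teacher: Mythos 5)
Your decomposition is exactly the one in the paper's proof: the same three-case split according to the position of the block of (necessarily consecutive) zeros, the same reduction of the suffix $\gamma$ to a word counted by $\mathfrak b_{r,m}$ after subtracting 1, the same observation that all values $m$ in $\alpha$ form a single factor at its end (else a 010 is created with the $m$ in $\gamma$), and the same value-shift by $m+1$ producing $\mathfrak a_{\ell,s-m-1}$, with the multiplicity factor $n-r-\ell-\delta_{\ell,0}$ arising, just as you anticipate, from counting the splits $q+z = n-\ell-r$ of the combined block $(m)^q \cdot (0)^z$ with $z \geqslant 1$ and with $q \geqslant 1$ forced when $\ell = 0$ (so that $\alpha$ is nonempty). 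The bookkeeping you flag as the crux goes through exactly as you describe, so your proposal is correct and essentially identical to the paper's argument.
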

\begin{proof}
    Let $n \geqslant 1, s \geqslant 0$, and $\sigma \in \mathfrak A_{n,s}$. Since $\sigma$ avoids the pattern 010, all occurrences of the value 0 in $\sigma$ must be consecutive. Let $z = |\{i \in [1,n] \; : \; \sigma_i = 0\}|$ be the number of occurrences of the value 0 in $\sigma$. Let $\sigma'$ be the sequence obtained by removing every occurrence of the value 0 from $\sigma$, and subtracting 1 from all remaining values.
    \begin{itemize}
        \item If $z = 0$ or $\sigma_1 = 0$, then $\sigma' \in \mathfrak A_{n-z, s+z-1}$. This is a bijection since $\sigma = (0)^z \cdot (\sigma'+1)$.
        \item If $\sigma_1 \neq 0$ and $\sigma_n = 0$, then $\sigma' \in \mathfrak A_{n-z, s-1}$. This is a bijection since $\sigma = (\sigma'+1) \cdot (0)^z$.
        \item If $z > 0$, $\sigma_0 \neq 0$, and $\sigma_n \neq 0$, let $\alpha$ and $\beta$ be the (uniquely defined) integer sequences such that $\sigma = \alpha \cdot (0)^z \cdot \beta$. In particular, $\alpha$ and $\beta$ are both nonempty, and their values are positive. Let $r$ be the size of $\beta$, $m$ be the maximum of $\beta$, and $\beta' \in \mathfrak B_{r, m}$ be the word obtained by subtracting 1 from each letter of $\beta$.
        
        Since $\sigma$ avoids 102, every value of $\alpha$ is greater than or equal to $m$ (in particular, $m \leqslant \alpha_1 \leqslant s$). All occurrences of the value $m$ in $\alpha$ must be in a single factor at the end of $\alpha$; otherwise, a value greater than $m$ would appear to the right of a value $m$ in $\alpha$, creating an occurrence of the pattern 010 in $\sigma$ (since $\beta$ also contains $m$). Let $q$ be the number of occurrences of the value $m$ in $\alpha$, and let $\ell = |\alpha| - q$ be the number of entries of $\alpha$ of value greater than $m$. Let $\alpha' \in \mathfrak A_{\ell, s-m-1}$ be the sequence obtained by removing every occurrence of the value $m$ from $\alpha$ and subtracting $m+1$ from the remaining values.

        Note that any choice of $q$ and $z$ such that $q+z = n-\ell-r$ does not affect $\alpha'$ and $\beta'$. We have $q \in [0,n-r-\ell-1]$ if $\alpha'$ is nonempty, and $q \in [1,n-r-\ell-1]$ if $\alpha'$ is empty (since $\alpha$ is not empty), so there are $n-r-\ell - \delta_{\ell,0}$ possible values for $q$. For each possible value of $q$, there is one value of $z$ such that $q+z = n-\ell-r$.

        This decomposition is bijective, since for any choice of
        \begin{itemize}
            \item $m \in [1, s]$,
            \item $\ell, q, z, r \in \mathbb N$ such that $\ell+q, z, r \geqslant 1$ and $\ell+q+z+r = n$,
            \item $\alpha' \in \mathfrak A_{\ell,s-m-1}$,
            \item $\beta' \in \mathfrak B_{r,m}$,
        \end{itemize}
         we have $(\alpha'+m+1) \cdot (m)^q \cdot (0)^z \cdot (\beta'+1) \in \mathfrak A_{n,s}$. Specifically, this construction cannot create any occurrence of a pattern 010 or 102 since all values of $\alpha'+m+1$ are greater than all values of $\beta'+1$. \qedhere
    \end{itemize}
\end{proof}

Let $F \, : \, (x,y) \mapsto \sum_{\ell,k \geqslant 0} |\W_{\ell,k}(010,102)| x^\ell y^k$ be the ordinary generating function of $\{010,102\}$-avoiding words.
The enumeration of $\{010,102\}$-avoiding words was solved in \cite{Mansour_2006}, with the following expression:
$$F(x,y) = \frac{(1-x)^2 - (1-2x)y - \sqrt{(1-x)^4 - 2(1-x)^2 y + (1-4x^2+4x^3)y^2}}{2xy(1-y)}.$$
We easily observe that $\mathfrak B_{n,k} = \W_{n,k}(010,102) \backslash \W_{n,k-1}(010,102)$, so the generating function of $\coprod_{n,k \in \mathbb N} \mathfrak B_{n,k}$ is $(x,y) \mapsto (1-y) F(x,y)$. We can then extract the numbers $\mathfrak b_{n,k}$ from this generating function, as in Section \ref{101+120}. The function $F$ is actually the same as in Section \ref{101+120}, since $\{010,102\}$-avoiding words and $\{101,120\}$-avoiding words are in bijection via the complement map $\W_{n,k}(010, 102) \to \W_{n,k}(101, 120)$, $\omega \mapsto (k-1-\omega_i)_{i \in [1,n]}$.

\subsection{The pair \{100, 102\}} \label{100+102}

Let $\mathfrak A_{n,s} = \I_n^s(100,102)$ be the set of $\{100,102\}$-avoiding $s$-shifted inversion sequences of size $n$. Let $\mathfrak A'_{n,s} = \{ \sigma \in \mathfrak A_{n,s} \; : \; 0 \in \vals(\sigma) \text{ and } \sigma_1 \neq 0\}$ be the subset of sequences which contain a 0 but do not begin by 0. Let $\mathfrak B_{n,k} = \{\omega \in \W_{n,k} (102) \; : \; \omega_i = \omega_j \implies i = j\}$ be the set of 102-avoiding words of length $n$ over the alphabet $[0,k-1]$ which do not contain any repeated letter. Let $\mathfrak C_{n,k} = \{\omega \in \W_{n,k} (102) \; : \; \omega_i = \omega_j \neq k-1 \implies i = j\}$ be the set of 102-avoiding words of length $n$ over the alphabet $[0,k-1]$ which do not contain any non-maximal repeated letter. Let $\mathfrak a_{n,s} = |\mathfrak A_{n,s}|$, $\mathfrak a'_{n,s} = |\mathfrak A'_{n,s}|$, $\mathfrak b_{n,k} = |\mathfrak B_{n,k}|$, and $\mathfrak c_{n,k} = |\mathfrak C_{n,k}|$. In particular, $\I_n(100,102) = \mathfrak a_{n,0}$.

\begin{thm} \label{thm100+102}
    For all $n \geqslant 1, s \geqslant 0$,
    $$\mathfrak a_{n,s} = \mathfrak a_{n,s-1} + \mathfrak a_{n-1,s+1} + \mathfrak a'_{n,s},$$
    $$\mathfrak a'_{n,s} = \sum_{p=2}^n \sum_{m=1}^s \left ( \mathfrak c_{n-p,m} + \sum_{r=1}^{p-2} \mathfrak a_{p-1-r, s+r-m-1} \cdot \mathfrak b_{n-p,m} + \sum_{r=0}^{p-2} \mathfrak a'_{p-1-r, s+r-m} \cdot \mathfrak b_{n-p,m-1} \right ). $$
\end{thm}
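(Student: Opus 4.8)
The plan is to prove the two identities separately, both by bijective decompositions that peel off the minimum value $0$ and absorb the resulting change of diagonal into the shift parameter. For the first identity I would partition $\mathfrak A_{n,s}$ according to the role of the value $0$. If $\sigma$ contains no $0$, subtracting $1$ from every entry is a bijection onto $\mathfrak A_{n,s-1}$: an entry $\sigma_i<i+s$ with $\sigma_i\geqslant 1$ becomes $\sigma_i-1<i+(s-1)$, and shifting all values preserves pattern containment; this gives $\mathfrak a_{n,s-1}$. If $\sigma_1=0$, deleting the first entry is a bijection onto $\mathfrak A_{n-1,s+1}$: reindexing turns $\sigma_{j+1}<(j+1)+s$ into the $(s+1)$-shifted condition, and prepending a $0$ can create neither $100$ nor $102$, since a leading entry can only serve as the first letter of either pattern and the smallest value cannot exceed a later one. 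This gives $\mathfrak a_{n-1,s+1}$. The remaining sequences are exactly those of $\mathfrak A'_{n,s}$, which contributes $\mathfrak a'_{n,s}$, yielding the first equation.

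For the second identity, let $\sigma\in\mathfrak A'_{n,s}$, let $p\geqslant 2$ be the position of the first $0$, and write $\sigma=\alpha\cdot 0\cdot\gamma$ with $\alpha=(\sigma_i)_{i\in[1,p-1]}$ nonempty and $\gamma=(\sigma_i)_{i\in[p+1,n]}$ of length $n-p$. Put $m=\min(\alpha)$; since $m\leqslant\sigma_1<1+s$ we get $m\in[1,s]$. Avoidance of $102$ forces $\max(\gamma)\leqslant m$ (otherwise an entry of $\alpha$, the $0$, and a larger entry of $\gamma$ form a $102$), and avoidance of $100$ forces $0\notin\vals(\gamma)$ (an entry of $\alpha$ together with two $0$'s would be a $100$). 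Hence $\gamma$ is a $\{100,102\}$-avoiding word over $[1,m]$, and after subtracting a suitable constant it becomes a word over an alphabet of size $m$ or $m-1$; which family it lies in ($\mathfrak B$ or $\mathfrak C$, and over which alphabet) will be dictated entirely by whether the letter $m$ may appear, and may repeat, in $\gamma$.

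I would then split according to the structure of $\alpha$, which under $100$-avoidance is governed by whether $m$ occurs only in a leading block: writing $\alpha=(m)^r\cdot\hat\alpha$ with $r$ maximal, either $\hat\alpha$ is empty, or $\hat\alpha$ begins with a value $>m$. When $\alpha=(m)^{p-1}$, no entry exceeds $m$ before $\gamma$, so in $\gamma$ the letter $m$ may repeat while smaller letters may not; this gives $\gamma\in\mathfrak C_{n-p,m}$ and the term $\mathfrak c_{n-p,m}$, with $\alpha$ determined by $m$ and $p$. When $\hat\alpha$ is nonempty with all values $>m$, the letter $m$ may appear in $\gamma$ but not repeat (a value of $\hat\alpha$ above a repeated $m$ would be a $100$), so $\gamma\in\mathfrak B_{n-p,m}$; removing the $r$ leading $m$'s and subtracting $m+1$ sends $\hat\alpha$ bijectively onto $\mathfrak A_{p-1-r,s+r-m-1}$ (the entry $\sigma_{r+t}<r+t+s$ becomes $<t+(s+r-m-1)$), giving the second term with $r\in[1,p-2]$. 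Finally, if $m$ occurs in $\alpha$ outside its leading block, some $m$ sits to the right of a larger entry, so $m$ cannot occur in $\gamma$ at all and $\gamma\in\mathfrak B_{n-p,m-1}$; removing the $r\in[0,p-2]$ leading $m$'s and subtracting $m$ sends the remainder bijectively onto $\mathfrak A'_{p-1-r,s+r-m}$ (it still contains $m$, hence a $0$ after the shift, and begins above $m$, hence not with $0$), giving the third term.

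The main obstacle is exactly this last dichotomy, together with the verification that in each case $\alpha$ and $\gamma$ may be chosen independently inside the stated families. One must notice that the naive split ``$\sigma_1=m$ versus $\sigma_1>m$'' is insufficient—for instance $\alpha=(m,m+2,m,m+1)$ begins with $m$ yet already forbids $m$ in $\gamma$—and that the correct invariant is whether $m$ is confined to a leading block. Granting this, the crossing-pattern analysis is routine: every value of $\gamma$ is at most $m$ and every value of $\alpha$ is at least $m$, with the single $0$ between them, so the only occurrences of $100$ or $102$ that could straddle the cut are those involving the value $m$ shared by $\alpha$ and $\gamma$, and these are precisely what the three cases rule out. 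Summing over $p\in[2,n]$ and $m\in[1,s]$ then produces the stated formula for $\mathfrak a'_{n,s}$.
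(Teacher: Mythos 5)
Your proof is correct and takes essentially the same route as the paper's: the identical three-way split (no zero / leading zero / $\mathfrak A'_{n,s}$) for the first identity, and for the second the same decomposition $\sigma = \alpha \cdot 0 \cdot \gamma$ at the unique zero with $m = \min(\alpha)$, with the same trichotomy on $\alpha$ (constant; all $m$'s in the leading block $(m)^r$ with $r \in [1,p-2]$; an $m$ outside the block with $r \in [0,p-2]$) producing the terms $\mathfrak c_{n-p,m}$, $\mathfrak a_{p-1-r,s+r-m-1}\cdot\mathfrak b_{n-p,m}$, and $\mathfrak a'_{p-1-r,s+r-m}\cdot\mathfrak b_{n-p,m-1}$. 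One cosmetic remark: your closing sentence claiming that all straddling occurrences involve the shared value $m$ is slightly loose, since a repeated letter $v < m$ in $\gamma$ also straddles (as a $100$ with any entry of $\alpha$), but your case analysis already rules these out via the distinctness conditions built into $\mathfrak B$ and $\mathfrak C$, so there is no gap.
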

\begin{proof}
    Let $n \geqslant 1, s \geqslant 0$, and $\sigma \in \mathfrak A_{n,s}$.
    \begin{itemize}
        \item If $\sigma$ does not contain the value 0, then subtracting 1 from every value of $\sigma$ yields a sequence in $\mathfrak A_{n,s-1}$, and this is a bijection.
        \item If $\sigma_1 = 0$, then removing the first term from $\sigma$ yields a sequence in $\mathfrak A_{n-1,s+1}$, and this is a bijection.
        \item Otherwise, $\sigma \in \mathfrak A'_{n,s}$.
    \end{itemize}
    This proves the first equality. Let us now turn to the second one.
    
    Let $n \geqslant 1, s \geqslant 0$, and $\sigma \in \mathfrak A'_{n,s}$. Since $\sigma$ avoids the pattern 100 and $\sigma_1 > 0$, there is exactly one term of value $0$ in $\sigma$. Let $p$ be the position of the only value 0 in $\sigma$. Let $\alpha = (\sigma_i)_{i \in [1,p-1]}$ and $\beta = (\sigma_i)_{i \in [p+1,n]}$ be the two subsequences such that $\sigma = \alpha \cdot 0 \cdot \beta$. In particular, all values of $\alpha$ and $\beta$ are positive. By construction, $\alpha$ is nonempty, but $\beta$ may be empty. Let $m$ be the minimum of $\alpha$. Since $\sigma$ avoids the pattern 102, all values of $\beta$ must be less than or equal to $m$, so $\beta$ is a word of length $n-p$ over the alphabet $[1,m]$. Since $\sigma$ avoids the pattern 100, each value in $[1,m-1]$ appears at most once in $\beta$.  Let $\beta'$ be the word obtained by subtracting 1 from every letter of $\beta$.
    \begin{itemize}
        \item If $\alpha = (m)^{p-1}$ is constant, then $\beta'$ can be any word of $\mathfrak C_{n-p,m}$.
        \item Otherwise, $\alpha$ contains at least one value greater than $m$. Since $\sigma$ avoids the pattern 100, the letter $m$ can appear at most once in $\beta$. In other words, all letters of $\beta$ must be distinct, so $\beta' \in \mathfrak B_{n-p,m}$. Let $r \in [0,p-2]$ be the number of consecutive occurrences of the value $m$ at the beginning of $\alpha$.
        \begin{itemize}
            \item If all values $m$ in $\alpha$ appear in the factor $(m)^r$ at the beginning of $\alpha$, let $\alpha' \in \mathfrak A_{p-1-r,s+r-m-1}$ be the sequence obtained by removing this factor from $\alpha$, and subtracting $m+1$ from all remaining values. Recall $\alpha$ must contain the value $m$ at least once, so $r > 0$. This defines a bijection, since for any $\alpha' \in \mathfrak A_{p-1-r,s+r-m-1}$ and $\beta' \in \mathfrak B_{n-p,m}$, the sequence $(m)^r \cdot (\alpha'+m+1) \cdot 0 \cdot (\beta'+1)$ is in $\mathfrak A'_{n,s}$.
            \item Otherwise, there is exactly one value $m$ in $\alpha$ outside of the factor $(m)^r$ at the beginning of $\alpha$, since having a second value $m$ outside of this factor would imply that the subsequence $(\alpha_{r+1},m,m)$ is an occurrence of the pattern 100. The same reasoning shows that $\beta$ cannot contain the letter $m$, so $\beta' \in \mathfrak B_{n-p,m-1}$.
            Let $\alpha' \in \mathfrak A'_{p-1-r,s+r-m}$ be the sequence obtained by removing the factor $(m)^r$ at the beginning of $\alpha$, and subtracting $m$ from all remaining values. This defines a bijection, since for any $\alpha' \in \mathfrak A'_{p-1-r,s+r-m}$ and $\beta' \in \mathfrak B_{n-p,m-1}$, the sequence $(m)^r \cdot (\alpha'+m) \cdot 0 \cdot (\beta'+1)$ is in $\mathfrak A'_{n,s}$. \qedhere
        \end{itemize}
    \end{itemize}
\end{proof}

Words avoiding the pairs of patterns $\{100, 102\}$ or $\{011, 120\}$ are trivially in bijection via the complement map $\W_{n,k}(100, 102) \to \W_{n,k}(011, 120)$, $\omega \mapsto (k-1-\omega_i)_{i \in [1,n]}$. We studied the pair of patterns $\{011, 120\}$ in Section \ref{011+120}, and it can easily be seen that the complement map defines bijections between the sets $\mathfrak B_{n,k}$, resp. $\mathfrak C_{n,k}$, and their counterparts of Section \ref{011+120}. Therefore, the following identities are deduced from Proposition \ref{prop011+120} and Lemma \ref{lem011+120}:
\begin{itemize}
    \item for all $n,k \geqslant 0$,
    $$\mathfrak b_{n,k} = \frac{\binom{k}{n} \binom{2n}{n} }{n+1},$$
    \item for all $n \geqslant 1, k \geqslant 2$,
    $$\mathfrak c_{n,k} = 2 \mathfrak c_{n,k-1} + \mathfrak c_{n-1,k} - \mathfrak c_{n-1,k-1} - \frac{\binom{k-2}{n} \binom{2n}{n}}{n+1}.$$
\end{itemize}

\section{Generating functions} \label{sectionGF}
In the literature, a succession rule describing a generating tree is often used to calculate the generating function of the associated combinatorial class. Indeed, a succession rule can be turned into a system of functional equations satisfied by the generating function. These equations often involve \emph{catalytic} variables, which correspond to some statistics of the combinatorial objects (the same statistics that label the generating tree). A common tool for solving these equations is the \emph{kernel method} \cite{GFforGT}.

The constructions of pattern-avoiding inversion sequences we presented allow us to compute many terms of their enumeration sequences (usually a few hundred, see Table \ref{table2} for more precise data). From those initial terms, Pantone has conjectured algebraic expressions for three of their generating functions, using his software \cite{guessfunc}.
\begin{conj} \label{conj000+102}
The ordinary generating function $F$ of $\I(000, 102)$ is algebraic with minimal polynomial
$$x^4 F(x)^4 - 2x^3(x - 1)F(x)^3 + x(x^3 - 2x^2 + 4x - 1)F(x)^2 - (2x^2 - 2x + 1)F(x) + 1.$$
\end{conj}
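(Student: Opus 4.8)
The plan is to derive a functional equation with one catalytic variable from the doubled succession rule $\Omega_{\{000,102\}}$ of Theorem \ref{thm000+102}, and then to extract $F=F(x)$ by the kernel method announced at the beginning of Section \ref{sectionGF}. First I would set
$$F(x,u) = \sum_{\sigma \in \I(000,102)} x^{|\sigma|}\, u^{\sites(\sigma)},$$
so that $x$ records the size and $u$ is the catalytic variable marking the label $\sites$; in particular $F(x)=F(x,1)$. Reading each node of label $s$ as the monomial $u^s$ and summing the contributions of its children, the two lines of $\Omega_{\{000,102\}}$ produce an equation $F(x,u) = u + x\,A[F] + x^2\,B[F]$, where the $x$-part comes from the jump $\overset{1}\leadsto$ and the $x^2$-part from $\overset{2}\leadsto$. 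The crucial feature is that the multiplicities $s+1-j$ and $\binom{s+1-j}{2}$ are polynomial in the label, so after performing the inner sums over $j$ (geometric-type sums with denominators up to $(1-u)^3$) the operators $A[F]$ and $B[F]$ express not only through $F(x,u)$ and $F(x,1)$ but also through the boundary derivatives $\partial_u F(x,1)$ and $\partial_u^2 F(x,1)$.

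Clearing the denominator $(1-u)^3$ rewrites the equation as $\widetilde K(x,u)\,F(x,u) = \widetilde R(x,u)$, where $\widetilde K$ is a polynomial kernel of degree $4$ in $u$ and $\widetilde R$ is a polynomial in $u$ whose coefficients are affine combinations of the three unknown series $F(x,1)$, $\partial_u F(x,1)$ and $\partial_u^2 F(x,1)$. At $x=0$ the kernel degenerates to $(1-u)^3$, so it has a triple point at $u=1$: three of its roots are Puiseux branches $u = 1 + O(x^{1/3})$, while the fourth escapes to infinity (the leading coefficient of $\widetilde K$ is $O(x^2)$). Because each $\sites(\sigma)$ is bounded by $|\sigma|$, $F(x,u)$ is a genuine power series in $x$ with polynomial coefficients in $u$, hence finite at $u$ near $1$; substituting each admissible branch $u = U_i(x)$ then kills the left-hand side and yields a relation $\widetilde R(x, U_i(x)) = 0$ that is linear in the three boundary unknowns.

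The three branches clustering at $u=1$ are Galois conjugate over $\mathbb{C}((x^{1/3}))$, so the relations they provide are conjugate; symmetrizing them (equivalently, using that the boundary unknowns lie in $\mathbb{C}[[x]]$ while the elementary symmetric functions of the $U_i$ are rational in $x$, since the $U_i$ are roots of $\widetilde K$) turns them into a linear system over $\mathbb{C}(x)$ whose solution gives $F(x,1)$, $\partial_u F(x,1)$ and $\partial_u^2 F(x,1)$ as algebraic functions of $x$. Finally I would eliminate the auxiliary series and the roots $U_i$ by resultants to obtain a single polynomial relation for $F(x) = F(x,1)$, and check that $F$ satisfies the claimed quartic
$$x^4 F^4 - 2x^3(x-1)F^3 + x(x^3 - 2x^2 + 4x - 1)F^2 - (2x^2 - 2x + 1)F + 1 = 0,$$
which, being irreducible over $\mathbb{C}(x)$, is then its minimal polynomial; the first coefficients $1,2,5,14,40,121,373$ from Table \ref{table2} serve as a sanity check.

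The main obstacle is the pair of boundary derivative terms $\partial_u F(x,1)$, $\partial_u^2 F(x,1)$ forced by the polynomial multiplicities: unlike the textbook one-catalytic-variable situation, a single kernel root does not suffice, so the argument must be run in the obstinate style, carefully selecting the three admissible Puiseux branches at the triple point $u=1$, justifying finiteness of $F(x,U_i)$ there, and verifying that the resulting conjugate relations are independent enough to determine all three unknowns. Should independence fail, I would supplement them by differentiating the functional equation in $u$ before substituting, or by expanding both sides to higher order in $(1-u)$ around $u=1$. The subsequent resultant elimination down to exactly the stated degree-$4$ polynomial is heavy but routine once the linear system is solved.
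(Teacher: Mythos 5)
First, a point of comparison you could not have known: the paper does \emph{not} prove this statement. It is stated as Conjecture~\ref{conj000+102}, and the text of Section~\ref{sectionGF} only reports that Pantone proved it in a personal communication, ``using a system of equations we derived from the succession rule $\Omega_{\{000,102\}}$ of Theorem~\ref{thm000+102}'' and that ``this requires some more advanced methods'' than the kernel-method computations carried out for $\I(102,201)$ and $\I(102,210)$. Your plan follows exactly the route the paper gestures at, and its structural claims check out: writing $F(x,u)=\sum_\sigma x^{|\sigma|}u^{\sites(\sigma)}$ and translating the two production lines of $\Omega_{\{000,102\}}$, the inner sums $\sum_{j=1}^{s}(s+1-j)u^j=\frac{u}{(1-u)^2}\bigl(s(1-u)-u+u^{s+1}\bigr)$ and $\sum_{j=1}^{s}\binom{s+1-j}{2}u^j=\frac{u}{(1-u)^3}\bigl(\binom{s}{2}-(s^2-1)u+\binom{s+1}{2}u^2-u^{s+1}\bigr)$ do produce an equation in $F(x,u)$ whose boundary unknowns are precisely $F(x,1)$, $\partial_uF(x,1)$ and $\partial_u^2F(x,1)$, with kernel $K(x,u)=(1-u)^3-xu^2(1-u)-x^2u^3(1-u)+x^2u^2$: quartic in $u$, leading coefficient $x^2$, triple root at $u=1$ when $x=0$, exactly as you predicted. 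Three unknowns against three small branches is the generic Bousquet--M\'elou--Jehanne situation, so algebraicity of $F(x,1)$ follows in principle, and your substitution argument (each coefficient of $x^n$ in $F(x,u)$ is a polynomial in $u$, so $F(x,U_i(x))$ is a well-defined Puiseux series) is sound.

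Two corrections and one caveat. The local analysis at $u=1$ is slightly off: setting $u=1+\epsilon$, the kernel's lowest-order part is $-\epsilon^3+x\epsilon+x^2$, whose Newton polygon gives branch exponents $1/2,\,1/2,\,1$, not three branches $1+O(x^{1/3})$; consequently the three small roots are \emph{not} a single Galois orbit over $\mathbb{C}((x^{1/3}))$ --- two are a conjugate square-root pair and one is an honest power series $u=1-x+O(x^2)$. This does not break the method (all three roots are algebraic over $\mathbb{C}(x)$, and your fallback of using elementary symmetric functions of the small roots, or direct resultant elimination, goes through unchanged), but the symmetrization step should be phrased that way rather than via a cyclic conjugacy argument. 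The caveat is that, as written, your proposal establishes algebraicity and a correct \emph{procedure}, while the identification of the specific quartic $x^4F^4-2x^3(x-1)F^3+x(x^3-2x^2+4x-1)F^2-(2x^2-2x+1)F+1$, the nonsingularity of the $3\times 3$ linear system, and the irreducibility check are all deferred to unexecuted computation; this is consistent with the paper's remark that the case needs ``more advanced methods,'' and it leaves your text a credible proof plan rather than a completed proof.
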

\begin{conj} \label{conj102+201}
The ordinary generating function $F$ of $\I(102, 201)$ is algebraic with minimal polynomial
\begin{align*}
    &x(x-1)^2 (x-2)^2 (2x-1)^2 F(x)^2 + (x-1)(2x-1)(4x^4-9x^3+5x^2+4x-2) F(x) \\
    &- x^5+9x^4-22x^3+25x^2-12x+2.
\end{align*}
\end{conj}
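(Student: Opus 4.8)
The plan is to turn the succession rule $\Omega_{\{102,201\}}$ of Remark \ref{remSuccession102+201} into a system of functional equations with one catalytic variable $u$, and then solve it by the kernel method \cite{GFforGT}. By Theorem \ref{thm102+201}, the quantity $|\I_n(102,201)|$ splits into contributions from $\mathfrak A$, $\mathfrak B^{(3)}$ and $\mathfrak C$, so I would introduce the bivariate generating functions $A(x,u) = \sum_{n,m}\mathfrak a_{n,m}\,x^n u^m$, $B^{(i)}(x,u) = \sum_{n,s}\mathfrak b^{(i)}_{n,s}\,x^n u^s$ for $i \in \{1,2,3\}$, and $C(x,u) = \sum_{n,\ell}\mathfrak c_{n,\ell}\,x^n u^\ell$, where $u$ tracks the second parameter of each label. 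The target function is then $F(x) = A(x,1) + B^{(3)}(x,1) + C(x,1)$, and the minimal polynomial forces $F(0)=1$, so the empty sequence (living in $\mathfrak A$) is counted.

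First I would solve for $A(x,u)$. The production $(a,n,m) \leadsto (a,n+1,i)$ for $i \in [m,n]$ is exactly the rule for nondecreasing inversion sequences, whose row sums $\sum_m \mathfrak a_{n,m}$ are the Catalan numbers, with generating function $c(z) = \frac{1-\sqrt{1-4z}}{2z}$. Summing the geometric production $\sum_{i=m}^n u^i$ yields the equation $A(x,u)\bigl(1-\tfrac{x}{1-u}\bigr) = 1 - \tfrac{xu}{1-u}\,c(xu)$, whose solution is
\[
A(x,u) = \frac{(1-u) - xu\,c(xu)}{1-u-x},
\]
an algebraic function (one checks $A(x,1)=c(x)$). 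The families $\mathfrak B^{(i)}$ then follow from Lemma \ref{lem101}: the weight $(n-1-s)$ in the $\mathfrak b^{(1)}$ recurrence gives $B^{(1)} = \frac{1}{1-x}\bigl(x\partial_x - u\partial_u - 1\bigr)A$, and then $B^{(2)} = \frac{x}{1-x}B^{(1)}$ and $B^{(3)} = \frac{x}{1-2x}B^{(2)}$ (with possible low-order boundary corrections to be tracked carefully). Since derivatives of algebraic functions are algebraic, all of these are explicit algebraic functions of $x$ and $u$.

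The crux is the equation for $C(x,u)$, coming from the merged recurrence of Remark \ref{remC102+201}. Writing $H_{n,i} = \mathfrak a_{n,i}+\mathfrak b^{(3)}_{n,i}+\mathfrak c_{n,i}$, the partial sum $\sum_{i=\ell+1}^{n-2}H_{n-1,i}$ transforms, after multiplying by $u^\ell$ and summing over $\ell$, into $\frac{1}{1-u}\bigl(\hat H(x,1)-\hat H(x,u)\bigr)$ where $\hat H = A+B^{(3)}+C$; the cutoffs match the supports exactly, so no spurious terms survive. Collecting the $C(x,u)$ terms and clearing $(1-u)$ gives
\[
C(x,u)\,\bigl(1 - u(1-x)\bigr) = x\Bigl(A(x,1)+B^{(3)}(x,1)+C(x,1) - A(x,u) - B^{(3)}(x,u)\Bigr),
\]
with kernel root $u_0 = \frac{1}{1-x}$. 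Substituting $u = u_0$ (a valid formal substitution, since each power of $x$ in these series involves only finitely many powers of $u$) annihilates the left-hand side and yields $C(x,1) = A(x,u_0)+B^{(3)}(x,u_0) - A(x,1) - B^{(3)}(x,1)$. Adding back the $\mathfrak A$ and $\mathfrak B^{(3)}$ contributions collapses everything to the clean identity
\[
F(x) = A\!\left(x,\tfrac{1}{1-x}\right) + B^{(3)}\!\left(x,\tfrac{1}{1-x}\right).
\]

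It then remains to evaluate this and verify the polynomial. Since $1-4xu_0 = \frac{1-5x}{1-x}$, the specialization $u = u_0$ introduces a single square root $\Delta = \sqrt{(1-x)(1-5x)}$; for instance $A(x,u_0) = \frac{\Delta - 1 - x}{2x(x-2)}$, and differentiating $A$ \emph{before} setting $u=u_0$ produces only rational multiples of $\Delta$ and $\Delta^{-1}$, which rationalize into the same quadratic field $\mathbb Q(x)[\Delta]$. Hence $F$ has the shape $\frac{P(x)+Q(x)\Delta}{R(x)}$ and satisfies the degree-two relation $(R\,F-P)^2 = Q^2(1-x)(1-5x)$; the last (routine but lengthy) step is to clear denominators and check that this relation agrees, up to a scalar, with the stated polynomial, minimality being immediate since $\Delta$ is irrational over $\mathbb Q(x)$. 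The main obstacle is the bookkeeping in the $C$-equation — getting the partial-sum transform, the index cutoffs and the $x$-shift exactly right so that the kernel is precisely $1-u(1-x)$ — together with the care needed to justify substituting $u=\frac{1}{1-x}$ and to carry the partial derivatives of $A$ through the evaluation at $u_0$.
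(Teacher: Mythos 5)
Your proposal is correct and takes essentially the same route as the paper: Proposition \ref{propEq102+201} is precisely your system of functional equations derived from $\Omega_{\{102,201\}}$, and Theorem \ref{thmGF102+201} performs exactly your two kernel steps (the first of which you shortcut, equivalently, by quoting the Catalan count of nondecreasing inversion sequences), collapsing to $F(x) = A\bigl(x,\tfrac{1}{1-x}\bigr) + B^{(3)}\bigl(x,\tfrac{1}{1-x}\bigr)$ inside $\mathbb Q(x)\bigl[\sqrt{(1-x)(1-5x)}\bigr]$, with your evaluation $A\bigl(x,\tfrac{1}{1-x}\bigr)=\frac{\Delta-1-x}{2x(x-2)}$ agreeing with the paper's closed form. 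The one item you flagged does need exactly one correction: the correct equation is $(1-x)B^{(1)}(x,u) = \bigl(x\frac{\partial}{\partial x} - u\frac{\partial}{\partial u} - 1\bigr)A(x,u) + 1$, the added $+1$ compensating the empty sequence's contribution $(n-m-1)\mathfrak a_{0,0}=-1$, after which your $B^{(1)}$ is equivalent (via $\mathfrak a_{n,s}=\sum_{m\leqslant s}\mathfrak a_{n-1,m}$) to the paper's expression and the rest goes through unchanged.
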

\begin{conj} \label{conj102+210}
The ordinary generating function $F$ of $\I(102, 210)$ is algebraic with minimal polynomial
\begin{align*}
&(4x-1)(x-1)^4 x^3 F(x)^2-(4x-1)(4x^4-22x^3+25x^2-9x+1)(x-1)^2 F(x) \\
&+4x^7-44x^6+165x^5-254x^4+194x^3-75x^2+14x-1.
\end{align*}
\end{conj}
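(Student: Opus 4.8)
The plan is to extract a system of functional equations from the succession rule $\Omega_{\{102,210\}}$ of Theorem \ref{thm102+210} and solve it with the kernel method. For each of the five families of labels I would introduce a bivariate generating function recording the size in $x$ and the $\bounce$ statistic (the integer parameter $k$) in a catalytic variable $u$: write $A(x,u) = \sum_{\sigma \in \mathfrak A} x^{|\sigma|} u^{\bounce(\sigma)}$ and likewise $B^{(1)}, B^{(2)}, C^{(1)}, C^{(2)}$ for the classes $\mathfrak B^{(1)}, \mathfrak B^{(2)}, \mathfrak C^{(1)}, \mathfrak C^{(2)}$. Since the tree is a genuine combinatorial generating tree for $\I(102,210)$, with no phantom objects, the sought series is simply $F(x) = A(x,1) + B^{(1)}(x,1) + B^{(2)}(x,1) + C^{(1)}(x,1) + C^{(2)}(x,1)$. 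Reading each production as a rule that sends a monomial $x^n u^k$ to $x^{n+1}$ times the sum of the monomials of its children, and using the finite-geometric identities $\sum_{i=1}^{k+1} u^i = \frac{u^{k+2}-u}{u-1}$, $\sum_{i=1}^{k} u^i = \frac{u^{k+1}-u}{u-1}$ and $\sum_{i=1}^{k-1}(k-i)u^i = \frac{u^{k+1} - ku^2 + (k-1)u}{(u-1)^2}$, each production becomes a linear relation among these series and their specializations at $u=1$.

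Crucially, the dependency graph of the labels is essentially triangular, so I can solve in cascade. First I would treat $A$: only $(a,\cdot)$ nodes produce $(a,\cdot)$ children, so $A$ satisfies the self-contained equation $A(x,u)(u-1-xu^2) = (u-1) - xu\,A(x,1)$, whose kernel $K(u) = u-1-xu^2$ has small root $U = U(x) = \frac{1-\sqrt{1-4x}}{2x}$; cancelling the kernel yields $A(x,1) = U$ (the Catalan generating function, as expected since $\mathfrak A = \I(10)$), and a removable-singularity computation gives $A(x,U) = \frac{1}{2-U}$. The family $B^{(1)}$ is fed only by $A$ and by its own single same-label child, so $(1-x)B^{(1)}(x,u)$ is an explicit rational expression in $A(x,u)$, $A(x,1)$ and $\partial_u A(x,1)$; the last of these I would obtain by differentiating the polynomial form of the $A$-equation at $u=1$, giving $\partial_u A(x,1) = \frac{(1-x)U-1}{x}$. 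Then $B^{(2)}$ obeys an equation with the \emph{same} quadratic kernel $K(u)$, so cancelling at $u=U$ determines $B^{(1)}(x,1)+B^{(2)}(x,1) = U\,B^{(1)}(x,U)$; similarly $C^{(1)}$ is explicit in $B^{(1)}$, and $C^{(2)}$ satisfies an equation whose kernel $u(1-x)-1$ is merely linear, so its root $u = \frac{1}{1-x}$ immediately yields $C^{(1)}(x,1)+C^{(2)}(x,1)$.

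Once every specialization at $u=1$ has been written as a rational function of $U$ (hence of $\sqrt{1-4x}$), I would sum them to obtain $F(x)$ as an algebraic function of $x$, and finally eliminate $U$ between the relation $F = (\text{rational in }U)$ and the defining equation $xU^2 - U + 1 = 0$ — for instance via the resultant $\operatorname{Res}_U$ — to recover a polynomial annihilating $F$; verifying that it matches the stated degree-two polynomial, and that this polynomial is irreducible, completes the proof. The main obstacle I anticipate is not conceptual but the accumulated bookkeeping: the series $B^{(1)}$ carries the derivative $\partial_u A(x,1)$ and must then be evaluated at the kernel root $U$, so several removable singularities have to be resolved correctly and the many rational-in-$U$ expressions combined without slips. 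A useful safeguard will be to test the resulting series against the hundreds of computed terms of $|\I_n(102,210)|$ both before and after the elimination step.
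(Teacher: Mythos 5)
Your proposal is correct and follows essentially the same route as the paper's proof (Proposition \ref{propEq102+210} and Theorem \ref{thmGF102+210}): the same five bivariate series from $\Omega_{\{102,210\}}$, the same quadratic kernel $1-y+xy^2$ with small root $U=\frac{1-\sqrt{1-4x}}{2x}$ giving $A(x,1)=U$, the same resolution of the removable singularity in $B^{(1)}(x,U)$ (your $A(x,U)=\frac{1}{2-U}$ agrees with the paper's $\frac{1}{2}\bigl(1+\frac{1}{\sqrt{1-4x}}\bigr)$), the same linear kernel $1-y+xy$ for $C^{(2)}$, and the same final summation $F=A+B^{(1)}+B^{(2)}+C^{(1)}+C^{(2)}$ at $y=1$. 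Your closing elimination of $U$ via a resultant, plus the irreducibility check, is just a reformulation of the paper's explicit closed form in $\sqrt{1-4x}$.
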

We prove Conjectures \ref{conj102+201} and \ref{conj102+210} by calculating the generating functions of $\I(102, 201)$ and $\I(102, 210)$, in Theorems \ref{thmGF102+201} and \ref{thmGF102+210} below. In a personal communication, Pantone proves Conjecture \ref{conj000+102} by calculating the generating function of $\I(000, 102)$, using a system of equations we derived from the succession rule $\Omega_{\{000,102\}}$ of Theorem \ref{thm000+102}. We present this proof in the appendix.

Pantone also has a conjecture in \cite{Pantone201+210} about inversion sequences avoiding the patterns 010 and 102.
\begin{conj} \label{conj010+102}
     The ordinary generating function $F$ of $\I(010, 102)$ is algebraic with minimal polynomial
     \begin{align*}
         &x(x^2 - x + 1)(x - 1)^2 F(x)^3 + 2x(x - 1)(2x^2 - 2x + 1)F(x)^2\\
         &- (x^4 - 8x^3 + 11x^2 - 6x + 1)F(x) - (2x - 1)(x - 1)^2.
     \end{align*}
\end{conj}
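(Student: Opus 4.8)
The plan is to determine $F(x)=\sum_{n\ge 0}|\I_n(010,102)|\,x^n$ directly from the recurrence of Theorem \ref{thm010+102} by the kernel method, treating the shift parameter $s$ as a catalytic variable. First I would introduce the bivariate generating function
$$A(x,y)=\sum_{n,s\ge 0}\mathfrak a_{n,s}\,x^n y^s,$$
so that $F(x)=A(x,0)$, and record the companion series $B(x,y)=\sum_{r,m}\mathfrak b_{r,m}\,x^r y^m$, which by the discussion following Theorem \ref{thm010+102} equals $(1-y)$ times the explicit algebraic word generating function displayed there. In particular $B$ is known in closed form and is quadratic over $\mathbb{Q}(x,y)$.

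Next I would translate the three summands of the recurrence into operations on $A$. The middle sum $\sum_{z=1}^{n-1}\mathfrak a_{n-z,s-1}$ is a partial sum in $n$ at shift $s-1$, contributing a term of the shape $\tfrac{x}{1-x}\,y\,A(x,y)$ up to boundary corrections at $s=0$. The word sum is a convolution in which the factor $(n-r-\ell-\delta_{\ell,0})$ acts as a ``number of positions'' weight, producing an operator of type $\tfrac{x}{(1-x)^2}$, while the index $s-m-1$ couples the alphabet size $m$ to the shift; this summand therefore contributes roughly $\tfrac{x}{(1-x)^2}\,y\,B(x,y)A(x,y)$, again with the $\ell=0$ term folded in. The delicate piece is the first sum $\sum_{z\ge 0}\mathfrak a_{n-z,s+z-1}$, whose diagonal coupling (raising $s$ by $z-1$ while lowering $n$ by $z$) introduces the ratio $x/y$ and, after summing the geometric series in $z$, a genuine kernel. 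Collecting everything and moving all occurrences of $A(x,y)$ to the left yields a functional equation of the form $\big(K(x,y)-R(x,y)B(x,y)\big)A(x,y)=P(x,y)+Q(x,y)F(x)$, where $K$ is the kernel arising from the first sum, $P,Q,R$ are explicit rational functions, and $F(x)=A(x,0)$ is the only unknown boundary section on the right.

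The core step is then the kernel method: I would isolate the unique root $y=Y(x)$ of the combined kernel $K(x,y)-R(x,y)B(x,y)$ that is a formal power series in $x$, and substitute $y=Y(x)$ to annihilate the left-hand side, leaving the scalar relation $0=P(x,Y(x))+Q(x,Y(x))F(x)$, hence $F=-P(x,Y)/Q(x,Y)$. The main obstacle is that $B$ carries a square root, so both $Y(x)$ and the evaluation $B(x,Y(x))$ appearing inside $P,Q$ are themselves algebraic of degree at least two; to reach a polynomial equation in $F$ alone I would have to eliminate these algebraic quantities, for instance by taking a norm over the relevant quadratic extension or by resultant elimination, and this is precisely the step I expect to raise the degree in $F$ to three and reproduce the cubic of Conjecture \ref{conj010+102}. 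Finally I would verify that the resulting polynomial is irreducible over $\mathbb{Q}(x)$ (hence minimal), and confirm that its power-series root matches the known initial values $1,2,5,15,51,186,707$, which selects the correct branch and completes the proof.
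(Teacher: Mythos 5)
A point of context before anything else: the paper does \emph{not} prove this statement. It is presented as Conjecture~\ref{conj010+102}, attributed to Pantone (guessed from initial terms of the enumeration sequence), and the author explicitly defers the proof to a future work ``using yet another construction of inversion sequences'' --- in particular, \emph{not} the shifted-inversion-sequence recurrence of Theorem~\ref{thm010+102} that your plan builds on. So there is no proof in the paper to compare against, and your attempt must stand on its own; the author's choice to introduce a new construction rather than derive the generating function from Theorem~\ref{thm010+102} is itself a warning that the route you sketch is not straightforward.

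On its own terms, your plan contains a concrete error at its central step. Translating the first sum $\sum_{z=0}^{n}\mathfrak a_{n-z,s+z-1}$ into generating functions, with $n'=n-z$ and $s'=s+z-1$, the constraint $s=s'-z+1\geqslant 0$ truncates the geometric series in $z$ at $z=s'+1$, and the truncation term is $\sum_{n',s'}\mathfrak a_{n',s'}x^{n'}x^{s'}=A(x,x)$; concretely this summand contributes $\bigl(y^{2}A(x,y)-x^{2}A(x,x)\bigr)/(y-x)$. Hence the functional equation has the shape $\bigl(K(x,y)-R(x,y)B(x,y)\bigr)A(x,y)=P(x,y)+Q(x,y)\,A(x,x)$: the unknown univariate section is the \emph{diagonal} $A(x,x)$, not $F(x)=A(x,0)$ as you assert. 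Your concluding step ``$0=P(x,Y)+Q(x,Y)F(x)$, hence $F=-P/Q$'' is therefore wrong as stated; the kernel substitution would determine $A(x,x)$ first, after which one must reconstruct $A(x,y)$ in full and only then specialize $y=0$ to reach $F$. This is repairable in principle, but a second, unaddressed obstacle remains: because $B(x,y)$ carries a square root, your combined kernel is algebraic rather than polynomial in $y$, and the existence and uniqueness of a formal-power-series root $Y(x)$, the legitimacy of the substitution, and the feasibility of the final resultant elimination down to the stated cubic are all asserted, not established. As written, the proposal is a plausible programme with a misstated central equation and an unverified kernel step, not a proof.
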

\noindent Conjecture \ref{conj010+102} is now proved in \cite{Testart_GT_left}, using yet another generating tree construction of inversion sequences.

The software \cite{guessfunc} could not guess an algebraic, D-finite, or D-algebraic generating function for any other classes of pattern-avoiding inversion sequences we studied in this article.

We now calculate the generating functions of $\I(102,201)$ and $\I(102,210)$ from our succession rules, using the kernel method.

\subsection{The generating function of \texorpdfstring{$\I$}{}(102, 201)}
We define generating functions corresponding to the numbers $\mathfrak a, \mathfrak b^{(i)}, \mathfrak c$ of Section \ref{102+201}.
Let $A(x,y) = \sum_{n,m \geqslant 0} \mathfrak a_{n,m} x^n y^m$, $B^{(i)}(x,y) = \sum_{n,s \geqslant 0} \mathfrak b^{(i)}_{n,s} x^n y^s$ for $i \in \{1,2,3\}$, and $C(x,y) = \sum_{n,\ell \geqslant 0} \mathfrak c_{n,\ell} x^n y^\ell$. Let $F(x) = \sum_{n \geqslant 0} |\I_n(102,201)| x^n$ be the ordinary generating function of $\I(102,201)$.

Recall the succession rule $\Omega_{\{102,201\}}$ from Remark \ref{remSuccession102+201}:
$$\Omega_{\{102,201\}} = \left \{ \begin{array}{rclll}
    (a,0,0) \\
    (a,n,m) & \leadsto & (a,n+1,i) & \text{for} & i \in [m,n] \\
    && (b^{(1)},i)^{n-i} & \text{for} & i \in [m,n-1] \\
    && (c,i) & \text{for} & i \in [0,m-1] \vspace{5pt} \\
    (b^{(1)},s) & \leadsto & (b^{(1)},s) \, (b^{(2)},s) \vspace{5pt}\\
    (b^{(2)},s) & \leadsto & (b^{(2)},s) \, (b^{(3)},s) \vspace{5pt}\\
    (b^{(3)},s) & \leadsto & (b^{(3)},s)^2 \\
    && (c,i) & \text{for} & i \in [0,s-1] \vspace{5pt}\\
    (c, \ell) & \leadsto & (c,i) & \text{for} & i \in [0, \ell].
\end{array}
\right .$$

\begin{prop} \label{propEq102+201}
    The functions $A, B^{(1)}, B^{(2)}, B^{(3)}$, $C$, and $F$ satisfy the following equations.
    \begin{align*}
        & A(x,y) = 1 + \frac{x}{1-y} \big (A(x,y) - y A(xy,1) \big ) \\
        & B^{(1)}(x,y) = x B^{(1)}(x,y) + \frac{x}{(1-y)} \left ( \hspace{-1.3pt} x \frac{\partial A}{\partial x}(x,y) - y \frac{\partial A}{\partial y}(x,y) + \frac{y}{1-y} \big (A(xy,1)-A(x,y) \big ) \hspace{-1.3pt} \right ) \\
        & B^{(2)}(x,y) = x \big (B^{(1)}(x,y) + B^{(2)}(x,y) \big ) \\
        & B^{(3)}(x,y) = x \big (B^{(2)}(x,y) + 2 B^{(3)}(x,y) \big ) \\
        & C(x,y) = \frac{x}{1-y} \big ( C(x,1) - y C(x,y) + A(x,1) - A(x,y) + B^{(3)}(x,1) - B^{(3)}(x,y) \big ) \\
        & F(x) = A(x,1) + B^{(3)}(x,1) + C(x,1)
    \end{align*}
\end{prop}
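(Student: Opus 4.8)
The plan is to read the whole system off the succession rule $\Omega_{\{102,201\}}$ directly, using the standard dictionary between productions and generating functions: a node at level $n$ produces children at level $n+1$, so each production contributes a factor $x$, and a child whose second component equals $i$ contributes $y^i$. Collecting the contributions of all productions that create a node of a given type yields the functional equation for the corresponding series. The only two delicate points are the finite geometric sums coming from productions ``for $i \in [\cdot,\cdot]$'', which always split into a bulk term and a boundary correction, and the multiplicity $n-i$ in the production $(a,n,m) \leadsto (b^{(1)},i)^{n-i}$.

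First I would treat $A$. Adding the axiom to the single production $(a,n,m) \leadsto (a,n+1,i)$ for $i \in [m,n]$ gives
$$A(x,y) = 1 + x\sum_{n,m}\mathfrak a_{n,m}x^n\sum_{i=m}^{n}y^i,$$
and writing $\sum_{i=m}^{n}y^i = \frac{y^m-y^{n+1}}{1-y}$ separates a bulk term $\frac{x}{1-y}A(x,y)$ from a boundary term carrying $y^{n+1}$; summing the latter and using the identity $\sum_n |\mathfrak A_n|(xy)^n = A(xy,1)$ (which encodes that a nondecreasing inversion sequence of size $n$ has maximum at most $n-1$) produces exactly $-\frac{xy}{1-y}A(xy,1)$. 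This is the template for everything else.

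The equations for $B^{(2)}$, $B^{(3)}$ and $C$ then follow the same recipe with no catalytic subtlety beyond geometric sums. The productions $(b^{(1)},s)\leadsto(b^{(2)},s)$ and $(b^{(2)},s)\leadsto(b^{(2)},s)$ give $B^{(2)}=x(B^{(1)}+B^{(2)})$; the productions into $(b^{(3)},s)$, with the doubling $(b^{(3)},s)\leadsto(b^{(3)},s)^2$, give $B^{(3)}=x(B^{(2)}+2B^{(3)})$; and the three productions into $(c,\cdot)$ — from $(a,n,m)$ over $i\in[0,m-1]$, from $(b^{(3)},s)$ over $i\in[0,s-1]$, and from $(c,\ell)$ over $i\in[0,\ell]$ — each yield a difference of the form $\frac{x}{1-y}(f(x,1)-(\text{power of }y)\,f(x,y))$, which assemble into the stated equation for $C$. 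Finally $F(x)=A(x,1)+B^{(3)}(x,1)+C(x,1)$ is just Theorem \ref{thm102+201} rewritten as a generating-function identity by setting $y=1$.

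The main obstacle is the $B^{(1)}$ equation, precisely because of the multiplicity $n-i$. The production $(b^{(1)},s)\leadsto(b^{(1)},s)$ contributes $xB^{(1)}$, while the $A$-source contributes $x\sum_{n,m}\mathfrak a_{n,m}x^n\sum_{i=m}^{n-1}(n-i)y^i$. I would evaluate the inner finite sum in closed form as $\sum_{i=m}^{n-1}(n-i)y^i=\frac{(n-m)y^m}{1-y}-\frac{y^{m+1}-y^{n+1}}{(1-y)^2}$ and then identify the three resulting pieces: the factor $(n-m)y^m$, summed against $\mathfrak a_{n,m}x^n$, yields $x\frac{\partial A}{\partial x}(x,y)-y\frac{\partial A}{\partial y}(x,y)$ (differentiation is exactly what manufactures the multiplicity), while the $y^{m+1}$ and $y^{n+1}$ boundary pieces produce $yA(x,y)$ and $yA(xy,1)$ and combine, after pulling out the common factor $\frac{x}{1-y}$, into $\frac{y}{1-y}(A(xy,1)-A(x,y))$. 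Verifying the finite-sum identity and correctly bookkeeping the two boundary terms — rather than any conceptual difficulty — is the part that demands care; the remaining equations are routine geometric-series manipulation.
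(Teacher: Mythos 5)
Your proposal is correct and takes essentially the same route as the paper's own proof: you translate each production of $\Omega_{\{102,201\}}$ into a generating-function contribution, handle the ranges $i\in[\cdot,\cdot]$ as geometric bulk-plus-boundary terms (which is exactly where the $yA(xy,1)$ and $yA(x,y)$ corrections arise), and absorb the multiplicity $n-i$ in $(a,n,m)\leadsto(b^{(1)},i)^{n-i}$ via the combination $x\frac{\partial A}{\partial x}(x,y)-y\frac{\partial A}{\partial y}(x,y)$, just as the paper does. Your closed form $\sum_{i=m}^{n-1}(n-i)y^i=\frac{(n-m)y^m}{1-y}-\frac{y^{m+1}-y^{n+1}}{(1-y)^2}$ matches the paper's intermediate expression, and reading $F(x)=A(x,1)+B^{(3)}(x,1)+C(x,1)$ off Theorem \ref{thm102+201} is also the paper's final step.
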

\begin{proof}
    We turn the succession rule $\Omega_{\{102,201\}}$ into equations relating the generating functions.
    \begin{align*}
        A(x,y) &= 1 + \sum_{n,m \geqslant 0} \mathfrak a_{n,m} x^{n+1} \sum_{i=m}^n y^i \\
        &= 1 + \sum_{n,m \geqslant 0} \mathfrak a_{n,m} x^{n+1} \frac{y^m - y^{n+1}}{1-y} \\
        &= 1 + \frac{x}{1-y} \big (A(x,y) - y A(xy,1) \big )
    \end{align*}

    \begin{align*}
        B^{(1)}(x,y) &= \left ( \sum_{n,s \geqslant 0} \mathfrak b^{(1)}_{n,s} x^{n+1} y^s \right ) + \left ( \sum_{n,m \geqslant 0} \mathfrak a_{n,m} x^{n+1} \sum_{i=m}^{n-1} (n-i) y^i \right ) \\
        &= x B^{(1)}(x,y) + \sum_{n,m \geqslant 0} \mathfrak a_{n,m} \frac{x^{n+1}}{1-y} \left (n y^m - m y^m + \frac{y^{n+1} - y^{m+1}}{1-y} \right ) \\
        &= x B^{(1)}(x,y) + \frac{x}{(1-y)} \left ( \hspace{-1.3pt} x \frac{\partial A}{\partial x}(x,y) - y \frac{\partial A}{\partial y}(x,y) + \frac{y}{1-y} \big (A(xy,1)-A(x,y) \big ) \hspace{-1.3pt} \right )
    \end{align*}
    
    \begin{align*}
         B^{(2)}(x,y) &= \sum_{n,s \geqslant 0} (\mathfrak b^{(1)}_{n,s} + \mathfrak b^{(2)}_{n,s}) x^{n+1} y^s \\
         &=  x \big (B^{(1)}(x,y) + B^{(2)}(x,y) \big )
    \end{align*}

    \begin{align*}
         B^{(3)}(x,y) &= \sum_{n,s \geqslant 0} (\mathfrak b^{(2)}_{n,s} + 2 \mathfrak b^{(3)}_{n,s}) x^{n+1} y^s \\
         &= x \big (B^{(2)}(x,y) + 2B^{(3)}(x,y) \big )
    \end{align*}

    \begin{align*}
        C(x,y) &= \left ( \sum_{n,\ell \geqslant 0} \mathfrak c_{n,\ell} x^{n+1} \sum_{i = 0}^\ell y^i \right ) + \left ( \sum_{n,k \geqslant 0} (\mathfrak a_{n,k} + \mathfrak b^{(3)}_{n,k}) x^{n+1} \sum_{i=0}^{k-1} y^i \right ) \\
        &= \left ( \sum_{n,\ell \geqslant 0} \mathfrak c_{n,\ell} x^{n+1} \frac{1-y^{\ell+1}}{1-y} \right ) + \left ( \sum_{n,k \geqslant 0} (\mathfrak a_{n,k} + \mathfrak b^{(3)}_{n,k}) x^{n+1} \frac{1-y^k}{1-y} \right )  \\
        &= \frac{x}{1-y} \big ( C(x,1) - y C(x,y) + A(x,1) - A(x,y) + B^{(3)}(x,1) - B^{(3)}(x,y) \big )
    \end{align*}
    The equation $F(x) = A(x,1) + B^{(3)}(x,1) + C(x,1)$ is an immediate consequence of Theorem \ref{thm102+201}.
\end{proof}

\begin{thm} \label{thmGF102+201}
    The generating function of $\I(102,201)$ is
    $$F(x) = \frac{-8x^4 + 18x^3 - 10x^2 - 8x + 4 + 2(2x - 1)(x^2 - 2x + 2)\sqrt{(5x-1)(x-1)}}{4x (2x-1) (x-1) (x-2)^2}.$$
\end{thm}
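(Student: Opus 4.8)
The plan is to solve the system of functional equations from Proposition \ref{propEq102+201} one generating function at a time, in the order $A$, then $B^{(1)}, B^{(2)}, B^{(3)}$, then $C$, and finally assemble $F$; two applications of the kernel method do all the real work. First I would solve the equation for $A$. Rewritten as $A(x,y)(1-y-x) = (1-y) - xy\,A(xy,1)$, its kernel $1-y-x$ vanishes at the admissible series $y = 1-x$; substituting this value annihilates the left-hand side and yields $A\big(x(1-x),1\big) = \frac{1}{1-x}$. Solving this relation for the univariate series $A(\cdot,1)$ recovers the Catalan generating function $A(x,1) = \frac{1-\sqrt{1-4x}}{2x}$ (as expected, since $\mathfrak A = \I(10)$ is counted by the Catalan numbers). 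Substituting $A(xy,1) = \frac{1-\sqrt{1-4xy}}{2xy}$ back into the equation then gives the closed form
$$A(x,y) = \frac{1 - 2y + \sqrt{1-4xy}}{2(1-x-y)}.$$

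Next, the three $B$-equations are each linear in their own unknown and can be solved explicitly in cascade, with no further use of the kernel method. Collecting terms in the first one gives $B^{(1)}(x,y)(1-x)$ equal to the bracketed expression on its right-hand side divided through appropriately, which is fully explicit once $A$ and its two partial derivatives $\frac{\partial A}{\partial x}$ and $\frac{\partial A}{\partial y}$ are known. Then $B^{(2)} = \frac{x}{1-x}B^{(1)}$ and $B^{(3)} = \frac{x}{1-2x}B^{(2)} = \frac{x^2}{(1-x)(1-2x)}B^{(1)}$; this cascade is the source of the factors $(x-1)$ and $(2x-1)$ in the denominator of the claimed $F$.

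Finally I would treat the equation for $C$. Rearranged, it reads $C(x,y)\big(1-y(1-x)\big) = x\big(C(x,1)+A(x,1)-A(x,y)+B^{(3)}(x,1)-B^{(3)}(x,y)\big)$, whose kernel $1-y(1-x)$ vanishes at $Y = \frac{1}{1-x}$. Substituting $y = Y$ eliminates the unknown diagonal $C(x,y)$ and solves for the univariate series as $C(x,1) = A(x,Y) - A(x,1) + B^{(3)}(x,Y) - B^{(3)}(x,1)$. The payoff is a clean telescoping: inserting this into the relation $F(x) = A(x,1) + B^{(3)}(x,1) + C(x,1)$ from Theorem \ref{thm102+201} cancels all the $(x,1)$-terms and leaves simply $F(x) = A\big(x,\tfrac{1}{1-x}\big) + B^{(3)}\big(x,\tfrac{1}{1-x}\big)$.

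It then remains to substitute $y = \frac{1}{1-x}$ into the explicit formulas for $A$ and $B^{(3)}$ and to simplify. At this value one has $1-4xy = \frac{1-5x}{1-x}$, so $\sqrt{1-4xy}$ becomes $\frac{\sqrt{(1-5x)(1-x)}}{1-x}$, matching the radical $\sqrt{(5x-1)(x-1)}$ in the statement, and the $A$-part already collapses to $\frac{-1-x+\sqrt{(1-5x)(1-x)}}{2x(x-2)}$. I expect the main obstacle to be purely computational: differentiating the closed form of $A$ to obtain $B^{(1)}$ (and hence $B^{(3)}$), and then carrying out the rational-plus-radical algebra after the substitution $y = \frac{1}{1-x}$ to collapse both contributions over the common denominator $4x(2x-1)(x-1)(x-2)^2$ displayed in the theorem. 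No new idea is needed beyond the two kernel substitutions; the delicate part is the bookkeeping of the square root and of this common denominator.
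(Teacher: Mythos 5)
Your proposal is correct and follows essentially the same route as the paper's proof: cancel the kernel $1-y-x$ at $y=1-x$ to obtain $A(x,1)$ and then $A(x,y)$, solve the $B$-equations in cascade via $B^{(2)}=\tfrac{x}{1-x}B^{(1)}$ and $B^{(3)}=\tfrac{x}{1-2x}B^{(2)}$, cancel the kernel $1-y+xy$ at $y=\tfrac{1}{1-x}$ to extract $C(x,1)$, and assemble $F$ from Theorem \ref{thm102+201}. Your explicit telescoping $F(x)=A\bigl(x,\tfrac{1}{1-x}\bigr)+B^{(3)}\bigl(x,\tfrac{1}{1-x}\bigr)$ is a minor streamlining the paper leaves implicit, and your assessment that the rest is routine algebra is accurate (unlike in the $\{102,210\}$ case, no $0/0$ indeterminacy arises at $y=\tfrac{1}{1-x}$).
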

\begin{proof}
From Proposition \ref{propEq102+201}, we have
    \begin{equation} \label{eqA102+201}
        (1-y-x) A(x,y) = 1-y - xy A(xy,1).
    \end{equation}
    We cancel the kernel $(1-y-x)$ by defining $Y(x) = 1-x$ and replacing $y$ by $Y(x)$:
    $$0 = 1 - (1-x) - x(1-x) A(x(1-x),1),$$
    and obtain
    $$A(x-x^2,1) = \frac{1}{1-x}.$$
    This determines a unique formal power series $A(x,1)$:
    $$A(x,1) = \frac{1-\sqrt{1-4x}}{2x}.$$
    Replacing $A(x,1)$ by this expression in \eqref{eqA102+201} yields
    $$A(x,y) = \frac{1-2y+\sqrt{1-4xy}}{2-2x-2y}.$$

From Proposition \ref{propEq102+201}, we have
$$B^{(1)}(x,y) = \frac{x}{(1-x)(1-y)} \left ( x \frac{\partial A}{\partial x}(x,y) - y \frac{\partial A}{\partial y}(x,y) + \frac{y}{1-y} \big (A(xy,1)-A(x,y) \big ) \right ),$$
$$B^{(2)}(x,y) = \frac{x}{1-x} B^{(1)}(x,y),$$
$$B^{(3)}(x,y) = \frac{x}{1-2x} B^{(2)}(x,y),$$
$$(1 - y + xy) C(x,y) = x \big ( C(x,1) + A(x,1) - A(x,y) + B^{(3)}(x,1) - B^{(3)}(x,y) \big ).$$
We easily obtain expressions for $B^{(1)}(x,y)$, $B^{(2)}(x,y)$, and $B^{(3)}(x,y)$ from that of $A(x,y)$. As for $C$, we apply the kernel method again. The kernel is $1 - y + xy$, we cancel it by setting $y = \frac{1}{1-x}$. We obtain an expression for $C(x,1)$:
$$C(x,1) = A \left (x,\frac{1}{1-x} \right ) - A(x,1) + B^{(3)} \left (x,\frac{1}{1-x} \right ) - B^{(3)}(x,1).$$
Finally, we can write an expression for $F(x)$:
\begin{align*}
    F(x) &= A(x,1) + B^{(3)}(x,1) + C(x,1) \\
    &= \frac{-8x^4 + 18x^3 - 10x^2 - 8x + 4 + 2(2x - 1)(x^2 - 2x + 2)\sqrt{(5x-1)(x-1)}}{4x (2x-1) (x-1) (x-2)^2}. \qedhere
\end{align*}
\end{proof}

\subsection{The generating function of \texorpdfstring{$\I$}{}(102, 210)}
We define generating functions corresponding to the numbers $\mathfrak a, \mathfrak b^{(i)}, \mathfrak c^{(i)}$ of Section \ref{102+210}.
Let $A(x,y) = \sum_{n,k \geqslant 0} \mathfrak a_{n,k} x^n y^k$, $B^{(i)}(x,y) = \sum_{n,k \geqslant 0} \mathfrak b^{(i)}_{n,k} x^n y^k$, $C^{(i)}(x,y) = \sum_{n,k \geqslant 0} \mathfrak c^{(i)}_{n,k} x^n y^k$ for $i \in \{1,2\}$. Let $F(x) = \sum_{n \geqslant 0} |\I_n(102,210)| x^n$ be the ordinary generating function of $\I(102,210)$.

Recall the succession rule $\Omega_{\{102,210\}}$ from Theorem \ref{thm102+210}:
$$\Omega_{\{102,210\}} = \left \{ \begin{array}{rclll}
(a,0) \\
(a,k) & \leadsto & (a,i) & \text{for} & i \in [1,k+1] \\
&& (b^{(1)},i)^{k-i} & \text{for} & i \in [1,k-1] \vspace{5pt}\\
(b^{(1)},k) & \leadsto & (b^{(1)},k) \\
&& (b^{(2)},i) & \text{for} & i \in [1,k+1] \\
&& (c^{(1)},i) & \text{for} & i \in [1,k-1] \vspace{5pt}\\
(b^{(2)},k) & \leadsto & (b^{(2)},i) & \text{for} & i \in [1,k+1]  \vspace{5pt}\\
(c^{(1)},k) & \leadsto & (c^{(1)},k) \\
&& (c^{(2)},i) & \text{for} & i \in [1,k] \vspace{5pt}\\
(c^{(2)},k) & \leadsto & (c^{(2)},i) & \text{for} & i \in [1,k].
\end{array} \right.$$

\begin{prop} \label{propEq102+210}
    The functions $A, B^{(1)}, B^{(2)}, C^{(1)}, C^{(2)}$, and $F$ satisfy the following equations.
    \allowdisplaybreaks
    \begin{align*}  
        & A(x,y) = 1 + \frac{xy}{1-y} \big (A(x,1) - y A(x,y) \big ) \\
        & B^{(1)}(x,y) = xB^{(1)}(x,y) + \frac{xy}{1-y} \left (\frac{\partial A}{\partial y}(x,1) + \frac{1}{1-y} \big (A(x,y)-A(x,1) \big ) \right) \\
        & B^{(2)}(x,y) = \frac{xy}{1-y} \left (B^{(1)}(x,1) + B^{(2)}(x,1) - y \big (B^{(1)}(x,y) + B^{(2)}(x,y) \big ) \right ) \\
        & C^{(1)}(x,y) = xC^{(1)}(x,y) + \frac{x}{1-y} \left ( yB^{(1)}(x,1) - B^{(1)}(x,y) \right ) \\
        & C^{(2)}(x,y) = \frac{xy}{1-y} \left ( C^{(1)}(x,1) + C^{(2)}(x,1) - C^{(1)}(x,y) - C^{(2)}(x,y) \right ). \\
        & F(x) = A(x,1) + B^{(1)}(x,1) + B^{(2)}(x,1) + C^{(1)}(x,1) + C^{(2)}(x,1)
    \end{align*}
\end{prop}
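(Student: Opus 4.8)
The plan is to translate the succession rule $\Omega_{\{102,210\}}$ into the announced functional equations, following the general principle recalled in Section \ref{trees}: each label contributes to the generating functions of its children according to $\mathfrak c_{n,\ell} = \sum_k f(k)(\ell)\,\mathfrak c_{n-1,k}$, where $f(k)(\ell)$ is the multiplicity of $\ell$ among the children of a node labelled $k$. Concretely, for each of the five families I would sum, over all parent nodes, the contribution $x^{n+1}\sum_i m_i\, y^i$, where the $m_i$ are the multiplicities prescribed by the production, and then recognize the resulting double sum in terms of $A, B^{(1)}, B^{(2)}, C^{(1)}, C^{(2)}$ evaluated at $y$ or at $y=1$. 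This is exactly the method used to prove Proposition \ref{propEq102+201}.

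First I would handle the diagonal productions that merely copy a label one level down, namely $(b^{(1)},k)\leadsto(b^{(1)},k)$ and $(c^{(1)},k)\leadsto(c^{(1)},k)$; these account for the isolated terms $xB^{(1)}(x,y)$ and $xC^{(1)}(x,y)$. Next come the productions whose children run over an interval $[1,k+1]$ or $[1,k]$ with multiplicity one: the $(a,k)$-to-$(a,i)$ rule, the two rules feeding $B^{(2)}$, the rule $(b^{(1)},k)\leadsto(c^{(1)},i)$, and the two rules feeding $C^{(2)}$. In every such case the inner sum is a finite geometric series, for instance $\sum_{i=1}^{k+1}y^i=\frac{y-y^{k+2}}{1-y}$ or $\sum_{i=1}^{k}y^i=\frac{y-y^{k+1}}{1-y}$, and substituting it produces exactly the stated combination of $A(x,y)$ with $A(x,1)$ (and similarly for the $B$'s and $C$'s). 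The axiom $(a,0)$ supplies the constant term $1$ in the equation for $A$.

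The main obstacle is the single production with nonconstant multiplicity, $(a,k)\leadsto(b^{(1)},i)^{\,k-i}$ for $i\in[1,k-1]$, which feeds $B^{(1)}$. Here the inner sum is the weighted series $\sum_{i=1}^{k-1}(k-i)\,y^i$ rather than a plain geometric one, and this is where the derivative term $\frac{\partial A}{\partial y}(x,1)$ must appear. I would compute this sum explicitly, using $\sum_{i=1}^{k-1}y^i=\frac{y-y^k}{1-y}$ together with $\sum_{i=1}^{k-1}i\,y^i=\frac{y-ky^k+(k-1)y^{k+1}}{(1-y)^2}$, and verify the identity $\sum_{i=1}^{k-1}(k-i)\,y^i=\frac{y}{1-y}\bigl(k+\frac{y^k-1}{1-y}\bigr)$. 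Summing the factor $k$ against $\mathfrak a_{n,k}x^n$ then yields $\frac{\partial A}{\partial y}(x,1)=\sum_{n,k}k\,\mathfrak a_{n,k}x^n$, while the factor $\frac{y^k-1}{1-y}$ yields $\frac{1}{1-y}\bigl(A(x,y)-A(x,1)\bigr)$, giving precisely the bracketed expression in the $B^{(1)}$ equation. The only remaining point is the identity $F(x)=A(x,1)+B^{(1)}(x,1)+B^{(2)}(x,1)+C^{(1)}(x,1)+C^{(2)}(x,1)$, which follows from Theorem \ref{thm102+210}: the five families $\mathfrak A, \mathfrak B^{(1)}, \mathfrak B^{(2)}, \mathfrak C^{(1)}, \mathfrak C^{(2)}$ partition $\I(102,210)$ and, as noted in the proof of that theorem, the construction carries no phantom objects, so setting $y=1$ counts each family by size.
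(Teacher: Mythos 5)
Your proposal is correct and follows essentially the same route as the paper's proof: translating each production of $\Omega_{\{102,210\}}$ into a geometric-series sum, isolating the weighted production $(a,k)\leadsto(b^{(1)},i)^{k-i}$ via the identity $\sum_{i=1}^{k-1}(k-i)y^i=\frac{y}{1-y}\bigl(k+\frac{y^k-1}{1-y}\bigr)$ to produce the $\frac{\partial A}{\partial y}(x,1)$ term, and deducing the $F$ equation from the partition of $\I(102,210)$ into the five families of Theorem \ref{thm102+210}. The only trivial slip is that the production $(b^{(1)},k)\leadsto(c^{(1)},i)$ ranges over $i\in[1,k-1]$ rather than $[1,k]$ or $[1,k+1]$, but your geometric-series treatment applies verbatim there, exactly as in the paper.
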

\begin{proof}
    We turn the succession rule $\Omega_{\{102,210\}}$ into equations relating the generating functions.
\begin{align*}
    A(x,y) &= 1 + \sum_{n,k \geqslant 0} \mathfrak a_{n,k} x^{n+1} \sum_{i = 1}^{k+1} y^i \\
    &= 1 + \sum_{n,k \geqslant 0} \mathfrak a_{n,k} x^{n+1} \frac{y-y^{k+2}}{1-y} \\
    &= 1 + \frac{xy}{1-y} \big ( A(x,1) - y A(x,y) \big )
\end{align*}
\begin{align*}
    B^{(1)}(x,y) &= \left (\sum_{n,k \geqslant 0} \mathfrak b^{(1)}_{n,k} x^{n+1} y^k \right ) + \left ( \sum_{n,k \geqslant 0} \mathfrak a_{n,k} x^{n+1} \sum_{i=1}^{k-1} (k-i) y^i \right ) \\
    &= xB^{(1)}(x,y) + \sum_{n,k \geqslant 0} \mathfrak a_{n,k} \frac{x^{n+1} y}{1-y} \left (k + \frac{y^k - 1}{1-y} \right ) \\
    &= xB^{(1)}(x,y) + \frac{xy}{1-y} \left ( \frac{\partial A}{\partial y}(x,1) + \frac{1}{1-y} \big ( A(x,y) - A(x,1) \big ) \right )
\end{align*}
\begin{align*}
    B^{(2)}(x,y) &= \sum_{n,k \geqslant 0} \left (\mathfrak b^{(1)}_{n,k} + \mathfrak b^{(2)}_{n,k} \right ) x^{n+1} \sum_{i=1}^{k+1} y^i \\
    &= \sum_{n,k \geqslant 0} \left (\mathfrak b^{(1)}_{n,k} + \mathfrak b^{(2)}_{n,k} \right ) x^{n+1} \frac{y-y^{k+2}}{1-y} \\
    &= \frac{xy}{1-y} \left (B^{(1)}(x,1) + B^{(2)}(x,1) - y \big (B^{(1)}(x,y) + B^{(2)}(x,y) \big ) \right )
\end{align*}
\begin{align*}
    C^{(1)}(x,y) &= \left ( \sum_{n,k \geqslant 0} \mathfrak c^{(1)}_{n,k} x^{n+1} y^k \right ) + \left ( \sum_{n,k \geqslant 0} \mathfrak b^{(1)}_{n,k} x^{n+1} \sum_{i=1}^{k-1} y^i \right ) \\
    &= xC^{(1)}(x,y) + \sum_{n,k \geqslant 0} \mathfrak b^{(1)}_{n,k} x^{n+1} \frac{y-y^k}{1-y} \\
    &= xC^{(1)}(x,y) + \frac{x}{1-y} \left ( yB^{(1)}(x,1) - B^{(1)}(x,y) \right )
\end{align*}
\begin{align*}
    C^{(2)}(x,y) &= \sum_{n,k \geqslant 0} \left (\mathfrak c^{(1)}_{n,k} + \mathfrak c^{(2)}_{n,k} \right ) x^{n+1} \sum_{i=1}^k y^i \\
    &= \sum_{n,k \geqslant 0} \left (\mathfrak c^{(1)}_{n,k} + \mathfrak c^{(2)}_{n,k} \right ) x^{n+1} \frac{y-y^{k+1}}{1-y} \\
    &= \frac{xy}{1-y} \left ( C^{(1)}(x,1) + C^{(2)}(x,1) - C^{(1)}(x,y) - C^{(2)}(x,y) \right ).
\end{align*}
By definition, $F(x) = A(x,1) + B^{(1)}(x,1) + B^{(2)}(x,1) + C^{(1)}(x,1) + C^{(2)}(x,1)$.
\end{proof}

\begin{thm} \label{thmGF102+210}
    The generating function $F(x)$ of $\I(102,210)$ is
    $$\frac{(4x-1) (4x^4 - 22x^3 + 25x^2 - 9x + 1) - (2x - 1)(x^2 - 5x + 1)(2x^2 - 4x + 1)\sqrt{1-4x}}{2 x^3 (4 x-1) (x-1)^2}.$$
\end{thm}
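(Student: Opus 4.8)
The plan is to solve the system of functional equations from Proposition~\ref{propEq102+210} one function at a time, using the kernel method exactly as in the proof of Theorem~\ref{thmGF102+201}. The unknown series $A(x,1)$, $B^{(1)}(x,1)$, $B^{(2)}(x,1)$, $C^{(1)}(x,1)$, $C^{(2)}(x,1)$ are determined in this order, after which $F(x)$ is simply their sum.

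First I would treat the equation for $A$. Clearing the denominator $1-y$ turns it into
$$(1-y+xy^2)\,A(x,y) = (1-y) + xy\,A(x,1),$$
whose kernel $1-y+xy^2$ vanishes at the power series root $Y=Y(x)=\frac{1-\sqrt{1-4x}}{2x}$ (the Catalan generating function, satisfying $Y=1+xY^2$). Substituting $y=Y$ cancels the left-hand side and yields $A(x,1)=\frac{Y-1}{xY}=Y$, as expected since $\mathfrak A_n=\I_n(10)$ is counted by Catalan numbers. Back-substitution then gives a closed form for $A(x,y)$. The equations for $B^{(1)}$ and $C^{(1)}$ require no catalytic cancellation: after dividing by $(1-x)$ they express $B^{(1)}(x,y)$ and $C^{(1)}(x,y)$ explicitly in terms of $A$ and $B^{(1)}$ respectively. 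The only care needed is that the brackets such as $\frac{\partial A}{\partial y}(x,1)+\frac{1}{1-y}\big(A(x,y)-A(x,1)\big)$ vanish to first order at $y=1$, so the apparent pole from $\frac{1}{1-y}$ is removable and $B^{(1)}(x,1)$, $C^{(1)}(x,1)$ are recovered by a Taylor expansion at $y=1$ (equivalently, by computing the relevant $y$-derivatives of $A$ and $B^{(1)}$).

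The remaining two specializations require fresh kernel cancellations. Rewriting the $B^{(2)}$ equation as $(1-y+xy^2)B^{(2)}(x,y)=xy\big(B^{(1)}(x,1)+B^{(2)}(x,1)-yB^{(1)}(x,y)\big)$ reveals the same kernel as for $A$; substituting $y=Y$ gives $B^{(2)}(x,1)=Y\,B^{(1)}(x,Y)-B^{(1)}(x,1)$. Similarly, $(1-y+xy)C^{(2)}(x,y)=xy\big(C^{(1)}(x,1)+C^{(2)}(x,1)-C^{(1)}(x,y)\big)$ has kernel $1-y+xy$, vanishing at the power series root $y=\frac{1}{1-x}$; substituting there yields $C^{(2)}(x,1)=C^{(1)}\big(x,\tfrac{1}{1-x}\big)-C^{(1)}(x,1)$, exactly as the analogous step in Theorem~\ref{thmGF102+201}. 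Every quantity on the right of these two identities is already known at this stage.

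Finally, summing $F(x)=A(x,1)+B^{(1)}(x,1)+B^{(2)}(x,1)+C^{(1)}(x,1)+C^{(2)}(x,1)$ and simplifying gives the stated expression. I expect the main obstacle to be purely computational: evaluating $B^{(1)}$ at the algebraic argument $y=Y$ and $C^{(1)}$ at $y=\frac{1}{1-x}$, together with the $y$-derivatives at $y=1$, produces bulky expressions in $\sqrt{1-4x}$, and collecting them into the single fraction of the theorem demands careful algebraic simplification (ideally checked with a computer algebra system). As a cross-check, one can verify that the resulting $F(x)$ annihilates the minimal polynomial of Conjecture~\ref{conj102+210}, which simultaneously establishes that conjecture.
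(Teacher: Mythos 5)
Your proposal is correct and follows essentially the same route as the paper's proof: the same kernel cancellation at $y=\frac{1-\sqrt{1-4x}}{2x}$ for both $A$ and $B^{(2)}$ (giving $B^{(2)}(x,1)=Y\,B^{(1)}(x,Y)-B^{(1)}(x,1)$), the same explicit substitutions for $B^{(1)}$ and $C^{(1)}$, the same cancellation of $1-y+xy$ at $y=\frac{1}{1-x}$ for $C^{(2)}$, and the same final summation. The one point the paper spells out that you subsume under ``careful algebraic simplification'' is that $B^{(1)}\bigl(x,\tfrac{1-\sqrt{1-4x}}{2x}\bigr)$ involves the indeterminate form $\tfrac{0}{0}$, since the kernel appears in the denominator of $A(x,y)$; the paper resolves this by evaluating $A\bigl(x,\tfrac{1-\sqrt{1-4z}}{2z}\bigr)$ and letting $z\to x$, which gives $\tfrac{1}{2}\bigl(1+\tfrac{1}{\sqrt{1-4x}}\bigr)$.
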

\begin{proof}
From Proposition \ref{propEq102+210}, we have
\begin{equation} \label{eqA102+210}
    (1-y+xy^2)A(x,y) = 1-y + xy A(x,1)
\end{equation}
\begin{equation} \label{eqB1102+210}
    B^{(1)}(x,y) = \frac{xy}{(1-x)(1-y)} \left (\frac{\partial A}{\partial y}(x,1) + \frac{1}{1-y} \big (A(x,y)-A(x,1) \big ) \right)
\end{equation}
\begin{equation} \label{eqB2102+210}
    (1-y+xy^2)B^{(2)}(x,y) = xy \left (B^{(1)}(x,1) + B^{(2)}(x,1) - y B^{(1)}(x,y) \right )
\end{equation}
\begin{equation} \label{eqC1102+210}
    C^{(1)}(x,y) = \frac{x}{(1-x)(1-y)} \left ( yB^{(1)}(x,1) - B^{(1)}(x,y) \right )
\end{equation}
\begin{equation} \label{eqC2102+210}
    (1-y+xy)C^{(2)}(x,y) = xy \left ( C^{(1)}(x,1) + C^{(2)}(x,1) - C^{(1)}(x,y) \right ).
    \end{equation}
First, we derive an expression of $A(x,y)$ from \eqref{eqA102+210}. To cancel the kernel $(1-y+xy^2)$, there are two solution in $y$:
$$Y_1(x) = \frac{1-\sqrt{1-4x}}{2x}, \qquad Y_2(x) = \frac{1+\sqrt{1-4x}}{2x}.$$
Only $Y_1$ defines a formal power series. We replace $y$ by $Y_1(x)$ in \eqref{eqA102+210}, and obtain
$$A(x,1) = \frac{1-\sqrt{1-4x}}{2x}.$$
Replacing $A(x,1)$ by this expression in \eqref{eqA102+210} yields
$$A(x,y) = \frac{2-y-y\sqrt{1-4x}}{2(1-y+xy^2)}.$$
We can obtain expressions for $B^{(1)}(x,y)$ and $C^1(x,y)$ by substituting our expression of $A(x,y)$ in \eqref{eqB1102+210} and \eqref{eqC1102+210}.

Next, we look for an expression of $B^{(2)}(x,1)$ from \eqref{eqB2102+210}. The kernel is the same as for $A$. Replacing $y$ by $\frac{1-\sqrt{1-4x}}{2x}$ in \eqref{eqB2102+210} yields
$$B^{(2)}(x,1) = \frac{1-\sqrt{1-4x}}{2x} B^{(1)}\left(x,\frac{1-\sqrt{1-4x}}{2x} \right) - B^{(1)}(x,1).$$
If we tried to directly evaluate $B^{(1)} \left (x,\frac{1-\sqrt{1-4x}}{2x} \right )$ from our expressions of $B^{(1)}(x,y)$ and $A(x,y)$, we would obtain the fraction $\frac{0}{0}$. Instead, we can write down an expression for $A\left (x, \frac{1-\sqrt{1-4z}}{2z} \right )$, then consider its limit as $z$ tends to $x$, to show that
$$A \left (x,\frac{1-\sqrt{1-4x}}{2x} \right ) = \frac{1}{2}\left (1+\frac{1}{\sqrt{1-4x}} \right ).$$
We can then obtain an expression of $B^{(1)}\left (x,\frac{1-\sqrt{1-4x}}{2x} \right )$, and therefore of $B^{(2)}(x,1)$.

Finally, we look for an expression of $C^{(2)}(x,1)$ from \eqref{eqC2102+210}. The kernel is $(1-y+xy)$. We cancel it by setting $y = \frac{1}{1-x}$, and obtain
$$C^{(2)}(x,1) = C^{(1)} \left (x,\frac{1}{1-x} \right ) - C^{(1)}(x,1).$$

The generating function of $\I(102,210)$ is
\begin{align*}
    &A(x,1) + B^{(1)}(x,1) + B^{(2)}(x,1) +  C^{(1)}(x,1) +  C^{(2)}(x,1) \\
    =& \frac{(4x-1) (4x^4 - 22x^3 + 25x^2 - 9x + 1) - (2x - 1)(x^2 - 5x + 1)(2x^2 - 4x + 1)\sqrt{1-4x}}{2 x^3 (4 x-1) (x-1)^2}. \qedhere
\end{align*}
\end{proof}

\section{Asymptotics} \label{sectionAsymptotics}
Looking back at the first few terms of the enumeration sequences in Table \ref{table2}, it could appear that among the pairs of patterns we have studied, the one avoided by the most inversion sequences is \{101, 210\}, and the one avoided by the fewest inversion sequences is \{000, 010\}. Interestingly, it can be shown that the enumeration sequence of $\I(101, 210)$ is bounded above by an exponential function, and that of $\I(000, 010)$ is super-exponential (see Propositions \ref{propPermAsymptotics} and \ref{propSuperExp} below). In fact, computation indicates there are more inversion sequences of size $n$ avoiding \{000, 010\} than \{101, 210\} starting at $n = 41$.

Marcus and Tardos \cite[Theorem 9]{Marcus_Tardos} prove that the number of $n \times n$ 0-1 matrices avoiding a permutation matrix $P$ is bounded above by $c_P^n$ for some constant $c_P$. The subset of $n \times n$ 0-1 matrices which contain exactly one 1-entry in each column is clearly in bijection with the set $\W_{n,n}$ of words of length $n$ over the alphabet $[0,n-1]$. For any permutation $\pi$, this restricts to a bijection between matrices which avoid the permutation matrix of $\pi$ and words avoiding the pattern $\pi$. Observing that $\W_{n,n}$ includes the set $\I_n$ of inversion sequences of size $n$, we obtain the following proposition.
\begin{prop} \label{propPermAsymptotics}
    For any permutation $\pi$, for all $n \geqslant 0$, we have $|\I_n(\pi)| \leqslant c_\pi^n$ for some constant $c_\pi$.
\end{prop}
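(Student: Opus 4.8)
The plan is to piggyback entirely on the Marcus--Tardos theorem, reducing the count of $\pi$-avoiding inversion sequences to the count of $n \times n$ 0-1 matrices avoiding the permutation matrix $P_\pi$ of $\pi$. First I would record the containment $\I_n \subseteq \W_{n,n}$: any $\sigma \in \I_n$ satisfies $\sigma_i < i \leqslant n$, so its entries lie in $[0,n-1]$ and $\sigma$ is a word of length $n$ over that alphabet. Since this containment is compatible with pattern avoidance, it gives $\I_n(\pi) \subseteq \W_{n,n}(\pi)$, and it therefore suffices to bound $|\W_{n,n}(\pi)|$.

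Next I would set up the encoding of words as matrices. To each word $\omega \in \W_{n,n}$ I associate the $n \times n$ 0-1 matrix $M(\omega)$ whose column $j$ carries its unique $1$-entry in row $\omega_j + 1$; this is a bijection onto the set of $n \times n$ 0-1 matrices having exactly one $1$ in each column. The crucial point is that this bijection transports pattern avoidance to pattern avoidance: for a permutation $\pi$ of length $k$, the word $\omega$ contains $\pi$ exactly when there are columns $j_1 < \dots < j_k$ whose $1$-entries occupy rows that are order-isomorphic to $\pi$, which is precisely the statement that $M(\omega)$ contains $P_\pi$ as a submatrix. Hence $M$ restricts to a bijection between $\W_{n,n}(\pi)$ and the one-$1$-per-column matrices avoiding $P_\pi$.

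Finally I would close with a chain of inequalities. The one-$1$-per-column matrices avoiding $P_\pi$ form a subset of \emph{all} $n \times n$ 0-1 matrices avoiding $P_\pi$, and Marcus--Tardos \cite{Marcus-Tardos} bounds the latter by $c_\pi^n$ for a constant $c_\pi$ depending only on $\pi$. Combining with the first step yields $|\I_n(\pi)| \leqslant |\W_{n,n}(\pi)| \leqslant c_\pi^n$, as desired.

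The argument contains no genuinely hard step, since the exponential bound is imported wholesale from \cite{Marcus-Tardos}; the only point demanding care is the middle one. One must check that the word notion of pattern containment (order-isomorphism of a length-$k$ subsequence to $\pi$) matches the submatrix-containment notion of the Marcus--Tardos setting, using that each column of $M(\omega)$ carries exactly one $1$. Because $\pi$ is a permutation, all its entries are distinct, so order-isomorphism of the selected rows is equivalent to those rows realizing $\pi$ when read in column order, and the two notions coincide; this is exactly where the hypothesis that the pattern is a permutation enters.
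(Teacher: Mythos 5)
Your proof is correct and follows essentially the same route as the paper: the inclusion $\I_n(\pi) \subseteq \W_{n,n}(\pi)$, the bijection between words in $\W_{n,n}$ and $n \times n$ 0-1 matrices with exactly one 1 per column (compatible with avoidance of the permutation matrix of $\pi$), and the Marcus--Tardos bound on matrices avoiding a permutation matrix. Your added verification that word containment matches submatrix containment, and that this is where the permutation hypothesis enters, is a point the paper leaves implicit but is entirely in the spirit of its argument.
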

\noindent In particular, the growth of $\I(101, 210)$ is at most exponential since 210 is a permutation pattern (i.e. a pattern without any repeated values).

Proposition \ref{propSuperExp} below implies that every class of inversion sequences avoiding only patterns with repeated values studied in this article have a super-exponential growth: $\I(010)$, $\I(000, 010)$, $\I(000,100)$, $\I(100,110)$, $\I(100,101)$, and $\I(010,110)$.

\begin{prop} \label{propSuperExp}
    The enumeration sequence of $\I(000, 010, 100, 101, 110)$ grows super-exponentially.
\end{prop}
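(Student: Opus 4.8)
The plan is to prove the proposition by exhibiting an explicit subfamily of $\I_n(000,010,100,101,110)$ whose size grows superexponentially; no upper bound or exact count is needed. Write $a_n=|\I_n(000,010,100,101,110)|$. First I would record a \emph{monotonicity} lemma so that it suffices to control $a_n$ along a convenient subsequence of lengths: the map $\sigma\mapsto (0)\cdot(\sigma+1)$ sends $\I_n(000,010,100,101,110)$ injectively into $\I_{n+1}(000,010,100,101,110)$. Indeed it produces an inversion sequence (if $\sigma_i<i$ then $\sigma_i+1<i+1$), and the new leading $0$ is the unique occurrence of the minimum value, sitting in first position; since each of the five forbidden patterns requires a \emph{repeated} value, and the only value that could be repeated here, namely $0$, appears once, the prepended $0$ cannot participate in any occurrence. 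Hence $(a_n)_n$ is nondecreasing.

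Next I would build the subfamily. For $r\geqslant 1$ set $n_r=\sum_{t=1}^r(t+2)=\binom{r+1}{2}+2r$ and consider the inversion sequences of length $n_r$ of the block form
$$\sigma = (v_1,v_1)\,R_1\,(v_2,v_2)\,R_2\cdots(v_r,v_r)\,R_r,$$
where $v_1=0$, each block $R_t$ is an arbitrary word using the $t$ consecutive values $\{v_t+1,\dots,v_t+t\}$ (each exactly once), and $v_{t+1}=\max(R_t)+1=v_t+t+1$. By construction the value sets are strictly nested, $\{v_1\}<\vals(R_1)<\{v_2\}<\vals(R_2)<\cdots$, so the only repeated values are the $v_t$, each appearing exactly twice in adjacent positions, with every earlier entry strictly smaller and every later entry strictly larger. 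This is exactly the configuration that avoids $000$ and $010$ and $101$ (at most two equal entries, adjacent) and avoids $100$ and $110$ (no larger entry before, and no smaller entry after, a repeated value), per the conditions collected in Section \ref{sectionMax}; the entries of each $R_t$ are pairwise distinct and so can never supply the repeated value any of the five patterns needs. A short computation confirms the inversion condition: if $p_t$ denotes the first position of $R_t$, then $\max(R_t)=p_t-1$, so every entry of $R_t$ lies strictly below every position of its block, and the pairs $(v_t,v_t)$ likewise satisfy $\sigma_i<i$. Thus every such $\sigma$ lies in $\I_{n_r}(000,010,100,101,110)$.

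Finally I would count and estimate. The blocks $R_1,\dots,R_r$ may be arranged independently, so this family has at least $\prod_{t=1}^r t!$ elements, giving $a_{n_r}\geqslant \prod_{t=1}^r t!$. Keeping only the factors with $t\geqslant r/2$ and applying Stirling yields $\prod_{t=1}^r t!\geqslant \bigl((\lfloor r/2\rfloor)!\bigr)^{r/2}\geqslant (r/(2e))^{r^2/4}$, while $n_r\leqslant r^2$ for $r$ large. Combining these with the monotonicity lemma at $r=\lfloor\sqrt n\rfloor$ gives $a_n\geqslant (\sqrt n/(2e))^{n/4}$ for all large $n$, so $a_n^{1/n}\to\infty$, i.e. the growth is superexponential. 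Because $\I_n(000,010,100,101,110)$ is contained in each of the classes listed before the proposition, this also yields the superexponential growth asserted there. The only genuine content is the pattern-avoidance check of the displayed family, which is routine given the earlier characterizations of occurrences of $100,101,110$ through the maximum; the asymptotic estimate is standard. (I expect, and small cases confirm, that in fact $a_n$ equals the number of involutions of $[n]$, with $a_n=a_{n-1}+(n-1)a_{n-2}$, which would reprove the statement, but establishing that exact identity is more than the proposition requires.)
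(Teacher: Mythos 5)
Your proof is correct, but it takes a genuinely different route from the paper's. The paper exhibits, for each fixed $k$, the family of sequences beginning with the doubled staircase $(0,0,1,1,\dots,k-1,k-1)$ followed by $n-2k$ pairwise distinct values $\geqslant k$ (a $00$-avoiding $k$-shifted inversion sequence); this family has exactly $(k+1)^{n-2k}$ elements at \emph{every} length $n \geqslant 2k$, so the enumeration sequence eventually exceeds $k^n$ for every $k$ --- two lines, with no monotonicity argument needed. Your nested-block family exists only at the quadratic lengths $n_r$, which is precisely why you need the injection $\sigma \mapsto (0)\cdot(\sigma+1)$ to interpolate between them; that lemma is sound, though note that for the pattern $100$, whose first letter is \emph{not} repeated, the prepended $0$ is excluded by its minimality rather than by the repeated-value argument (you state both properties, so the argument closes). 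Your avoidance check is right: the only equal pairs are the adjacent $(v_t,v_t)$ with everything earlier strictly smaller and everything later strictly larger, and since each of the five patterns needs a repeated value, no occurrence can form. In exchange for the extra machinery, your family yields a quantitatively stronger, factorial-type bound $a_{n} \geqslant n^{(c-o(1))n}$ along with explicit constants, whereas the paper contents itself with beating every exponential; in fairness, the paper's family would also give $n^{n/2-o(n)}$ by letting $k$ grow like $\sqrt n$, so the added strength is available on both routes and neither needs it for the proposition. Two harmless slips to fix: with your conventions $\max(R_t) = p_t - 2$, not $p_t - 1$ (already at $t=1$, the block $R_1=(1)$ sits at position $p_1 = 3$) --- immaterial, since the subdiagonality check only needs $\max(R_t) < p_t$; and the clean final bound $a_n \geqslant (\sqrt n/(2e))^{n/4}$ holds only after absorbing lower-order corrections in the exponent coming from $r = \lfloor\sqrt n\rfloor$, which does not affect the conclusion $a_n^{1/n} \to \infty$.
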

\begin{proof}
Let $\mathfrak A_{n,k} = \{ \sigma \in \I_n \, : \, \sigma = \alpha \cdot (\beta+k) , \, \alpha = (0,0,1,1, \dots, k-1,k-1), \, \beta \in \I_{n-2k}^k(00)\}$ be the set of inversion sequences of length $n$ which begin by $(0,0,1,1, \dots, k-1,k-1)$, followed by $n-2k$ distinct values greater than or equal to $k$. Let $\mathfrak A_n = \coprod_{k \geqslant 0} \mathfrak A_{n,k}$. For all $k \geqslant 0$ and $n \geqslant 2k$, we have $|\mathfrak A_{n,k}| = (k+1)^{n-2k}$, hence for any $k \geqslant 0$, $|\mathfrak A_n|$ is asymptotically larger than $k^n$. Since this holds for an arbitrarily large value of $k$, $|\mathfrak A_n|$ has a super-exponential asymptotic behavior. By construction, every sequence in $\mathfrak A_n$ avoids the patterns 000, 010, 100, 101, and 110.
\end{proof}

Constructions similar to the above proof can be used to show that many classes of inversion sequences avoiding patterns with repeated letters have a super-exponential growth. However this is not always the case: for instance, \cite[Theorem 10]{Corteel_Martinez_Savage_Weselcouch_2016}
proves that $|\I_n(001)| = 2^{n-1}$ for all $n \geqslant 1$. Remarkably, 001 is the only pattern $\rho$ of size $3$ with repeated values such that the growth of $\I(\rho)$ is at most exponential.

Using his software \cite{DiffApprox}, Pantone provided us with precise conjectures about the asymptotic behavior of some of the enumeration sequences, based on the initial terms we computed. We present these conjectures in Table \ref{tableAsymptotics}. In particular, it seems that the classes $\I(010, 201)$ and $\I(101, 210)$ have the same growth rate, and the classes $\I(100, 120)$ and $\I(110, 201)$ as well.

\begin{table}[ht]
\begin{center}
    \begin{tabular}{|c|c|c|c|}
    \hline
    Class & $\mu$ & $\alpha$ & $\beta$ \\
    \hline
    $\I(000, 201)$ & $\sim 10.9282032$ & $\sim -7.3906$ & ? \\
    $\I(010, 201)$ & $\sim 10.1566572$ & $\sim -7.6168360$ & ? \\
    $\I(100, 102)$ & $\sim 5.066130494716195596699600$ & $-3/2$ & 0 \\
    $\I(100, 120)$ & $\sim 7.72334814688$ & $-3/2$& 0 \\
    $\I(101, 210)$ & $\sim 10.156657$ & $\sim -6.273831$ & 0 \\
    $\I(110, 201)$ & $\sim 7.7233481468847308370$ & $-3/2$ & 0 \\
    $\I(120, 201)$ & $\sim 7.4563913226671221339$ & $-3/2$ & 0 \\ \hline
    
    \end{tabular}
    \caption{Conjectured asymptotic behavior of the form $C \mu^n n^\alpha \log(n)^\beta$ for the enumeration sequences of some classes of pattern-avoiding inversion sequences.}
    \label{tableAsymptotics}
\end{center}
\end{table}

\section*{Appendix}

In this appendix, we present a proof of Conjecture \ref{conj000+102} provided by Jay Pantone in a personal communication.

Recall the succession rule $\Omega_{\{000,102\}}$ from Section \ref{000+102}:
$$\Omega_{\{000,102\}} = \begin{cases}
(1)\\
(s) \overset{1} \leadsto (j)^{s+1-j} &\text{for} \quad j \in [1, s]\\
(s) \overset{2} \leadsto (j+1)^{s+1-j} (j)^{\binom{s+1-j}{2}} &\text{for} \quad j \in [1, s].
\end{cases}$$
Let $\mathfrak a_{n,s} = |\{\sigma \in \I_n(000,102) \; : \; \sites(\sigma) = s\}|$ where $\sites$ is defined in Section \ref{000+102} and corresponds to the parameter of the rule $\Omega_{\{000,102\}}$. Let $A(x,y) = \sum_{n,s \geqslant 0} \mathfrak a_{n,s} x^n y^s$.

From the succession rule $\Omega_{\{000,102\}}$, it can be seen that the generating function of the children of a sequence of $\I(000,102)$ having size $n$ and $s$ active sites is
\begin{align*}
    & (x^{n+1} + x^{n+2}y) \left ( \sum_{j = 1}^s (s+1-j) y^j \right ) + x^{n+2} \left (\sum_{j = 1}^s \binom{s+1-j}{2} y^j \right) \\
    &= (x^{n+1}y + x^{n+2} y^2) \frac{y^{s+1} - y + s(1-y)}{(1-y)^{2}} \\
    & + x^{n+2}y \frac{2y - 2y^{s+1} - 2s(1-y) + s(s+1) (1-y)^2}{2(1-y)^3}.
\end{align*}
Seeing that
$$\frac{\partial A}{\partial y}(x,1) = \sum_{n,s \geqslant 0} s \, \mathfrak a_{n,s} x^n,$$
$$\frac{\partial^2 (y \cdot A)}{\partial y^2}(x,1) = \sum_{n,s \geqslant 0} s(s+1)\mathfrak a_{n,s} x^n,$$
we can apply this transformation to each monomial $\mathfrak a_{n,s} x^n y^s$, and obtain an equation which characterizes $A(x,y)$.
\begin{align*}
    A(x,y) &= y + \frac{xy + x^2 y^2}{(1-y)^2} \left( y (A(x,y)-A(x,1)) + (1-y) \frac{\partial A}{\partial y}(x,1) \right) \\
    &+ \frac{x^2y}{2(1-y)^3} \left ( \hspace{-1pt} 2y (A(x,1) - A(x,y)) - 2(1-y) \frac{\partial A}{\partial y}(x,1) + (1-y)^2 \frac{\partial^2 (y \cdot A)}{\partial y^2}(x,1) \hspace{-1pt} \right ) \\
    &= y - xy^2\frac{(1 + xy)(1-y) - x}{(1-y)^3} (A(x,1) - A(x,y)) \\ & + xy\frac{(1+xy)(1-y) - x}{(1-y)^2}\frac{\partial A}{\partial y}(x,1)
    + \frac{x^2y}{2(1-y)}\frac{\partial^2 (y \cdot A)}{\partial y^2}(x,1)
\end{align*}
    
To simplify this expression, we introduce two series $B(x,y)$ and $C(x,y)$.
\begin{align*} 
B(x,y) &= y\frac{A(x,1)-A(x,y)}{1-y} = \sum_{n,s \geqslant 0} \mathfrak a_{n,s} x^n \sum_{i=1}^s y^i \\
C(x,y) &= \frac{B(x,1)-B(x,y)}{1-y} = \sum_{n,s \geqslant 0} \mathfrak a_{n,s} x^n \sum_{i=1}^s \sum_{j=0}^{i-1} y^j
\end{align*}
From the definitions of $B(x,y)$ and $C(x,y)$, it can be seen that
\begin{align*}
B(x,1) &= \frac{\partial A}{\partial y}(x,1), \\
C(x,1) &= \sum_{n,s \geqslant 0} \mathfrak a_{n,s} x^n \sum_{i=1}^s \sum_{j=0}^{i-1} 1 = \sum_{n,s \geqslant 0} \frac{s(s+1)}{2} \mathfrak a_{n,s} x^n = \frac{1}{2} \frac{\partial^2 (y \cdot A)}{\partial y^2}(x,1).
\end{align*}
We can now rewrite our equation for $A(x,y)$ in terms of $B$ and $C$, without partial derivatives.
\begin{align*}
    A(x,y) &= y + xy\frac{(1 + xy)(1-y) - x}{(1-y)^2} (B(x,1) - B(x,y)) + \frac{x^2y}{1-y} C(x,1) \\
    &= y + xy\frac{(1 + xy)(1-y) - x}{1-y} C(x,y) + \frac{x^2y}{1-y} C(x,1)
\end{align*}
In summary, $A(x,y)$ is defined by the following system of equations involving $x$, $y$, the three series $A(x,y)$, $B(x,y)$, $C(x,y)$, and their evaluations for $y=1$.
$$\begin{cases}
    0 = (y-1) A(x,y) + (1-y)y + xy((1 + xy)(1-y) - x) C(x,y) + x^2y C(x,1)\\
    0 = (y-1) B(x,y) + y(A(x,1)-A(x,y)) \\
    0 = (y-1) C(x,y) + (B(x,1)-B(x,y))
\end{cases}$$
This can be seen as a system of three polynomial equations in eight variables ($x$, $y$, $A(x,1)$, $B(x,1)$, $C(x,1)$, $A(x,y)$, $B(x,y)$, and $C(x,y)$). Using Gröbner basis computations\footnote{We use the Maple package PolynomialIdeals.}, we can eliminate $B(x,y)$ and $C(x,y)$ from this system, and obtain an equation involving only six variables:
\begin{equation} \label{eqAxy}
\begin{split}
    &(-x^2 y^4 + x^2 y^3 - x^2 y^2 - xy^3 + xy^2 + y^3 - 3y^2 + 3y - 1)A(x,y) \\
    &+ xy^2 (xy^2 - xy + x + y - 1)A(x,1) + xy (y-1) (xy^2 - xy + x + y -1) B(x,1) \\
    &+ x^2 y (y-1)^2 C(x,1) - y (y-1)^3 = 0.
\end{split}
\end{equation}
This single equation uniquely defines the series $A(x,y)$, $A(x,1)$, $B(x,1)$, and $C(x,1)$, under the assumption that $A(x,y)$ is a formal power series in $x$ whose coefficients are polynomials in $y$.
Next, we use the approach from \cite{Bousquet-Mélou_Jehanne_2006} generalizing the kernel method to solve this equation. The kernel of \eqref{eqAxy} is
$$-x^2 y^4 + x^2 y^3 - x^2 y^2 - xy^3 + xy^2 + y^3 - 3y^2 + 3y - 1.$$
The coefficient of $x^0$ is a polynomial of degree 3 in $y$, therefore the kernel has three roots $Y_1(x)$, $Y_2(x)$, $Y_3(x)$ that are fractional power series in $x$, by \cite[Theorem 2]{Bousquet-Mélou_Jehanne_2006}.
Now, for $i \in \{1,2,3\}$, we have
$$-x^2 Y_i(x)^4 + x^2 Y_i(x)^3 - x^2 Y_i(x)^2 - xY_i(x)^3 + xY_i(x)^2 + Y_i(x)^3 - 3Y_i(x)^2 + 3Y_i(x) - 1 = 0$$
and
\begin{align*}
    &xY_i(x)^2 (xY_i(x)^2 - xY_i(x) + x + Y_i(x) - 1)A(x,1) \\
    &+ xY_i(x) (Y_i(x)-1) (xY_i(x)^2 - xY_i(x) + x + Y_i(x) -1) B(x,1) \\
    &+ x^2 Y_i(x) (Y_i(x)-1)^2 C(x,1) - Y_i(x) (Y_i(x)-1)^3 = 0.
\end{align*}
This forms a system of 6 equations. We add another indeterminate $Z$ and a seventh equation
$$Z(Y_1(x) - Y_2(x))(Y_1(x) - Y_3(x))(Y_2(x) - Y_3(x)) - 1 = 0,$$
to ensure that the system has a solution only if the series $Y_1(x)$, $Y_2(x)$, and $Y_3(x)$ are distinct.
Fortunately they are distinct, and using Gröbner basis computations again, we can find an equation involving only $x$ and $A(x,1)$.
$$x^4 A(x,1)^4 - 2x^3(x - 1)A(x,1)^3 + x(x^3 - 2x^2 + 4x - 1)A(x,1)^2 + (-2x^2 + 2x - 1)A(x,1) + 1 = 0$$
The left-hand side is the minimal polynomial of $A(x,1)$, and this proves Conjecture \ref{conj000+102}. We can compute the minimal polynomials of $B(x,1)$ and $C(x,1)$ in the same way.
$$x^6 B(x,1)^4 - 3x^4 B(x,1)^3 + x^2(x^2 - x + 3)B(x,1)^2 + (-x^2 + x - 1)B(x,1) + 1 = 0$$
$$x^8 C(x,1)^4 + x^5(3x + 1)C(x,1)^3 + 2x^3(2x + 1)C(x,1)^2 + (3x^2 + x - 1)C(x,1) + 1 = 0$$
Going back to equation \eqref{eqAxy}, we can now see that $A(x,y)$ is the sum of three algebraic functions of $(x,y)$ (of degree 4 each), so $A(x,y)$ is itself algebraic. Its minimal polynomial is difficult to obtain, since asking a computer algebra program to directly eliminate $A(x,1)$, $B(x,1)$ and $C(x,1)$ from a system of four equations consisting in \eqref{eqAxy} and the minimal polynomials of $A(x,1)$, $B(x,1)$, and $C(x,1)$ would yield a polynomial that is too big to compute (the minimal polynomial of $A(x,y)$ is a very small factor of this huge polynomial). Pantone still managed to eliminate the variables $A(x,1)$, $B(x,1)$, and $C(x,1)$ ``by hand" one at a time, using resultants. He found that the function $A(x,y)$ is also algebraic of degree 4, and its minimal polynomial is
\begin{align*}
& x^2 \big ((y-1)(x^2 y^3 + x y^2 + 3y) + x^2 y^2 - y^3 + 1 \big ) A(x,y)^4 \\
& - xy \big ((y-1)(2x^3 y^2 + 4x^2 y - x y^2 + 2x + 2y - 2) - 2 x^2 y^3 + 2 x^3 y \big ) A(x,y)^3 \\
& + y^2 \big ((y-1)(x^4 y - 2x^3 y + x^3 + 2 x^2 - 1) \\
& + (x-y)(- 2 x^2 - 2 xy + 3x) +  x^4 + x^2 y^2  \big ) A(x,y)^2 \\
&- y^3 \big (x^2 y + x^2 - 2x - y + 2 \big) A(x,y) + y^4.
\end{align*}

The statistic $\sites$ associated with the catalytic variable $y$ is essentially the same as the statistic $\mathbf{rank}$ studied in \cite{Huh_Kim_Seo_Shin_2025} (to be precise, $\sites(\sigma) = \mathbf{rank}(\sigma)+1$ for any $\sigma \in \I(000,102)$), so this answers the question of enumerating $\{000, 102\}$-avoiding inversion sequences according to their rank, that was left open in \cite{Huh_Kim_Seo_Shin_2025}.

\section*{Acknowledgements}
We thank Mathilde Bouvel for her support, and for giving a lot of feedback and suggestions on this work. We thank Jay Pantone for sharing interesting ideas about some constructions of inversion sequences, for using his software to provide us with conjectures, and for great help in proving one such conjecture in the appendix. We thank Mireille Bousquet-Mélou for some insight on power series equations. We thank the anonymous referee for some nice suggestions.

\emergencystretch=1em
\printbibliography
\end{document}